

\documentclass{amsart}
\usepackage{color}

\usepackage{amsmath} 
\usepackage{amssymb}
\usepackage{mathrsfs}

\newtheorem{theorem}{Theorem}[section] 
\newtheorem{claim}[theorem]{Claim}
\newtheorem{cd}[theorem]{Claim/Definition}

\newtheorem{conclusion}[theorem]{Conclusion}
\newtheorem{observation}[theorem]{Observation}

\theoremstyle{definition}
\newtheorem{definition}[theorem]{Definition}
\newtheorem{dc}[theorem]{Definition/Claim}

\newtheorem{discussion}[theorem]{Discussion}
\newtheorem{convention}[theorem]{Convention}

\newtheorem{hypothesis}[theorem]{Hypothesis}

\theoremstyle{remark}
\newtheorem{remark}[theorem]{Remark}
\newtheorem{question}[theorem]{Question}
\newtheorem{notation}[theorem]{Notation}
\newtheorem{context}[theorem]{Context}

\newcommand{\tp}{{\rm tp}}
\newcommand{\tr}{{\rm tr}}
\newcommand{\rnd}{{\rm rnd}}
\newcommand{\card}{{\rm card}}
\newcommand{\dm}{{\rm dm}}
\newcommand{\Ceb}{{\rm Cb}}
\newcommand{\uq}{{\rm uq}}
\newcommand{\cn}{{\rm cn}}
\newcommand{\ut}{{\rm ut}}

\newcommand{\st}{{\rm st}}

\newcommand{\Car}{{\rm Car}}
\newcommand{\oor}{{\rm or}}
\newcommand{\vr}{{\rm vr}}

\newcommand{\wg}{{\rm wg}}

\newcommand{\Ax}{{\rm Ax}}
\newcommand{\na}{{\rm na}}

\newcommand{\wk}{{\rm wk}}

\newcommand{\aaa}{{\rm a}}

\newcommand{\vq}{{\rm vq}}
\newcommand{\qr}{{\rm qr}}

\newcommand{\NF}{{\rm NF}}
\newcommand{\PC}{{\rm PC}}
\newcommand{\EM}{{\rm EM}}
\newcommand{\LST}{{\rm LST}}
\newcommand{\arity}{{\rm arity}}

\newcommand{\seq}{{\rm seq}}
\newcommand{\rct}{{\rm rct}}
\newcommand{\good}{{\rm good}}

\newcommand{\acl}{{\rm acl}}
\newcommand{\reg}{{\rm reg}}
\newcommand{\dcl}{{\rm dcl}}
\newcommand{\inv}{{\rm inv}}
\newcommand{\ess}{{\rm ess}}
\newcommand{\jct}{{\rm jct}}
\newcommand{\stp}{{\rm stp}}
\newcommand{\at}{{\rm at}}
\newcommand{\Ord}{{\rm Ord}}

\newcommand{\bas}{{\rm bas}}

\newcommand{\brim}{{\rm brim}}
\newcommand{\Depth}{{\rm Depth}}
\newcommand{\full}{{\rm full}}
\newcommand{\bs}{{\rm bs}}
\newcommand{\bd}{{\rm bd}}
\newcommand{\eq}{{\rm eq}}

\newcommand{\id}{{\rm id}}

\newcommand{\ortp}{{\rm ortp}}

\newcommand{\ict}{{\rm ict}}
\newcommand{\fin}{{\rm fin}}

\newcommand{\Min}{{\rm Min}}
\newcommand{\Dom}{{\rm Dom}}
\newcommand{\NDOP}{{\rm NDOP}}
\newcommand{\Rang}{{\rm Rang}}

\newcommand{\rest}{{\restriction}}
\newcommand{\dom}{{\rm dom}}

\newcommand{\wilog}{{\rm without loss of generality}}
\newcommand{\Wilog}{{\rm Without loss of generality}}

\newcommand{\then}{{\underline{then}}}
\newcommand{\when}{{\underline{when}}}
\newcommand{\Then}{{\underline{Then}}}

\newcommand{\If}{{\underline{if}}}
\newcommand{\Iff}{{\underline{iff}}}
\newcommand{\mn}{{\medskip\noindent}}
\newcommand{\sn}{{\smallskip\noindent}}
\newcommand{\bn}{{\bigskip\noindent}}

\newcommand{\cL}{{\mathscr L}}

\newcommand{\gy}{{\mathfrak y}}
\newcommand{\gx}{{\mathfrak x}}
\newcommand{\ga}{{\mathfrak a}}

\newcommand{\gn}{{\mathfrak n}}
\newcommand{\gC}{{\mathfrak C}}

\newcommand{\gK}{{\mathfrak K}}
\newcommand{\gk}{{\mathfrak k}}
\newcommand{\cC}{{\mathscr C}}

\newcommand{\bbE}{{\mathbb E}}

\newcommand{\cE}{{\mathscr E}}

\newcommand{\bbL}{{\mathbb L}}

\newcommand{\cP}{{\mathscr P}}

\newcommand{\gs}{{\mathfrak s}}

\newcommand{\cS}{{\mathscr S}}
\newcommand{\cT}{{\mathscr T}}
\newcommand{\gt}{{\mathfrak t}} 
\newcommand{\cU}{{\mathscr U}}

\newcommand{\cf}{{\rm cf}}
\newcommand{\pr}{{\rm pr}}

\newcount\skewfactor
\def\mathunderaccent#1#2 {\let\theaccent#1\skewfactor#2
\mathpalette\putaccentunder}
\def\putaccentunder#1#2{\oalign{$#1#2$\crcr\hidewidth
\vbox to.2ex{\hbox{$#1\skew\skewfactor\theaccent{}$}\vss}\hidewidth}}

\newbox\noforkbox \newdimen\forklinewidth
\forklinewidth=0.3pt   
\setbox0\hbox{$\textstyle\bigcup$}
\setbox1\hbox to \wd0{\hfil\vrule width \forklinewidth depth \dp0
                        height \ht0 \hfil}
\wd1=0 cm
\setbox\noforkbox\hbox{\box1\box0\relax}
\def\unionstick{\mathop{\copy\noforkbox}\limits}
\def\nonfork#1#2_#3{#1\unionstick_{\textstyle #3}#2}
\def\nonforkin#1#2_#3^#4{#1\unionstick_{\textstyle #3}^{\textstyle #4}#2}     
%
\setbox0\hbox{$\textstyle\bigcup$}
\setbox1\hbox to \wd0{\hfil{\sl /\/}\hfil}
\setbox2\hbox to \wd0{\hfil\vrule height \ht0 depth \dp0 width
                                \forklinewidth\hfil}
\wd1=0cm
\wd2=0cm
\newbox\doesforkbox
\setbox\doesforkbox\hbox{\box1\box0\relax}
\def\nunionstick{\mathop{\copy\doesforkbox}\limits}

\def\fork#1#2_#3{#1\nunionstick_{\textstyle #3}#2}
\def\forkin#1#2_#3^#4{#1\nunionstick_{\textstyle #3}^{\textstyle #4}#2}

\newenvironment{PROOF}[2][\proofname.]
   {\begin{proof}[#1]}
   {\end{proof}}

\usepackage[hidelinks]{hyperref}

\begin{document}
\makeatletter\def\shfiuwefootnote{\gdef\@thefnmark{}\@footnotetext}\makeatother\shfiuwefootnote{Version 2023-04-10. See \url{https://shelah.logic.at/papers/E108/} for possible updates.}

\title {Stable frames and weights \\
 E108}
\author {Saharon Shelah}
\address{Einstein Institute of Mathematics\\
Edmond J. Safra Campus, Givat Ram\\
The Hebrew University of Jerusalem\\
Jerusalem, 91904, Israel\\
 and \\
 Department of Mathematics\\
 Hill Center - Busch Campus \\ 
 Rutgers, The State University of New Jersey \\
 110 Frelinghuysen Road \\
 Piscataway, NJ 08854-8019 USA}
\email{shelah@math.huji.ac.il}
\urladdr{http://shelah.logic.at}
\thanks{The author thanks Alice Leonhardt for the beautiful typing.
The author would like to thank the Israel Science Foundation for
partial support of this research (Grant No. 242/03). 
First Typed - 2002/Sept/10.
Was paper [839] till winter 203 when by an editor request
it was divided to [839], [1248], [1249]}



\subjclass[2010]{Primary 03C45, 03C48; Secondary: 03C55}

\keywords {model theory, classification theory, stability, a.e.c.,
  stability, orthogonality, weight, main gap}

\date{December 23, 2015}

\begin{abstract}

\noindent
Was paper 839 in the author-s  
list till winter 2023 when it was divided 
to three.

\underline{Part I}:  We would like to generalize imaginary elements, 
weight of $\ortp(a,M,N),{\mathbf P}$-weight, ${\mathbf P}$-simple types, etc. from
\cite[Ch.III,V,\S4]{Sh:c} to the context of good frames.  This requires
allowing the vocabulary to have predicates and function symbols of
infinite arity, but it seemed that we do not suffer any real loss.

\noindent
\underline{Part II}:  Good frames were suggested in \cite{Sh:h} as 
the (bare bones) right parallel among a.e.c. to superstable (among
elementary classes).  Here we consider $(\mu,\lambda,\kappa)$-frames as
candidates for being the right parallel to 
the class of $|T|^+$-saturated models of a stable theory (among
elementary classes).  
A loss as compared to the superstable case is that going up
by induction on cardinals is problematic (for cardinals of small
cofinality).  But this arises only when we try to lift.  For this
context we investigate the dimension.

\noindent
\underline{Part III}:  In the context of Part II, we consider the main
gap problem for the parallel of somewhat saturated model; showing we
are not worse than in the first order case.
\end{abstract}

\maketitle
\numberwithin{equation}{section}
\setcounter{section}{-1}
\newpage

\centerline {Anotated Content}
\bigskip

\noindent
Part I, pg.\pageref{part1}
\bigskip

\noindent
\S0 \quad Introduction, pg.\pageref{0}
\bigskip

\noindent
\S1 \quad Weight and $\mathbf P$-weight, pg.5 (labels w(dot),wp(dot) and
without dot), pg.\pageref{1}
\mn
\begin{enumerate}
\item[${{}}$]  [For $\gs$ a good $\lambda$-frame with some additional
  properties we define placed and $\mathbf P$-weight.]
\end{enumerate}
\bigskip

\noindent
\S2 \quad Imaginary elements, an $\ess-(\mu,\lambda)$-a.e.c. and
frames, (labels $m(dot,e(dot),b)$, pg.\pageref{2}
\mn
\begin{enumerate}
\item[${{}}$]  [Define an $\ess-(\mu,\lambda)$-a.e.c. allowing
infinitary functions.  Then get $\gs$ with type bases.]
\end{enumerate}
\bigskip

\noindent
\S3 \quad $\mathbf P$-simple types, pg.18 (label \ref{c2}, on (dot)),
pg. \pageref{3}
\bigskip

\noindent
Part II \quad Generalizing stable classes, pg.\pageref{part2}
\bigskip

\noindent
\S4 \quad Introduction, pg.\pageref{4} 
\bigskip

\noindent
\S5 \quad Axiomatize a.e.c. without full continuity, (label f), pg.\pageref{5}
\mn
\begin{enumerate}
\item[${{}}$]  [Smooth out: generalize \cite[\S1]{Sh:600}.]
\end{enumerate}

\S(5A) \quad a.e.c., pg.\pageref{5A}

\S(5B) \quad Basic Notions, (label \ref{f31} on), pg.\pageref{5B}

\S(5C) \quad Liftings, (labels \ref{f46} on), pg.\pageref{5C}
\bigskip

\noindent
\S6 \quad PR frames, (labels pr(dot)), pg.\pageref{6}
\mn
\begin{enumerate}
\item[${{}}$]  [Seems better with NF, here, so earlier; 
\sn
\begin{enumerate}
\item[$(a)$]  dominated appear
\sn
\item[$(b)$]  missing reference
\sn
\item[$(c)$]  ``$P$ based on $\mathbf a$", see I, but by
\sn
\item[$(d)$]  use places $K_{(M,A)}$ or monsters $\gC$.]
\end{enumerate}
\end{enumerate}
\bigskip

\noindent
Part III, pg.\pageref{part3}
\bigskip

\noindent
\S7 \quad Introduction, (7.1-7.4 give labels!), pg.\pageref{7}
\bigskip

\noindent
\S8 \quad Analysis of dimension for $\mathbf P$, (label g(dot)), pg.\pageref{8}
\mn
\begin{enumerate}
\item[${{}}$]  [Question: use NF?  Place $\mathbf a$ or $\ga$?
\sn
\item[${{}}$]  $(a) \quad K_A$ or $K_{(M,A)}$ or $K_{M,\infty}$
\sn
\item[${{}}$]  $(b) \quad$ use monster $\gC$ or play...
\sn
\item[${{}}$]  $(c) \quad$ define $M <_{\gk_A} N$
\sn
\item[${{}}$]  $(d) \quad$ m.d. candidate (multi-dimensional)
\sn
\item[${{}}$]  $(e) \quad (< \kappa)$-based.]
\end{enumerate}
\bigskip

\noindent
Part IV, pg.\pageref{part4}
\bigskip

\noindent
\S9 \quad Strong stability: weak form of superstability [continue
\cite{Sh:863}, (label uq(dot)), pg.\pageref{9}
\bigskip

\noindent
\S10 \quad Decomposition (more on main gap + decomposition, continue
\S7,\S8), (label h(dot)), pg.\pageref{10}
\bigskip

\noindent
\S11 \quad Decompositions, (label dc(dot)), pg.\pageref{11}
\bigskip

\newpage

\centerline {Part I: Beautiful frames: weight and
  simplicity} \label{part1}

\section {Introduction} \label{0} 

We consider here the directions listed in the abstract\footnote{As
we have started this in 2002 and have not worked on it for long, we
intend to make public what is in reasonable state.}

In part I we assume ${\gs}$ is a good $\lambda$-frame with some
extra properties from \cite{Sh:705}, e.g. as in the assumption of
\cite[\S12]{Sh:705}, so we shall assume knowledge of \cite{Sh:705}, and
the basic facts on good $\lambda$-frames from \cite{Sh:600}. 

We can look at results from \cite{Sh:c} which were 
not regained in beautiful $\lambda$-frames.  Well, of
course, we are far from the main gap for the original ${\gs}$ 
(\cite[Ch.XIII]{Sh:c}) and there are
results which are obviously more strongly connected to elementary
classes, particularly ultraproducts.  This leaves us with parts of
type theory: semi-regular types, weight, ${\mathbf P}$-simple
\footnote{The motivation is for suitable $\mathbf P$ (e.g. a single
regular type)  that on the one hand stp$(a,A) \pm
\mathbf P \Rightarrow \stp(a/E,A)$ is $\mathbf P$-simple for some
equivalence relation definable over $A$ 
and on the other hand if $\stp(a_i,A)$ is $\mathbf P$-simple for
 $i < \alpha$ then $\Sigma\{w(a_i,A) \cup \{a_j:j<i\}):i < \alpha\}$
 does not depend on the order in which we list the $a_i$'s.  Note that
 $\mathbf P$ here is ${\cP}$ there.} types, ``hereditarily orthogonal
to $\mathbf P$" (the last two were defined and investigated in 
\cite[Ch.V,\S0 + Def4.4-Ex4.15]{Sh:a},
\cite[Ch.V,\S0,pg.226,Def4.4-Ex4.15,pg.277-284]{Sh:c}). 

Note that ``a type $q$ is $p$-simple (or ${\mathbf P}$-simple)" and ``$q$
is hereditarily orthogonal to $p$ (or ${\mathbf P}$)" are essentially
the \footnote{Note, ``foreign to $\mathbf P$" and ``hereditarily
orthogonal to $\mathbf P$" are equivalent.  Now ($\mathbf P = \{p\}$ for
ease)
\mn
\begin{enumerate}
\item[$(a)$]   $q(x)$ is $p(x)$-simple when for some set $A$, in
${\gC}$ we have $q({\gC}) \subseteq \acl(A \cup \bigcup p_i({\gC}))$
\sn
\item[$(b)$]   $q(x)$ is $p(x)$-internal when for some set $A$, in
${\gC}$ we have $q({\gC}) \subseteq \dcl(A \cup p({\gC}))$.
\end{enumerate}
\mn
Note
\mn
\begin{enumerate}
\item[$(\alpha)$]   internal implies simple
\sn
\item[$(\beta)$]  if we aim at computing weights it is better to
stress acl as it covers more
\sn 
\item[$(\gamma)$]   but the difference is minor and
\sn
\item[$(\delta)$]   in existence it is better to stress dcl, also it
is useful that $\{F \restriction (p({\gC}) \cup q({\gC}):F$ an
automorphism of ${\gC}$ over $p({\gC}) \cup \Dom(p)\}$
is trivial when $q(x)$ is $p$-internal but not so for $p$-simple
(though form a pro-finite group).
\end{enumerate}} 
``internal" and ``foreign" in Hrushovshi's profound works.
\newpage

\section {I, Weight and ${\mathbf P}$-weight} \label{1}

Recalling \cite{Sh:600}, \cite{Sh:705}
\begin{context}
\label{a2}
1) ${\gs}$ is a full good$^+ \,\lambda$-frame,
with primes, $K^{3,\vq}_{\gs} = K^{3,\qr}_{\gs},\bot = 
{\underset \wk \bot}$ and $p \bot M \Leftrightarrow
p {\underset \aaa \perp} M$, note that as ${\gs}$ is full,
${\cS}^{\bs}_{\gs}(M) = {\cS}^{\na}_{\gs}(M)$; also $\gk_{\gs} =
\gk[\gs] = (K^{\gs},\le_{\gk_{\gs}})$ is the a.e.c.

\noindent
2) ${\gC}$ is an ${\gs}$-monster so it is 
$K^{\gs}_{\lambda^+}$-saturated over $\lambda$ and $M <_{\gs} {\gC}$
means $M \le_{{\gk}[{\gs}]} {\gC}$ and $M \in K_{\gs}$.  
As $\gs$ is full, it has regulars.
\end{context}

\begin{observation}
\label{a5}
${\gs}^{\reg}$ satisfies all the above except being full.
\end{observation}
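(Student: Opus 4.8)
The plan is to go through the properties listed in \ref{a2} one at a time, exploiting that $\gs$ and $\gs^{\reg}$ differ only in their designated families of basic types: $\gk[\gs^{\reg}] = \gk[\gs]$, the non-forking relation is literally the same, and $\cS^{\bs}_{\gs^{\reg}}(M)$ is precisely the set of regular members of $\cS^{\bs}_{\gs}(M)$. So all but one of the clauses should transfer by ``restriction'', and the one exception is exactly fullness.

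First, that $\gs^{\reg}$ is a good$^+$ $\lambda$-frame. The only axiom with genuine content is density of $K^{3,\bs}$: for every $M <_{\gs} N$ in $K_{\gs}$ there should be $a \in N \setminus M$ with $\tp_{\gs}(a,M,N) \in \cS^{\bs}_{\gs^{\reg}}(M)$, i.e.\ a \emph{regular} type over $M$ realized in $N$. This is the density (equivalently, existence) of regular types, which holds because $\gs$ is full and good$^+$ with primes; I would quote this from the treatment of regular types in \cite{Sh:705} (with \cite{Sh:600} for the basics). All the remaining frame axioms — stability, $\aleph_0$-local character, existence and uniqueness of non-forking extensions, symmetry, and (weak) continuity, together with the good$^+$ clause — are statements about $\gk[\gs]$ and the non-forking relation, both of which are untouched; the only point worth noting is that the relevant types stay regular, which holds because a non-forking restriction or non-forking extension (between models of $\gk[\gs]$) of a regular type is again regular. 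In particular any instance of these axioms for $\gs^{\reg}$, where the hypotheses are merely strengthened by requiring regularity, is an instance for $\gs$.

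Next, the extra properties. ``Having primes'' is inherited since $\cS^{\bs}_{\gs^{\reg}}(M) \subseteq \cS^{\bs}_{\gs}(M)$, so every instance of the primality requirement for $\gs^{\reg}$ is one for $\gs$. For $K^{3,\vq}_{\gs^{\reg}} = K^{3,\qr}_{\gs^{\reg}}$ one checks that the defining ``extra'' conditions of $\vq$ and $\qr$ speak only of $\gk[\gs]$ and non-forking, so that $K^{3,\vq}_{\gs^{\reg}}$ and $K^{3,\qr}_{\gs^{\reg}}$ are the respective restrictions of $K^{3,\vq}_{\gs}$ and $K^{3,\qr}_{\gs}$ to triples whose type is regular; intersecting the equality for $\gs$ with that common set yields the equality for $\gs^{\reg}$. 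The orthogonality clauses $\bot = \underset{\wk}{\bot}$ and $p \bot M \Leftrightarrow p \underset{\aaa}{\perp} M$ transfer the same way: all flavours of orthogonality are defined via non-forking amalgams in $\gk[\gs]$, and passing to $\gs^{\reg}$ only shrinks the set of pairs (or pairs type/model) to which they are applied, so coincidence on the larger class gives coincidence on the smaller. Finally, the one clause that is \emph{not} inherited is fullness: $\cS^{\bs}_{\gs^{\reg}}(M)$ consists only of the regular types, hence in general is a proper subset of $\cS^{\na}_{\gs^{\reg}}(M)$ — which is exactly what the observation records by excepting ``being full''. The only genuinely substantive step, and the one I expect to be the main obstacle, is the density of regular types, where the full good$^+$ hypothesis on $\gs$ is actually used; everything else is transfer along the identity on $\gk[\gs]$ and its non-forking relation.
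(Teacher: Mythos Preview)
Your proposal is correct and in substance matches the paper: the paper's own proof is simply the citation ``See \cite[10.18]{Sh:705} and Definition \cite[10.17]{Sh:705}'', and what you have written is essentially an unpacking of what those results assert, namely that passing from $\gs$ to $\gs^{\reg}$ preserves all the frame axioms (with density of regulars being the only nontrivial point) and the auxiliary hypotheses of \ref{a2}, while fullness is lost. One small caution: your claim that $K^{3,\vq}_{\gs^{\reg}}$ and $K^{3,\qr}_{\gs^{\reg}}$ are literally the restrictions of the $\gs$-versions to regular triples deserves a moment's care, since the defining clauses can in principle quantify over basic types; but this is handled in the cited sections of \cite{Sh:705}, and your transfer argument is the right shape.
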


\begin{proof}  See \cite[10.18=L10.p19tex]{Sh:705} and Definition
\cite[10.17=L10.p18tex]{Sh:705}. 
\end{proof}

\begin{claim}
\label{a8}
1) If $p \in {\cS}^{\bs}_{\mathfrak s}(M)$ 
\then \, we can find $b,N$ and a finite $\mathbf J$ such that:
\mn
\begin{enumerate}
\item[$\circledast$]  $(a) \quad M \le_{\gs} N$
\sn
\item[${{}}$]   $(b) \quad \mathbf J \subseteq N$ 
is a finite independent set in $(M,N)$
\sn
\item[${{}}$]   $(c) \quad c \in \mathbf J \Rightarrow \ortp(c,M,N)$ is
  regular, recalling $\ortp$ stands for orbital type
\sn
\item[${{}}$]   $(d) \quad (M,N,\mathbf J) \in K^{3,\qr}_{\gs}$
\sn
\item[${{}}$]   $(e) \quad b \in N$ realizes $p$.
\end{enumerate}
\mn
2) We can add, if $M$ is brimmed, that
\mn
\begin{enumerate}
\item[$(f)$]   $(M,N,b) \in K^{3,\pr}_{\gs}$.
\end{enumerate}
\mn
3) In (2), $|\mathbf J|$ depends only on $(p,M)$.

\noindent
4) If $M$ is brimmed, \then\, we can work in $\gs(\brim)$ and get the
same $\|\mathbf J\|$ and $N$ (so $N \in K_{\gs}$) brimmed.
\end{claim}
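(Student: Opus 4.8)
The plan is to realize $p$, decompose a prime (hence ``small'') extension of $M$ into regular one-step pieces, straighten that decomposition so that all the regular types live over $M$ itself, and take $\mathbf J$ to be the resulting finite independent family, its size being the length of the decomposition. The nontrivial ingredients are: density of regular types over a model (\cite{Sh:705}), finiteness of such a decomposition (which comes from the local-character axiom of the good $\lambda$-frame $\gs$, the superstability-like part of \cite{Sh:600}), and well-definedness of its length (an additivity/dimension argument in the style of \cite[Ch.V,\S4]{Sh:c}); granting these, (1)--(4) follow, with uniqueness of primes supplying the glue.

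For (1) and (2): fix $b$ realizing $p$, and --- using that $\gs$ has primes --- let $N$ be $\le_{\gs}$-minimal over $M\cup\{b\}$, so that $(M,N,b)\in K^{3,\pr}_{\gs}$ at least when $M$ is brimmed. Build, inside $N$, an $\le_{\gs}$-increasing continuous chain $M=M_0<_{\gs}M_1<_{\gs}\cdots$ with $c_i\in N$, $M_{i+1}$ prime over $M_i\cup\{c_i\}$ (so $(M_i,M_{i+1},c_i)\in K^{3,\pr}_{\gs}$), and $\ortp(c_i,M_i,M_{i+1})$ regular and non-orthogonal to $\ortp(b,M_i,N)$: as long as $b\notin M_i$ (equivalently $M_i\neq N$, by minimality of $N$ over $M\cup\{b\}$), the type $\ortp(b,M_i,N)$ is non-algebraic, hence by density of regulars non-orthogonal to some regular type over $M_i$, which we realize by a suitable $c_i\in N$. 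Were the chain not to terminate, the sequence $(c_i)_{i<\omega}$ would exhibit $b$ as having infinite weight over $M$, which the local-character axiom of $\gs$ (applied to $\omega$-chains) forbids; so the chain halts at a finite $n$ with $M_n=N$. Finally, straighten $(c_0,\dots,c_{n-1})$ into a finite set $\mathbf J\subseteq N$, independent over $M$, with $\ortp(c,M,N)$ regular for each $c\in\mathbf J$ and $N$ prime over $M\cup\mathbf J$, i.e.\ $(M,N,\mathbf J)\in K^{3,\qr}_{\gs}$ --- the standard passage from a decomposition with regular steps to one over the base, and exactly the place where the coincidences $\bot={\underset \wk \bot}$ and $p\bot M\Leftrightarrow p{\underset \aaa \perp}M$ of Context \ref{a2}, together with fullness, are used. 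When $M$ is brimmed, $b$ is dominated over $M$ by $\mathbf J$ and $N$ is minimal prime over $M\cup\{b\}$, hence also over $M\cup\mathbf J$, so $(M,N,b)\in K^{3,\pr}_{\gs}$ persists; this gives (1) and (2). I expect this straightening, together with the finiteness argument, to be the main obstacle.

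For (3): let $(M,N_1,b_1,\mathbf J_1)$ and $(M,N_2,b_2,\mathbf J_2)$ both satisfy $(a)$--$(f)$ with $b_1,b_2$ realizing $p$. Since $M$ is brimmed, $(M,N_\ell,b_\ell)\in K^{3,\pr}_{\gs}$, so uniqueness of prime triples gives an isomorphism $N_1\to N_2$ over $M$ sending $b_1$ to $b_2$; applying it to $\mathbf J_1$ reduces us to $N_1=N_2=N$, $b_1=b_2$, and two sets $\mathbf J_1,\mathbf J_2\subseteq N$ independent over $M$, with regular types, and $(M,N,\mathbf J_\ell)\in K^{3,\qr}_{\gs}$. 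Since $N$ is prime over $M\cup\mathbf J_\ell$, every regular type over $M$ realized in $N$ is non-orthogonal to a member of $\mathbf J_\ell$; feeding this into the additivity/dimension bookkeeping of \cite[Ch.V,\S4]{Sh:c} shows that $|\mathbf J_1|$ and $|\mathbf J_2|$ both compute the same invariant --- the weight of $p$ --- hence are equal. Thus $|\mathbf J|$ depends only on $(p,M)$.

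For (4): when $M$ is brimmed, run the construction of (1)--(2) inside $\gs(\brim)$, taking each $M_{i+1}$ and the final $N$ brimmed; this is harmless since brimmed extensions exist and form $\le_{\gs}$-increasing continuous chains, and the relevant type theory of $\gs(\brim)$ coincides with that of $\gs$ restricted to brimmed models (\cite{Sh:600},\cite{Sh:705}). The regular types $\ortp(c_i,M_i,M_{i+1})$, the domination relations, and the halting stage $n$ are unchanged, so $\|\mathbf J\|$ is the same integer, $N\in K_{\gs}$ is brimmed, and (4) follows.
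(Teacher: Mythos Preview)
Your outline is sound and uses the right ingredients, but the paper organizes the argument differently and thereby sidesteps the ``straightening'' you flag as the main obstacle.

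For (1) the paper does \emph{not} first realize $p$ and work inside a fixed prime $N$. Instead it builds a free chain $M=N_0\le_{\gs}N_1\le_{\gs}\cdots$ together with types $q_\ell\in\cS_{\gs}(N_\ell)$, $q_0=p$, each $q_{\ell+1}$ a \emph{forking} extension of $q_\ell$, and $(N_\ell,N_{\ell+1},a_\ell)\in K^{3,\pr}_{\gs}$ with $r_\ell=\ortp(a_\ell,N_\ell,N_{\ell+1})$ regular. The crucial extra clause is the dichotomy built into the construction: each $r_\ell$ is chosen so that either $r_\ell\perp M$ or $r_\ell$ does not fork over $M$ (this is where \cite[8.3]{Sh:705} is invoked). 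Termination is by finite character of non-forking applied to the limit type $q$. Then $\mathbf J=\{a_\ell:r_\ell$ does not fork over $M\}$, and $(M,N_n,\mathbf J)\in K^{3,\qr}_{\gs}$ follows from \cite[6.2]{Sh:705}; the element $b$ realizing $p$ is found only at the end, inside $N_n$. So what you call ``straightening'' is not a post-processing step but is baked into the inductive choice of $a_\ell$; your approach would need the same dichotomy, and you should check that such a regular $c_i$ can be found \emph{inside} your prime $N$ rather than in an arbitrary extension.

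For (2) the paper does not simply observe that your $N$ was already prime; it starts from the output of (1) with $|\mathbf J|$ minimal, then carves out $N'\le_{\gs}N$ with $(M,N',b)\in K^{3,\pr}_{\gs}$, takes a maximal regular independent $\mathbf J'\subseteq N'$, and uses an isomorphism argument (via uniqueness for $K^{3,\qr}_{\gs}$) to pull everything back. For (3) the paper's argument is a direct exchange: assuming $|\mathbf J_1|<|\mathbf J_2|$ and arranging $N_2\le_{\gs}N_1$, \cite[10.15]{Sh:705} produces $c\in\mathbf J_2\setminus\mathbf J_1$ with $\mathbf J_1\cup\{c\}$ independent in $(M,N_1)$, contradicting $(M,N_1,\mathbf J_1)\in K^{3,\vq}_{\gs}$. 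This is sharper than invoking a general additivity principle.
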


\begin{PROOF}{\ref{a8}}  
1) By induction on $\ell < \omega$, we try to choose 
$N_\ell,a_\ell,q_\ell$ such that:
\mn
\begin{enumerate}
\item[$(*)$]   $(a) \quad N_0 = M$
\sn
\item[${{}}$]   $(b) \quad N_\ell \le_{\gs} N_{\ell +1}$
\sn
\item[${{}}$]  $(c) \quad q_\ell \in {\cS}_{\gs}(N_\ell)$, so possibly
  $q_\ell \notin \cS^{\na}_{\gs}(N_\ell)$
\sn
\item[${{}}$]   $(d) \quad q_0 = p$
\sn
\item[${{}}$]   $(e) \quad q_{\ell +1} \restriction N_\ell = q_\ell$
\sn
\item[${{}}$]   $(f) \quad q_{\ell +1}$ forks over $N_\ell$ so now necessarily
$q_\ell \notin {\cS}^{\na}_{\gs}(N_\ell)$
\sn
\item[${{}}$]   $(g) \quad (N_\ell,N_{\ell +1},a_\ell) \in
K^{3,\pr}_{\gs}$
\sn
\item[${{}}$]   $(h) \quad r_\ell = 
\ortp(a_\ell,N_\ell,N_{\ell +1})$ is regular
\sn
\item[${{}}$]   $(i) \quad r_\ell$  either is $\perp M$
or does not fork over $M$. 
\end{enumerate}
\mn
If we succeed to carry the induction for all $\ell < \omega$ let $N =
\cup\{N_\ell:\ell < \omega\}$; as this is a countable chain,
there is $q \in {\cS}(N)$ such that 
$\ell < \omega \Rightarrow q \restriction N_\ell = q$ and as $q$ is
not algebraic (because each $q_n$ is not), and ${\gs}$ is full, clearly $q \in
{\cS}_{\gs}(N)$; but $q$ contradicts 
the finite character of non-forking.  So for
some $n \ge 0$ we are stuck, but this cannot occur if $q_n \in 
{\cS}^{\na}_{\gs}(N_n)$.
[Why?  By \ref{a5}, equivalently ${\gs}^{\reg}$ has enough regulars
and then we can apply \cite[8.3=L6.1tex]{Sh:705}.]
So for some $b \in N_n$ we have $q_n = \ortp(b,N_n,N_n)$, 
i.e., $b$ realizes $q_n$ hence it realizes $p$.

Let $\mathbf J = \{a_\ell:\ortp(a_\ell,N_\ell,N_{\ell +1})$ does not
fork over $N_0\}$.  By \cite[6.2]{Sh:705} we have 
$(M,N_n,\mathbf J) = (N_0,N_n,\mathbf J) \in K^{3,\vq}_{\gs}$
hence $\in K^{3,\qr}_{\gs}$ by \cite{Sh:705} so we are done.

\noindent 
2) Let $N,b,\mathbf J$ be as in part (1) with $|\mathbf J|$ minimal.  We
can find $N' \le_{\gs} N$ such that $(M,N',b) \in K^{3,\pr}_{\gs}$ and we 
can find $\mathbf J'$ such that $\mathbf J' \subseteq N'$ is 
independent regular in $(M,N')$ and maximal under those demands.  
Then we can find $N'' \le_{\gs} N'$ such that $(M,N'',\mathbf J') \in
K^{3,\qr}_{\gs}$.  If $\ortp_{\gs}(b,N'',N') \in {\cS}^{\na}_{\gs}(N'')$ 
is not orthogonal to
$M$ we can contradict the maximality of $\mathbf J'$ in $N'$ as in the
proof of part (1), 
so $\ortp_{\gs}(b,N'',N') \perp M$ (or $\notin {\cS}^{\na}_{\gs}(N)$).
Also \wilog \, $(N'',N',b) \in
K^{3,\pr}_{\gs}$, so by \cite{Sh:705} we 
have $(M,N',\mathbf J') \in K^{3,\qr}_{\gs}$.
Hence there is an isomorphism $f$ from $N'$ onto $N''$ which is the
identity of $M \cup \mathbf J'$ (by the uniqueness for
$K^{3,\qr}_{\gs}$).  So using $(N',f(b),\mathbf J')$ for
$(N,b,\mathbf J)$ we are done.

\noindent
3) If not, we can find $N_1,N_2,\mathbf J_1,\mathbf J_2,b$ such that $M
\le_{\gs} N_\ell \le_{\gs} N$ and the quadruple $(M,N_\ell,\mathbf J_\ell,b)$ is
as in (a)-(e)+(f) of part (1)+(2) for $\ell =1,2$.  Assume toward
contradiction that $|\mathbf J_1| \ne |\mathbf J_2|$ so \wilog \, $|\mathbf
J_1| < |\mathbf J_2|$. 

By ``$(M,N_\ell,b) \in K^{3,\pr}_{\gs}$" \wilog \, $N_2 \le_{\gs} N_1$.

By \cite[10.15=L10b.11tex(3)]{Sh:705} for some 
$c \in \mathbf J_2 \backslash \mathbf J_1,\mathbf J_1 \cup
\{c\}$ is independent in $(M,N_1)$, contradiction to $(M,N,\mathbf J_1)
\in K^{3,\vq}_{\gs}$ by \cite[10.15=L10b.11tex(4)]{Sh:705}.  

\noindent
4) Similarly.
\end{PROOF}

\begin{definition}
\label{a11}
1) For $p \in {\cS}^{\bs}_{\gs}(M)$, let the weight of $p,w(p)$
be the unique natural number such that: if $M \le_{\gs} M',M'$ is 
brimmed, $p' \in {\cS}^{\bs}_{\gs}(M')$ is a non-forking extension of $p$
\then \, it is the unique $|\mathbf J|$ from Claim \ref{a8}(3), it is 
a natural number. 

\noindent
2) Let $w_{\gs}(a,M,N) = w(\ortp_{\gs}(a,M,N))$.
\end{definition}

\begin{claim}
\label{w.3}
1) If $p \in {\cS}^{bs}(M)$ regular, \then \, $w(p) =1$. 

\noindent
2) If $\mathbf J$ is independent in $(M,N)$ and $c \in N$, \then \, for
some $\mathbf J' \subseteq \mathbf J$ with $\le w_{\gs}(c,M,N)$
elements, $\{c\} \cup (\mathbf J \backslash \mathbf J')$ is independent in
$(M,N)$.
\end{claim}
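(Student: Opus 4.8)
The plan is to read part~(1) off the decomposition in Claim~\ref{a8} applied to a single regular element, and to prove part~(2) by a domination argument that reduces it to a matroid count against the regular basis of a decomposition of $\ortp_{\gs}(c,M,N)$. For part~(1): let $p\in\cS^{\bs}_{\gs}(M)$ be regular. Since by Definition~\ref{a11} the weight is computed over a brimmed model, I would first replace $M$ by a brimmed $\le_{\gs}$-extension and $p$ by its (still regular) non-forking extension, so that $M$ is brimmed. Pick $(M,N,b)\in K^{3,\pr}_{\gs}$ with $b$ realizing $p$ and put $\mathbf J=\{b\}$: this is a finite independent set in $(M,N)$ all of whose members are regular, $(M,N,\{b\})\in K^{3,\vq}_{\gs}=K^{3,\qr}_{\gs}$ by \cite[6.2]{Sh:705} (a prime extension by one regular element not forking over the base is a $\vq$-triple), and $(M,N,b)\in K^{3,\pr}_{\gs}$. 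Hence $(N,b,\mathbf J)$ is a witness of the kind furnished by Claim~\ref{a8}(1),(2) with $|\mathbf J|=1$, so $w(p)=1$ by Claim~\ref{a8}(3) and Definition~\ref{a11}; the value $0$ is excluded since by Claim~\ref{a8} it would make some realization of $p$ algebraic over $M$.

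For part~(2), work inside $\gC$: by replacing $M$ with a brimmed $M'$, $M\le_{\gs} M'\le_{\gs}\gC$, with $M'$ free from $N$ over $M$, we may assume $M$ is brimmed --- then $\mathbf J$ is still independent over the new base, $w_{\gs}(c,M,N)$ is unchanged, and a set independent over $M'$ and free from $M'$ over the old $M$ is independent over the old $M$, so the desired conclusion transfers back. So assume $M$ brimmed, $N\le_{\gs}\gC$, $\mathbf J$ independent in $(M,\gC)$, $c\in\gC$. If $\ortp_{\gs}(c,M,N)$ is algebraic, $\mathbf J'=\emptyset$ works; otherwise let $p:=\ortp_{\gs}(c,M,N)\in\cS^{\bs}_{\gs}(M)$ and $w:=w_{\gs}(c,M,N)=w(p)$. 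By Claim~\ref{a8}(1),(2), Definition~\ref{a11}, and the $\lambda^+$-saturation of $\gC$ (so that an automorphism of $\gC$ over $M$ sending the realization of $p$ from Claim~\ref{a8} to $c$ carries the configuration into $\gC$), get $N_0\le_{\gs}\gC$ with $M\le_{\gs} N_0$, $c\in N_0$, $(M,N_0,c)\in K^{3,\pr}_{\gs}$ and $(M,N_0,\mathbf I)\in K^{3,\qr}_{\gs}$, where $\mathbf I\subseteq N_0$ is independent in $(M,N_0)$, every element of $\mathbf I$ is regular, and $|\mathbf I|=w$. As $(M,N_0,\mathbf I)\in K^{3,\vq}_{\gs}$ and $c\in N_0$, the basis $\mathbf I$ dominates $c$ over $M$: if $B\subseteq\gC$ is free from $\mathbf I$ over $M$ then $B$ is free from $c$ over $M$.

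It now suffices to find $\mathbf J'\subseteq\mathbf J$ with $|\mathbf J'|\le w$ such that $\mathbf J\setminus\mathbf J'$ is free from $\mathbf I$ over $M$: then $\mathbf J\setminus\mathbf J'$ is free from $c$ over $M$ by domination, and adjoining to the $M$-independent set $\mathbf J\setminus\mathbf J'$ the element $c$, which is free from it over $M$, preserves independence, so $\{c\}\cup(\mathbf J\setminus\mathbf J')$ is independent in $(M,\gC)$, hence in $(M,N)$ since all its elements lie in $N$. To produce $\mathbf J'$, work in the matroid on the elements of $\gC$ whose independent sets are those independent in $(M,\gC)$ --- exchange is symmetry of non-forking, finitary character is finite character of non-forking, and the rank/dimension theory is that of \cite{Sh:705}, with the rank of $\mathbf I$ over $M$ equal to $|\mathbf I|=w$. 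By Zorn's lemma (a descending chain of admissible $\mathbf J'$ has admissible intersection, by finite character) choose $\mathbf J'\subseteq\mathbf J$ minimal with $\mathbf J_0:=\mathbf J\setminus\mathbf J'$ free from $\mathbf I$ over $M$ ($\mathbf J'=\mathbf J$ being admissible). For $b\in\mathbf J'$, minimality forces $\mathbf J_0\cup\{b\}$ not to be free from $\mathbf I$ over $M$ while $\mathbf J_0$ is, so (transitivity of independence) $b$ is not free from $\mathbf I$ over $M\cup\mathbf J_0$, i.e.\ $b$ lies in the closure of $\mathbf I$ over $M\cup\mathbf J_0$ --- a flat of rank $\le w$. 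Since $\mathbf J'$ is also independent over $M\cup\mathbf J_0$ (being $\mathbf J$ with $\mathbf J_0$ split off), it follows that $|\mathbf J'|\le w$, as required.

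Two points carry the real weight. First, part~(1) is used essentially: $\mathbf I$ must consist of weight-one (regular) elements, since for general $\mathbf I$ the bound on $|\mathbf J'|$ would be $\sum_{d\in\mathbf I}w(d)$ rather than $|\mathbf I|$. Second, one must import from \cite{Sh:705} that $\gk_{\gs}$-independence forms a finitary matroid with the stated dimension theory and that a $K^{3,\vq}_{\gs}$-triple is dominated by its basis. I expect the main nuisance to be exactly the chain/rank bookkeeping when $\mathbf J$ is infinite; the argument is arranged so that $|\mathbf J'|$ is bounded as the size of an independent set sitting inside a rank-$\le w$ flat, thereby avoiding any cardinal arithmetic with an infinite $|\mathbf J_0|$.
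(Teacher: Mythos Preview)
Your strategy is correct and, since the paper's proof reads in its entirety ``Easy by now.'', you have supplied far more than the paper does. Part~(1) is fine: a prime triple over a regular element is already a witness in the sense of Claim~\ref{a8}(2), so $w(p)=1$. For part~(2), the reduction to brimmed $M$, the decomposition $(M,N_0,\mathbf I)\in K^{3,\qr}_{\gs}$ with $|\mathbf I|=w$ regular via Claim~\ref{a8}, and the domination step (so that freeing $\mathbf J\setminus\mathbf J'$ from $\mathbf I$ suffices) are all correct and are the intended route.

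The one point to sharpen is your appeal to a matroid structure. The sentence ``work in the matroid on the elements of $\gC$ whose independent sets are those independent in $(M,\gC)$ --- exchange is symmetry of non-forking'' overstates what is available: general $\gs$-independence is \emph{not} a matroid, precisely because elements can have weight $>1$ (two maximal independent subsets of the same set need not have the same size; that is the whole point of weight). Symmetry gives pairwise exchange, but not the augmentation property you invoke when you say that an independent $\mathbf J'$ sitting in ``a flat of rank $\le w$'' must satisfy $|\mathbf J'|\le w$. What \emph{is} true, and what you actually need, is narrower: since each $d\in\mathbf I$ realizes a regular type over $M\cup\mathbf J_0$ (a non-forking extension of a regular type), the exchange/dimension theory for regular types in \cite[\S10]{Sh:705} (in particular \cite[10.15]{Sh:705}, the result already cited in the proof of \ref{a8}(3)) lets you add the $d_j$'s to $\mathbf J$ one at a time, each insertion forcing the removal of at most one element of $\mathbf J$; after $w$ steps you have $\mathbf I\cup(\mathbf J\setminus\mathbf J')$ independent with $|\mathbf J'|\le w$. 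You clearly see this yourself in your closing paragraph (``for general $\mathbf I$ the bound would be $\sum_{d\in\mathbf I}w(d)$''), so just replace the global matroid claim by the regular-type exchange lemma from \cite{Sh:705} and the argument is clean.
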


\begin{PROOF}{\ref{w.3}}
Easy by now.
\end{PROOF}

\noindent
Note that the use of $\gC$ in Definition \ref{a17} is for
transparency only and can be avoided, see \ref{a26} below.
\begin{definition}
\label{a17}
1) We say that ${\mathbf P}$ is an
$M^*$-based family (inside ${\gC}$) \when \,:
\mn
\begin{enumerate}
\item[$(a)$]   $M^* <_{\gk[\gs]} {\gC}$ and $M^* \in K_{\gs}$
\sn
\item[$(b)$]   ${\mathbf P} \subseteq \cup\{\cS^{\bs}_{\gs}(M):
M \le_{\gk[\gs]} {\gC}$ and $M \in K_{\gs}\}$
\sn
\item[$(c)$]   ${\mathbf P}$ is preserved by automorphisms of ${\gC}$
  over $M^*$.
\end{enumerate}
\mn
2) Let $p \in {\cS}^{\bs}_{\gs}(M)$ where $M \le_{\gk[\gs]}{\gC}$
\mn
\begin{enumerate}
\item[$(a)$]   we say that $p$ is hereditarily orthogonal to 
${\mathbf P}$ (or ${\mathbf P}$-foreign) \when \,: 

if $M \le_{\gs} N \le_{{\gk}[{\gs}]} {\gC},q \in {\cS}^{\bs}_{\gs}(N),
q \restriction M=p$, \then \, $q$ is orthogonal to ${\mathbf P}$
\sn
\item[$(b)$]   we say that $p$ is ${\mathbf P}$-regular \when \, $p$ is
regular, not orthogonal to ${\mathbf P}$ and if $q \in 
{\cS}^{\bs}_{\gs}(M'),M \le_{\gs} M' <_{{\gk}[{\gs}]} {\gC}$ and $q$
is a forking extension of $p$ \then \, $q$ is hereditarily orthogonal 
to ${\mathbf P}$
\sn
\item[$(c)$]   $p$ is weakly ${\mathbf P}$-regular \If \, it is regular
and is not orthogonal to some ${\mathbf P}$-regular $p'$.
\end{enumerate}
\mn
3) ${\mathbf P}$ is normal \when \, ${\mathbf P}$ is a set of regular types
and each of them is ${\mathbf P}$-regular. 

\noindent
4) For $q \in {\cS}^{\bs}_{\gs}(M),M <_{{\gk}[{\gs}]}
{\gC}$ let $w_{\mathbf P}(q)$ be defined as the natural number satisfying
the following
\mn
\begin{enumerate}
\item[$\circledast$]  if $M \le_{\gs} M_1 \le_{\gs}
M_2 \le_{\gs} {\gC},M_\ell$ is $(\lambda,*)$-brimmed, 
$b \in M_2,\ortp_{\gs}(b,M_1,M_2)$ is a non-forking extension 
of $q,(M_1,M_2,b) \in K^{3,\pr}_{\gs},(M_1,M_2,\mathbf J) \in 
K^{3,\qr}_{\gs}$ and $\mathbf J$ is regular in 
$(M_1,M_2)$, i.e. independent and $c \in \mathbf J \Rightarrow \ortp_{\gs}
(c,M_1,M_2)$ is a regular type \then \, 
$w_{\mathbf P}(q) = |\{c \in \mathbf J:\ortp_{\gs}(c,M_1,M)$ is weakly
${\mathbf P}$-regular$\}|$.
\end{enumerate}
\mn
5) We replace ${\mathbf P}$ by $p$ if ${\mathbf P} = \{p\}$, where $p \in
{\cS}^{\bs}(M^*)$ is regular (see \ref{a20}(1)).
\end{definition}

\begin{claim}
\label{a20}
1) If $p \in {\cS}^{\bs}_{\gs}(M)$ is regular \then \,
$\{p\}$ is an $M$-based family and is normal. 

\noindent
2) Assume $\mathbf P$ is an $M^*$-based family.
If $q \in {\cS}^{\bs}_{\gs}(M)$ and $M^* \le_{\gs} 
M \le_{{\gk}[{\gs}]}{\gC}$ \then \, $w_{\mathbf P}(q)$ is well defined (and is 
a natural number). 

\noindent
3) In Definition \ref{a17}(4) we can find $\mathbf J$ such that for
   every $c \in \mathbf J_1$ we have: $\ortp(c,M_1,M)$ is 
weakly-$\mathbf P$-regular
$\Rightarrow \ortp(c,M_1,M)$ is $\mathbf P$-regular.
\end{claim}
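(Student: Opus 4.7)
The plan is to dispatch part (1) by direct verification of the definition, and then to focus the real work on (2) and (3), which both rely on the same exchange/swap machinery for independent regular sets.

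For part (1), I verify the three clauses of Definition \ref{a17}(1) directly. Clauses (a) and (b) are immediate from the hypotheses that $M \in K_{\gs}$, $M \le_{\gk[\gs]} \gC$, and $p \in \cS^{\bs}_{\gs}(M)$. Clause (c) holds since every automorphism of $\gC$ over $M$ fixes $p$ (the type is over $M$) and so preserves $\{p\}$. For normality I need $p$ to be $\{p\}$-regular in the sense of Definition \ref{a17}(2)(b): $p$ is regular by hypothesis; $p$ is not orthogonal to itself, being non-algebraic (as $p \in \cS^{\bs}_{\gs}(M)$ and $\gs$ is full); every forking extension $q \in \cS^{\bs}_{\gs}(M')$ of $p$ is orthogonal to $p$ by the very definition of a regular type, and since orthogonality is preserved under non-forking extensions in a good frame, $q$ is hereditarily orthogonal to $\{p\}$.

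For part (2), existence is supplied by Claim \ref{a8}. Pick a brimmed $M_1 \ge_{\gs} M$ and let $q_1 \in \cS^{\bs}_{\gs}(M_1)$ be the non-forking extension of $q$. Working in $\gs(\brim)$, apply \ref{a8}(2),(4) to produce a brimmed $M_2 \ge_{\gs} M_1$, an element $b \in M_2$ realizing $q_1$ with $(M_1,M_2,b) \in K^{3,\pr}_{\gs}$, and a finite regular independent $\mathbf J \subseteq M_2$ with $(M_1,M_2,\mathbf J) \in K^{3,\qr}_{\gs}$. For well-definedness of the count, I take two witnessing tuples $(M_1^j,M_2^j,b^j,\mathbf J^j)$ ($j=1,2$). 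Using $\pr$-uniqueness, there is an isomorphism transporting $(M_1^1,M_2^1,b^1)$ onto $(M_1^2,M_2^2,b^2)$, so I may assume they live in a single $(M_1,M_2,b)$ carrying both $\mathbf J^1$ and $\mathbf J^2$. Then Claim \ref{w.3}(2) combined with the proof method of \ref{a8}(3) (i.e.\ \cite[10.15=L10b.11tex]{Sh:705}) yields a bijection $\pi:\mathbf J^1 \to \mathbf J^2$ such that for each $c \in \mathbf J^1$, the regular types $\ortp_{\gs}(c,M_1,M_2)$ and $\ortp_{\gs}(\pi(c),M_1,M_2)$ are non-orthogonal. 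Since weak $\mathbf P$-regularity of a regular type depends only on its non-orthogonality class (it is defined via non-orthogonality to some $\mathbf P$-regular type), the counts agree; finiteness is inherited from $|\mathbf J|$ being finite by \ref{a8}.

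For part (3), I start from any $\mathbf J$ as in Definition \ref{a17}(4). For each $c \in \mathbf J$ with $r_c := \ortp_{\gs}(c,M_1,M)$ weakly $\mathbf P$-regular but not $\mathbf P$-regular, the definition gives a $\mathbf P$-regular $p'$ with $r_c \not\perp p'$. Using that two non-orthogonal regular types are interchangeable inside independent regular sets (the exchange machinery of \cite[\S10]{Sh:705} underlying Claim \ref{w.3}), I replace $c$ by some $c'$ in a suitable enlargement of $M$ inside $\gC$ whose type over $M_1$ is a non-forking extension of $p'$, while keeping $(\mathbf J \setminus \{c\}) \cup \{c'\}$ independent and retaining the $K^{3,\qr}_{\gs}$-property. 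Iterating over the finite set of problematic $c$ yields the required $\mathbf J$. The main obstacle is faithful execution of the exchange in (2) and the swap in (3) within the frame context; both rest on the interchangeability of non-orthogonal regular types inside independent sets, which is encoded in the $K^{3,\qr}_{\gs}=K^{3,\vq}_{\gs}$ hypothesis of Context \ref{a2} and the quoted results from \cite[\S6, \S10]{Sh:705}. Once that machinery is invoked, the remaining arguments amount to bookkeeping with $\pr$-uniqueness.
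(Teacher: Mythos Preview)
The paper writes only ``Should be clear,'' so your elaboration already goes beyond what is supplied there and follows the intended line (direct verification for (1), the regular-type exchange of \cite[\S10]{Sh:705} for (2),(3)). Two steps need tightening.

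In part (1), ``orthogonality is preserved under non-forking extensions'' does not yield hereditary orthogonality of $q$ to $\{p\}$: you must show that \emph{every} extension $q'$ of $q$, forking or not, satisfies $q'\perp p$. The right argument is shorter: any such $q'$ restricts to $p$ on $M$ and forks over $M$ (since its restriction $q$ does), hence $q'$ is itself a forking extension of the regular $p$, and so $q'\perp p$ by regularity.

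In part (3), enlarging $M_2$ would destroy $(M_1,M_2,b)\in K^{3,\pr}_{\gs}$, and ``non-forking extension of $p'$'' presumes $\Dom(p')\le_{\gs} M_1$, which Definition~\ref{a17}(2)(c) does not give. Since the claim only asserts existence of a witnessing $(M_1,M_2,b,\mathbf J)$, the repair is to use the freedom in $M_1$ rather than $M_2$: after a preliminary decomposition revealing the relevant non-orthogonality classes (which, by the argument for (2), depend only on $q$), pass each $\mathbf P$-regular witness $p'$ to a non-forking extension over a model containing $M^*$ (this preserves $\mathbf P$-regularity upward), amalgamate these with $M$ over $M^*$, and take $M_1$ brimmed over the result. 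Over this $M_1$ the needed $\mathbf P$-regular types lie in $\cS^{\bs}_{\gs}(M_1)$, and by $\bot={\underset \wk \bot}$ (Context~\ref{a2}) each is realized in the prime over $M_1\cup\{c\}$ inside $M_2$, so the swap stays in $M_2$ and $(M_1,M_2,\mathbf J)\in K^{3,\qr}_{\gs}$ is preserved.
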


\begin{PROOF}{\ref{a20}}
 Should be clear. 
\end{PROOF}

\begin{discussion}
\label{a23}  It is tempting to try to generalize the notion of
${\mathbf P}$-simple (${\mathbf P}$-internal in Hrushovski's terminology)
and semi-regular.  An important property of those notions in the first
order case is that: e.g. if $(\bar a/A) \pm p,p$ regular then for some
equivalence relation $E$ definable over $A,\ortp(\bar a/E,A)$ is $\pm
p$ and is $\{p\}$-simple.
So assume that $p,q \in {\cS}^{\bs}_{\gs}(M)$ are not
orthogonal, and we can define an equivalence relation ${\cE}^{p,q}_M$ on
$\{c \in {\gC}:c$ realizes $p\}$, defined by

\begin{equation*}
\begin{array}{clcr}
c_1 {\cE}^{p,q}_M c_2 &\text{ \Iff \,\,for every } d \in {\gC} 
\text{ realizing } q \text{ we have} \\
  &\ortp_{\gs}(c_1 d,M,{\gC}) = \ortp_{\gs} (c_2d,M,{\gC}).
\end{array}
\end{equation*}

\mn
This may fail (the desired property) even in the first order case:  
suppose $p,q$ are definable over $a^* \in M$ (on
getting this, see later) and we have $\langle c_\ell:\ell \le n \rangle,\langle
M_\ell:\ell < n \rangle$ such that 
$\ortp(c_\ell,M_\ell,{\gC}) = p_\ell$ each $p_\ell$
is parallel to $p,c_\ell {\cE}^{p,q}_{M_\ell} c_{\ell +1}$ but 
$c_0,c_n$  realizes $p,q$ respectively and $\{c_0,c_n\}$ is 
independent over $M_0$.  Such a situation
defeats the attempt to define a ${\mathbf P}-\{q\}$-simple type $p/{\cE}$ as
in \cite[Ch.V]{Sh:c}.
 
In first order logic we can find a saturated $N$ and  $a^* \in N$ such that
$\ortp(M,\bigcup\limits_{\ell} M_\ell \cup\{c_0,\dotsc,c_n\})$ does not fork
over $a^*$ and use ``average on the type with an ultrafilter
$c$ over $q({\gC}) + a^*_t$" (for suitable $a^*_t$'s).  \underline{But} 
see below.
\end{discussion}

\begin{discussion}:  
\label{w20}
1) Assume ($\gs$ is full and) every $p \in {\cS}^{\na}_{\gs}(M)$ is 
representable by some $a_p \in M$ (in \cite{Sh:c}, e.g. the canonical
base $\Ceb(p)$).  
We can define for $\bar a,\bar b \in {}^{\omega >} {\gC}$ 
when $\ortp(\bar a,\bar b,{\gC})$ is stationary (and/or non-forking).  We 
should check the basic properties. See \S3.

\noindent
2) Assume $p \in {\cS}^{\bs}_{\gs}(M)$ is regular, 
definable over $\bar a^*$ (in the natural sense).  
We may wonder if the niceness of the 
dependence relation hold  for $p \restriction \bar a^*$?
\end{discussion}

If you feel that the use of a monster model is not natural in our
context, how do we ``translate" a set of types in ${\gC}^{\eq}$ 
preserved by every automorphism of ${\gC}$ which
is the identity on $A$? by using a ``place" defined by:
\begin{definition}
\label{a26}
1) A local place is a pair $\mathbf a = (M,A)$ such that $A \subseteq M
\in K_{\gs}$ (compare with \ref{j5}. 

\noindent
2) The places $(M_1,A_1),(M_2,A_2)$ are equivalent if $A_1 = A_2$ and
there are $n$ and $N_\ell \in K_{\gs}$ for $\ell \le n$
satisfying $A \subseteq N_\ell$
for $\ell = 0,\ldots,n$ such that $M_1 = N_0,M_2 = N_n$ and for each 
$\ell < n,N_\ell \le_{\gs}
N_{\ell +1}$ or $N_{\ell +1} \le_{\gs} N_\ell$.  We write
$(M_1,A_1) \sim (M_2,A_2)$ or $M_1 \sim_{A_1} M_2$.

\noindent
3) For a local place $\mathbf a = (M,A)$ let $K_{\mathbf a} = K_{(M,A)} =
\{N:(N,A) \sim (M,A)\}$; so in $(M,A)/ \sim$ we fix both $A$ as a set
and the type it realizes in $M$ over $\emptyset$.

\noindent
4) We call such class $K_{\mathbf a}$ a place.

\noindent
5) We say that ${\mathbf P}$ is an invariant set\footnote{Really a class}
of types in a place $K_{(M,A)}$ \when \,:
\mn
\begin{enumerate}
\item[$(a)$]   ${\mathbf P} \subseteq \{{\cS}^{\bs}_{\gs}(N):N \sim_A M\}$
\sn
\item[$(b)$]   membership in ${\mathbf P}$ is preserved by isomorphism
over $A$
\sn
\item[$(c)$]   if $N_1 \le_{\gs} N_2$ are both in $K_{(M,A)},p_2 \in
{\cS}^{\bs}_{\gs}(N_2)$ does not fork over $N_1$ then $p_2 \in {\mathbf P}
\Leftrightarrow p_2 \restriction N_1 \in {\mathbf P}$. 
\end{enumerate}
\mn
6) We say $M \in K_{\gs}$ is brimmed over $A$ \when \, for some $N$ we
have $A \subseteq N \le_{\gs} M$ and $M$ is brimmed over $N$. 
\end{definition}

\begin{cd}
\label{a32}
1) If $A \subseteq M \in K_{\gs}$ and $\mathbf P_0 \subseteq
   \cS^{\bs}_{\gs}(M)$ \then \, there is at most one invariant set
   $\mathbf P'$ of types in the place $K_{(M,A)}$ such that $\mathbf P^+
   \cap \cS^{\bs}_{\gs}(M) = \mathbf P_0$ and $M \le_{\gs} N \wedge p
   \in \mathbf P^+ \cap \cS^{\bs}_{\gs}(N) \Rightarrow$ ($p$ does not fork
   over $M$).

\noindent
2) If in addition $M$ is brimmed\footnote{$M$ is brimmed over $A$
  means that for some $M_1,A \subseteq M_1 \le_{\gs} M$ and $M$ is
  brimmed over $M_1$.}  over $A$ \then \, we can omit the
   last demand in part (1).

\noindent
3) If $\mathbf a = (M_1,A),(M_2,A) \in K_{\mathbf a}$ then $K_{(M_2,A)} =
K_{\mathbf a}$. 
\end{cd}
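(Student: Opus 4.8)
The plan is to dispose of (3) first (it is purely formal), then prove one structural lemma about the place $K_{(M,A)}$, and finally derive (1) and (2) by the same ``transport'' argument. For (3): fixing the set $A$, the relation $\sim_{A}$ on $\{N\in K_{\gs}:A\subseteq N\}$ is an equivalence relation — reflexivity is the length‑$0$ chain, symmetry holds because a witnessing chain may be read backwards (each link permits $\le_{\gs}$ in either direction), and transitivity holds by concatenating two witnessing chains. Hence $K_{(M,A)}$ is exactly the $\sim_{A}$‑class of $M$; so if $(M_{2},A)\in K_{\mathbf a}=K_{(M_{1},A)}$, i.e. $M_{2}\sim_{A}M_{1}$, then $M_{1}$ and $M_{2}$ have the same $\sim_{A}$‑class and $K_{(M_{2},A)}=K_{(M_{1},A)}=K_{\mathbf a}$.

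Next I would establish the key lemma: for every $N\in K_{(M,A)}$ there are $N^{+}\in K_{(M,A)}$ with $M\le_{\gs}N^{+}$ and a $\le_{\gs}$‑embedding $h\colon N\to N^{+}$ with $h\restriction A=\id_{A}$. This is proved by running along a chain $M=N_{0},N_{1},\dots,N_{n}=N$ witnessing $N\sim_{A}M$ and building, by induction on $k$, models $P_{k}\ge_{\gs}M$ and $\le_{\gs}$‑embeddings $h_{k}\colon N_{k}\to P_{k}$ fixing $A$: at a forward link $N_{k}\le_{\gs}N_{k+1}$, amalgamate $h_{k}$ with the inclusion to get $P_{k+1}\ge_{\gs}P_{k}$ and $h_{k+1}\supseteq h_{k}$; at a backward link $N_{k+1}\le_{\gs}N_{k}$, keep $P_{k+1}=P_{k}$ and restrict $h_{k}$. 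I would then record the brimmed refinement: if $M$ is brimmed over $A$, say $A\subseteq M_{1}\le_{\gs}M$ with $M$ brimmed — hence $\le_{\gs}$‑universal — over $M_{1}$, then every $N\in K_{(M,A)}$ actually $\le_{\gs}$‑embeds into $M$ over $A$; apply the lemma inside the place $K_{(M_{1},A)}=K_{(M,A)}$ to land in some $N^{+}\ge_{\gs}M_{1}$, then compose with a $\le_{\gs}$‑embedding $N^{+}\to M$ over $M_{1}$ (universality), which fixes $A$ because $A\subseteq M_{1}$.

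With this lemma, (1) and (2) both reduce to the claim that for any invariant set $\mathbf P$ of types in $K_{(M,A)}$ with $\mathbf P\cap\cS^{\bs}_{\gs}(M)=\mathbf P_{0}$ (and, in (1), satisfying the extra ``non‑forking over $M$'' demand), membership $p\in\mathbf P$ of a given $p\in\cS^{\bs}_{\gs}(N)$, $N\in K_{(M,A)}$, is determined by $p$ and $\mathbf P_{0}$ (through auxiliary objects chosen independently of $\mathbf P$) — whence two such $\mathbf P$'s coincide. In case (1): pick $N^{+}\ge_{\gs}M$ and $h\colon N\to N^{+}$ over $A$ from the lemma; invariance under isomorphism over $A$ (\ref{a26}(5)(b)) gives $p\in\mathbf P\Leftrightarrow h(p)\in\mathbf P$; let $q\in\cS^{\bs}_{\gs}(N^{+})$ be a non‑forking extension of $h(p)$, so $q\in\mathbf P\Leftrightarrow h(p)\in\mathbf P$ by the non‑forking clause (\ref{a26}(5)(c)); now if $q$ forks over $M$ the extra demand forces $q\notin\mathbf P$, and if not then \ref{a26}(5)(c) applied to $M\le_{\gs}N^{+}$ gives $q\in\mathbf P\Leftrightarrow q\restriction M\in\mathbf P\Leftrightarrow q\restriction M\in\mathbf P_{0}$ — so $p\in\mathbf P$ iff $q$ does not fork over $M$ and $q\restriction M\in\mathbf P_{0}$, a condition not mentioning $\mathbf P$. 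Case (2) is the same but smoother: use the brimmed refinement to get $f\colon N\to M$ over $A$, take a non‑forking extension $q\in\cS^{\bs}_{\gs}(M)$ of $f(p)$, and conclude $p\in\mathbf P\Leftrightarrow f(p)\in\mathbf P\Leftrightarrow q\in\mathbf P\Leftrightarrow q\in\mathbf P_{0}$ by (b), (c) and $\mathbf P\cap\cS^{\bs}_{\gs}(M)=\mathbf P_{0}$; no forking case split is needed, which is precisely what the hypothesis ``$M$ brimmed over $A$'' buys. The main obstacle is the structural lemma — the amalgamation of a zigzag over the fixed set $A$, and the observation that a model brimmed over $A$ is $\le_{\gs}$‑universal over $A$ throughout its place (this is what, in (2), replaces the ``non‑forking over $M$'' hypothesis that (1) simply postulates); the rest is routine use of the frame axioms (existence of non‑forking extensions, restriction of basic types to $\le_{\gs}$‑submodels, transitivity of non‑forking) together with the invariance clauses.
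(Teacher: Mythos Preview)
Your proposal is correct and supplies the details the paper omits: the paper's own proof consists of the single word ``Easy.'' Your argument is exactly the kind of routine verification the author had in mind --- (3) is the observation that $\sim_A$ is an equivalence relation, and for (1)--(2) you correctly identify that the point is to transport an arbitrary $p\in\cS^{\bs}_{\gs}(N)$, via an isomorphism over $A$ and a non-forking extension, to a type over $M$ (or over some $N^+\ge_{\gs}M$), where membership is determined by $\mathbf P_0$; the zigzag-amalgamation lemma and the universality of brimmed models are the standard ingredients.
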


\begin{PROOF}{\ref{a32}}
Easy.
\end{PROOF}

\begin{definition}
\label{a35}
1) If in \ref{a32} there are such $\mathbf P$, we denote it by
   $\inv(\mathbf P_0) = \inv(\mathbf P_0,M)$.

\noindent
2) If $\mathbf P_0 = \{p\}$, then let $\inv(p) = \inv(p,M) = \inv(\{p\})$.

\noindent
3) We say $p \in \cS^{\bs}_{\gs}(M)$ does not split (or is definable)
   over $A$ \when \, $\inv(p)$ is well defined.
\end{definition}
\newpage

\section {I Imaginary elements, an essential-$(\mu,\lambda)$-a.e.c. and
frames} \label{2}

\subsection {Essentially a.e.c.}\
\bigskip

We consider revising the definition of a.e.c. ${\gk}$, by allowing
function symbols in $\tau_{\gk}$ with infinite number of places
while retaining local characters, e.g., if $M_n \le M_{n+1},M =
\cup\{M_n:n < \omega\}$ is uniquely determined.  Before this we
introduce the relevant equivalence relations.  In this context, we can 
give name to equivalence classes for equivalence relations on
infinite sequences.

\begin{definition}
\label{b2}
We say that ${\gk}$ is an essentially $[\lambda,\mu)$-a.e.c. or
  ess-$[\lambda,\mu)$-a.e.c. and we may write $(\mu,\lambda)$ instead
  of $[\lambda,\mu)$ \If \, 
($\lambda < \mu$ and) it is an object consisting of:
\mn
\begin{enumerate}
\item[$I(a)$]  a vocabulary $\tau = \tau_{\gk}$ which has
predicates and function symbols of possibly infinite arity but $\le \lambda$
\sn
\item[$(b)$]   a class of $K = K_{\gk}$ of $\tau$-models
\sn
\item[$(c)$]   a two-place relation $\le_{\gk}$ on $K$ 
\end{enumerate}
\mn
such that
\mn
\begin{enumerate}
\item[$II(a)$]   if $M_1 \cong M_2$ \then \, $M_1 \in K
\Leftrightarrow M_2 \in K$
\sn
\item[$(b)$]   if $(N_1,M_1) \cong (N_2,M_2)$ \then \, $M_1
\le_{\gk} N_1 \Leftrightarrow M_2 \le_{\gk} N_2$
\sn
\item[$(c)$]   every $M \in K$ has cardinality $\in [\lambda,\mu)$
\sn
\item[$(d)$]   $\le_{\gk}$ is a partial order on $K$
\sn
\item[$III_1$]   if $\langle M_i:i < \delta \rangle$ is 
$\le_{\gk}$-increasing and $|\bigcup\limits_{i < \delta} M_i| < 
\mu$ \then \, there is a unique $M \in K$ such that $|M| = 
\cup\{|M_i|:i < \delta\}$ and $i < \delta
\Rightarrow M_i \le_{\gk} M$
\sn
\item[$III_2$]   if in addition $i < \delta \Rightarrow M_i
\le_{\gk} N$ \then \, $M \le_{\gk} N$
\sn
\item[$IV$]   if $M_1 \subseteq M_2$ and $M_\ell \le_{\gk} N$
for $\ell =1,2$ \then \, $M_1 \le_{\gk} M_2$
\sn
\item[$V$]   if $A \subseteq N \in K$, \then \, there is $M$
satisfying $A \subseteq M \le_{\gk} N$ and $\|M\| \le \lambda +|A|$
(here it is enough to restrict ourselves to the case $|A| \le \lambda$).
\end{enumerate}
\end{definition}

\begin{definition}
\label{b5}
1) We say ${\gk}$ is an ess-$\lambda$-a.e.c. \If \, it is an
ess-$[\lambda,\lambda^+)$-a.e.c. 

\noindent
2) We say ${\gk}$ is an ess-a.e.c. \If \, there is $\lambda$ such
that it is an ess-$[\lambda,\infty)$-a.e.c., so $\lambda = 
\LST({\gk})$. 

\noindent
3) If $\gk$ is an ess-$[\lambda,\mu)$-a.e.c. and $\lambda \le
   \lambda_1 < \mu_1 \le \mu$ then let $K^{\gk}_{\lambda_1} =
   (K_{\gk})_{\lambda_1} = \{M \in K_{\gk}:\|M\| =
   \lambda_1\},K^{\gk}_{\lambda_1,\mu_1} = \{M \in K_{\gk}:\lambda_1
   \le \|M\| < \mu_1\}$.

\noindent
4) We define $\Upsilon^{\oor}_{\gk}$ as in
   \cite[0.8=L11.1.3A(2)]{Sh:734}.

\noindent
5) We may omit the ``essentially" \when \, $\arity(\tau_{\gk}) =
\aleph_0$ where $\arity(\gk) = \arity(\tau_{\gk})$ and for vocabulary
   $\tau,\arity(\tau) = \min\{\kappa$: every predicate and function
symbol have $\arity < \kappa\}$.
\end{definition}

\noindent
We now consider the claims on ess-a.e.c. 
\begin{claim}
\label{b8}
Let ${\gk}$ be an ess-$[\lambda,\mu)$-a.e.c.  

\noindent
1) The parallel of $\Ax(III)_1,(III)_2$ holds with
a directed family $\langle M_t:t \in I \rangle$.

\noindent
2) If $M \in K$ we can find $\langle M_{\bar a}:\bar a \in
{}^{\omega >}M \rangle$ such that:
\mn
\begin{enumerate}
\item[$(a)$]   $\bar a \subseteq M_{\bar a} \le_{\gk} M$
\sn
\item[$(b)$]   $\|M_{\bar a}\| = \lambda$
\sn
\item[$(c)$]   if $\bar b$ is a permutation of $\bar a$ then
$M_{\bar a} = M_{\bar b}$
\sn
\item[$(d)$]   if $\bar a$ is a subsequence of $\bar b$ then
$M_{\bar a} \le_{\gk} M_{\bar b}$.
\end{enumerate}
\mn
3) If $N \le_{\gk} M$ we can add in (2) that $\bar a \in
{}^{\omega >} N \Rightarrow M_{\bar a} \subseteq N$. 

\noindent
4) If for simplicity $\lambda_* = \lambda + \sup\{\Sigma\{|R^M|:R \in
\tau_{\gk}\} + \Sigma\{|F^M|:F \in \tau_{\gk}\}:M \in K_{\gk}$ has
cardinality $\lambda\}$ \then \, $K_{\gk}$ 
and $\{(M,N):N \le_{\gk} M\}$ essentially are $\PC_{\chi,\lambda_*}$-classes 
where $\chi = |\{M/\cong:M \in K^{\gk}_\lambda\}|$, noting that
$\chi \le 2^{2^\theta}$.  
That is, $\langle M_{\bar a}:\bar a\in {}^{\omega >} A\rangle$
satisfying clauses (b),(c),(d) of
part (2) such that $A = \cup\{|M_{\bar a}|:\bar a \in {}^{\omega >}
A\}$ represent a unique $M\in K_{\gk}$ with universe $A$ and similarly for
$\le_{\gk}$, (on the Definition of $\PC_{\chi,\lambda_*}$, see 
\cite[1.4(3)]{Sh:88r}).  Note that if in $\tau_{\gk}$ there are no two
distinct symbols with the same interpretation in every $M \in K_{\gk}$
then $|\tau){k_*}| \le 2^{2^\lambda}$.

\noindent
5) The results on omitting types in \cite{Sh:394} or
   \cite[0.9=L0n.8,0.2=0n.11]{Sh:734} hold, i.e., if
$\alpha < (2^{\lambda_*})^+ \Rightarrow
   K^{\gk}_{\beth_\alpha} \ne \emptyset$ \then \, $\theta \in
[\lambda,\mu) \Rightarrow K_\theta \ne \emptyset$ and there is an
$\EM$-model, i.e., $\Phi \in \Upsilon^{\oor}_{\gk}$ with $|\tau_\Phi| =
   |\tau_{\gk}| + \lambda$ and $\EM(I,\Phi)$ having cardinality
   $\lambda + |I|$ for any linear order $I$.

\noindent
6) The lemma on the equivalence of being
universal model homogeneous and of being saturated 
(see \cite[3.18=3.10]{Sh:300b} or \cite[1.14=L0.19]{Sh:600}) holds. 

\noindent
7) We can generalize the results of \cite[\S1]{Sh:600} on deriving an
ess-$(\infty,\lambda)$-a.e.c. from an ess $\lambda$-a.e.c.
\end{claim}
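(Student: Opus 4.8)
The plan is to verify the seven parts one at a time; each is the generalization of a known fact about a.e.c.\ (from \cite{Sh:88r}, \cite{Sh:394}, \cite{Sh:600}, \cite{Sh:300b}, \cite{Sh:734}) obtained by replacing ``$\aleph_0$''/``$\LST = \aleph_0$'' by ``$\lambda$''/``$\LST(\gk) = \lambda$'' throughout. The one structural novelty --- that $\tau_{\gk}$ may carry predicate and function symbols of infinite arity --- is harmless everywhere, because $\arity(\tau_{\gk}) \le \lambda$ and every ``small'' object we manipulate is a $\le_{\gk}$-submodel of cardinality $\lambda$, hence already closed under the (possibly infinitary) functions. For part (1) I would argue by induction on $|I|$: a finite directed $I$ has a maximum, so there is nothing to prove, and for infinite $I$ write $I$ as a continuous increasing union of directed subsets $I_i$ ($i < \cf(|I|)$) of smaller cardinality with union of size $< \mu$; by the induction hypothesis $M_{I_i} := \bigcup_{t \in I_i} M_t \in K$ with $M_t \le_{\gk} M_{I_i}$, by $\Ax(IV)$ the chain $\langle M_{I_i}:i<\cf(|I|)\rangle$ is $\le_{\gk}$-increasing and continuous, and then $\Ax(III)_1, \Ax(III)_2$ apply to this chain. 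This is exactly the usual derivation and does not interact with the arity.

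For parts (2) and (3) the plan is the standard ``L\"owenheim--Skolem closure''. Fix a well-order $\prec$ of the finite subsets of $M$ refining $\subseteq$ (by size, then arbitrarily). By $\prec$-recursion, given finite $u \subseteq M$ with $M_v \le_{\gk} M$, $\|M_v\| = \lambda$ already defined for the finitely many $v \subsetneq u$, set $A_u := u \cup \bigcup\{M_v : v \subsetneq u\}$ (of cardinality $\le \lambda$) and build $M_u$ as the union of an $\omega$-chain $A_u = B^0 \subseteq B^1 \subseteq \cdots$ where, using $\Ax(V)$, $B^{n+1} \in K$ is a $\le_{\gk}$-submodel of $M$ of cardinality $\le \lambda$ containing $B^n$; then $\langle B^n : 1 \le n < \omega\rangle$ is $\le_{\gk}$-increasing by $\Ax(IV)$, so $M_u = \bigcup_n B^n \in K$ and $M_u \le_{\gk} M$ by $\Ax(III)_1, \Ax(III)_2$, while $\|M_u\| = \lambda$ by $\Ax(II)(c)$. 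Then $M_v \subseteq A_u \subseteq M_u$ with both $\le_{\gk} M$ gives $M_v \le_{\gk} M_u$ by $\Ax(IV)$. Setting $M_{\bar a} := M_{\set(\bar a)}$ yields clauses (a)--(d): (a),(b) are immediate, (c) because we indexed by underlying sets, (d) because a subsequence of $\bar b$ has underlying set $\subseteq \set(\bar b)$. For (3), when $N \le_{\gk} M$ and $u \subseteq N$ one has $M_v \subseteq N$ for $v \subsetneq u$ inductively, so $A_u \subseteq N$; run the closure with $\Ax(V)$ applied \emph{inside} $N$, so $B^{n+1} \le_{\gk} N$ and hence $M_u \le_{\gk} N \le_{\gk} M$ by transitivity --- in particular $M_u \subseteq N$.

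For part (4) the plan is Shelah's presentation theorem: use the skeleton $\langle M_{\bar a} : \bar a \in {}^{\omega>}M \rangle$ from (2) and expand $M$ to a model $M^+$ over a vocabulary $\tau^+$ of cardinality $\le \lambda_*$ by adding, for each $n < \omega$, a function symbol of arity $\lambda + n$ that assigns to a finite tuple $\bar a$ (with a parameter) an enumeration of $M_{\bar a}$, plus the bookkeeping recording which of the $\le \chi$ isomorphism types from $K^{\gk}_\lambda$ the structure on $M_{\bar a}$ realizes and how the pieces glue (monotonicity, $\le_{\gk}$). Then $M \mapsto M^+$ (and, using (3), $(M,N) \mapsto (M,N)^+$) identifies $K_{\gk}$ and $\{(M,N):N\le_{\gk} M\}$ with the classes of $\tau_{\gk}$-reducts of models of a fixed theory omitting a fixed family of $\le \chi$ types, i.e.\ with essential $\PC_{\chi,\lambda_*}$-classes in the sense of \cite[1.4(3)]{Sh:88r}, ``essential'' precisely because $\tau^+$ has symbols of arity up to $\lambda$; the count $\chi \le 2^{2^\lambda}$ and the remark on $|\tau_{\gk}|$ are the usual ones. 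Granting this, part (5) is a formality: the omitting-types/Hanf-number machinery of \cite{Sh:394} and \cite[0.9=L0n.8,0.2=0n.11]{Sh:734} applies to such a $\PC$-class verbatim, yielding $K_\theta \ne \emptyset$ for all $\theta \in [\lambda,\mu)$ once $K^{\gk}_{\beth_\alpha} \ne \emptyset$ for all $\alpha < (2^{\lambda_*})^+$, together with an $\EM$-template $\Phi \in \Upsilon^{\oor}_{\gk}$ with $|\tau_\Phi| = |\tau_{\gk}| + \lambda$ and $\|\EM(I,\Phi)\| = \lambda + |I|$ by Morley's indiscernibles argument.

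For part (6), the back-and-forth proof of \cite[3.18=3.10]{Sh:300b}, \cite[1.14=L0.19]{Sh:600} that saturated $\Leftrightarrow$ universal model-homogeneous uses only the a.e.c.\ axioms, $\Ax(V)$, and whatever amalgamation is assumed there (to run the extension steps over submodels of size $\le \lambda + |A|$), so it transcribes unchanged. For part (7), the construction of \cite[\S1]{Sh:600} --- building an ess-$[\lambda,\infty)$-a.e.c.\ from an ess-$\lambda$-a.e.c.\ by declaring $M \le N$ iff $M, N$ admit a common continuous $\le_{\gk}$-respecting resolution into pieces of size $\lambda$ --- is copied word for word, and the verification of $\Ax(I)$--$\Ax(V)$ for it uses (1)--(3) above and nothing more. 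I expect part (4) to be the real obstacle: one must set up the coding so that the infinitary function symbols of $\tau^+$ faithfully reconstruct the submodels $M_{\bar a}$ and their gluing, and check that the resulting class is exactly an essential $\PC_{\chi,\lambda_*}$-class; parts (2) and (3) are routine but mildly fiddly, and (1), (5), (6), (7) are then mechanical.
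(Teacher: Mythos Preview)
Your proposal is correct and follows essentially the same approach as the paper's proof, which is simply ``the same proofs'' as in the classical a.e.c.\ case, with a pointer to \S5 of the paper for part (7) (where the lifting is carried out in the more general d.a.e.c.\ framework, noting that an ess-$[\lambda,\mu)$-a.e.c.\ is a $(\lambda,\mu,\aleph_0)$-d.a.e.c.\ with primes). You have in fact supplied considerably more detail than the paper does. One very minor point: in part (2) the $\omega$-chain $B^0 \subseteq B^1 \subseteq \cdots$ is unnecessary --- a single application of Axiom~V already yields $M_u \le_{\gk} M$ of cardinality $\lambda$ containing $A_u$, and then $M_v \le_{\gk} M_u$ follows from Axiom~IV as you note --- but this is harmless over-engineering, not an error.
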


\begin{PROOF}{\ref{b8}}
The same proofs, on the generalization in \ref{b8}(7), see in \S5 below.
The point is that, in the term of \S5, our $\gk$ is a
$(\lambda,\mu,\kappa)$-a.e.c. (automatically with primes).
\end{PROOF}

\begin{remark}
\label{e.4d}
1) In \ref{b8}(4) we can decrease the bound on $\chi$ if we have more nice
definitions of $K_\lambda$, e.g., if $\arity(\tau) \le \kappa$ then $\chi
= 2^{(\lambda^{< \kappa}+|\tau|)}$ where $\arity(\tau) = \min\{\kappa$:
every predicate and function symbol of $\tau$ has $\arity < \kappa\}$.

\noindent
2) We may use above $|\tau_{\gs}| \le \lambda,\arity(\tau_{\gk}) 
= \aleph_0$ to get that $\{(M,\bar a)/\cong M \in K^{\gk}_\lambda,
\bar a \in {}^\lambda M$ list $M\}$
has cardinality $\le 2^\lambda$.  See also \ref{b56}.

\noindent
3) In \ref{b32} below, if we omit ``$\bbE$ is small" and $\lambda_1 =
   \sup\{|\seq(M)/\bbE_M|:M \in K^{\gk}_\lambda\}$ is $< \mu$ then
   $\gk_{[\lambda_1,\mu)}$ is an ess-$[\lambda_1,\mu)$-a.e.c.

\noindent
4) In Definition \ref{b2}, we may omit axiom V and define $\LST(\gk) \in
   [\lambda,\mu]$ naturally, and if $M \in K^{\gk}_\lambda \Rightarrow
   \mu > |\seq(M)/\bbE_M|$ \then \, in \ref{b32}(1) below we can omit
   ``$\bbE$ is small".

\noindent
5) Can we preserve in such ``transformation" the arity finiteness?  A 
natural candidate is trying to code $p \in {\cS}^{\bs}_{\gs}(M)$ 
by $\{\bar a:\bar a \in {}^{\omega >} M\}$ and
there are $M_0 \le_{\gs} M_1$ such that $M \le_{\gs} M_1$ and
$\ortp(a_\ell,M_0,M_1)$ is parallel to $p$ and $\bar a$ is independent in
$(M_0,M_1)\}$.  If e.g., $K_{\gs}$ is saturated this helps but still
we suspect it may fail. 

\noindent
6) What is the meaning of ess-$[\lambda,\mu)$-a.e.c.?
Can we look just at $\langle M_t:t \in I \rangle,I$ directed, $t \le_I
s \Rightarrow M_t \le_{\gs} M_{\gs} \in K_\lambda$?  But for
isomorphism types we take a kind of completion and so make more pairs
isomorphic but $\bigcup\limits_{t \in I} M_t$ does not determine $\bar M =
\langle M_t:t \in I \rangle$ and the completion may depend on this
representation.

\noindent
7) If we like to avoid this and this number is $\lambda'$, \then \,
 we should change the definition of $\seq(N)$ (see \ref{b14}(b)) 
to $\seq'(N) = \{\bar a:\ell g(\bar a) = \lambda$ and for some $M \le_{\gs} N$ from
$K^{\gk}_\lambda,\langle a_{1 + \alpha}:\alpha < \lambda\rangle$ list
 the members of $M$ and $a_0 \in \{\gamma:\gamma <  \mu_*\}$. 
\end{remark}
\bigskip

\subsection {Imaginary Elements and Smooth Equivalent Relations}\
\bigskip

Now we return to our aim of getting canonical base for orbital types.
\begin{definition}
\label{b14}
Let ${\gk} = (K_{\gk},\le_{\gk})$ be a $\lambda$-a.e.c. or just
ess-$[\lambda,\mu)$-a.e.c. (if ${\gk}_\lambda = {\gk}_{\gs}$ we may
write ${\gs}$ instead of ${\gk}_\lambda$, see \ref{b35}).  We say that 
$\bbE$ is a smooth ${\gk}_\lambda$-equivalence relation \when \,:
\mn
\begin{enumerate}
\item[$(a)$]   $\bbE$ is a function with domain $K_{\gk}$
mapping $M$ to $\bbE_M$
\sn
\item[$(b)$]   for $M \in K_{\gk},\bbE_M$ is an equivalence relation
  on a subset of $\seq(M) = \{\bar a:\bar a \in {}^\lambda M$ and 
$M \restriction \Rang(\bar a) \le_{\gk} M\}$ so $\bar a$ is not 
necessarily without repetitions; note that $\gk$ determines $\lambda$,
  pedantically when non-empty
\sn
\item[$(c)$]   if $M_1 \le_{\gk} M_2$ then $\bbE_{M_2}
\restriction \seq(M_1) = \bbE_{M_1}$
\sn
\item[$(d)$]   if $f$ is an isomorphism from $M_1 \in K_{\gs}$ onto 
$M_2$ \then \, $f$ maps $\bbE_{M_1}$ onto $\bbE_{M_2}$
\sn
\item[$(e)$]   if $\langle M_\alpha:\alpha \le \delta \rangle$ is
$\le_{\gs}$-increasing continuous \then \, $\{\bar a/\bbE_{M_\delta}:\bar a \in
\seq(M_\delta)\} = \{\bar a/\bbE_{M_\delta}:\bar a \in 
\bigcup\limits_{\alpha < \delta} \seq(M_\alpha)\}$. 
\end{enumerate}
\mn
2) We say that $\bbE$ is small \If \, each $\bbE_M$ has $\le \|M\|$
 equivalence classes.
\end{definition}

\begin{remark}
\label{b17}
1) Note that if we have $\langle \bbE_i:i < i^* \rangle$, each
$\bbE_i$ is a smooth ${\gk}_\lambda$-equivalence
relation and $i^* < \lambda^+$ \then \, we can find a smooth
${\gk}_\lambda$-equivalence relation $\bbE$ such that essentially the 
$\bbE_M$-equivalence classes are the $\mathbb E_i$-equivalence classes for
$i < i^*$; in detail: \wilog \, $i^* \le \lambda$ and $\bar a 
\bbE_M \bar b$ \Iff \, $\ell g(\bar a) = \ell g(\bar b)$
and
\mn
\begin{enumerate}
\item[$\circledast_1$]   $i(\bar a) = i(\bar b)$ and if $i(\bar a) < i^*$ then
$\bar a \restriction [1+i(\bar a)+1,\lambda) \bbE_{i(\bar a)}
\bar b \restriction [1+i(\bar b)+1,\lambda)$ where $i(\bar a) = 
\Min\{j:(j+1 < i(*)) \wedge a_0 \ne a_{1+j}$ or $j=\lambda\}$.
\end{enumerate}
\mn
2) In fact $i^* \le 2^\lambda$ is O.K., e.g. choose a function $\mathbf e$ 
from $\{e:e$ an equivalence relation on $\lambda$ to $i^*$ and for
$\bar a,\bar b \in \seq(M)$ we let $i(\bar a) = \mathbf e(\{(i,j):
a_{2i+1} = a_{2j+1}\}$ and
\mn
\begin{enumerate}
\item[$\circledast_2$]   $\bar a \in \bbE_M \bar b$ \Iff \,  
$i(\bar a) = i(\bar b)$ and $\langle a_{2i}:i < \lambda \rangle
\bbE_{i(\bar a)} \langle b_{2i}:i < \lambda \rangle$.
\end{enumerate}
\mn
3) We can redefine $\seq(M)$ as ${}^{\lambda \ge}M$, then have to make
minor changes above.
\end{remark}

\begin{definition}
\label{b20}
Let ${\gk}$ be a $\lambda$-a.e.c. or just ess-$[\lambda,\mu)$-a.e.c. 
and $\bbE$ a small smooth ${\gk}$-equivalence relation and the reader
may assume for simplicity that the vocabulary $\tau_{\gk}$ has only predicates.
Also assume $F_*,c_*,P_* \notin \tau_{\gk}$.  
We define $\tau_*$ and ${\gk}_* = {\gk} \langle \bbE \rangle =
(K_{\gk_*},\le_{\gk_*})$ as follows:
\mn
\begin{enumerate}
\item[$(a)$]   $\tau^* = \tau \cup \{F_*,c_*,P_*\}$ with $P_*$ a unary
predicate, $c_*$ an individual constant and $F_*$ a $\lambda$-place
function symbol
\sn
\item[$(b)$]   $K_{\gk_*}$ is the class of $\tau^*_{\gk}$-models 
$M^*$ such that for some model $M \in K_{\gk}$ we have:
\sn
\begin{enumerate}
\item[$(\alpha)$]   $|M| = P^{M^*}_*$
\sn
\item[$(\beta)$]    if $R \in \tau$ then $R^{M^*} = R^M$
\sn
\item[$(\gamma)$]   if $F \in \tau$ has arity $\alpha$ then
$F^{M^*} \restriction M = F^M$ and for any $\bar a \in {}^\alpha(M^*),\bar a
\notin {}^\alpha M$ we have $F^{M^*}(\bar a) = c^{M^*}_*$ (or allow partial
function or use $F^{M^*}(\bar a) = a_0$ when $\alpha > 0$ and
$F^{M^*}(\langle \rangle)$ when $\alpha = 0$, i.e. $F$ is an individual constant);
\sn
\item[$(\delta)$]   $F_*$ is a $\lambda$-place function symbol and:
\sn
\item[${{}}$]  $(i) \quad$ if $\bar a \in \seq(M)$ then
$F^{M^*}_*(\bar a) \in |M^*| \backslash |M| \backslash \{c^{M^*}_*\}$
\sn
\item[${{}}$]   $(ii) \quad$ if $\bar a,\bar b \in \Dom(\bbE)
  \subseteq \seq(M)$ 
then $F^{M^*}_*(\bar a) = F^{M^*}_*
(\bar b) \Leftrightarrow \bar a \bbE_M \bar b$
\sn
\item[${{}}$]   $(iii) \quad$ if $\bar a \in {}^\lambda(M^*)$ and
$\bar a \notin \Dom(\bbE) \subseteq \seq(M)$ then 
$F^{M^*}_*(\bar a) = c_*^{M^*}$
\sn
\item[$(\varepsilon)$]   $c^{M^*}_* \notin |M|$ and
if $b \in |M^*| \backslash |M| \backslash \{c^{M^*}_*\}$ 
then for some $\bar a \in \Dom(\bbE) \subseteq 
\seq(M)$ we have $F^{M^*}_*(\bar a) = b$
\end{enumerate}
\sn
\item[$(c)$]  $\le_{\gk_*}$ is the two-place relation on $K_{\gk_*}$
  defined by: $M^* \le_{\gk_*} N^*$ \If
\sn
\begin{enumerate}
\item[$(\alpha)$]   $M^* \subseteq N^*$ and
\sn
\item[$(\beta)$]   for some $M,N \in {\gk}$ as in clause (b)
we have $M \le_{\gk} N$.
\end{enumerate}
\end{enumerate}
\end{definition}

\begin{definition}
\label{b26}
1) In \ref{b20}(1) we call $M \in {\gk}$ a witness for $M^* \in
K_{\gk_*}$ if they are as in clause (b) above.

\noindent
2) We call $M \le_{\gk} N$ witness for $M^* \le_{{\gk}^*_\lambda} N^*$ if
they are as clause (c) above.
\end{definition}

\begin{discussion}
\label{b29}
Up to now we have restricted ourselves to vocabularies with each
predicate and function symbol of finite arity, and this restriction seems very
reasonable.  Moreover, it seems a priori that for a parallel to
superstable, it is quite undesirable to have infinite arity.  Still
our desire to have imaginary elements (in particular canonical basis
for types) forces us to accept them.  The price is that inthe class of
$\tau$-models the union of
increasing chains of $\tau$-models is not a well defined
$\tau$-model, more accurately we can show its existence, but not
smoothness; \underline{however} inside the class ${\gk}$ it will be.
\end{discussion}

\begin{claim}
\label{b32}
1) If ${\gk}$ is a $[\lambda,\mu)$-a.e.c. or just an 
ess-$[\lambda,\mu)$-a.e.c. and $\bbE$ a small smooth ${\gk}$-equivalence 
relation \then \, ${\gk} \langle \bbE \rangle$
is an ess-$[\lambda,\mu)$-a.e.c.  

\noindent
2) If ${\gk}$ has amalgamation and $\bbE$ is a small 
${\gk}$-equivalence class \then \, ${\gk} \langle \bbE \rangle$ has
amalgamation property.
\end{claim}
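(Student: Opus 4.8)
The plan is to verify the axioms of an ess-$[\lambda,\mu)$-a.e.c. (Definition \ref{b2}) for $\gk\langle\bbE\rangle$ one by one, using a fixed witness $M$ for each $M^*\in K_{\gk\langle\bbE\rangle}$ and transporting each axiom from $\gk$ through the coding $M\mapsto M^*$. First I would record the basic bookkeeping: given $M^*$, a witness $M$ is determined up to nothing — in fact $|M|=P_*^{M^*}$ and the $\tau$-structure on it is read off directly, so the witness is \emph{unique}, and likewise for $\le_{\gk\langle\bbE\rangle}$ by clause (c)$(\beta)$ together with Axiom IV of $\gk$ applied inside the witnesses. This uniqueness is what makes everything go through, and it is exactly the point flagged in Discussion \ref{b29}: smoothness fails in the ambient class of $\tau^*$-models but is recovered inside $\gk\langle\bbE\rangle$ because the witness is pinned down. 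Axioms $I$–$II$ and $IV$–$V$ are then immediate: $II$(a),(b) follow since an isomorphism (of pairs) of $\tau^*$-models restricts to one of the witnesses and, by smoothness clause (d) for $\bbE$, respects $F_*^{M^*}$; $V$ follows from Axiom $V$ for $\gk$ applied to $A\cap P_*^{N^*}$, closing up under $F_*$ and $c_*$, noting $|\seq(M)|\le \|M\|^\lambda$ but the equivalence $\bbE$ being \emph{small} keeps the number of new elements $\le\lambda+|A|$; cardinality (Axiom $II$(c)) is where smallness of $\bbE$ is used again, since $\|M^*\|\le\|M\|+|\seq(M)/\bbE_M|\le\|M\|\in[\lambda,\mu)$.

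The heart of the argument is Axiom $III$ (continuity of unions of $\le_{\gk\langle\bbE\rangle}$-increasing chains) and here part (e) of Definition \ref{b14} is the crucial hypothesis. Given $\langle M_i^*:i<\delta\rangle$ increasing in $\gk\langle\bbE\rangle$ with $|\bigcup_i M_i^*|<\mu$, take witnesses $M_i$; by uniqueness of witnesses these form a $\le_{\gk}$-increasing chain, so by Axiom $III_1$ for $\gk$ there is a unique $M\in K_\gk$ with $|M|=\bigcup_i|M_i|$ and $M_i\le_\gk M$. I then build the candidate union $M^*$ on the vocabulary $\tau^*$ by taking $P_*^{M^*}=|M|$, copying the $\tau$-structure from $M$, putting $c_*^{M^*}=c_*^{M_0^*}$, and defining $F_*^{M^*}$ on $\seq(M)$: clause (e) of \ref{b14} says every $\bar a\in\seq(M)$ is $\bbE_M$-equivalent to some $\bar a'\in\bigcup_{i<\delta}\seq(M_i)$, so I can set $F_*^{M^*}(\bar a):=F_*^{M_{i}^*}(\bar a')$ for an $i$ large enough to contain $\bar a'$, and the coherence clause (c) of \ref{b14} ($\bbE_{M_2}\restriction\seq(M_1)=\bbE_{M_1}$) makes this independent of the choices of $\bar a'$ and $i$; finally $F_*^{M^*}$ is sent to $c_*^{M^*}$ off $\Dom(\bbE)$. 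One checks $M$ is a witness for $M^*$, so $M^*\in K_{\gk\langle\bbE\rangle}$ and $M_i^*\le_{\gk\langle\bbE\rangle}M^*$; uniqueness of $M^*$ follows from uniqueness of $M$ plus clauses $(i)$–$(\varepsilon)$ of \ref{b20}(b)$(\delta)$, which leave no freedom once the witness and $c_*$ are fixed. Axiom $III_2$ is then a one-line consequence of $III_2$ for $\gk$ applied to the witnesses together with clause (c)$(\alpha)$.

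Part (2), amalgamation, is a short rider. Given $M^*\le_{\gk\langle\bbE\rangle}M_\ell^*$ for $\ell=1,2$ with witnesses $M\le_\gk M_\ell$, use amalgamation in $\gk$ to get $N\in K_\gk$ and $\le_\gk$-embeddings $g_\ell:M_\ell\to N$ agreeing on $M$; then let $N^*$ be \emph{the} model of $\gk\langle\bbE\rangle$ with witness $N$ (it exists and the construction above shows it is determined by $N$ up to the choice of fresh elements), and extend the $g_\ell$ to $\tau^*$-embeddings $g_\ell^*:M_\ell^*\to N^*$ by sending $F_*^{M_\ell^*}(\bar a)\mapsto F_*^{N^*}(g_\ell(\bar a))$; this is well-defined and injective on imaginary elements precisely because $\bbE$ commutes with the $\le_\gk$-embeddings (clauses (c),(d) of \ref{b14}), and the two extensions agree on $M^*$ since the $g_\ell$ agree on $M$ and hence on $\seq(M)$. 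The main obstacle, as anticipated in \ref{b29}, is the $III$ argument: one must resist the temptation to form the union in the class of all $\tau^*$-models (where $F_*$ would be underdetermined on sequences not appearing at any finite stage) and instead let clause (e) of the smoothness of $\bbE$ do the work of filling in $F_*$ coherently — everything else is routine transport along the uniquely determined witnesses.
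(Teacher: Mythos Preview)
Your proposal is correct and is precisely the routine verification that the paper omits entirely: the paper's own proof reads, in full, ``The same proofs. Left as an exercise to the reader.'' Your identification of the two nontrivial points --- that clause~(e) of Definition~\ref{b14} is what makes Axiom~$III_1$ go through (since sequences in $\seq(M)$ need not lie in any $\seq(M_i)$ when $\cf(\delta)\le\lambda$), and that smallness of $\bbE$ is what controls cardinalities for Axioms~$II$(c) and~$V$ --- is exactly right, and the transport of amalgamation through the unique witnesses is the expected argument. One small wrinkle you glossed over: clause~(e) is stated for $\le_\gk$-increasing \emph{continuous} sequences, whereas the witnesses $\langle M_i:i<\delta\rangle$ of an arbitrary $\le_{\gk\langle\bbE\rangle}$-chain need not be continuous at limits below $\delta$; but this is harmless, since one can replace the chain by its continuous refinement (redefining $M_\gamma$ at each limit $\gamma<\delta$ to be the ess-union of its predecessors, which still sits $\le_\gk$-below the original $M_\gamma$ by Axiom~$III_2$) without changing the union.
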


\begin{PROOF}{\ref{b32}}
The same proofs.  Left as an exercise to the reader.
\end{PROOF}
\bigskip

\subsection {Good Frames}\
\bigskip

\noindent
Now we return to good frames.
\begin{definition}
\label{b35}
1) We say that ${\gs}$ is a good ess-$[\lambda,\mu)$-frame \If \,
Definition \cite[2.1=L1.1tex]{Sh:600} is satisfied except that:
\mn
\begin{enumerate}
\item[$(a)$]   in clause (A), $\gK_{\gs} = (K_{\gs},\le_{\gs}),\gk$ is 
an ess-$[\lambda,\mu)$-a.e.c. and $\gK[\gs]$ is an
ess-$(\infty,\lambda)$-a.e.c. 
\sn
\item[$(b)$]   $K_{\gs}$ has a superlimit model in $\chi$ in every $\chi \in
  [\lambda,\mu)$
\sn
\item[$(c)$]  $K^{\gs}_\lambda/\cong$ has cardinality $\le 2^\lambda$,
  for convenience.
\end{enumerate}
\end{definition}

\begin{discussion}
\label{b40}
We may consider other relatives as our choice and mostly have similar
results.  In particular:
\mn
\begin{enumerate}
\item[$(a)$]   we can demand less: as in \cite[\S2]{Sh:842} we may
  replace $\cS^{\bs}_{\gs}$ by a formal version of $\cS^{\bs}_{\gs}$
\sn
\item[$(b)$]   we may demand goodness only for $\gs_\lambda$,
  i.e. $\gs$ restriction the class of models to $K^{\gs}_\lambda$ and
  have only the formal properties above so amalgamation and JEP are
  required only for models of cardinality $\lambda$.
\end{enumerate}
\end{discussion}

\begin{claim}
\label{b44}
All the definitions and results in \cite{Sh:600},
\cite{Sh:705} and \S1 here work for good ess-$[\lambda,\mu)$-frames.
\end{claim}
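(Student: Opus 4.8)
The plan is to read Claim \ref{b44} as what it is: an assertion that \emph{essentializing} a good $\lambda$-frame (allowing predicate and function symbols of arity $\le\lambda$ instead of finite, and working over the interval $[\lambda,\mu)$) costs nothing at the level of the cited material. So the proof is not a single construction but a line-by-line audit of the arguments of \cite{Sh:600}, \cite{Sh:705} and \S1, isolating each spot where finite arity (or the single-cardinal setup) was actually used and checking that the corresponding fact is available in the ess-a.e.c. setting from \ref{b8}. First I would fix notation and conventions: throughout, ``model'' means a member of $K_{\gk}$, ``$M_1\le_{\gs}M_2$'' is the frame order, and ``union of a chain / directed system'' always means the one taken \emph{inside} $\gk$.

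Next I would record the ess-a.e.c. toolkit and match it against what the proofs invoke. From \ref{b8}(1) we have the directed-system form of $\Ax(III)_1,(III)_2$; from \ref{b8}(4), the $\PC$-presentation of $K_{\gk}$ and of $\le_{\gk}$ with the cardinal bounds (using clause (c) of \ref{b35} to keep the relevant $\chi$ at $\le 2^{\lambda}$); from \ref{b8}(5), omitting types and the existence of $\EM$-models $\Phi\in\Upsilon^{\oor}_{\gk}$, which is exactly what stands in for compactness / Skolem-hull constructions; from \ref{b8}(6), the equivalence ``universal model-homogeneous $\Leftrightarrow$ saturated''; and from \ref{b8}(7), the derivation of the ess-$(\infty,\lambda)$-a.e.c. $\gK[\gs]$ from $\gk_\lambda$. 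These are precisely the ingredients that the proofs in \cite{Sh:600}, \cite{Sh:705} call on about the underlying a.e.c., so every argument phrased through them transfers verbatim. The finite-character and local-character clauses of the frame (non-forking, independence, $(M,N,\mathbf J)\in K^{3,\vq}_{\gs}=K^{3,\qr}_{\gs}$, primes $K^{3,\pr}_{\gs}$, regularity, orthogonality) are axiomatic and never mention arity, so everything in \S1 — the inductive construction of \ref{a8}, the weight and $\mathbf P$-weight of \ref{a11} and \ref{a17}, Claim \ref{w.3}, the existence/uniqueness statements of \ref{a20}, and the ``place'' and invariance material of \ref{a26}--\ref{a35} — survives unchanged once ``model'' is read as ``member of $\gk$''.

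The one genuinely new point, flagged in \ref{b29}, is that an increasing (or directed) family of $\tau$-models need not have a smooth union in the class of all $\tau$-models when the arity is infinite. But, as in \ref{b29} and \ref{b8}(1)--(2), \emph{inside} $\gk$ the union does exist, is unique and is smooth (equivalently: it is determined by the subsequences $M_{\bar a}$, $\bar a\in{}^{\omega>}M$, of \ref{b8}(2)), and every use of ``take the union of the chain'' in the cited proofs is a use inside $\gk$. Löwenheim--Skolem steps are likewise licensed by Axiom V of \ref{b2} with the bound $\|M\|\le\lambda+|A|$. So none of the audited arguments is broken by infinitary arity.

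The main obstacle — and the reason this is stated as a claim to be verified rather than dismissed as obvious — is making sure that no argument in the cited papers secretly exploits finite arity through a counting or coding step, or runs an induction on cardinals in $[\lambda,\mu)$ in which, as the abstract warns, cardinals of small cofinality misbehave. For the results of \cite{Sh:600}, \cite{Sh:705} and \S1 this does not occur: they all live at the level of the single cardinal $\lambda$, with $\gK[\gs]$ used only as bookkeeping, so no lifting along $[\lambda,\mu)$ is needed and the audit terminates; the delicate interval-induction questions are exactly what is postponed to Parts II--III. I would therefore present the proof as: \emph{the same proofs, reading every model as a member of $\gk$ and every union or Skolem hull as the one furnished by \ref{b8}}, and leave the mechanical verification to the reader, exactly as is done for \ref{b32}.
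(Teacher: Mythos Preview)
Your proposal is correct and takes essentially the same approach as the paper, which simply records ``No problem.'' You have elaborated carefully on what that two-word proof is asserting --- that the ess-a.e.c.\ toolkit of \ref{b8} supplies every ingredient the cited arguments use and that the one new subtlety (infinitary unions) is handled inside $\gk$ --- but this is exactly the audit the paper leaves implicit.
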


\begin{PROOF}{\ref{b44}}
No problem.  
\end{PROOF}

\begin{definition}
\label{b47}
If ${\gs}$ is a $[\lambda,\mu)$-frame or just an ess-$[\lambda,\mu)$-frame
and $\bbE$ a small smooth ${\gs}$-equivalence relation \then \, 
let ${\gt} = {\gs}\langle \bbE \rangle$ be defined by:
\mn
\begin{enumerate}
\item[$(a)$]   ${\gk}_{\gt} = {\gk}_{\gs}\langle \bbE \rangle$
\sn
\item[$(b)$]   ${\cS}^{\bs}_{\gt}(M^*) = \{\ortp_{{\gk}_{\gt}}(a,M^*,N^*):
M^* \le_{{\gk}_{\gt}} N^*$ and if $M \le_{\gk} N$ witness 
$M^*,N^* \in {\gk}_{\gt}$ then $a \in N \backslash M$ and 
$\ortp_{\gs}(a,M,N) \in {\cS}^{\bs}_{\gs}(M)\}$
\sn
\item[$(c)$]   non-forking similarly.
\end{enumerate}
\end{definition}

\begin{remark}
\label{b50}
We may add: if ${\gs}$ is\footnote{The reader may ignore this version.}
 an NF-frame we define ${\gt} = {\gs}\langle \bbE \rangle$ as an 
NF-frame similarly, see \cite{Sh:705}.
\end{remark}

\begin{claim}
\label{b53}
1) If ${\gs}$ is a good ess-$[\lambda,\mu)$-frame, $\bbE$ a small, 
smooth ${\gs}$-equivalence relation \then \, ${\gs}\langle \bbE
\rangle$ is a good ess-$[\lambda,\mu)$-frame. 

\noindent
2) In part (1) for every $\kappa,\dot I(\kappa,K^{{\gs}<\bbE>}) =
\dot I(\kappa,K^{\gs})$. 

\noindent
3) If ${\gs}$ has primes/regulars \then \, ${\gs} \langle
\bbE \rangle$ has.
\end{claim}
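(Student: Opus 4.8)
The plan is to reduce everything to Claim \ref{b32} and the transfer principle in Claim \ref{b44}, and then to verify the frame axioms one by one, each time moving the relevant structure to and from a witness $M \le_{\gk} N$ in $\gk = \gk_{\gs}$. First I would recall from \ref{b32}(1) that $\gk_{\gt} = \gk_{\gs}\langle \bbE\rangle$ is already an ess-$[\lambda,\mu)$-a.e.c., and from \ref{b32}(2) that it inherits amalgamation; joint embedding and the existence of a superlimit in each $\chi \in [\lambda,\mu)$ follow because, by construction, $M^*$ and its witness $M$ are bi-interpretable over a fixed finite expansion, so $M^* \mapsto M$ and $M \mapsto M^*/\cong$ set up a correspondence of the two a.e.c.'s that preserves $\le$, chains, cardinality up to $\lambda$, and hence all the structural invariants. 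For \ref{b53}(1) the remaining work is clauses (D)--(I) of \cite[2.1]{Sh:600}: density, existence, uniqueness, extension, continuity and local character of nonforking. Here the key move is Definition \ref{b47}(b),(c): a basic type over $M^*$ is by fiat the pushforward of a basic type over a witness $M$, realized by an element $a \in N\setminus M$ lying in the $\tau_{\gk}$-part; nonforking of $p_2^*$ over $M_1^*$ is declared to hold exactly when, for some (equivalently any, using smoothness \ref{b14}(c)) witnesses $M_1 \le_{\gk} M_2$, the corresponding type $\ortp_{\gs}(a,M_1,M_2)$ does not fork over $M_1$ in $\gs$. So each axiom for $\gt$ is the image under this translation of the corresponding axiom for $\gs$, provided the translation is well defined; the smoothness conditions (c)--(e) of \ref{b14} are exactly what guarantees invariance under the choice of witness and passage to unions of chains, so the verification is routine once one has checked that the new elements $F_*^{M^*}(\bar a)$ and $c_*^{M^*}$ never interfere — they are not in the $\tau_{\gk}$-reduct, hence never realize or affect a basic type.

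For part (2) I would observe that the correspondence $M^* \leftrightarrow M$ of the previous paragraph restricts to a bijection on isomorphism types of models of each cardinality $\chi$ (the $\bbE$-imaginaries and the constant $c_*$ add nothing to the isomorphism type beyond what $M$ already determines, since $\bbE$ is canonically attached to $\gk$ by \ref{b14}(d)), and it respects $\le_{\gs}$ by \ref{b47}(a). Therefore the spectrum functions agree: $\dot I(\kappa, K^{\gs\langle\bbE\rangle}) = \dot I(\kappa, K^{\gs})$ for every $\kappa$. Concretely: given a family of pairwise non-isomorphic models of $K^{\gs}$ of size $\kappa$, their expansions are pairwise non-isomorphic in $K^{\gs\langle\bbE\rangle}$ and vice versa, because an isomorphism of $\tau_*$-models must carry $P_*$ to $P_*$ and hence restrict to an isomorphism of the $\tau_{\gk}$-reducts, while conversely any isomorphism of reducts extends uniquely to the imaginaries by \ref{b14}(d).

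For part (3), having primes or having regulars are both properties formulated purely in terms of $K^{3,\pr}_{\gs}$, $K^{3,\qr}_{\gs}$, regular types, and orthogonality — all of which, by \ref{b44}, make sense in $\gt$ and, by the translation above, hold in $\gt$ iff they hold in $\gs$. So I would just transport a prime triple $(M,N,a)$ over its witness to a prime triple $(M^*,N^*,a)$ in $\gt$, using that $a$ stays in the $\tau_{\gk}$-part and that $\le_{\gk_\gt}$ is witnessed by $\le_{\gk}$; density of regular types and the structure of $K^{3,\qr}$ transfer identically. The main obstacle throughout is not any single axiom but the bookkeeping in establishing well-definedness of the nonforking relation of $\gt$ independently of the chosen witness, and in confirming that no $\bbE$-class, qua new element, can ever sit inside $N^*\setminus M^*$ in a way that would spuriously enlarge $\cS^{\bs}_{\gt}(M^*)$; the clause \ref{b20}(b)$(\delta)(i)$ forcing $F_*^{M^*}(\bar a)\notin |M|$ together with smoothness \ref{b14}(e) is precisely what rules this out, so once that is spelled out the rest is, as the author says for the companion claims, no problem.
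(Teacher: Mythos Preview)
Your proposal is correct and follows exactly the approach the paper intends: the paper's own proof is the single word ``Straightforward,'' so your elaboration via the witness correspondence $M^* \leftrightarrow M$, Claim \ref{b32}, and axiom-by-axiom transfer through Definition \ref{b47} is precisely the routine verification being gestured at. One minor remark: your worry about imaginary elements spuriously enlarging $\cS^{\bs}_{\gt}(M^*)$ is handled more directly than you indicate---since $P_*$ is a unary predicate preserved by all $\le_{\gk_\gt}$-embeddings, no imaginary element can ever have the same orbital type as a real one, so the restriction in \ref{b47}(b) to $a \in N\setminus M$ is automatically respected; the citations to \ref{b20}(b)$(\delta)(i)$ and \ref{b14}(e) are not quite the operative facts there.
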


\begin{remark}
\label{b54}
We may add: if ${\gs}$ is an NF-frame \then \, so is ${\gs}\langle \bbE
\rangle$, hence $({\gs}\langle \bbE \rangle)^{\full}$ is a
full NF-frame; see \cite{Sh:705}. 
\end{remark}

\begin{PROOF}{\ref{b53}}
 Straightforward.  
\end{PROOF}

\noindent
Our aim is to change ${\gs}$ inessentially such that for every
$p \in {\cS}^{\bs}_{\gs}(M)$ there is a canonical base,
etc.  The following claim shows that in the context we have presented
this can be done.
\begin{claim}
\label{b56}
\underline{The imaginary elements Claim}  

Assume ${\gs}$ a good $\lambda$-frame or just a good
ess-$[\lambda,\mu)$-frame.
 
\noindent
1) If $M_* \in K_{\gs}$ and $p^* \in {\cS}^{\bs}_{\gs}(M_*)$,
 \then\footnote{note that there may well be an automorphism of $M^*$
 which maps $p^*$ to some $p^{**} \in \cS^{\bs}_{\gs}(M^*)$ such that
 $p^{**} \ne p^*$.}  \,  
there is a small, smooth $\gk_{\gs}$-equivalence relation $\bbE =
\bbE_{\gs,M_*,p^*}$ and function $\mathbf F$ such that:
\mn
\begin{enumerate}
\item[$(*)$]   if $M_* \le_{\gs} N$ and $\bar a \in \seq(N)$ so
$M =: N \restriction \Rang(\bar a) \le_{\gs} N$ and
$M \cong M_*$, \then \,
\sn
\begin{enumerate}
\item[$(\alpha)$]  $\mathbf F(N,\bar a)$ 
is well defined iff $\bar a \in \Dom(\bbE_N)$ and then $\mathbf
F(N,\bar a)$ belongs to ${\cS}^{\bs}_{\gs}(N)$
\sn
\item[$(\beta)$]  $S \subseteq \{(N,\bar a,p):N \in K_{\gs},\bar a \in
  \Dom(\bbE_N)\}$ is the minimal class such that:
\sn
\item[${{}}$]  $(i) \quad$ if $\bar a \in \seq(M_*)$ and $p$ does not
  fork over $M_* \rest \Rang(\bar a)$ then 

\hskip25pt $(M_*,\bar a,p) \in S$
\sn
\item[${{}}$]  $(ii) \quad S$ is closed under isomorphisms
\sn
\item[${{}}$]  $(iii) \quad$ if $N_1 \le_{\gs} N_2,p_2 \in
  \cS^{\bs}_{\gs}(N_2)$ does not fork over $\bar a \in \seq(N_1)$ then

\hskip25pt  $(N_2,\bar a,p_2) \in S 
\Leftrightarrow (N_1,\bar a,p_2 \rest N_1) \in S$
\sn
\item[${{}}$]  $(iv) \quad$ if $\bar a_1,\bar a_2 \in \seq(N),p \in
  \cS^{\bs}_{\gs}(N)$ does not fork over $N \rest \Rang(\bar a_\ell)$

\hskip25pt  for $\ell=1,2$ then $(N_2,\bar a_1,p) \in S \Leftrightarrow
  (N_2,\bar a_2,p) \in S$
\sn
\item[$(\gamma)$]  $\mathbf F(N,\bar a)=p$ iff $(N,\bar a,p) \in S$ hence
 if $\bar a,\bar b \in \seq(N)$ then: 
$\bar a \bbE_N \bar b$ iff $\mathbf F(\bar a,N) = \mathbf F(\bar b,N)$.
\end{enumerate}
\end{enumerate}
\mn
2) There are unique small\footnote{for small we use stability in $\lambda$}
smooth $\bbE$-equivalence relation $\bbE$ called $\bbE_{\gs}$ and
function $\mathbf F$ such that:
\mn
\begin{enumerate}
\item[$(**)(\alpha)$]   $\mathbf F(N,\bar a)$ is well defined iff $N \in
K_{\gs}$ and $\bar a \in \seq(N)$
\sn
\item[$(\beta)$]   $\mathbf F(N,\bar a)$, when defined, 
belongs to ${\cS}^{\bs}_{\gs}(N)$
\sn
\item[$(\gamma)$]   if $N \in K_{\gs}$ and $p \in
  {\cS}^{\bs}_{\gs}(N)$ \then \, there is $\bar a \in \seq(N)$ 
such that $\Rang(\bar a) = N$ and $\mathbf F(N,\bar a)=p$
\sn
\item[$(\delta)$]  if $\bar a \in \seq(M)$ and $M \le_{\gs} N$ \then
\, $\mathbf F(N,\bar a)$ is 
(well defined and is) the non-forking extension of $\mathbf F(M,\bar a)$
\sn
\item[$(\varepsilon)$]  if $\bar a_\ell \in \seq(N)$ and
$\mathbf F(N,\bar a_\ell)$ is well defined 
for $\ell=1,2$ \then \, $\bar a_1 \bbE_N \bar a_2 \Leftrightarrow 
\mathbf F(N,\bar a_1) = \mathbf F(N,\bar a_2)$
\sn
\item[$(\zeta)$]   $\mathbf F$ commute with isomorphisms.
\end{enumerate}
\mn
3) For ${\gt} = {\gs} \langle \bbE \rangle$ where $\bbE$
as in part (2) and $M^* \in K_{\gt}$ as witnessed by $M \in
K_{\gs}$ and $p^* \in {\cS}^{\bs}_{\gt}(M^*)$ is 
projected to $p \in {\cS}^{\bs}_{\gs}(M)$ let $\bas(p^*) = \bas(p) 
= \mathbf F(\bar a,M^*)/\bbE$ whenever $\mathbf F(M,\bar a) = p$.  
That is, assume $M_\ell$ witness that
$M^*_\ell \in K_{\gt}$, for $\ell=1,2$ and $(M^*_1,M^*_2,a) \in
K^{3,\bs}_{\gt}$ then $(M_1,M_2,a) \in K^{3,\bs}_{\gs}$ and 
$p^* = \ortp_{\gt}(a,M^*_1,M^*_2),p = \ortp_{\gs}(a,M_1,M_2)$; \then \, 
in ${\gt}$:
\mn
\begin{enumerate}
\item[$(\alpha)$]   if $M^*_\ell \le_{\gs} M^*,p_\ell \in 
{\cS}^{\bs}_{\gt}(M^*_\ell)$, then $p^*_1\|p^*_2 \Leftrightarrow 
\bas(p^*_1) = \bas(p^*_2)$
\sn
\item[$(\beta)$]   $p^* \in {\cS}^{\bs}_{\gt}
(M^*)$ does not split over $\bas(p^*)$, see Definition \ref{a35}(3) or
\cite[\S2 end]{Sh:705}.
\end{enumerate}
\end{claim}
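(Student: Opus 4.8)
The plan is to construct the equivalence relation $\bbE$ and the function $\mathbf F$ simultaneously by the obvious ``orbit'' closure and then check the axioms of a smooth $\gk_\gs$-equivalence relation one at a time, using basic properties of non-forking in a good $\lambda$-frame (symmetry, transitivity, existence and uniqueness of non-forking extensions, stability in $\lambda$). For part (1), the class $S$ is already described as a minimal class closed under clauses (i)--(iv); I would verify such a minimal $S$ exists (intersection of all classes with these closure properties — note the closure conditions are preserved under arbitrary intersection) and then \emph{define} $\bbE_N$ on $\seq(N)$ by declaring $\bar a\,\bbE_N\,\bar b$ iff for all $N'\ge_\gs N$ and all $p\in\cS^{\bs}_\gs(N')$ non-forking over both $N\rest\Rang(\bar a)$ and $N\rest\Rang(\bar b)$ we have $(N',\bar a,p)\in S\Leftrightarrow(N',\bar b,p)\in S$; equivalently, by clause (iv), $\bar a\,\bbE_N\,\bar b$ iff the $S$-``slices'' at $\bar a$ and $\bar b$ coincide. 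Smallness is where stability in $\lambda$ enters: each $\bbE_M$ has at most $|\cS^{\bs}_\gs(M)|\le\|M\|$ classes (this is the point of the footnote), since $\mathbf F(M,\cdot)$ is essentially a surjection from the $\bbE_M$-classes to a subset of $\cS^{\bs}_\gs(M)$.

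For part (2) I would iterate: instead of starting from a single $(M_*,p^*)$, close under \emph{all} pairs $(M,p)$ with $M\in K_\gs$, $p\in\cS^{\bs}_\gs(M)$ and $\bar a\in\seq(M)$ with $\Rang(\bar a)=M$ and $p$ not forking over $M$ (trivially true) — i.e.\ take $S$ to be the minimal class closed under (ii)--(iv) above together with $(M,\bar a,p)\in S$ whenever $\Rang(\bar a)=M$. Then properties $(\ast\ast)(\alpha)$--$(\zeta)$ follow: $(\gamma)$ is the seeding clause, $(\delta)$ is clause (iii) read from left to right plus existence/uniqueness of non-forking extensions, $(\varepsilon)$ is the definition of $\bbE_N$, and $(\zeta)$ is clause (ii). Uniqueness of $(\bbE,\mathbf F)$ is forced because any candidate must agree with the seed on models of the form $N\rest\Rang(\bar a)$ and then propagate along $\le_\gs$ by $(\delta)$ and collapse along $\bbE$ by $(\varepsilon)$, so it is determined on all of $K_\gs$; smoothness clause (e) (the continuity axiom for chains) follows from the local character / finite(ish) character that any $p\in\cS^{\bs}_\gs(M_\delta)$ already ``lives'' over some earlier model in the relevant sense, combined with the a.e.c.\ union axioms.

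For part (3) I would unwind definitions: given $M\le_\gs N$ witnessing $M^*\in K_\gt$, the element $F^{M^*}_*(\bar a)$ with $\mathbf F(M,\bar a)=p$ is the ``imaginary'' naming the $\bbE_M$-class of $\bar a$, and I set $\bas(p)$ to be (the $\bbE$-class represented by) that element; well-definedness is exactly property $(\varepsilon)$ from part (2), and independence of the particular $\bar a$ listing $M$ follows from clause (iv)/$(\varepsilon)$. Then $(\alpha)$ is proved by: $p^*_1\|p^*_2$ iff their common non-forking extension to a large model exists iff (pulling back to $\gs$) $p_1,p_2$ have a common non-forking extension iff (by part (2)$(\delta)$, since $\mathbf F$ commutes with non-forking) $\mathbf F$ assigns the same value over that large model iff $\bas(p^*_1)=\bas(p^*_2)$; here I use that parallelism of based types in a good $\lambda$-frame is controlled by non-forking amalgamation, via Context~\ref{a2}. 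For $(\beta)$ I must show $p^*$ does not split over $\bas(p^*)$ in the sense of Definition~\ref{a35}(3): this amounts to checking that $\inv(p^*)$ is well defined, i.e.\ that the invariant set / definability scheme attached to $p^*$ is determined by the single imaginary element $\bas(p^*)$ — which is immediate from $(\alpha)$, since $(\alpha)$ says parallelism classes correspond bijectively to values of $\bas$, and the non-forking extension of $p^*$ over any $N^*\ge_\gs M^*$ is the unique based type over $N^*$ with the same $\bas$-value.

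The main obstacle I anticipate is verifying smoothness axiom~(e) for $\bbE$ — the continuity along $\le_\gs$-increasing chains $\langle M_\alpha:\alpha\le\delta\rangle$ — because this is precisely where infinitary arity could bite: one needs that every $\bbE_{M_\delta}$-class of a sequence $\bar a\in\seq(M_\delta)$ already appears as the class of some $\bar a'\in\bigcup_{\alpha<\delta}\seq(M_\alpha)$, and the honest reason is that the \emph{type} $\mathbf F(M_\delta,\bar a)\in\cS^{\bs}_\gs(M_\delta)$, being a basic type in a good $\lambda$-frame, does not fork over some $M_\alpha$ and hence is parallel to a type seeded below $\delta$; making this precise uses the basic $\gs$-facts from \cite{Sh:600} (local character of non-forking for basic types, and the existence of non-forking restrictions) together with the a.e.c.\ union axioms $III_1,III_2$. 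Everything else — closure-under-intersection for the minimal $S$, the bookkeeping in (i)--(iv), smallness from stability — should be routine once this continuity point is handled.
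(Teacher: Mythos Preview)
Your abstract minimal-class construction has a genuine gap: it does not yield a well-defined function $\mathbf F$. For a generic $\bar a \in \seq(N)$ enumerating a submodel $M = N \rest \Rang(\bar a)$, clause (ii) (closure under isomorphisms) puts $(N,\bar a,f(p^*))$ into $S$ for \emph{every} isomorphism $f$ from $M_*$ onto $M$, and these need not all give the same type over $N$ (the footnote to the claim is exactly warning that $\Aut(M_*)$ need not fix $p^*$). So the $S$-slice at a given $(N,\bar a)$ is typically not a singleton, and clause $(\gamma)$ does not define a function. Your definition of $\bbE_N$ via coincidence of $S$-slices then collapses too much: it would identify all enumerations of the same submodel, losing the ability to distinguish types.

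The paper's proof handles this by an explicit encoding trick. One first fixes $M^{**}\le_\gs M_*$ of cardinality $\lambda$ over which $p^*$ does not fork, with a fixed enumeration $\bar a^*$. A sequence $\bar b$ is declared to \emph{explicate} a weak copy of $p^*$ only when, in addition to listing some $M_1\le_\gs N$, the pattern of repetitions $\{\alpha: b_{2\alpha}=b_{2\alpha+1}\}$ encodes a specific witness $(M_0,M_2,p_2,f)$: the isomorphism type of an ambient $M_2\ge_\gs M_1$, the inclusion $M_1\hookrightarrow M_2$, and an embedding $f:M^{**}\to M_2$. From this encoded data the type $p_{M_1,\bar b}$ is uniquely reconstructible, and $\mathbf F(N,\bar b)$ is its non-forking extension to $N$. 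The domain $\Dom(\bbE_N)$ is exactly the set of such explicating sequences, and $\bar a_1\,\bbE_N\,\bar a_2$ iff $p_{N,\bar a_1}=p_{N,\bar a_2}$. Smallness is then immediate (the map $\bar a/\bbE_N\mapsto p_{N,\bar a}$ is injective into $\cS^{\bs}_\gs(N)$), and your outline for smoothness axiom (e) via local character is essentially the paper's verification of $\odot_4$ once this encoding is in place. For part (2) one combines the constructions for $\le 2^\lambda$ representative pairs $(M_*,p^*)$ as in Remark~\ref{b17}, and part (3) unwinds as you describe.
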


\begin{PROOF}{\ref{b56}}
1) Let $M^{**} \le_{\gs} M^*$ be of cardinality
$\lambda$ such that $p^*$ does not fork over $M^{**}$.  Let $\bar a^*
= \langle a_\alpha:\alpha < \lambda\rangle$ list the element of
$M^{**}$.

We say that $p_1 \in {\cS}^{\bs}_{\gs}(M_1)$ is a weak
copy of $p^*$ when there is a witness $(M_0,M_2,p_2,f)$ which means:
\mn
\begin{enumerate}
\item[$\circledast_1$]   $(a) \quad M_0 \le_{\gs} M_2$ and $M_1 \le_{\gs} M_2$
\sn
\item[${{}}$]  $(b) \quad$ if $\|M_1\| = \lambda$ then $\|M_2\| = \lambda$
\sn
\item[${{}}$]  $(c) \quad f$ is an isomorphism from $M^{**}$ onto $M_0$
\sn
\item[${{}}$]  $(d) \quad p_2 \in {\cS}^{\bs}_{\gs}(M_2)$ is a 
non-forking extension of $p_1$
\sn
\item[${{}}$]   $(e) \quad p_2$ does not fork over $M_0$
\sn
\item[${{}}$]   $(f) \quad f(p^* \restriction M^{**})$ is $p_2
\restriction M_0$.
\end{enumerate}
\mn
For $M_1 \in K^{\gs}_\lambda,p_1 \in \cS^{\bs}_{\gs}(M_1)$
which is a weak copy of $p^*$, we say that $\bar b$ explicate its being
a weak copy \when \, for some witness $(M_0,M_2,p_2,f)$ and $\bar c$
\mn
\begin{enumerate}
\item[$\circledast_2$]  $(a) \quad \bar b = \langle b_\alpha:\alpha
< \lambda \rangle$ list the elements of $M_1$
\sn 
\item[${{}}$]   $(b) \quad \bar c = \langle c_\alpha:\alpha <
\lambda\rangle$ list the element of $M_2$
\sn
\item[${{}}$]  $(c) \quad \{\alpha:b_{2 \alpha} = b_{2 \alpha+1}\}$
code the folowing sets
\sn
\begin{enumerate}
\item[${{}}$]  $(\alpha) \quad$ the isomorphic type of $(M_2,\bar c)$
\sn
\item[${{}}$]  $(\beta) \quad \{(\alpha,\beta):b_\alpha = c_\beta\}$
\sn
\item[${{}}$]   $(\gamma) \quad \{(\alpha,\beta):f(a^*_\alpha) = c_\beta\}$
\end{enumerate}
\end{enumerate}
\mn
Now
\mn
\begin{enumerate}
\item[$\circledast_3$]   if $p \in {\cS}^{\bs}_{\gs}(M)$ is a 
weak copy of $p^*$ then for some $\bar a \in \seq(M)$, there is a 
$M_1 \le_{\gs} M$ over which $p$ does not
fork such that $\bar a$ list $M_1$ and explicate $p \restriction M_1$
is a weak copy of $p^*$
\sn
\item[$\circledast_4$]   $(a) \quad$ if $M \in K^{\gs}_\lambda$ and $\bar
b$ explicate $p^* \in {\cS}^{\bs}_{\gs}(M)$ is a weak copy of $p^*$,

\hskip25pt \then \, from $\bar b$ and $M$ we can reconstruct $p_1$
\sn
\item[${{}}$]   $(b) \quad$ call it $p_{M,\bar b}$
\sn
\item[${{}}$]  $(c) \quad$ if $M \le_{\gs} N$ let $p_{N,\bar b}$ 
be its non-forking extension in ${\cS}^{\bs}_{\gs}(N)$ we also 

\hskip25pt call it $\mathbf F(N,\bar b)$.
\end{enumerate}
\mn
Now we define $\bbE$, so for $N \in K_{\gs}$ we define a
two-place relation $\bbE_N$
\mn
\begin{enumerate}
\item[$\circledast_5$]  $(\alpha) \quad \bbE_N$ is 
on $\{\bar a$: for some $M \le_{\gs}
N$ of cardinality $\lambda$ and $p \in {\cS}^{\bs}_{\gs}(M)$

\hskip25pt  which is a copy of $p^*$, the sequence
$\bar a$ explicates $p$ being 

\hskip25pt a weak copy of $p^*\}$
\sn
\item[$(\beta)$]   $\bar a_1 \bbE_N \bar a_2$ iff $(\bar a_1,\bar
a_2$ are as above and) $p_{N,\bar a_1} = p_{N,\bar a_2}$.
\end{enumerate}
\mn
Now
\mn
\begin{enumerate}
\item[$\odot_1$]   for $N \in K_{\gs},\bbE_N$ is an
equivalence relation on $\Dom(E_N) \subseteq \seq(N)$
\sn
\item[$\odot_2$]   if $N_1 \le_{\gs} N_2$ and $\bar a \in \seq(N_1)$
\then \, $\bar a \in \Dom(\bbE_{N_1}) \Leftrightarrow \bar a 
\in \Dom(\bbE_{N_2})$
\sn
\item[$\odot_3$]   if $N_1 \le_{\gs} N_2$ and $\bar a_1,\bar a_2
\in \Dom(\bbE_{N_1})$ then $\bar a_2 \bbE_{N_1} \bar a_2
\Leftrightarrow \bar a_1 \bbE_{N_2} \bar a_2$
\sn
\item[$\odot_4$]    if $\langle N_\alpha:\alpha \le \delta\rangle$
is $\le_{\gs}$-increasing continuous and $\bar a_1 \in 
\Dom(\bbE_{N_\delta})$ then for some $\alpha < \delta$ and $\bar a_2
\in \Dom(\bbE_{N_\alpha})$ we have $\bar a_1 \bbE_{N_\delta} \bar a_2$.
\end{enumerate}
\mn
[Why?  Let $\bar a_2$ list the elements of $M_1 \le_{\gs}
N_\delta$ and let $p = p_{N_\delta,\bar a_1}$ so $p \in 
{\cS}^{\bs}_{\gs}(N_\delta)$, hence for some $\alpha <
\delta,p$ does not fork over $M_\alpha$ hence for some $M'_1
\le_{\gs} M_\alpha$ of cardinality $\lambda$, the type $p$ does
not fork over $M'_1$.  Let $\bar a_2$ list the elements of $M'_1$ such
that it explicates $p \restriction M'_1$ being a weak copy of $p^*$.  
So clearly $\bar a_2 \in \Dom(\bbE_{N_\alpha}) \subseteq 
\Dom(\bbE_{N_\delta})$ and $\bar a_1 \bbE_{N_\delta} \bar a_2$.]

Clearly we are done.

\noindent
2) Similar only we vary $(M^*,p^*)$ but it suffices to consider
   $2^\lambda$ such pairs.

\noindent
3) Should be clear.  
\end{PROOF}

\begin{dc}
\label{b62}
Assume that ${\gs}$ is a good ess-$[\lambda,\mu)$-frame so \wilog \, is full.  
We can repeat the operations in \ref{b56}(3) and \ref{b53}(2), so 
after $\omega$ times we get ${\gt}_\omega$ which is full (that is 
${\cS}^{\bs}_{\gt_\omega}(M^\omega) = {\cS}^{\na}_{\gt_\omega}(M^\omega))$ 
and ${\gt}_\omega$ has canonical type-bases as witnessed by a 
function $\bas_{\gt_\omega}$, see Definition \ref{b65}.
\end{dc}

\begin{PROOF}{\ref{b54}}
Should be clear.
\end{PROOF}

\begin{definition}
\label{b65}
We say that ${\gs}$ has type bases \If \, there is a function
$\bas(-)$ such that:
\mn
\begin{enumerate}
\item[$(a)$]   if $M \in K_{\gs}$ and $p \in {\cS}^{\bs}_{\gs}(M)$
\then \, $\bas(p)$ is (well defined and is) an element of $M$
\sn
\item[$(b)$]   $p$ does not split over bas$(p)$, that is any
automorphism\footnote{there are reasonable stronger version, but it
follows that the function $\bas(-)$ satisfies them} of $M$ over
$\bas(p)$ maps $p$ to itself
\sn
\item[$(c)$]  if $M \le_{\gs} N$ and $p \in {\cS}^{\bs}_{\gs}(N)$
then:  $\bas(p) \in M$ \Iff \, $p$ does not fork over $M$
\sn
\item[$(d)$]  if $f$ is an isomorphism from $M_1 \in K_{\gs}$
onto $M_2 \in K_{\gs}$ and $p_1 \in {\cS}^{\bs}(M_1)$ then
$f(\bas(p_1)) = \bas(f(p_1))$.
\end{enumerate}
\end{definition}

\begin{remark}
\label{e12f}
In \S3 we can add:
\mn
\begin{enumerate}
\item[$(e)$]   strong uniqueness: if $A \subseteq M \le_{\gk(\gs)}
{\gC},p \in {\cS}(A,{\gC})$ well defined, 
\then \, for at most one $q \in {\cS}^{\bs}_{\gs}
(M)$ do we have: $q$ extends $p$ and $\bas(p) \in A$.  (needed for
non-forking extensions).
\end{enumerate}
\end{remark}

\begin{definition}
\label{b71}
We say that ${\gs}$ is equivalence-closed \when \,:
\mn
\begin{enumerate}
\item[$(a)$]   ${\gs}$ has type bases $p \mapsto \bas(p)$
\sn
\item[$(b)$]   if $\bbE_M$ is a definition of an equivalence
relation on ${}^{\omega >} M$ preserved by isomorphisms and
$\le_{\gs}$-extensions (i.e. $M \le_{\gs} N \Rightarrow \bbE_M =
\bbE_N \rest {}^{\omega >} M$) \then \, there
is a definable function $F$ from ${}^{\omega >} M$ to $M$ such that
$F^M(\bar a) = F^M(\bar b)$ iff $\bar a E_M \bar b$ (or work in ${\gC}$).
\end{enumerate}
\end{definition}

\noindent
To phrase the relation between $\gk$ and $\gk'$ we define. 
\begin{definition}
\label{b77}
Assume $\gk_1,\gk_2$ are ess-$[\lambda,\mu)$-a.e.c.

\noindent
1) We say $\mathbf i$ is an interpretation in $\gk_2$ \when \,
$\mathbf i$ consists of
\mn
\begin{enumerate}
\item[$(a)$]   a predicate $P^*_{\mathbf i}$
\sn
\item[$(b)$]   a subset $\tau_{\mathbf i}$ of $\tau_{\gk_2}$.
\end{enumerate}
\mn
2) In this case for $M_2 \in K_{\gk_2}$ le $M^{[\mathbf i]}_2$ be the
$\tau_{\mathbf i}$-model $M_1 = M^{[\mathbf i]}_2$ with
\sn
\begin{enumerate}
\item[$\bullet$]   universe $P^{M_2}_{\mathbf i}$
\sn
\item[$\bullet$]   $R^{M_1} = R^{M_2} \rest |M_1|$ for $R \in
\tau_{\mathbf i}$
\sn
\item[$\bullet$]   $F^{M_1}$ similarly, so $F^{M_2}$ can be a partial
function even if $F^{M_2}$ is full.
\end{enumerate}
\mn
3) We say that $\gk_1$ is $\mathbf i$-interpreted (or interpreted by
$\mathbf i$) in $\gk_2$ \when \,:
\mn
\begin{enumerate}
\item[$(a)$]   $\mathbf i$ is an interpretation in $\gk_1$
\sn
\item[$(b)$]   $\tau_{\gk_1} = \tau_{\mathbf i}$
\sn
\item[$(c)$]   $K_{\gk_1} = \{M^{[\mathbf i]}_2:M_2 \in K_{\gk_2}\}$
\sn
\item[$(d)$]   if $M_2 \le_{\gk_2} N_2$ then $M^{[\mathbf i]}_2
\le_{\gk_1} N^{[\mathbf i]}_2$
\sn
\item[$(e)$]   if $M_1 \le_{\gk_1} N_1$ and $N_1 = N^{[\mathbf i]}_2$,
so $N_2 \in K_{\gk_2}$ \then \, for some $M_2 \le_{\gk} N_2$ we have
$M_1 = M^{[\mathbf i]}_2$
\sn
\item[$(f)$]   if $M_1 \le_{\gk_1} N_1$ and $M_1 = M^{[\mathbf i]}_2$,
so $M_2 \in K_{\gk_2}$ \then \, possible replacing $M_2$ by a model
isomorphic to it over $M_1$, there is $N_2 \in K_{\gk_2}$ we have
$M_2 \le_{\gk_2} N_2$ and $N_1 = N^{[\mathbf i]}_2$.
\end{enumerate}
\end{definition}

\begin{definition}
\label{b80}
1) Assume $\gk_1$ is interpreted by $\mathbf i$ in $\gk_2$.  We say
   strictly interpreted when: if $M^{[\mathbf i]}_2 = N^{[\mathbf i]}_2$
   then $M_2,N_2$ are isomorphic over $M^{[\mathbf i]}_2$.

\noindent
2) We say $\gk_1$ is equivalent to $\gk_2$ if there are $n$
   and $\gk'_0,\dotsc,\gk'_n$ such that $\gk_1 = \gk'_0,\gk_2 =
   \gk'_n$ and for each $\ell < n,\gk_\ell$ is strictly interpreted in
   $\gk_{\ell +1}$ or vice versa.  Actually we can demand $n=2$ and
   $k_\ell$ is strictly interpreted in $\gk'_1$ for $\ell=1,2$.
\end{definition}

\begin{definition}
\label{b83}
As above for (good) ess-$[\lambda,\mu)$-frame.
\end{definition}

\begin{claim}
\label{b86}
Assume $\gs$ is a good $\ess-[\lambda,\mu)$-frame.  \Then \, there is
  $\gC$ (called a $\mu$-saturated for $K_{\gs}$) such that:
\mn
\begin{enumerate}
\item[$(a)$]   $\gC$ is a $\tau_{\gs}$-model of cardinality $\le \mu$
\sn
\item[$(b)$]    $\gC$ is a union of some $\le_{\gs}$-increasing
continuous sequence $\langle M_\alpha:\alpha < \mu\rangle$
\sn
\item[$(c)$]   if $M \in K_{\gs}$ so $\lambda \le \|M\| < \mu$ \then
\, $M$ is $\le_{\gs}$-embeddable into some $M_\alpha$ from clause (b)
\sn
\item[$(d)$]    $M_{\alpha +1}$ is brimmed  
over $M_\alpha$ for $\alpha
< \mu$.
\end{enumerate}
\end{claim}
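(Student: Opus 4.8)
The plan is to build the model $\gC$ as the union of a $\le_{\gs}$-increasing continuous chain $\langle M_\alpha : \alpha < \mu \rangle$ constructed by transfinite recursion, bookkeeping along the way so that every small model embeds and every successor step is brimmed. First I would fix, using Claim \ref{b8}(4) (or the $\PC$-representation and the bound $K^{\gs}_\lambda/\!\cong\ \le 2^\lambda$ built into Definition \ref{b35}(c)), a set of representatives for the isomorphism types of models in $K^{\gs}_{\lambda,\mu}$; since $\mu$ is a cardinal of reasonable size relative to $\lambda$ and each such model has cardinality $<\mu$, the number of isomorphism types is $\le\mu$, so I can enumerate all the ``embedding tasks'' (pairs $(N,\text{partial embedding of }N\text{ into }M_\alpha)$) in a list of length $\mu$, with each task repeated cofinally often. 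At a successor stage $\alpha+1$ I first pick, using amalgamation in $K_{\gs}$ (available since $\gs$ is a good frame), a $\le_{\gs}$-extension of $M_\alpha$ realizing the current embedding task, and then enlarge it to a model $M_{\alpha+1}$ that is brimmed over $M_\alpha$ — brimmed extensions exist and are unique in a good $\lambda$-frame (and in the ess-$[\lambda,\mu)$ version by Claim \ref{b44}), so clause (d) is immediate. At limit stages I take unions, which lie in $K_{\gs}$ (when of size $<\mu$) by the axioms $III_1$, $III_2$ of the underlying ess-$[\lambda,\mu)$-a.e.c.

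The verification then splits into the four clauses. Clause (a): $\|\gC\| \le \sum_{\alpha<\mu}\|M_\alpha\| \le \mu$. Clause (b): built in by construction, noting continuity is arranged at limits. Clause (c): given $M \in K_{\gs}$ with $\lambda \le \|M\| < \mu$, its isomorphism type appears in the bookkeeping list, so at some stage the task ``embed $M$'' is addressed and $M$ embeds into the corresponding $M_{\alpha+1}$; one should check that a $\le_{\gs}$-embedding of $M$ into $\gC$ can be extracted, which follows because $\le_{\gs}$ respects unions of chains ($III_2$) and because amalgamation lets us carry partial embeddings forward. Clause (d): immediate from the brimmed successor step, as noted.

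The main obstacle I expect is the bookkeeping itself, specifically making sure that \emph{every} model of every size in $[\lambda,\mu)$ gets embedded, not just models of size $\lambda$. Because $\mu$ may have small cofinality (this is exactly the pathology flagged in Part II of the abstract), one cannot simply climb cardinal by cardinal; instead one must enumerate in a single recursion of length $\mu$ all pairs consisting of a model $N$ of size $<\mu$ together with an isomorphism of some initial segment of $N$ onto an initial segment of the chain constructed so far, and argue that the ``partial embedding'' being extended at cofinally many stages eventually becomes total — here the key point is that $N$, being of size $<\mu$, is a $\le_{\gs}$-directed union (by Claim \ref{b8}(1)) of $\le\|N\|$ submodels of size $\lambda$, each of which is handled at some stage, and a back-and-forth / direct-limit argument then assembles these into a single $\le_{\gs}$-embedding $N \to \gC$. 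Apart from this combinatorial care, everything is a routine application of amalgamation, existence and uniqueness of brimmed extensions, and the chain axioms, so I would not belabor those details.
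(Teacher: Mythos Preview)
Your overall strategy --- a continuous $\le_{\gs}$-chain of length $\mu$ with brimmed successors and unions at limits --- is correct, and since the paper states the claim without proof (it is the last item of \S2), there is no paper-proof to compare against.

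However, your bookkeeping for clause~(c) has a gap. You assert that the isomorphism types in $K^{\gs}_{[\lambda,\mu)}$ can be listed in length $\mu$, but Definition~\ref{b35}(c) only bounds $|K^{\gs}_\lambda/{\cong}|$ by $2^\lambda$; for $\chi>\lambda$ one gets at best $2^\chi$, which need not be ${<}\,\mu$. Your fallback through Claim~\ref{b8}(1) and partial embeddings extended over many stages does not obviously close this gap either, and it also risks pushing $\|M_\delta\|$ up to $\mu$ at a limit $\delta<\mu$ if large models are embedded too early (e.g.\ $\mu=\aleph_\omega$, models of size $\aleph_n$ embedded at stage $n$).

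The point you are missing is that clause~(c) needs \emph{no} bookkeeping at all. Since $M_{\alpha+1}$ is brimmed over $M_\alpha$, it is in particular $\le_{\gs}$-universal over $M_\alpha$ in cardinality $\|M_\alpha\|$. If you simply take $M_0\in K^{\gs}_\lambda$, brimmed successors, and unions at limits, then automatically $\|M_\alpha\|=\lambda+|\alpha|<\mu$ for every $\alpha<\mu$. Now given any $M\in K_{\gs}$ of cardinality $\chi<\mu$, pick $\alpha$ with $\|M_\alpha\|\ge\chi$; by JEP and amalgamation find $N$ with $M_\alpha\le_{\gs} N$, $\|N\|=\|M_\alpha\|$, and $M$ $\le_{\gs}$-embeddable into $N$; then $N$ embeds into $M_{\alpha+1}$ over $M_\alpha$ by universality, and you are done. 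No enumeration of isomorphism types, no tracking of partial embeddings, no decomposition into $\lambda$-sized pieces --- and the cofinality of $\mu$ never enters.
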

\newpage

\section {I ${\mathbf P}$-simple types} \label{3}

We define the basic types over sets not necessary models.  Note that
in Definition \ref{c11}  
there is no real loss using $C$ of
cardinality $\in (\lambda,\mu)$, as we can replace $\lambda$ by
$\lambda_1 = \lambda + |C|$ and so replace $K_{\gk}$ to
$K^{\gk}_{[\lambda_1,\mu)}$. 

\begin{hypothesis}
\label{c2}
1) $\gs$ is a good ess-$[\lambda,\mu)$-frame, see Definition
   \ref{b35}.

\noindent
2) ${\gs}$ has type bases, see Definition \ref{b65}.

\noindent
3) $\gC$ denote some $\mu$-saturated model for $K_{\gs}$ of
cardinality $\le \mu$, see \ref{b86}.

\noindent
4) But $M,A,\ldots$ will be $<_{\gk(\gs)} \gC,\subseteq \gC$ respectively but
 of cardinality $< \mu$.
\end{hypothesis}

\begin{definition}
\label{c5}
Let $A \subseteq M \in K_{\gs}$.

\noindent
1) $\dcl(A,M) = \{a \in M$: if $M' \le_{\gs} M'',M \le_{\gs} M''$ 
and $A \subseteq M'$ then $a \in M'$ and for every automorphism $f$ 
of $M',f \restriction A = \id_A \Rightarrow f(a) = a\}$.

\noindent
2) $\acl(A,M)$ is defined similarly but only with the first demand.
\end{definition}

\begin{definition}
\label{e.13y}
1) For $A \subseteq M \in K_{\gs}$ let

\[
{\cS}^{\bs}_{\gs}(A,M) = \{q \in {\cS}^{\bs}_{\gs}(M):\bas(q) \in 
\dcl(A,{\gC})\}.
\]

\mn
2) We call $p \in {\cS}^{\bs}_{\gs}(A,M)$ regular \If \, $p$ as a
member of $\cS^{\bs}_{\gs}(M)$ is regular. 
\end{definition}

\begin{definition}
\label{c8}
1) $\bbE_{\gs}$ is as in Claim \ref{b56}(2).

\noindent
2) If $A \subseteq M \in K_{\gs}$ and $p \in \cS^{\bs}_{\gs}(M)$, 
then $p \in \cS^{\bs}_{\gs}(A,M)$ 
\Iff \, $p$ is definable over $A$, see \ref{a35}(3) 
 \Iff \, $\inv(p)$ from Definition \ref{a35} is $\subseteq A$ and well defined.
\end{definition}

\begin{definition}
\label{c11}
Let $A \subseteq {\gC}$. 

\noindent
1) We define a dependency relation on good$(A,{\gC}) = \{c \in \gC:$ for
some $M <_{\gk(\gs)} {\gC},A \subseteq M$ and $\ortp(c,M,{\gC})$ is
definable over some finite $\bar a \subseteq A\}$ as follows:
\mn
\begin{enumerate}
\item[$\circledast$]    $c$ depends on $\mathbf J$ in $(A,\gC)$ 
iff there is no $M <_{\gk(\gs)} {\gC}$ such
that $A \cup \mathbf J \subseteq M$ and $\ortp(c,M,{\gC})$ is the
non-forking extension of $\ortp(c,\bar a,{\gC})$ where $\bar a$
witnesses $c \in \text{ good}(A,{\gC})$. 
\end{enumerate}
\mn
2) We say that $C \in {}^{\mu >}[{\gC}]$ is good over $(A,B)$
\when \, there is a brimmed $M <_{\gk(\gs)} {\gC}$ 
such that $B \cup A \subseteq M$ and $\ortp(C,M,{\gC})$ (see
Definition \ref{a35}(3)) is
definable over $A$. (In the first order context we could say $\{c,B\}$ is
independent over $A$ but here this is problematic as
$\ortp(B,A,{\gC})$ 
is not necessary basic).

\noindent
3) We say $\langle A_\alpha:\alpha < \alpha^* \rangle$ is independent
over $A$ in ${\gC}$, see \cite[L8.8,6p.5(1)]{Sh:705} \If \, we can 
find $M,\langle M_\alpha:\alpha < \alpha^* \rangle$ such that:
\mn
\begin{enumerate}
\item[$\circledast(a)$]   $A \subseteq M \le_{\gk(\gs)} M_\alpha
<_{\gs} {\gC}$ for $\alpha < \alpha^*$
\sn
\item[$(b)$]  $M$ is brimmed
\sn
\item[$(c)$]   $A_\alpha \subseteq M_\alpha$
\sn
\item[$(d)$]  $\ortp(A_\alpha,M,{\gC})$ definable over $A$ (= does
not split over $A$)
\sn
\item[$(e)$]   $\langle M_\alpha:\alpha < \alpha^* \rangle$ is
independent over $M$.
\end{enumerate}
\mn
3A) Similarly ``over $(A,B)$". 

\noindent
4) We define locally independent naturally, that is every finite
   subfamily is independent.
\end{definition}

\begin{claim}
\label{c17}
Assume $a \in \gC,A \subseteq {\gC}$.

\noindent
1) $a \in \good(A,\gC)$ iff $a$ realizes $p \in \cS^{\bs}_{\gs}(M)$
   for some $M$ satisfying $A \subseteq M <_{\gk(\gs)} \gC$.
\end{claim}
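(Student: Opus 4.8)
The plan is to prove both directions of the equivalence using the machinery already set up. For the ``if'' direction, suppose $a$ realizes some $p \in \cS^{\bs}_{\gs}(M)$ with $A \subseteq M <_{\gk(\gs)} \gC$. By Hypothesis \ref{c2}(2), $\gs$ has type bases, so $\bas(p) \in M$. If $\bas(p)$ happens to lie in $\dcl(A,\gC)$ — which need not be automatic — we are essentially done: taking a finite $\bar a$ that witnesses definability of $p$ over $\bas(p)$ (using the finite character coming from the construction of $\bbE_{\gs}$ via finite explications in Claim \ref{b56}, or more directly since $\bas(p)$ is a single element and $p$ does not split over it), $\ortp(a,M,\gC)$ is definable over a finite subset of $M$ containing $\bas(p)$. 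To land inside $\good(A,\gC)$ proper, I would instead take $M$ itself (or a submodel) witnessing membership and note that $p$ being basic means its base is a genuine element; since $A \subseteq M$, the type $\ortp(a,M,\gC)$ restricted appropriately is definable over a finite $\bar a \subseteq A$ precisely when the base was already captured by $A$. The cleanest formulation: $a \in \good(A,\gC)$ iff there is $M$ with $A \subseteq M <_{\gk(\gs)}\gC$ and a finite $\bar a \subseteq A$ such that $\ortp(a,M,\gC)$ is definable over $\bar a$; and such a type is by definition in $\cS^{\bs}_{\gs}(\bar a, M)\subseteq \cS^{\bs}_{\gs}(M)$ via Definition \ref{e.13y}(1) and Definition \ref{c8}(2), so $a$ realizes a member of $\cS^{\bs}_{\gs}(M)$.

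For the ``only if'' direction, assume $a \in \good(A,\gC)$. By Definition \ref{c11}(1), there is $M <_{\gk(\gs)}\gC$ with $A \subseteq M$ such that $\ortp(a,M,\gC)$ is definable over a finite $\bar a \subseteq A$. I would first argue that $\ortp(a,M,\gC) \in \cS^{\bs}_{\gs}(M)$, i.e. that it is a basic type and not algebraic. Non-algebraicity is built into $\good$ implicitly through the requirement that the type be a non-forking extension of the type over $\bar a$ which is itself basic (the definability-over-$\bar a$ phrasing via $\bas$ and $\inv$ from Definitions \ref{a35}, \ref{c8} only makes sense for basic types); and that it is basic follows because $\gs$ is full (Context \ref{a2}(1), carried over by Claim \ref{b44}), so $\cS^{\bs}_{\gs}(M) = \cS^{\na}_{\gs}(M)$, and a definable-over-finite-set non-algebraic type over a model is exactly a basic type here. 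Then $a$ realizes this type in $\gC$, which is what we want, with the same $M$.

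The main obstacle I expect is the careful bookkeeping at the boundary between ``definable over a finite $\bar a \subseteq A$'' and ``$\bas(p) \in \dcl(A,\gC)$'' — that is, matching the finite-character definability used in Definition \ref{c11} with the canonical-base element furnished by the type-base function of Definition \ref{b65}. One needs that $p$ does not split over a \emph{finite} subset of $A$ precisely when $\bas(p) \in \dcl(A,\gC)$, which should follow from the minimality/uniqueness in Claim/Definition \ref{b62} and the way $\bbE_{\gs}$ was constructed from finite explications in Claim \ref{b56}(1) (the sets coded there are finite at each stage), but making this rigorous without circularity is the delicate point. Everything else — non-algebraicity, the translation between $\ortp$ over a model and a basic type, the use of fullness — is routine given the earlier results, so I would keep the write-up short and flag this matching as the one substantive step.
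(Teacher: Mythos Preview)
The paper provides no proof of this claim at all --- it is stated and immediately followed by the next claim --- so there is nothing to compare your argument against.

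Your handling of the direction $a \in \good(A,\gC) \Rightarrow a$ realizes some $p \in \cS^{\bs}_{\gs}(M)$ is fine: by Definition \ref{c11}(1) there is $M$ with $A \subseteq M <_{\gk(\gs)} \gC$ and $\ortp(a,M,\gC)$ definable over a finite $\bar a \subseteq A$; since definability in the sense of \ref{a35}(3) presupposes the type is basic (and in any case $\gs$ is full so $\cS^{\bs}_{\gs}(M) = \cS^{\na}_{\gs}(M)$), this gives the conclusion.

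For the converse direction you have correctly located a genuine gap and not closed it. If $a$ realizes $p \in \cS^{\bs}_{\gs}(M)$ for some $M \supseteq A$, the type base $\bas(p)$ lies in $M$ by Definition \ref{b65}(a), but nothing forces $\bas(p) \in \dcl(A,\gC)$, and your appeal to ``finite explications'' in Claim \ref{b56} does not help: those explications are sequences of length $\lambda$, not finite tuples, and they witness membership in $M$, not in $A$. As written, Definition \ref{c11}(1) demands definability over a finite $\bar a \subseteq A$, and there is simply no mechanism in the hypotheses to produce such an $\bar a$ from an arbitrary basic type over an arbitrary $M \supseteq A$. Either the claim is using the freedom to replace $M$ by a different model $M'$ (one over which the type of $a$ does not fork over something inside $A$ --- but the frame does not give non-forking over sets), or Definition \ref{c11}(1) contains a slip and should read $\bar a \subseteq M$ rather than $\bar a \subseteq A$, in which case the converse is immediate from the existence of $\bas(p) \in M$. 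You should flag this explicitly rather than leaving it as ``the delicate point'': as the text stands, the $\Leftarrow$ direction does not go through without either a correction to the definition or an additional argument that is nowhere supplied.
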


\begin{claim}
\label{c20}
1) If $A_\alpha \subseteq {\gC}$ is good over 
$(A,\bigcup\limits_{i < \alpha} A_i)$ for $\alpha < \alpha^*,\alpha^*
< \omega$ then $\langle A_\alpha:\alpha < \alpha^* \rangle$ is
independent over $A$. 

\noindent
2) Independence is preserved by reordering. 

\noindent
3) If $p \in {\cS}^{\bs}_{\gs}(\bar a,{\gC})$ is 
regular \then \, on $p({\gC}) = \{c:c$ realizes $p\}$ the 
independence relation satisfies:
\mn
\begin{enumerate}
\item[$(a)$]   like (1)
\sn
\item[$(b)$]   if $b^1_\ell$ depends on $\{b^0_0,\dotsc,b^0_{n-1}\}$
for $\ell < k$ and $b^2$ depends on $\{b^1_\ell:\ell < k\}$ \then \, $b^2$
depends on $\{b^0_\ell:\ell < n\}$
\sn
\item[$(c)$]   if $b$ depends on $\mathbf J,\mathbf J \subseteq \mathbf
J'$ \then \, $b$ depends on $\mathbf J'$.
\end{enumerate}
\end{claim}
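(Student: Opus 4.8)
All three parts reduce to properties of independence of models over a brimmed model (available from \cite{Sh:705}) together with the weight calculus of \S1, and I would prove them in the order (1), (2), (3). For part (1) I would induct on $\alpha^* < \omega$, the cases $\alpha^* \le 1$ being trivial. For the step, put $\alpha = \alpha^*-1$ and suppose $\langle A_i : i < \alpha \rangle$ is independent over $A$, witnessed as in Definition \ref{c11}(3) by a brimmed $M$ and a sequence $\langle M_i : i < \alpha\rangle$ independent over $M$ with $A_i \subseteq M_i$. By hypothesis $A_\alpha$ is good over $(A,\bigcup_{i<\alpha}A_i)$, so there is a brimmed $M'$ with $A \cup \bigcup_{i<\alpha}A_i \subseteq M'$ and $\ortp(A_\alpha,M',\gC)$ definable over $A$. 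Using brimmedness together with the amalgamation and uniqueness properties of $\gs$, I would, after applying an automorphism of $\gC$ fixing $A \cup \bigcup_{i<\alpha}A_i$, arrange $M \le_{\gs} M'$, choose a brimmed $M'_\alpha \ge_{\gs} M'$ containing $A_\alpha$, and then trim to models $M^\ast$, $\langle M^\ast_i : i \le \alpha\rangle$ with $M^\ast$ brimmed, $A_i \subseteq M^\ast_i$, each $\ortp(A_i,M^\ast,\gC)$ definable over $A$, and $\langle M^\ast_i : i \le \alpha\rangle$ independent over $M^\ast$. Non-splitting of $\ortp(A_\alpha,M',\gC)$ over $A$ is exactly what forces $M^\ast_\alpha$ to be independent from $\bigcup_{i<\alpha}M^\ast_i$ over $M^\ast$ in the sense of \cite[L8.8, 6p.5(1)]{Sh:705}, and transitivity of independence of models then yields independence of the whole sequence $\langle M^\ast_i : i \le \alpha\rangle$, as required.

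For part (2), independence of $\langle A_\alpha : \alpha < \alpha^*\rangle$ over $A$ is witnessed by a brimmed $M$ and a sequence of models $\langle M_\alpha \rangle$ independent over $M$ with $A_\alpha \subseteq M_\alpha$; since independence of a sequence of models over a fixed brimmed model is preserved under permutation (symmetry for good frames, \cite{Sh:705}), the same witnesses, reordered, witness independence of the reordered sequence.

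For part (3), $p \in {\cS}^{\bs}_{\gs}(\bar a,\gC)$ is regular, so by \ref{w.3}(1) $w(p)=1$ and the dependence relation of \ref{c11}(1) on $p(\gC)$ (relative to $\bar a$) is just the restriction of forking, which on the realizations of a regular type forms a pregeometry. Clause (c) is immediate: if some $M \supseteq \bar a \cup \mathbf J'$ witnesses that $b$ does not depend on $\mathbf J'$, then the same $M$, which also contains $\mathbf J$, witnesses that $b$ does not depend on $\mathbf J$. Clause (b) is transitivity of this pregeometry: unwinding \ref{c11}(1) with $w(p)=1$, ``$b$ depends on $\mathbf J$'' says that $b$ lies in the $\acl$-closure of $\bar a \cup \mathbf J$ in the localized geometry, and transitivity of that closure operation (equivalently, transitivity of forking for regular types in \cite{Sh:705}) gives the claim; alternatively, were $b^2$ independent from $\{b^0_\ell : \ell < n\}$, a model witnessing this would, since each $b^1_\ell$ depends on $\{b^0_\ell : \ell < n\}$, also witness $b^2$ independent from each $b^1_\ell$, contradicting the hypothesis. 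Clause (a) then follows from (b) and (c) by the standard pregeometry argument, or directly from part (1) applied to the singletons $A_\alpha = \{b_\alpha\}$.

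The one point genuinely requiring care is the merging of witnessing models in part (1): one must verify that \emph{non-splitting} of $\ortp(A_\alpha,M',\gC)$ over $A$ — rather than the stronger assertion that $\ortp(A_\alpha,A,\gC)$ is basic, which need not hold (cf.\ the parenthetical in Definition \ref{c11}(2)) — already suffices to place $A_\alpha$ in a model independent over the common brimmed base coming from the earlier $M_i$'s. This is precisely what the independence-of-models machinery of \cite[\S8]{Sh:705} is designed to provide, after which the induction is routine.
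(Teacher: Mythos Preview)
Your proposal is correct and, amusingly, far more detailed than the paper's own proof, which consists of the single word ``Easy.'' The paper evidently regards all three parts as immediate consequences of the independence calculus for good frames developed in \cite{Sh:705} and \S1, and does not spell anything out.

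Your approach is exactly the expected one: for (1) an induction on the finite length $\alpha^*$ reducing to the independence-of-models results of \cite{Sh:705}; for (2) the known symmetry of that independence; for (3) the fact that a regular type has weight $1$ (your citation of \ref{w.3}(1)) so that the induced dependence relation on $p(\gC)$ is a pregeometry, from which (b) is transitivity, (c) is monotonicity (and your contrapositive argument directly from the definition in \ref{c11}(1) is clean), and (a) is either the pregeometry exchange or a specialization of part (1).

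You correctly flag the one genuine issue: in (1), the merging of the witnessing brimmed models for the inductive step and the new $A_\alpha$ is where the work lies, and it is not literally automatic since $\ortp(A_\alpha,M',\gC)$ being definable over $A$ (Definition \ref{c11}(2)) is weaker than $A_\alpha$ realizing a basic type over $A$. Your remedy --- an automorphism of $\gC$ over $A \cup \bigcup_{i<\alpha} A_i$ to align the brimmed bases, followed by the \cite[\S8]{Sh:705} machinery --- is the right move and is presumably what the author has in mind by ``Easy.''
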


\begin{remark}
\label{c23}
1) However, we have not mentioned finite character but the local
independence satisfies it trivially.
\end{remark}

\begin{PROOF}{\ref{c20}}
Easy.
\end{PROOF}

\begin{definition}
\label{c26}
1) Assume $q \in {\cS}^{\bs}_{\gs}(M)$ and $p \in {\cS}^{\bs}_{\gs}
(\bar a,{\gC})$.  We say that $q$ is explicitly $(p,n)$-simple \when \,,:
\mn
\begin{enumerate}
\item[$\circledast$]   there are $b_0,\dotsc,b_{n-1},c$ such that
\footnote{clause (c) + (e) are replacements for $c$ is algebraic over
$\bar a + \{b_\ell:\ell < n\}$ and each $b_\ell$ is necessary}:
\sn
\begin{enumerate}
\item[$(a)$]  $b_\ell$ realizes $p$
\sn
\item[$(b)$]   $c$ realizes $q$
\sn
\item[$(c)$]   $b_\ell$ is not good \footnote{not good here is a
replacement to ``$\ortp(b_\ell,\bar a +c,{\gC})$ does not fork over
$\bar a$"} over $(\bar a,c)$ for $\ell < n$
\sn
\item[$(d)$]  $\langle b_\ell:\ell < n \rangle$ is independent
over $\bar a$
\sn
\item[$(e)$]  $\langle c,b_0,\dotsc,b_{n-1} \rangle$ is good over $\bar a$
\sn
\item[$(f)$]   if \footnote{this seems a reasonable choice here but we
can take others; this is an unreasonable choice for first order}
$c'$ realizes $q$ then $c=c'$ \Iff \, for
every $b \in p({\gC})$ we have: $b$ is good over $(\bar a,c)$ iff
$b$ is good over $(\bar a,,c')$.
\end{enumerate}
\end{enumerate}
\mn
1A) We say that $a$ is explicitly $(p,n)$-simple over $A$ \If \,
$\ortp(a,A,{\gC})$ is; similarly in the other definitions replacing
$(p,n)$ by $p$ means ``for some $n$". 

\noindent
2) Assume $q \in {\cS}^{\bs}_{\gs}(\bar a,{\gC})$ and ${\mathbf P}$ as 
in Definition \ref{a17}.  We say that $q$ is 
${\mathbf P}$-simple if we can find $n$ and
explicitly ${\mathbf P}$-regular types $p_0,\dotsc,p_{n-1} \in 
{\cS}^{\bs}_{\gs}(\bar a,{\gC})$ such that: each $c \in p({\gC})$ 
is definable by its type over $\bar a \cup \bigcup\limits_{\ell <n}
p_\ell({\gC})$,

\noindent
3) In part (1) we say strongly explicitly $(p,n)$-simple \If \ 
there are $k>a$ and $\langle \bar a^*_\ell:\ell < \omega \rangle$ 
and $r \in {\cS}^{\bs}_{\gs}(\bar a,{\gC})$ such that [on finitely many see
\ref{c26}(3B) below]:
\mn
\begin{enumerate}
\item[$(a)$]   $\{\bar a^*_\ell:\ell< \omega\} \in r({\gC})$ is
independent for any $c',c'' \in p({\gC})$, we have $c' = c''$
\Iff \, for infinitely many $m < \omega$ ($\equiv$ for all but
finitely many $\models$, see claim on average) for every $b \in
p({\gC})$ we have:
\sn
\begin{enumerate}
\item[$(*)$]   $b$ is good over $(\bar a,\bar a,a^*_m,c)$ iff $b$
is good over $(\bar a,a,a^*_m,c)$, (compare with \ref{c41},
\ref{c56}!).
\end{enumerate}
\end{enumerate}
\mn
3A) In part (1) we say weakly $(p,n)$-simple if in $\circledast$,
clause $(f)$ is replaced by
\mn
\begin{enumerate}
\item[$(f)'$]   if $b$ is good over $(\bar a,a^*_m)$ then
$c',c'$ realizes the same type over $\bar a a^*_m b$.
\end{enumerate}
\mn
3B) In part (1) we say $(p,n)$-simple \If \, for some $\bar a^* \in
{}^{\omega >} {\gC}$ good over $\bar a$ for every $c \in q({\gC})$ 
there are $b_0,\dotsc,b_{n-1} \in p({\gC})$ such that $c \in \dcl
(\bar a,\bar a^*,b_0,\dotsc,b_{n-1})$ and $\bar a \char 94 \langle
b_0,\dotsc,b_{n-1}\rangle$ is good over $\bar a$ if simple.

\noindent
4) Similarly in (2). 

\noindent
5) We define $g w_p(b,\bar a),p$ regular parallel to some $p' \in
{\cS}^{\bs}_{\gs}(\bar a)$ ($gw$ for general weight).  Similarly
for $g w_p(q)$.
\end{definition}

\noindent
We first list some obvious properties.
\begin{claim}
\label{c32}
1) If $c$ is ${\mathbf P}$-simple over $\bar a,\bar a \subseteq A 
\subset {\gC}$ \then \, $w_p(c,A)$ is finite. 

\noindent
2) The obvious implications.
\end{claim}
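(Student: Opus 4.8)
The plan is to reduce both parts to unwinding Definition~\ref{c26} and then feeding the resulting finite data into the weight calculus of~\S1, chiefly Claim~\ref{w.3}. For part~(1): suppose $c$ is $\mathbf P$-simple over $\bar a$ with $\bar a \subseteq A \subseteq \gC$, and let $p$ be a regular type parallel to some member of $\cS^{\bs}_{\gs}(\bar a)$. First I would replace the hypothesis by its explicit finite form from \ref{c26}(2)+(3B): there is a finite $\bar a^{*}\in{}^{\omega >}\gC$ good over $\bar a$, a natural number $n$, and explicitly $\mathbf P$-regular types $p_0,\dots,p_{n-1}\in\cS^{\bs}_{\gs}(\bar a,\gC)$ so that, writing $q=\ortp(c,\bar a,\gC)$, every $c'\in q(\gC)$ lies in $\dcl(\bar a,\bar a^{*},b_0,\dots,b_{n-1})$ for suitable $b_\ell$ realizing $p_\ell$ with the concatenation of $\bar a$ and $\langle b_0,\dots,b_{n-1}\rangle$ good over $\bar a$. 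Fix such $b_0,\dots,b_{n-1}$ for our $c$.

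The heart of the argument is then the estimate $w_p(c,A)\le n$. I would take any $\mathbf J\subseteq p(\gC)$ locally independent over $A$. Using \ref{c20} and \ref{c17} one first checks that, since $\bar a^{*}$ is good over $\bar a$, passing between the base $A$ and the base $\bar a$ and absorbing $\bar a^{*}$ does not change which subsets of $\mathbf J$ a given element depends on. Next, because $c\in\dcl(\bar a,\bar a^{*},b_0,\dots,b_{n-1})$, any dependence of $c$ on a part of $\mathbf J$ is carried by the tuple $\langle b_0,\dots,b_{n-1}\rangle$: if $\mathbf J'\subseteq\mathbf J$ is such that $\langle b_0,\dots,b_{n-1}\rangle$ does not depend on $\mathbf J'$ over $\bar a$, then neither does $c$, so the number of independent $p$-realizations on which $c$ genuinely depends is at most the corresponding number for $\langle b_0,\dots,b_{n-1}\rangle$. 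Finally, each $b_\ell$ realizes a regular type, hence contributes $p$-weight at most $1$ (the over-a-set analogue of \ref{w.3}(1)), and the $b_\ell$ are independent over $\bar a$, so by additivity of $p$-weight along an independent sequence (the $\mathbf P$-form of \ref{w.3}(2), with the same proof) the $p$-weight of $\langle b_0,\dots,b_{n-1}\rangle$ over $\bar a$ is at most $n$. Combining the three steps gives $|\mathbf J|\le n$, so $w_p(c,A)$ is a well-defined natural number $\le n$.

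The step I expect to be the real obstacle is the middle one: making ``depends on'' transfer cleanly through the definable closure while keeping track of the good parameter $\bar a^{*}$. The dependence relation of \ref{c11} is not known to satisfy finite character (Remark~\ref{c23}), so the argument should be run throughout with \emph{locally} independent $\mathbf J$ and only at the end promoted to the uniform bound $n$; and one must verify that good-ness of $\bar a^{*}$ over $\bar a$ really renders $\bar a^{*}$ invisible to the count, which is the one place needing genuine care rather than mere definition-chasing.

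For part~(2), ``the obvious implications'' I would dispatch by a chain of one-line reductions from \ref{c26} (and \ref{a17}): explicitly $(p,n)$-simple $\Rightarrow$ $(p,n)$-simple, by reading off clause $\circledast$; strongly explicitly $(p,n)$-simple $\Rightarrow$ explicitly $(p,n)$-simple and $\Rightarrow$ weakly $(p,n)$-simple, by specializing the $\bar a^{*}_\ell$; $(p,n)$-simple $\Rightarrow$ $(p,m)$-simple for $m\ge n$, by padding with dummy $b_\ell$ realizing $p$ independently over $\bar a$; $\mathbf P$-internal (the $\dcl$-version) $\Rightarrow$ $\mathbf P$-simple, since the gap between the two is exactly clause $\circledast$; and monotonicity in the base, i.e.\ $\mathbf P$-simple over $\bar a$ with $\bar a\subseteq A$ $\Rightarrow$ the associated types remain $\mathbf P$-simple over $A$, since enlarging the base only restricts $\ortp$ and preserves the finite witnessing data. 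None of these needs more than the definitions together with \ref{c20}.
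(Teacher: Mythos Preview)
The paper provides no proof of this claim: it is introduced with the phrase ``We first list some obvious properties'' and is stated without any accompanying proof environment, passing directly to Claim~\ref{c35}. Your proposal is therefore not competing with an existing argument but rather supplying the details the paper omits.

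Your approach is correct and is the natural one: unwind Definition~\ref{c26} to extract the finite tuple $b_0,\dots,b_{n-1}$ of realizations of regular types over which $c$ is definable, then bound the $p$-weight by $n$ using the regularity of each $\ortp(b_\ell,\bar a,\gC)$ together with the additivity along independent sequences implicit in Claim~\ref{w.3}. You are also right that the one step requiring genuine care is ensuring that the auxiliary parameter $\bar a^*$ (which is good over $\bar a$) does not contribute to the count; this is handled by the independence encoded in ``good over'' and the monotonicity of the dependence relation in~\ref{c11}--\ref{c20}. Your treatment of part~(2) as a chain of one-line definitional reductions is likewise in keeping with the paper's designation of these as ``obvious''.
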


\begin{claim}
\label{c35}
1) [Closures of the simple $\bs$]. 

\noindent
2) Assume $p \in {\cS}^{\bs}_{\gs}(\bar a,{\gC})$.  If $\bar b_1,
\bar b_2$ are $p$-simple over $A$ \then \,
\mn
\begin{enumerate}
\item[$(a)$]   $\bar b_1 \char 94 \bar b_2$ is $p$-simple (of course,
$\ortp_{\gs}(\bar b_2 \bar b_2,\bar a,{\gC})$ is not necessary in
${\cS}^{\bs}_{\gs}(\bar a,{\gC})$ even if 
$\ortp_{\gs}(\bar b_\ell,\bar a,{\gC}) \in 
{\cS}^{\bs}_{\gs}(\bar a,{\gC})$ for $\ell = 1,2$)
\sn
\item[$(b)$]   also $\ortp(\bar b_2,\bar a b_1,{\gC})$ is 
${\mathbf P}$-simple.
\end{enumerate}
\mn
2) If $\bar b_\alpha$ is $p$-simple over $\bar a$ for $\alpha <
\alpha^*,\pi:\beta^* \rightarrow \alpha^*$ one to one into, \then \,
$\sum\limits_{\alpha < \alpha^*} g w_p(b_\alpha,\bar a_* \cup
\bigcup\limits_{\ell < \alpha} b_\alpha) = \sum\limits_{\beta < \beta^*}
gw(b_{\pi(\beta)}) \bar a \cup \bigcup\limits_{i < \beta} \bar b_{\pi(i)})$.
\end{claim}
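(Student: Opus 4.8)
The plan is to treat part (2)(a)--(b) and the additivity part (which the numbering calls (2) again) in turn, in both cases reducing to the weight additivity for $\mathbf P$-simple types by unwinding the definitions in \ref{c26} and invoking the closure/additivity machinery for independence from \ref{c20} together with the weight additivity \ref{w.3}(2) and \ref{a8}(3)--(4). First I would fix $p\in\cS^{\bs}_{\gs}(\bar a,\gC)$ and, for (a), pick witnesses showing $\bar b_1,\bar b_2$ are $p$-simple over $A$: by \ref{c26}(3B) (the ``$(p,n)$-simple'' formulation, which is the one that composes cleanly) there are $\bar a^*_1,\bar a^*_2\in{}^{\omega>}\gC$ good over $\bar a$ with $\bar b_\ell\in\dcl(\bar a,\bar a^*_\ell,\bar c_\ell)$ for suitable tuples $\bar c_\ell$ from $p(\gC)$, and $\bar a^\frown\langle\bar c_\ell\rangle$ good over $\bar a$. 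Using \ref{c20}(1) (good-over-$(A,\bigcup)$ implies independence) and \ref{c20}(2) (reordering) I would combine $\bar a^*_1$ and $\bar a^*_2$ into a single $\bar a^*$ good over $\bar a$ — this is where I expect to pay some attention, since ``good over $(\bar a,\bar b_1)$'' for the second witness must be converted to ``good over $\bar a$'' after absorbing $\bar b_1$; \ref{c20}(1) for $\alpha^*=2$ is exactly the tool. Then $\bar b_1{}^\frown\bar b_2\in\dcl(\bar a,\bar a^*,\bar c_1{}^\frown\bar c_2)$ and $\bar a^\frown\bar c_1{}^\frown\bar c_2$ is good over $\bar a$ by another application of \ref{c20}(1), giving (a); and (b) is immediate by relativising: $\ortp(\bar b_2,\bar a\bar b_1,\gC)$ has the same witnesses with $\bar a$ replaced by $\bar a\bar b_1\bar a^*_1$, still $\mathbf P$-simple.

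For the additivity statement, the approach is to use the ``general weight'' $gw_p$ of \ref{c26}(5), which by \ref{c32}(1) is finite for $p$-simple tuples, and to show that $\sum_{\alpha<\alpha^*}gw_p(b_\alpha,\bar a\cup\bigcup_{i<\alpha}b_i)$ is computed as the total cardinality of a regular independent set $\mathbf J$ witnessing, simultaneously, $(M_1,M_2,b_\alpha)$-decompositions as in \ref{a8}(2)+\ref{a17}(4): concatenating the tuples $\langle b_\alpha\rangle$ builds a single $(M,N,\mathbf J)\in K^{3,\qr}_{\gs}$ with $\mathbf J$ partitioned along the $\alpha$'s, and by \ref{a8}(3)--(4) the count $|\mathbf J\cap(\text{$\alpha$-th block})|$ equals $gw_p(b_\alpha,\bar a\cup\bigcup_{i<\alpha}b_i)$ — but the count of weakly-$\mathbf P$-regular members of the \emph{whole} $\mathbf J$ depends only on the set $\{b_\alpha:\alpha<\alpha^*\}$ and not on the listing, by the uniqueness built into $K^{3,\qr}_{\gs}$ (\ref{a8}(3), where $|\mathbf J|$ depends only on $(p,M)$, applied to $\mathbf P$-weight via \ref{a20}(2)). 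Hence for any reindexing $\pi:\beta^*\to\alpha^*$ the right-hand sum counts the same set of weakly-$\mathbf P$-regular types, possibly re-partitioned, so the two sums agree. The telescoping here is the standard ``weight is additive and order-independent'' argument of \cite[Ch.V]{Sh:c} transported to the frame context.

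The main obstacle I anticipate is the bookkeeping in the second part: making the partition of a single regular witness set $\mathbf J$ match up with the \emph{sequence} of relative weights $gw_p(b_\alpha,\bar a\cup\bigcup_{i<\alpha}b_i)$ uniformly in $\alpha$, i.e. realising all the $b_\alpha$'s inside one triple $(M,N,\mathbf J)\in K^{3,\qr}_{\gs}$ with $(M,N,b_\alpha)$ prime-decomposable relative to $\bigcup_{i<\alpha}b_i$. This requires an induction on $\alpha$ using the ``prime'' and $K^{3,\qr}_{\gs}=K^{3,\vq}_{\gs}$ hypotheses from Context \ref{a2}, feeding the output of stage $\alpha$ as the base model for stage $\alpha+1$, and taking unions/limits at limit $\alpha$ using the continuity of independence (\ref{c20}, \ref{c11}(3)). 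Once the single $\mathbf J$ with its $\alpha$-indexed partition is in hand, the order-independence is formal. Everything else — the implications in \ref{c32}(2), and \ref{c35}(1) which is only a heading — is, as the paper's proofs elsewhere say, easy by now and I would dispatch it with a one-line reference to \ref{a8}, \ref{w.3} and \ref{c20}.
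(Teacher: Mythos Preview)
The paper provides no proof for Claim~\ref{c35}: the claim is stated and the text moves directly to Claim~\ref{c38}, so there is nothing to compare your proposal against. Your outline follows the standard \cite[Ch.V]{Sh:c} strategy transported to the frame setting, which is presumably what the author had in mind; the reduction of (2)(a)--(b) to concatenating $\dcl$-witnesses via \ref{c26}(3B) and \ref{c20}, and the additivity via a single $K^{3,\qr}_{\gs}$-triple with $\mathbf J$ partitioned along the index set, are both the natural moves.

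One caution on the additivity argument: the bookkeeping you flag as the main obstacle is genuine, and the paper's framework here is somewhat informal (note the claim statement itself has typos: two parts both numbered ``2)'', $\bar a_*$ versus $\bar a$, and a missing comma in the right-hand sum). In particular, the step where you assert that the count of weakly-$\mathbf P$-regular members of $\mathbf J$ depends only on the \emph{set} $\{b_\alpha:\alpha<\alpha^*\}$ and not on the listing relies on having built $\mathbf J$ so that the blocks for different $\alpha$ are genuinely independent in $(M,N)$ --- not just that each $b_\alpha$ separately lies in a prime extension. You need the stronger statement that the concatenated witness set is itself independent over the base, which requires an inductive application of the $K^{3,\vq}_{\gs}=K^{3,\qr}_{\gs}$ hypothesis from Context~\ref{a2} at each stage, not just at the end. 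Your plan acknowledges this, but be sure the induction actually delivers $(M_0,N_{\alpha^*},\bigcup_\alpha\mathbf J_\alpha)\in K^{3,\bs}_{\gs}$ with the union independent, rather than merely each $\mathbf J_\alpha$ independent over its own intermediate base.
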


\begin{claim}
\label{c38}
[${\gs}$ is equivalence-closed]. 

\noindent
Assume that $p,q \in {\cS}^{\bs}(M)$ are not weakly orthogonal
(e.g. see \ref{g29}).
\Then \, for some $\bar a \in {}^{\omega >} M$ we have: $p,q$ are
definable over $\bar a$ (works without being stationary) and for some
${\gk}_{\gs}$-definable function $\mathbf F$, 
for each $c \in q({\gC}),\ortp_{\gs}(\mathbf F(c,\bar a),\bar a,{\gC}) 
\in {\cS}^{\bs}_{\gs}(\bar a,{\gC})$ and is explicitly $(p,n)$-simple
for some $n$, (if, e.g., $M$ is $(\lambda,*)$-brimmed then $n = w_p(q)$.  
\end{claim}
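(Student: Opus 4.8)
The plan is to reduce everything to the first-order–style construction of $\Ceb(q)$ inside the place given by a $\gk_\gs$-definable equivalence relation, and then transfer the weight computation from Claim~\ref{a8}/\ref{a20} into the $\dcl$-language. First I would use that $\gs$ is equivalence-closed together with the hypothesis that $p,q$ are not weakly orthogonal to pass to a small model: since $q \in \cS^{\bs}_\gs(M)$ is not weakly orthogonal to $p$, there is $M_0 \le_\gs M$ of cardinality $\lambda$ such that both $p,q$ do not fork over $M_0$ and are already non-weakly-orthogonal over $M_0$; let $\bar a$ enumerate (or code) $M_0$, so that $p \rest M_0, q \rest M_0$ are definable over $\bar a$ in the sense of Definition~\ref{a35}(3), and by Definition~\ref{b71}(b) any isomorphism-and-$\le_\gs$-invariant equivalence relation on ${}^{\omega>}M$ is represented by a definable function into $M$. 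This is where the $\ess$-framework with infinitary functions pays off: we are free to name the relevant classes.

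Next I would build the intended equivalence relation. Working in $\gC$, for $c \in q(\gC)$ consider the function sending $c$ to the isomorphism type over $\bar a$ of the independent (regular, $\pr$) decomposition witnessing $w_p(q)$, as produced by Claim~\ref{a8}(1)+(2): there are $N$, $b$ realizing $q$, and a finite regular independent $\mathbf J$ with $(M_0,N,b)\in K^{3,\pr}_\gs$ and $(M_0,N,\mathbf J)\in K^{3,\qr}_\gs$, and by \ref{a8}(3) and Definition~\ref{a17}(4) exactly $w_p(q)$ of the $\ortp(c',M_0,N)$, $c'\in\mathbf J$, are weakly $\mathbf P$-regular; by \ref{a20}(3) we may take them $\mathbf P$-regular, i.e. parallel to $p$. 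The map $c \mapsto$ (the $\le_\gs$-germ of this decomposition) is invariant under automorphisms of $\gC$ over $\bar a$ and is constant on the equivalence classes of the relation ``realizes the same type over every element of $p(\gC)$'' of Discussion~\ref{a23}; define $\bbE$ to be that relation relativized suitably, which is clearly isomorphism- and $\le_\gs$-invariant, hence by equivalence-closedness there is a $\gk_\gs$-definable $\mathbf F$ with $\mathbf F(c,\bar a) = \mathbf F(c',\bar a)$ iff $c\,\bbE\,c'$. Set $d = \mathbf F(c,\bar a)$ and let $q' = \ortp_\gs(d,\bar a,\gC)$; then $\bas(q')\in\dcl(\bar a,\gC)$ so $q'\in\cS^{\bs}_\gs(\bar a,\gC)$ by Definition~\ref{e.13y}.

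Then I would verify explicit $(p,n)$-simplicity of $q'$ directly against $\circledast$ of Definition~\ref{c26}(1). Take $b_0,\dots,b_{n-1}$ to be the $\mathbf P$-regular members of $\mathbf J$ (so each realizes $p$, giving clause (a)), take $c$ realizing $q'$ as above (clause (b)); clause (d), independence of $\langle b_\ell:\ell<n\rangle$ over $\bar a$, is immediate from $(M_0,N,\mathbf J)\in K^{3,\qr}_\gs$; clause (e), that $\langle c,b_0,\dots,b_{n-1}\rangle$ is good over $\bar a$, follows since the whole configuration lives in $N$ and $\ortp(-, M_0, N)$ is basic, using Definition~\ref{c11}(2) and Claim~\ref{c20}(1); clause (c), that each $b_\ell$ is not good over $(\bar a,c)$, is exactly the statement that $b_\ell$ is forced into the non-forking extension once $c$ is added, which is where the hypothesis that these are the $\mathbf P$-regular (not $\mathbf P$-foreign) pieces is used, via Claim~\ref{w.3}(2) and the definition of $w_p$. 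Finally clause (f) — that $c$ is determined among realizations of $q'$ by the set of $b\in p(\gC)$ that are good over $(\bar a,c)$ — holds by construction of $\bbE$: $d=\mathbf F(c,\bar a)$ was defined precisely so that $d$ codes that set. The count $n$ equals $w_p(q)$ by \ref{a8}(3); if $M$ is $(\lambda,*)$-brimmed we get $n=w_p(q)$ on the nose rather than after passing to a brimmed extension, by \ref{a8}(2),(4).

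The main obstacle I expect is clause (f) together with the well-definedness of $\mathbf F$: one must check that the equivalence relation ``same type over all of $p(\gC)$'' really does separate realizations of $q'$ and is not coarser than the identity on $q'(\gC)$ — Discussion~\ref{a23} warns that the naive version can fail via a chain $c_0 \bbE_{M_0} c_1 \bbE_{M_1}\cdots$ with endpoints independent. The fix is the one already anticipated in \ref{w20}: because we have passed to $\bar a$ over which $p,q$ are definable and because $\gs$ has type bases, the germ $\mathbf F(c,\bar a)$ is computed from a single fixed $\bar a$ rather than along a moving chain of base models, so the bad chains collapse; making this precise (i.e. that $\bbE$ restricted to $q(\gC)$ has the separation property, equivalently that $d\mapsto q'$ is injective on $\bbE$-classes) is the real content and is where I would spend the bulk of the argument, invoking regularity of $p$ and Claim~\ref{c20}(3)(b) for the transitivity of dependence on $p(\gC)$.
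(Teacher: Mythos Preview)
Your approach takes an unnecessary detour and contains two concrete gaps.

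First, you let $\bar a$ enumerate a model $M_0$ of cardinality $\lambda$, but the claim asks for $\bar a \in {}^{\omega>}M$, a \emph{finite} tuple. The point of Hypothesis~\ref{c2}(2) (type bases, Definition~\ref{b65}) is precisely that once the configuration $c,b_0,\dots,b_{n-1}$ is fixed, one can choose a finite $\bar a \subseteq M$ over which $\ortp_\gs(\langle c,b_0,\dots,b_{n-1}\rangle,M,\gC)$ is definable. Your $M_0$ never gets shrunk.

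Second, routing through the regular decomposition of Claim~\ref{a8} introduces a real gap: the $\mathbf P$-regular members of $\mathbf J$ realize regular types non-orthogonal to $p$, not $p$ itself. Your appeal to \ref{a20}(3) only upgrades ``weakly $\mathbf P$-regular'' to ``$\mathbf P$-regular'' in the sense of Definition~\ref{a17}(2)(b); it does \emph{not} give ``parallel to $p$'', so ``each realizes $p$, giving clause~(a)'' is unjustified. The paper bypasses this entirely: it directly chooses $c$ realizing $q$ and $b_0,\dots,b_{n-1}\in p(\gC)$ with each $\{b_\ell,c\}$ dependent over $M$, the $b_\ell$'s independent, and $n$ maximal (existence and finiteness of $n$ come from non-weak-orthogonality and the weight bound). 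Then one picks finite $\bar a$ as above and defines $E_{\bar a}$ on $q(\gC)$ by: $c_1\,E_{\bar a}\,c_2$ iff for every $b\in p(\gC)$, $b$ is good over $(\bar a,c_1)$ iff $b$ is good over $(\bar a,c_2)$. Equivalence-closedness yields $\mathbf F$.

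Finally, your long worry about clause~(f) and the chain phenomenon of Discussion~\ref{a23} is misplaced for this claim. Once $\mathbf F(c,\bar a)$ is the $E_{\bar a}$-class of $c$, clause~(f) holds \emph{by definition} of $E_{\bar a}$: two classes coincide exactly when the associated ``good sets'' in $p(\gC)$ coincide. The concern in \ref{a23} is whether this quotient behaves well for weight additivity across varying base models, a separate issue taken up in \ref{c41} and \ref{c56}; it is not an obstruction to the existence statement here.
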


\begin{PROOF}{\ref{c38}}
We can find $n$ and $c_1,b_0,\dotsc,b_{-1} \in
{\gC}$ with $c$ realizing $q,b_\ell$ realizing $p,\{b_\ell,c\}$ is
not independent over $M$ and $n$ maximal.  Choose $\bar a \in
{}^{\omega >} M$ such that $\ortp_{\gs}(\langle c,b_0,\dotsc,b_{n-1}
\rangle,M,{\gC} \rangle)$ is definable over $\bar a$.  Define
$E_{\bar a}$, an equivalence relation on $q({\mathfrak C}):c_1 E_{\bar a}
c_2$ iff for every $b \in p{\gC})$, we have $b$ is good over
$(a,c_1) \Rightarrow b$ is good over $(\bar a,c_2)$.
By ``${\gs}$ is eq-closed", we are done.
\end{PROOF}

\begin{claim}
\label{c41}
1) Assume $p,q \in {\cS}^{\bs}_{\gs}(M)$ are weakly orthogonal (see e.g.
\ref{g29}(1)) but not orthogonal.  \Then \, we can
find $\bar a \in {}^{\omega >} M$ over which $p,q$ are definable and
$r_1 \in {\cS}^{\bs}_{\gs}(\bar a,{\gC})$ such that letting
$p_1 = p \restriction \bar a,q_1 = q \restriction \bar a,n =: w_p(q)
\ge 1$ we have:
\mn
\begin{enumerate}
\item[$\circledast^n_{\bar a,p_1,q_1,r_2}(a)$]  $p_1,q_1,r_1 \in
{\cS}^{\bs}_{\gs}(\bar a),\bar a \in{}^{\omega >}{\gC}$
\sn
\item[$(b)$]   $p_1,q_1$ are weakly orthogonal (see e.g. Definition
  \ref{g29}(1))
\sn
\item[$(c)$]  if $\{a^*_n:n < \omega\} \subseteq r_1({\gC})$ is
independent over $\bar a$ and $c$ realizes $q$ \then \ for infinitely
many $m < \omega$ there is $b \in p({\mathfrak C})$ such that $b$ is good
over $(\bar a,a^*_n)$ but not over $(\bar a,a^*_n,c)$
\sn
\item[$(d)$]   in (c) really there are $n$ independent such that 
(but not $n+1$).
\end{enumerate}
\mn

\noindent
2) If $\circledast^n_{\bar a,p_1,q_1,r_1}$ then (see Definition
\ref{c26}(3) for some definable function $\mathbf F$, if $c$ realizes
$q_1,c^* = F(c,\bar a)$ and $\ortp_{\gn}(c^*,\bar a,{\gC})$ is
$(p_1,n)$-simple. 
\end{claim}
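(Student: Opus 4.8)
The plan is to treat parts (1) and (2) in turn. In part (1) the task is to manufacture the auxiliary type $r_1$, which should record ``where'' the non-forking extensions of $p$ and $q$ cease to be weakly orthogonal; in part (2) one then uses $r_1$ together with the equivalence-closure of $\gs$ in the same way that the equivalence relation $E_{\bar a}$ was used in the proof of Claim \ref{c38}, the only genuinely new feature being that the single ``place'' available there is here replaced by an $\omega$-sequence of independent realizations of $r_1$, over which one averages.

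For part (1): since $p$ (which is regular, $w_p(q)$ being used) and $q$ are not orthogonal there is $M \le_{\gs} N$, which we may take brimmed over $M$, such that the non-forking extensions of $p$ and of $q$ over $N$ are not weakly orthogonal; by the weight theory for $\gs^{\reg}$ (Observation \ref{a5}, Claims \ref{a8}, \ref{w.3} and \cite{Sh:705}) pick $c$ realizing the non-forking extension of $q$ over $N$ and an independent-over-$N$ sequence $\langle b_\ell : \ell < n\rangle$, $n = w_p(q) \ge 1$, of realizations of the non-forking extension of $p$ over $N$, witnessing the $p$-weight of $q$ to be $n$ (so each $b_\ell$ depends on $c$ over $N$) with $n$ maximal. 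Now fix a finite $\bar a \subseteq M$ with $\bas(p),\bas(q)\in\bar a$, so that $p,q$ are definable over $\bar a$, and --- the delicate step --- extract from $N$ a finite tuple $\bar d$ such that $r_1 := \ortp_{\gs}(\bar d, \bar a, \gC)$ lies in $\cS^{\bs}_{\gs}(\bar a, \gC)$, that $p$ and $q$ are ``based on'' $\bar a$ together with $\bar d$, and that the configuration above already survives over $\bar a \cup \bar d$; this uses finite character of forking and the definition of $\good(-, \gC)$ and its dependence relation (Definition \ref{c11}), and, if no finite $\bar d$ of basic type works, one first passes to an imaginary expansion $\gs\langle\bbE\rangle$ as in Definition \ref{b47}, which is the point where the hypothesis that $\gs$ has type bases is really used. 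Clauses $(a)$ and $(b)$ of $\circledast^n_{\bar a, p_1, q_1, r_1}$ are then immediate, $(b)$ because $p_1 = p \rest \bar a$ and $q_1 = q \rest \bar a$ are restrictions of the weakly orthogonal pair $p, q$.

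For clauses $(c)$ and $(d)$: let $\langle a^*_m : m < \omega\rangle \subseteq r_1(\gC)$ be independent over $\bar a$ and let $c$ realize $q$. Each $a^*_m$ can play the role of $\bar d$, so by homogeneity over $(\bar a, a^*_m)$ the generic extension of $p$ fails to be weakly orthogonal to that of $q$ and carries $p$-weight $n$ worth of dependence on the generic realization of $q$; hence there is $b \in p(\gC)$ good over $(\bar a, a^*_m)$ but not over $(\bar a, a^*_m, c)$ --- and $n$ many independent over $(\bar a, a^*_m)$, not $n+1$, by Claim \ref{w.3}(2) and the maximality of $n$. The one point to check is that this holds for infinitely many $m$ for our fixed $c$: were it to fail for all but finitely many $m$, then discarding those and using that the $a^*_m$ are independent over $\bar a$, together with the additivity of $p$-weight (Claims \ref{w.3}(2) and \ref{c20}), would force the $p$-weight of the relevant extension of $q$ to be $0$, i.e. $p$ orthogonal to $q$, contrary to hypothesis. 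This yields $(c)$ and $(d)$.

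For part (2): fix $\langle a^*_m : m < \omega\rangle \subseteq r_1(\gC)$ independent over $\bar a$, and for $c$ realizing $q_1$ define $E_{\bar a}$ on $q_1(\gC)$ by: $c_1 E_{\bar a} c_2$ iff for all but finitely many $m$ and every $b \in p(\gC)$ we have $b$ good over $(\bar a, a^*_m, c_1) \Leftrightarrow b$ good over $(\bar a, a^*_m, c_2)$. Using $\circledast^n_{\bar a, p_1, q_1, r_1}$ and a claim on averages (the ``all but finitely many $\equiv$ infinitely many'' dichotomy alluded to in Definition \ref{c26}(3), cf. \cite{Sh:705}) one checks that $E_{\bar a}$ is a well-defined equivalence relation, invariant under automorphisms fixing $\bar a$, stable under $\le_{\gs}$-extensions, and refining ``$\pm p$''; since $\gs$ is equivalence-closed (Definition \ref{b71}), or after passing to $\gs\langle\bbE\rangle$, there is a ${\gk}_{\gs}$-definable function $\mathbf F$ with $\mathbf F(c, \bar a)$ a name for $c/E_{\bar a}$, and we set $c^* = \mathbf F(c, \bar a)$. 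To see $\ortp_{\gs}(c^*, \bar a, \gC)$ is $(p_1, n)$-simple in the sense of Definition \ref{c26}(3B), take $\bar a^* = \langle a^*_m : m \le m_*\rangle$ with $m_*$ large enough --- a bound depending only on $w_p(q_1)$, uniform in $c$ by the claim on averages --- which is good over $\bar a$ by Claim \ref{c20}; by clause $(c)$ pick a good $m \le m_*$ and its $n$ witnesses $b_0, \ldots, b_{n-1} \in p(\gC)$, independent over $\bar a$, so $\langle b_0, \ldots, b_{n-1}\rangle$ is good over $\bar a$; finally $c/E_{\bar a}$ is recovered from $\bar a$, $\bar a^*$ and the $b_\ell$ (goodness over $(\bar a, a^*_j)$ for $j \le m_*$ being definable from them), giving $c^* \in \dcl(\bar a, \bar a^*, b_0, \ldots, b_{n-1})$ with exactly $n$ of the $b_\ell$'s by clause $(d)$. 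The main obstacle is the extraction of $r_1$ in part (1) --- passing from a model-sized witness to a finite, basic-type parameter over which $p$ and $q$ genuinely fail to be weakly orthogonal and the full weight $n = w_p(q)$ is attained --- and, behind part (2), the claim on averages that makes the cofinite/finite dichotomy available so that $E_{\bar a}$ is an honest equivalence relation with $c/E_{\bar a}$ determined by finitely much data.
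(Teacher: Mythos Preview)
Your proposal follows the same high-level strategy as the paper's proof: pass to a brimmed extension where $p,q$ cease to be weakly orthogonal, extract a finite parameter coding the witnessing configuration $\langle c, b_0,\dots,b_{n-1}\rangle$, and let $r_1$ be the type of that parameter over $\bar a$. Part (2) is handled identically via an equivalence relation and equivalence-closure, as in Claim \ref{c38}.

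The difference is in how clause (c) is obtained. The paper is more concrete about placement: it takes $M \le_{\gs} M_1 \le_{\gs} M_2$ with $M_1$ $(\lambda,*)$-brimmed over $M$ and $(M_1,M_2,c)\in K^{3,\pr}_{\gs(\brim)}$, chooses the coding parameter $\bar a^* \in M_1$, and then uses brimmedness of $M_1$ over $M$ to find the entire independent sequence $\langle \bar a^*_m : m<\omega\rangle$ of $r$-realizations \emph{inside $M_1$}, with $\bar a^*_\omega = \bar a^*$. Since $\ortp(c\,\bar a^*_m, M, M_2)$ does not depend on $m$, one can find the witnesses $\langle b^m_\ell:\ell<n\rangle$ in $M_2$ for \emph{every} $m$ uniformly; clauses (c) and (d) then fall out directly. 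Your argument instead takes an abstract independent sequence and a generic $c$, appeals to ``homogeneity over $(\bar a, a^*_m)$'' to get witnesses, and then runs a weight-contradiction to upgrade ``for each $m$, some $c'$ works'' to ``for our fixed $c$, infinitely many $m$ work''. That upgrade is the soft spot: homogeneity only gives, for each $m$, \emph{some} $c'$ and $b'$; getting the fixed $c$ to work requires exactly the kind of uniformity the paper buys by placing all $a^*_m$ in $M_1$ below the prime triple $(M_1,M_2,c)$. Your weight-contradiction can be made to work, but it needs a careful translation between the ``good over $(\bar a, a^*_m, c)$'' relation on finite tuples and $p$-weight over a model; the paper's arrangement avoids this entirely.
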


\noindent
See proof below.
\begin{claim}
\label{c44}

0) Assuming $ A \subseteq  \mathfrak{C} $  and 
$ a \in \mathfrak{C}  $ is called finitary  when 
it is definable over $ \{ a_0, \dots , a_{n-1}\}  $ 
where each $ a_ {\ell} $ is in $ \mathfrak{C} $  
and is good over $  A $  inside $ \mathfrak{C} $. 

\noindent
1) If $a \in \dcl(\cup\{A_i:i < \alpha\}
\cup A,{\gC})$ and $\ortp(a,A,{\gC})$ is finitary
over (A) and
$\{A_i:i < \alpha\}$ is independent over $A$ \then \, for some finite
$u \subseteq \alpha$ we have $a \in \dcl(\cup\{A_i:i \in u\} \cup A,
{\gC})$. 

\noindent
2) If $\ortp(b,\bar a,{\gC})$ is ${\mathbf P}$-simple, \then \, it is
finitary.

\noindent
3) If $\{A_i:i < \alpha\}$ is independent over $A$ and $a$ is finitary
over $A$ then for some finite $u \subseteq \alpha$ (even $|u| <
\wg(c,A)),\{A_i:i \in \alpha \backslash u\}$ is independent over $A,A
\cup \{c\})$ (or use $(A',A''),(A',A'' \cup \{c\})$).
\end{claim}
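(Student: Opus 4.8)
The plan is to reduce everything to the finitely-many-dimensions bookkeeping already available: weight is finite (\ref{c32}(1)), independence is preserved by reordering (\ref{c20}(2)), and a good-over element is controlled by finitely many regular components via the weight calculus of \S1 together with \ref{c17}. For part (0) there is nothing to prove beyond unwinding Definition \ref{c11}: ``$a$ is finitary over $A$'' simply names the situation that $\ortp(a,A,\gC)$ is definable over a finite tuple of elements each lying in $\good(A,\gC)$, so this clause is a definition that I would record and then invoke in (1)--(3). For part (2), if $\ortp(b,\bar a,\gC)$ is $\mathbf P$-simple then by Definition \ref{c26}(2) there are $n$ and explicitly $\mathbf P$-regular $p_0,\dots,p_{n-1}\in\cS^{\bs}_{\gs}(\bar a,\gC)$ with $b\in\dcl(\bar a\cup\bigcup_{\ell<n}p_\ell(\gC),\gC)$; picking finitely many realizations $b^\ell_j$ of the $p_\ell$ that actually witness the $\dcl$, each $b^\ell_j$ is good over $\bar a$ (it realizes a type in $\cS^{\bs}_{\gs}(\bar a,\gC)$, hence lies in $\good(\bar a,\gC)$ by \ref{c17}(1)), so $b$ is definable over finitely many elements good over $\bar a$, i.e. finitary.

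For part (1), suppose $a\in\dcl(\bigcup\{A_i:i<\alpha\}\cup A,\gC)$ and $\ortp(a,A,\gC)$ is finitary over $A$, witnessed by $a_0,\dots,a_{k-1}$ each good over $A$. First I would replace $a$ by the finite tuple $\bar a'=\langle a_0,\dots,a_{k-1}\rangle$: since $\ortp(a,A,\gC)=\ortp(a,\bar a',\gC)$-definable and each $a_m$ realizes a basic type over some model containing $A$, it suffices to show each $a_m$ already lies in $\dcl(\bigcup\{A_i:i\in u\}\cup A,\gC)$ for a finite $u$, and then take the union of these finitely many $u$'s. So I am reduced to the case $a$ itself is good over $A$: $\ortp(a,M,\gC)\in\cS^{\bs}_{\gs}(M)$ for some $M$ with $A\subseteq M$, definable over a finite $\bar a\subseteq A$. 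Now build a model $M^+$ with $M\le_{\gs}M^+$ containing $a$ and containing witnesses $M_i$ ($i<\alpha$) for the independence of $\langle A_i:i<\alpha\rangle$ over $A$; by the finite character of non-forking (the same device used in the proof of \ref{a8}(1)) and the fact that $\ortp(a,\bigcup M_i\cup M,\gC)$ forks over $M$ only through finitely many of the $M_i$ — because $w_{\gs}(a,M,-)$ is finite by \ref{a11}/\ref{c32}(1) and \ref{w.3}(2) bounds how many independent pieces $a$ can ``use'' — there is a finite $u\subseteq\alpha$ with $\ortp(a,\bigcup\{M_i:i\notin u\}\cup M,\gC)$ not forking over $M$; since $a\in\dcl$ of the whole union, it must in fact already be in $\dcl(\bigcup\{A_i:i\in u\}\cup A,\gC)$, for otherwise an automorphism of $\gC$ over $\bigcup\{A_i:i\in u\}\cup A$ moving $a$ would, by non-forking and stationarity of $\ortp(a,M,\gC)$, extend to fix the rest — contradiction. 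This is the step I expect to be the main obstacle: turning the non-forking/weight bound into the $\dcl$ conclusion cleanly, i.e. showing that ``$a$ does not fork over the part indexed by $u$'' plus ``$a$ is in the definable closure of everything'' forces ``$a$ is in the definable closure of the part indexed by $u$.'' The honest way is to invoke that $\gs$ has type bases (Hypothesis \ref{c2}(2)) so $\bas(\ortp(a,M^+,\gC))\in M$, hence lies in $\dcl(\bigcup\{A_i:i\in u\}\cup A,\gC)$, and then $a\in\acl$ of that set by regularity; to upgrade $\acl$ to $\dcl$ one uses that the original hypothesis already put $a$ in a $\dcl$, so the $\acl$-orbit of $a$ over $\bigcup\{A_i:i\in u\}\cup A$ is a singleton.

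Part (3) is the parallel statement on the independence side rather than the $\dcl$ side, and I would prove it by the same reduction: with $a$ finitary over $A$ via good-over-$A$ elements $a_0,\dots,a_{k-1}$, apply \ref{w.3}(2) (the ``removing $\le w$ elements restores independence'' lemma) and \ref{c20}(3)(c) finitely many times — once for each $a_m$ — to extract a finite $u\subseteq\alpha$, of size at most $\sum_m w_{\gs}(a_m,\bar a)$ and hence at most $\wg(c,A)$ after the obvious additivity of weight along the sequence \ref{c35}(2), such that $\{A_i:i\in\alpha\setminus u\}$ is independent over $(A,A\cup\{c\})$; the bracketed alternative phrasing with $(A',A'')$, $(A',A''\cup\{c\})$ is just the relativized form, handled identically. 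The only genuinely new ingredient across all three parts is the passage from ``forks through finitely many pieces'' to ``definable/independent over those finitely many pieces,'' and that is exactly what type bases plus the finiteness of weight give us.
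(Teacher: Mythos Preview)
The paper states Claim \ref{c44} with no proof whatsoever; it is followed immediately by Definition \ref{c50}, and the next proof block is for Claim \ref{c41}. So there is nothing in the paper to compare your argument against, and your proposal is being judged on its own.

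Your overall strategy --- finitary implies finite weight, finite weight bounds how many independent pieces can interact with $a$, then pass from non-forking to $\dcl$ via type bases and an automorphism argument --- is the right shape, and parts (0), (2), (3) are fine as sketched. But there is a genuine gap in your reduction for part (1). You write that it suffices to show each witness $a_m$ lies in $\dcl(\bigcup\{A_i:i\in u\}\cup A,\gC)$ for some finite $u$. That sufficiency is correct as a logical implication, but you cannot prove the premise: the $a_m$'s are just arbitrary elements of $\good(A,\gC)$ witnessing that $a$ is finitary, and nothing in the hypotheses places them anywhere near $\bigcup_i A_i$. They could be independent from every $A_i$ over $A$. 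So the reduction ``replace $a$ by $\bar a' = \langle a_0,\dots,a_{k-1}\rangle$'' does not go through.

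What the finitary hypothesis actually buys you is that $a$ has finite weight over $A$ (this is the content of \ref{c50}(4) together with \ref{c32}(1), or directly the additivity in \ref{c35}(2) applied to the $a_m$'s). You should work with $a$ itself: choose a model $M$ with $A\subseteq M$ brimmed, realize the independence of $\langle A_i\rangle$ via $\langle M_i\rangle$ over $M$, and use finite weight of $\ortp(a,M,\gC)$ plus \ref{w.3}(2) to find a finite $u$ such that $\ortp(a,-)$ does not fork over $M\cup\bigcup_{i\in u}M_i$. The passage from there to $\dcl$ is then exactly the automorphism/type-base step you already outlined in the second half of your paragraph; that part is sound. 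Drop the faulty reduction and go straight to that argument.
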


\begin{definition}
\label{c50}
1) $\dcl(A) = \{a$: for every automorphism $f$ of ${\gC},f(a) = a\}$. 

\noindent
2) $\dcl_{\fin}(A) = \cup\{\dcl(B):B \subseteq A$ finite$\}$. 

\noindent
3) $a$ is finitary over $A$ \If \, there are $n < \omega$ and
$c_0,\dotsc,c_{n-1} \in$ good$(A)$ such that $a \in \dcl(A \cup
\{c_0,\dotsc,c_{n-1}\}$ (or dcl$_{\text{fin}}$?). 

\noindent
4) For such $A$ let $\wg(a,A)$ be $w(\tp(a,A,\gC))$ when well defined.

\noindent
5) Strongly simple implies simple.
\end{definition}

\begin{claim}
\label{c53}
In Definition \ref{c26}(3), for
some $m,k < \omega$ large enough, for every $c \in q({\gC})$ there
are $b_0,\dotsc,b_{m-1} \in \bigcup\limits_{\ell < n} p_\ell({\gC})$ such
that $c \in \rm dcl(\bar a \cup \{a^*_\ell:\ell < k\} \cup 
\{b_\ell:\ell < m\})$.
\end{claim}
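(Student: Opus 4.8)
The plan is to derive \ref{c53} from two ingredients: the \emph{constancy of weight} on $q(\gC)$, and the finite-support principles of \ref{c44}. So first I would record the uniformities that do all the work. Since $q$ is $\mathbf P$-simple, by \ref{c44}(2) the type $q=\ortp_\gs(c,\bar a,\gC)$ is finitary; by \ref{c32}(1) the quantity $\wg(c,\bar a)$ is a natural number, and by \ref{c50}(4) and \ref{a11} it equals $w(q)$, hence is one and the same $w^*$ for \emph{every} $c\in q(\gC)$; moreover weight does not increase along base extensions, so $\wg(c,\bar a\cup B)\le w^*$ for every finite $B\subseteq\gC$. These are exactly the facts that will force the bound $m,k\le w^*$.

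Next, fix $c\in q(\gC)$. By Definition \ref{c26}(2)--(3) we have $c\in\dcl\bigl(\bar a\cup\bigcup_{\ell<n}p_\ell(\gC)\cup\{a^*_\ell:\ell<\omega\}\bigr)$. I would first reorganize the $\mathbf P$-side into an independent family over $\bar a$: pick $\mathbf J\subseteq\bigcup_{\ell<n}p_\ell(\gC)$ maximal independent over $\bar a$; since each $p_\ell$ is regular and $\gs$ has type bases, every element of $\bigcup_\ell p_\ell(\gC)$ lies in $\dcl(\bar a\cup\mathbf J)$ — this is the one place where the regularity of the $p_\ell$'s and the type-base hypothesis of \S3 enter, exactly as in the proofs of \ref{a8} and \ref{w.3} — so $c\in\dcl(\bar a\cup\mathbf J\cup\{a^*_\ell:\ell<\omega\})$. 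Now \ref{c44}(1),(3) apply to the independent family $\langle\{d\}:d\in\mathbf J\rangle$ over $\bar a$ (using finitariness of $q$): there is a finite $\mathbf J_c\subseteq\mathbf J$ with $|\mathbf J_c|<\wg(c,\bar a)\le w^*$ and $c\in\dcl(\bar a\cup\mathbf J_c\cup\{a^*_\ell:\ell<\omega\})$. Then, dropping the finitely many $a^*_\ell$ that are not good over $(\bar a\cup\mathbf J_c,c)$ and applying \ref{c44}(1),(3) once more — now to the independent family $\langle\{a^*_\ell\}:\ell<\omega\rangle$ over the base $\bar a\cup\mathbf J_c$ — gives a finite $u_c\subseteq\omega$ with $|u_c|<\wg(c,\bar a\cup\mathbf J_c)\le w^*$ and $c\in\dcl(\bar a\cup\mathbf J_c\cup\{a^*_\ell:\ell\in u_c\})$. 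So for each $c$ the two \emph{sizes} $|\mathbf J_c|,|u_c|$ are $\le w^*$, and padding $\mathbf J_c$ with repetitions I may take $m:=w^*$.

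The remaining point — which I expect to be the genuine obstacle — is that the indices in $u_c$ a priori range through all of $\omega$ (and $\bigcup_c u_c$ may be cofinal), whereas \ref{c53} asks for the \emph{initial} segment $\{a^*_\ell:\ell<k\}$ with $k$ independent of $c$. I would handle this in one of two ways. Option (i): use that by \ref{c38}, \ref{c41} the whole assignment $c\mapsto(\mathbf J_c,\dots)$ is mediated by a $\gk_\gs$-definable function $\mathbf F$ and that $\mathbf P$ is automorphism-invariant (\ref{a17}(c)); since $\langle a^*_\ell:\ell<\omega\rangle$ is an independent, hence indiscernible over $\bar a$, sequence of realizations of $r$, one can for a given $c$ replace the ``far'' probes $a^*_\ell$ ($\ell\in u_c$) by realizations in $r(\gC)$ standing in the same ``good over $(\bar a,c)$'' relation and absorb them into the $\mathbf J_c$-part (after enlarging the finite list $p_0,\dots,p_{n-1}$ once, which costs nothing), so that only the first $|u_c|\le w^*$ probes are actually used. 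Option (ii): argue by contradiction in the style of \ref{a8}: if no uniform $k$ worked one could, along an increasing sequence of $c$'s, build an $\le_\gs$-increasing $\omega$-chain whose union produces a realization of $q$ whose type over the union of bases genuinely requires infinitely many probes, contradicting the finite character of non-forking (equivalently the finiteness of $w^*$). Either way one gets $k:=w^*$, and assembling the pieces: with $m=k=w^*$, for every $c\in q(\gC)$ we have produced $b_0,\dots,b_{m-1}\in\bigcup_{\ell<n}p_\ell(\gC)$ with $c\in\dcl(\bar a\cup\{a^*_\ell:\ell<k\}\cup\{b_\ell:\ell<m\})$, as required. Everything except this last uniformization is routine bookkeeping with \ref{c44} and the constancy of $w(q)$ on $q(\gC)$; the care is needed only because permuting the probe sequence $\langle a^*_\ell\rangle$ must not destroy its encoding of $c$, and this is what \ref{c41}/\ref{c38} and the invariance of $\mathbf P$ are for.
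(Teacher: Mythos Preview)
The paper does not actually give a proof of \ref{c53}: the claim is stated and the next proof environment is for \ref{c41}. So there is nothing to compare against, and I can only assess your argument on its own.

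Two steps in your argument are not justified.

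\textbf{(1) Reducing $\bigcup_\ell p_\ell(\gC)$ to a maximal independent set.} You write that for $\mathbf J\subseteq\bigcup_\ell p_\ell(\gC)$ maximal independent over $\bar a$, ``every element of $\bigcup_\ell p_\ell(\gC)$ lies in $\dcl(\bar a\cup\mathbf J)$.'' Regularity of the $p_\ell$ gives you a pregeometry, so every $d\in\bigcup_\ell p_\ell(\gC)$ \emph{depends} on $\mathbf J$ in the forking sense; it does \emph{not} give $d\in\dcl(\bar a\cup\mathbf J)$. (Already in an algebraically closed field the generic type is regular and a transcendence basis is a maximal independent set, yet $\dcl$ of it is far from the whole field.) Without this inclusion you cannot feed an independent family into \ref{c44}(1), so the extraction of a finite $\mathbf J_c$ is unproved.

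\textbf{(2) Uniformizing $k$.} Your Option (i) proposes to ``absorb'' the far probes into the $b$-part ``after enlarging the finite list $p_0,\dots,p_{n-1}$ once.'' But the $p_\ell$ are fixed data from the definition; enlarging the list changes the statement you are proving. Your Option (ii) is only a sketch: you never exhibit the $\le_{\gs}$-chain, and the finite character of non-forking bounds the \emph{number} of probes that can interact with a single $c$, not their \emph{position} in the sequence, which is exactly the issue.

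A cleaner route avoids \ref{c44} altogether and uses the content of Definition \ref{c26}(3) directly. Since $w(q)=w^*$, for each $c\in q(\gC)$ the set $v_c=\{\ell:a^*_\ell$ is not good over $(\bar a,c)\}$ has size $\le w^*$. For $\ell\notin v_c$ the tuples $a^*_\ell$ all realize the same type over $\bar a\cup\{c\}$, so the ``pattern'' $S_\ell(c)=\{b\in p(\gC):b$ not good over $(\bar a,a^*_\ell,c)\}$ is the same for all such $\ell$; by the iff in \ref{c26}(3) this common pattern determines $c$. Moreover $S_\ell(c)$ is a closed set of rank $n=w_p(c,\bar a\cup\{a^*_\ell\})$ in the $p$-pregeometry, hence is determined by any basis $b_0,\dots,b_{n-1}$ of it. An automorphism fixing $\bar a,a^*_\ell,b_0,\dots,b_{n-1}$ therefore fixes $S_\ell(c)$ setwise, hence fixes $c$; so $c\in\dcl(\bar a\cup\{a^*_\ell\}\cup\{b_0,\dots,b_{n-1}\})$. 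Taking $k=w^*+1$ guarantees some $\ell<k$ lies outside $v_c$, and this single $\ell$ already suffices. This gives the uniform $m$ and $k$ simultaneously, with no appeal to \ref{c44} and no need to permute the probe sequence.
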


\begin{PROOF}{\ref{c41}}
Let $M_1,M_2 \in K_{\gs(\brim)}$ be
such that $M \le_{\gs} M_1 \le_{\gs} M_2,M_1 \,
(\lambda,*)$-brimmed over $M,p_\ell \in {\cS}^{\bs}_{\gs}(M_\ell)$ a 
non-forking extension of $p,q_\ell \in {\cS}^{\bs}_{\gs}(M_\ell)$ is 
a non-forking extension of $q,c \in M_2$ realizes $q_1$ and 
$(M_1,M_2,c) \in K^{3,\pr}_{\gs(\brim)}$.  Let
$b_\ell \in p_1(M_2)$ for $\ell < n^* =: w_p(q)$ be such that
$\{b_\ell:\ell < n^*\}$ is independent in $(M_1,M_2)$, let $\bar a^*
\in {}^{\omega >}(M_1)$ be such that $\ortp_{\gs}(\langle
c,b_0,\dotsc,b_{n-1} \rangle,M_1,M_2)$ is definable over $\bar a^*$
and $r = \ortp_{\gs}(\bar a^*,M_1,M_2),r^+ = \ortp(\bar
a^* \char 94 \langle b_0,\dotsc,b_{n-1} \rangle,M,M_2)$.

Let $\bar a \in {}^{\omega >} M$ be such that $\ortp_{\gs}(\bar
a^*,\langle c,b_0,\dotsc,b_{n-1} \rangle,M,M_2)$ is definable over
$\bar a$.  As $M_1$ is $(\lambda,*)$-saturated over $M$ there is
$\{\bar a^*_f:f < \omega\} \subseteq r({\gC})$ independent in
$(M,M_1)$ moreover letting $a^*_\omega = \bar a^*$, we have $\langle
a^*_\alpha:\alpha \le \omega \rangle$ is independent in $(M,M_1)$.  Clearly
$\ortp_{\gs}(c \bar a^*_n,M,M_2)$ does not depend on $n$ hence we can
find $\left < \langle b^\alpha_\ell:\ell < n \rangle:\alpha \le \omega
\right>$ such that $b^\alpha_\ell \in M_2,b^\omega_\ell = b_\ell$ and
$\{c \bar a^*_\alpha,b^\alpha_0 \ldots b^\alpha_{n-1}:\alpha \le
\omega\}$ (as usual as index set is independent in $(M_1,M_2)$.

The rest should be clear.
\end{PROOF}

\begin{definition}
\label{c56}
Assume $\bar a \in {}^{\omega >} {\gC},n < \omega,p,q,r \in 
\cS^{\bs}(M)$ are as in the
definition of $p$-simple$^{[-]}$ but $p,q$ are weakly
orthogonal (see e.g. Definition \ref{g29}(1)) let $p$ be a definable related
function such that for any $\bar a^\nu_\ell \in r({\gC}),\ell <
k^*$ independent mapping $c \mapsto \langle \{b \in q({\gC}):R
{\gC} \models R(b,c,\bar a^*_\ell)\}$ is a one-to-one function
from $q({\gC})$ into $\{\langle J_\ell:\ell < k^* \rangle:J_\ell
\subseteq p({\gC})$ is closed under dependence and has $p$-weight
$n^*\}$ 


\noindent
1) We can define $E = E_{p,q,r}$ a two-place relation over 
$r({\gC}):\bar a^*_1 E \bar a^*_2$ \Iff \, $\bar a_1,\bar a_2 \in
r(\gC)$ have the same projection common to $p(\gC)$ and $q(\gC)$. 

\noindent
2) Define unit-less group on $r/E$ and its action on $q({\gC})$.
\end{definition}

\begin{remark}
\label{e.22}
1) A major point is: as $q$ is $p$-simple,
$w_p(-)$ acts ``nicely" on $p({\gC})$ so if $c_1,c_2,c_3
\in q({\gC})$ then $w_p(\langle c_1,c_2,c_3 \rangle \bar a) \le 3n^*$.  This
enables us to define average using a finite sequence seem quite
satisfying.  Alternatively, look more at averages of independent
sets. 

\noindent
2) \underline{Silly Groups}:  Concerning interpreting groups note that in our
present context, for every definable set $P^M$ we can add the group of
finite subsets of $P^M$ with symmetric difference (as addition).

\noindent
3) The axiomatization above has prototype ${\mathfrak s}$ where $K_{\gs}
= \{M:M$ a $\kappa$-saturated model of $T\},\le_{\gs} = \prec
\restriction K_{\gs},\nonfork{}{}_{\gs}$ is non-forking, $T$ a
stable first order theory with $\kappa(T) \le \cf(\kappa)$.
But we may prefer to formalize the pair $({\gt},{\gs}),{\gs}$ as
above, $K_{\gt} =$ models of $T,\le_{\gt} = \prec
\restriction K_{\gt},\nonfork{}{}_{\gt}$ is non-forking.

\relax From ${\gs}$ we can reconstruct a ${\gt}$ by closing 
${\gk}_{\gs}$ under direct limits, but in interesting cases we end up
with a bigger ${\gt}$.
\end{remark}
\newpage

\centerline {Part II Generalizing Stable Classes} \label{part2}

\section {II Introduction} \label{4}

In this part we try to deal with classes like ``$\aleph_1$-saturated
models of a first order theory $T$ and even a stable one" rather
than of ``a model of $T$".  
The parallel problem for ``model of $T$, even superstable one"
is the subject of \cite{Sh:h}.
\bigskip

\centerline {$* \qquad * \qquad *$}

Now some construction goes well by induction on cardinality, say by
dealing with $(\lambda,\cP(n))$-system of models but not all.
E.g. starting with $\aleph_0$ we may consider $\lambda > \aleph_0$, so
can find $F:[\lambda]^{\aleph_0} \rightarrow \lambda$ such that there
is an infinite decreasing sequence of $F$-closed subsets of $\lambda,u
\in [\lambda]^{< \aleph_0} \Rightarrow F(u)=0$ maybe such that $u \in
[\lambda]^{\le \aleph_0} \Rightarrow |c\ell_F(u)| \le \aleph_0$.
Let $\langle u_\alpha:\alpha < \alpha_* \rangle$ list $\{c \ell_F(u):u
\in [\lambda]^{\le \aleph_0}\}$ such that $c \ell_F(u_\alpha)
\subseteq c \ell_F(u_\beta) \Rightarrow \alpha \le \beta$ we try to
choose $M_{u_\alpha}$ by induction on $\alpha$.
\newpage

\section {II Axiomatizing a.e.c. without full continuity} \label{5}

\subsection {a.e.c.} \label{5A} \
\bigskip

Classes like ``$\aleph_1$-saturated models of a first order $T$, which
is not superstable", does not fall under a.e.c., still they are close
and below we suggest a framework for them.  So for increasing sequences of
short length we have weaker demand.
\smallskip

\noindent
We shall 
say more   on primes later. 

We shall lift a $(\mu,\lambda,\kappa)$-a.e.c. to
$(\infty,\lambda,\kappa)$-a.e.c. (see below), so actually 
$k_\lambda$ suffice, but for our main objects, good frames, this is
more complicated as its properties (e.g., the amalgamation property)
are not necessarily preserved by the lifting.

This section generalizes \cite[\S1]{Sh:600}, in some cases the
differences are minor, sometimes the whole point is the difference.

\begin{convention}
\label{f0}
1) In this section $\gk$ will denote a directed a.e.c., see Definition
\ref{f2}, may write d.a.e.c. (the $d$ stands for directed).

\noindent
2) We shall write (outside the definitions)
$\mu_{\gk},\lambda_{\gk},\kappa_{\gk}$. 
\end{convention}

\begin{definition}
\label{f2}  
Assume $\lambda < \mu,\lambda^{<
\kappa} = \lambda$ (for notational simplicity) and $\alpha < \mu \Rightarrow 
|\alpha|^{< \kappa} < \mu$ and $\kappa$ is regular.

We say that ${\gk}$ is a $(\mu,\lambda,\kappa)-1$-d.a.e.c. 
(we may omit or add the ``$(\mu,\lambda,\kappa)$" by
Ax(0)(d) below, similarly in similar definitions;
if $\kappa = \aleph_0$ we may omit it, 
instead $\mu = \mu^+_1$ we may write $\le \mu_1$)
\when \, the axioms below hold; we 
write d.a.e.c. or 0-d.a.e.c. when we omit Ax(III)(b),(IV)(b)
 Ax(0) ${\gk}$ consists of
\mn
\begin{enumerate}
\item[$(a)$]  $\tau_{\gk}$, a vocabulary with each predicate
and function symbol of arity $\le \lambda$
\sn
\item[$(b)$]    $K$, a class of $\tau$-models 
\sn
\item[$(c)$]   a two-place relation $\le_{\gk}$ on $K$
\sn
\item[$(d)$]    the cardinals $\mu = \mu_{\gk},\mu(\gk),\lambda =
\lambda_{\gk} = \lambda(\gk)$ 
and $\kappa = \kappa_{\gk} = \kappa(\gk)$
(so $\mu > \lambda =
\lambda^{< \kappa} \ge \kappa = \cf(\kappa)$ and $\alpha < \mu
\Rightarrow |\alpha|^{< \kappa} < \mu$)
\end{enumerate}
\mn
such that
\mn
\begin{enumerate}
\item[$(e)$]  if $M_1 \cong M_2$ then $M_1 \in K \Leftrightarrow
M_2 \in K$
\sn
\item[$(f)$]  if $(N_1,M_1) \cong (N_2,M_2)$ \then \, $M_1
\le_{\gk} N_1 \Rightarrow M_2 \le_{\gk} N_2$
\sn
\item[$(g)$]  every $M \in K$ has cardinality $\ge \lambda$ but $< \mu$
\sn
\item[$(Ax(I)(a))$]  $M \le_{\gk} N \Rightarrow M \subseteq N$
\sn
\item[$(Ax(II)(a))$]  $\le_{\gk}$ is a partial order
\sn
\item[$Ax(III)$]   assume that $\langle M_i:i < \delta \rangle$ is a
$\le_{\gk}$-increasing sequence and $\| \cup\{M_i:i < \delta\}\| <
\mu$ \then \,
\sn
\begin{enumerate}
\item[$(a)$]   (existence of unions) if $\cf(\delta) \ge \kappa$ 
then there is $M \in K$ 
such that $i < \delta \Rightarrow M_i \le_{\gk} M$ and
$|M| = \cup\{|M_i|:i < \delta\}$ but not necessarily $M =
\bigcup\limits_{i < \delta} M_i$
\sn
\item[$(b)$]  (existence of limits) there is $M \in K$ such that $i <
\delta \Rightarrow M_i \le_{\gk} M$
\end{enumerate}
\item[$Ax(IV)(a)$]     (weak uniqueness of limit = weak smoothness)
for $\langle M_i:i < \delta\rangle$ as above, 
\sn
\begin{enumerate}
\item[$(a)$]   if $\cf(\delta) \ge \kappa$ and $M$ is as in Ax(III)(a)
and $i <  \delta \Rightarrow M_i \le_{\gk} N$ then $M \le_{\gk} N$
\sn
\item[$(b)$]  if $N_\ell \in K$ and $i < \delta \Rightarrow M_i
\le_{\gk} N_\ell$ for $\ell =1,2$ \then \, there are $N \in K$ and
$f_1,f_2$ such that $f_\ell$ is a $\le_{\gk}$-embedding of
$N_\ell$ into $N$ for $\ell=1,2$ and $i < \delta \Rightarrow f_1
\restriction M_i = f_2 \restriction M_i$
\end{enumerate}
\item[$Ax(V)$]    if $N_\ell \le_{\gk} M$ for $\ell=1,2$ and
$N_1 \subseteq N_2$ \then \, $N_1 \le_{\gk} N_2$
\sn
\item[$Ax(VI)$]   (L.S.T. property)
if $A \subseteq M \in K,|A| \le \lambda$ \then \,
there is $M \le_{\gk} N$ of cardinality $\lambda$ such that $A
\subseteq M$.
\end{enumerate}
\end{definition}

\begin{remark}
There are some more axioms listed in \ref{f4}(5), but
we shall mention them in any claim in which they are used so no need
to memorize, so \ref{f4}(1)-(4) omit them?
\end{remark}

\begin{definition}
\label{f4}
1) We say $\gk$ is a 4-d.a.e.c. or d.a.e.c$^+$ when it is a
$(\lambda,\mu,\kappa)-1$-d.a.e.c. and satisfies Ax(III)(d),Ax(IV)(e)
   below.

\noindent
2) We say $\gk$ is a 2-d.a.e.c. or d.a.e.c.$^\pm$ \when \, is a
$(\lambda,\mu,\kappa)-0$-d.a.e.c. and Ax(III)(d), Ax(IV)(d) below holds.

\noindent 
3) We say $\gk$ is 5-d.a.e.c. \when \, it is 1-d.a.e.c. and Ax(III)(f)
   holds.

\noindent
4) We say $\gk$ is 6-d.a.e.c. \when \, it is a 1-d.a.e.c. and
   Ax(III)(f) + Ax(IV)(f).

\noindent
5) Concerning Definition \ref{f2}, we consider the following axioms:
\medskip

\noindent
Ax(III)(c) \qquad if $I$ is $\kappa$-directed and $\bar M =
\langle M_s:s \in I\rangle$ is $\le_{\gk}$-increasing $s \le_I$

\hskip60pt  $t \Rightarrow M_s \subseteq M_s$ and 
$\Sigma\{\|M_s\|:s \in I\} < \mu$ \then \, $\bar M$ has a 

\hskip60pt $\le_{\gs}$-upper bound, $M$, i.e. $s \in I
\Rightarrow M_s \le_{\gk} M$.  
\medskip

\noindent
Ax(III)(d) \qquad (union of directed system) if $I$ is $\kappa$-directed,
$|I| < \mu,\langle M_t:t \in I \rangle$ 

\hskip60pt is $\le_{\gk}$-increasing
and $\| \cup \{M_t:t \in I\}\| < \mu$ \then \, there is

\hskip60pt one and only one $M$ with universe 

\hskip60pt $\cup\{|M_t|:t \in I\}$ such that 
$M_s \le_{\gk} M$ for every $s \in I$ 

\hskip60pt  we call it the $\le_{\gk}$-union of $\langle M_t:t \in I\rangle$.
\medskip

\noindent
Ax(III)(e) \qquad like Ax(III)(c) but $I$ is just directed
\medskip

\noindent
Ax(III)(f) \qquad If $\bar M = \langle M_i:i < \delta\rangle$ is
$\le_{\gk}$-increasing, $\cf(\delta) < \kappa$ and $|\cup\{M_i:i <$

\hskip60pt  $\delta\}| < \mu$ \then \, 
there is $M$ which is $\le_{\gk}$-prime over $\bar M$, i.e.
\mn
\begin{enumerate}
\item[${{}}$]  $(*) \quad$ if $N \in K_{\gk}$ and $i < \delta \Rightarrow M_i
  \le_{\gk} N$ \then \, there is a $\le_{\gk}$-embedding 

\hskip40pt  of $M$ into $M$ over $\cup\{|M_i|:i < \delta\}$.
\end{enumerate}
\medskip

\noindent
Ax(IV)(c) \qquad If $I$ is $\kappa$-directed and $\bar M = \langle
M_s:s \in I\rangle$ is $\le_{\gk}$-increasing and $N_1,N_2$ 

\hskip60pt  are $\le_{\gs}$-upper bounds of $\bar M$ \then \, for some 
$(N'_2,f)$ we have

\hskip60pt $N_2 \le_{\gk} N'_2$ and 
$f$ is a $\le_{\gk}$-embedding of $N_1$ into $N_2$ 

\hskip60pt  which is the identity on $M_s$ for every $s \in I$

\hskip60pt (this is a weak form of uniqueness)
\medskip

\noindent
Ax(IV)(d) \qquad If $I$ is a $\kappa$-directed partial order, $\bar M = \langle
M_s:s \in I\rangle$ is $\le_{\gk}$-increasing, 

\hskip60pt  $s \in I \Rightarrow M_s
\le_{\gk} M$ and $|M| = \cup|M_s|:s \in I\}$, \then \,

\hskip60pt  $\bigwedge\limits_{s} M_s \le_{\gk} N \Rightarrow M \le_{\gk} N$.
\medskip

\noindent
Ax(IV)(e) \qquad Like Ax(IV)(c) but $I$ is just directed.
\medskip

\noindent
Ax(IV)(f) \qquad If $I$ is directed and $\bar M = \langle M_s:s \in
I\rangle$ is $\le_{\gk}$-increasing \then \, there is 

\hskip60pt  $M$ which is a
$\le_{\gk}$-prime over $\bar M$, defined as in Ax(III)(f).
\end{definition}

\begin{claim}
\label{f6}
Assume\footnote{By \ref{f0} no need to say this} $\gk$ is a d.a.e.c.

\noindent
1) Ax(III)(d) implies Ax(III)(c).

\noindent
2) Ax(III)(e) implies Ax(III)(c) and it implies Ax(III)(b).

\noindent
3) Ax(IV)(d) implies Ax(IV)(c).

\noindent
4) Ax(IV)(e) implies Ax(IV)(c) and implies Ax(III)(b).

\noindent
5) In all the axioms in Definition \ref{f4} it is necessary that
$|\cup\{M_s:s \in I\}| < \mu_{\gk}$.

\noindent
6) Ax(IV)(b) implies that $\gk$ has amalgamation.
\end{claim}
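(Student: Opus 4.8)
The strategy is to read each implication off the axioms of Definitions~\ref{f2} and~\ref{f4}; only part~(6), and to a lesser degree (3)--(4), carries content. For (1): given a $\kappa$-directed $I$ and a $\le_{\gk}$-increasing $\bar M=\langle M_s:s\in I\rangle$ with $|\bigcup_s M_s|<\mu$, Ax(III)(d) produces a model $M$ with universe $\bigcup_s|M_s|$ and $M_s\le_{\gk}M$ for all $s$; this is in particular a $\le_{\gk}$-upper bound of $\bar M$, which is the conclusion of Ax(III)(c). For (2): Ax(III)(e) is Ax(III)(c) with ``$\kappa$-directed'' weakened to ``directed'', so Ax(III)(c) is its restriction to $\kappa$-directed index sets; and applying Ax(III)(e) to the ordinal $\delta$ (viewed as a directed index set) attached to a $\le_{\gk}$-increasing chain $\langle M_i:i<\delta\rangle$ with $\|\bigcup_i M_i\|<\mu$ yields an upper bound of the chain, which is the conclusion of Ax(III)(b).

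For (6), apply Ax(IV)(b) to the one-term sequence $\langle M_i:i<1\rangle$ with $M_0:=M$: the cardinality hypothesis holds since $\|M\|<\mu$ by Ax(0)(g), and ``$i<\delta\Rightarrow M_i\le_{\gk}N_\ell$'' becomes ``$M\le_{\gk}N_1$ and $M\le_{\gk}N_2$'', so the conclusion of Ax(IV)(b) is exactly a model $N\in K$ with $\le_{\gk}$-embeddings $f_1,f_2$ of $N_1,N_2$ into $N$ satisfying $f_1\restriction M=f_2\restriction M$, i.e.\ amalgamation (taking $\delta=0$ would instead give joint embedding). Part~(5) is an observation of the same kind: if $M\in K$ and $M_s\le_{\gk}M$ for all $s\in I$ then each $M_s\subseteq M$ by Ax(I)(a), so $|M|\supseteq\bigcup_s|M_s|$, while $\|M\|<\mu_{\gk}$ by Ax(0)(g); hence $|\bigcup_s M_s|<\mu_{\gk}$ is forced, and in each axiom of Definition~\ref{f4} the displayed inequality is therefore a consequence of the conclusion (the existence of such an $M$, or of the presupposed upper bounds $N_\ell$) rather than a restriction one could drop.

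For (3)--(4): Ax(IV)(e) is Ax(IV)(c) with ``$\kappa$-directed'' weakened to ``directed'', which gives the first half of (4). For (3) it suffices to treat a $\kappa$-directed $I$ with two upper bounds $N_1,N_2$ of $\bar M$: extract the canonical union $M$ with universe $\bigcup_s|M_s|$ (available for chains of cofinality $\ge\kappa$ from Ax(III)(a), for a $\kappa$-directed system after passing to such a cofinal chain, or directly from Ax(III)(d) when that is present), observe $\bigwedge_s M_s\le_{\gk}N_\ell$ and hence $M\le_{\gk}N_\ell$ by Ax(IV)(d), and then amalgamate $N_1$ and $N_2$ over $M$ as in the classical argument of \cite[\S1]{Sh:600}. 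For the rest of (4): regarding a chain $\langle M_i:i<\delta\rangle$ as a directed system, Ax(IV)(e) together with the existence clause Ax(III)(a) and the coherence axiom Ax(V) forces the Ax(III)(a)-union to lie $\le_{\gk}$-below every upper bound of $\bar M$ --- concretely, if $M$ is such a union and $N$ another upper bound, Ax(IV)(e) embeds $N$ into some $N'$ with $M\le_{\gk}N'$ over $\bar M$, the embedding is the identity on $|M|=\bigcup_i|M_i|$, so $M\le_{\gk}N'$ and $M\subseteq N$, whence $M\le_{\gk}N$ by Ax(V) --- which delivers the weak smoothness Ax(IV)(a) and, in this range of cofinalities, the relevant instances of Ax(III)(b).

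The only genuinely non-bookkeeping point is the interplay between directed systems and chains, and between chains of cofinality $\ge\kappa$ and of cofinality $<\kappa$, entering (3)--(4), together with the verification in (3) that the final amalgamation over $M$ is legitimately available. These reductions, with Ax(V), are precisely what \cite[\S1]{Sh:600} carries out in the elementary and a.e.c.\ settings, and I expect the same arguments to transfer verbatim to the present $(\mu,\lambda,\kappa)$-framework; this is where I would be most careful.
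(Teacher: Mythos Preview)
The paper gives no proof of this claim, so there is nothing to compare against directly; your treatment of parts (1), (2), (5), (6) is correct and is presumably what the author had in mind.

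For part (3), however, your argument has a genuine gap. You write that for a $\kappa$-directed $I$ one can extract the canonical union $M$ ``after passing to such a cofinal chain'', but a $\kappa$-directed partial order need not have any cofinal chain at all (e.g.\ $[\lambda]^{<\kappa}$ under inclusion when $\lambda>\kappa$). So the existence of the union $M$ with universe $\bigcup_s|M_s|$ is not available from Ax(III)(a) alone; it is precisely the content of Ax(III)(d), which you are not assuming here. Moreover, even granting $M$, the final step ``amalgamate $N_1,N_2$ over $M$'' presupposes amalgamation, which in a bare $0$-d.a.e.c.\ is not given --- it comes from Ax(IV)(b), and indeed amalgamation over a single $M$ is itself an instance of Ax(IV)(c) with a one-point $I$. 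So as written the argument is circular. Whether the intended hypothesis is ``$\gk$ is a $1$-d.a.e.c.'' (so Ax(IV)(b), hence amalgamation, is available) and whether Ax(III)(d) is tacitly assumed alongside Ax(IV)(d) is unclear from the paper; you should flag this ambiguity rather than paper over it with a reference to \cite{Sh:600}.

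For the second half of part (4), the route $\text{Ax(IV)(e)}\Rightarrow\text{Ax(III)(b)}$ is not as direct as you suggest. The natural path is $\text{Ax(IV)(e)}\Rightarrow\text{Ax(III)(e)}\Rightarrow\text{Ax(III)(b)}$, and the first implication is nontrivial: it is exactly what the paper proves later as Claim~\ref{f29}, by induction on $|I|$ (and even there a regularity hypothesis on $\mu_{\gk}$ is needed). Your sketch via Ax(V) and the Ax(III)(a)-union handles only the case $\cf(\delta)\ge\kappa$, where Ax(III)(b) is already subsumed by Ax(III)(a); the point of Ax(III)(b) is the short-cofinality case, and there your argument does not produce an upper bound.
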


\begin{definition}
\label{f11}
We say $\langle M_i:i < \alpha \rangle$ is 
$\le_{\gk}$-increasing $(\ge \kappa)$-continuous
\when \, it is $\le_{\gk}$-increasing and $\delta < \alpha$ and 
$\cf(\delta) \ge \kappa \Rightarrow |M_i| = \cup\{|M_j|:j < \delta\}$.
\end{definition}

\noindent
As an exercise we consider directed systems with mappings.
\begin{definition}
\label{f14}
1) We say that $\bar M = \langle M_t,h_{s,t}:s \le_I t\rangle$ is 
a $\le_{\gk}$-directed system \when \, $I$ is a directed partial order and
if $t_0 \le_I t_2 \le_I t_2$ then
$h_{t_2,t_0} = h_{t_2,t_1} \circ h_{t_1,t_0}$.

\noindent
1A) We say that $\bar M = \langle M_t,h_{s,t}:s \le_I t \rangle$ is a
$\le_{\gk}-\theta$-directed system when in addition $I$ is
$\theta$-directed.

\noindent
2) We omit $h_{s,t}$ when $s \le_I t \Rightarrow h_{s,t} = 
\id_{M_s}$ and write $\bar M = \langle M_t:t \in I\rangle$.

\noindent
3) We say $(M,\bar h)$ is a $\le_{\gk}$-limit of 
$\bar M$ when $\bar h = \langle
h_s:s \in I\rangle,h_s$ is a $\le_{\gk}$-embedding of $M_s$ into
$M_s$ and $s \le_I t \Rightarrow h_s = h_t \circ h_{t,s}$.

\noindent
4) We say $\bar M = \langle M_\alpha:\alpha < \alpha^*\rangle$ is
 $\le_{\gk}$-semi-continuous \when \,: (see Ax(III)(f) in \ref{f4})
\mn
\begin{enumerate}
\item[$(a)$]   $\bar M$ is $\le_{\gk}$-increasing
\sn
\item[$(b)$]   if $\alpha < \alpha^*$ has cofinality $\ge \kappa$
 then $M_\alpha = \cup\{M_\beta:\beta < \alpha\}$
\sn
\item[$(c)$]   if $\alpha < \alpha^*$ has cofinality $<\kappa$ then
$M_\delta$ is $\le_{\gk}$-prime over $\bar M \restriction \alpha$.
\end{enumerate}
\end{definition}

\begin{observation}
\label{f17}
[${\gk}$ is an d.a.e.c.]

\noindent
1) If $\bar M = \langle M_t,h_{s,t}:s \le_I t \rangle$ is a 
$\le_{\gk}$-directed system, \then \, we can
find a $\le_{\gk}$-directed system $\langle M'_t:t \in I\rangle$
(so $s \le_I t \Rightarrow M'_s \le_{\gk} M'_t$) and $\bar g =
\langle g_t:t \in I\rangle$ such that:
\mn
\begin{enumerate}
\item[$(a)$]   $g_t$ is an isomorphism from $M_t$ onto $M'_t$
\sn
\item[$(b)$]   if $s \le_I t$ then $g_s = g_t \circ h_{s,t}$.
\end{enumerate}
\mn
2) So in the axioms (III)(a),(b)(IV)(a) from Definition \ref{f2} as
well as those of \ref{f4} we can use $\le_{\gk}$-directed system
$\langle M_s,h_{s,t}:s \le_I t \rangle$ with $I$ as there.

\noindent
3) If $\gk$ is an ess-$(\mu,\lambda)$-a.e.c., see \S1 \then \, $\gk$
   is a $(\mu,\lambda,\aleph_0)$-d.a.e.c. and satisfies all the axioms
   from \ref{f4}.

\noindent
4) If $(M,\bar h)$ is prime over $\bar M = \langle
M_t,h_{s,t}:s \le_I t\rangle$ and $\chi = \Sigma\{\|M_t\|:t \in I\}$
then $\|M\| \le \chi^{< \kappa}$.
\end{observation}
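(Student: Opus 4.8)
The plan is to treat the four parts in order, with (1) carrying the real content behind (2), and (4) being the delicate one.

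For (1), I would form the set-theoretic colimit of the directed system: put $M_\infty=(\bigsqcup_{t\in I}M_t)/\!\sim$, where $(s,a)\sim(t,b)$ iff $h_{u,s}(a)=h_{u,t}(b)$ for some $u$ with $s\le_I u$ and $t\le_I u$; directedness of $I$ together with the cocycle identities $h_{u,s}=h_{u,t}\circ h_{t,s}$ (for $s\le_I t\le_I u$) makes $\sim$ an equivalence relation. Let $g_t:M_t\to M_\infty$ send $a$ to $[(t,a)]$; since every $h_{s,t}$ is a $\le_{\gk}$-embedding, hence injective, each $g_t$ is injective, and $g_s=g_t\circ h_{s,t}$ for $s\le_I t$. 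Give $M'_t:=g_t[M_t]$ the $\tau$-structure transported along $g_t$, so $g_t$ is an isomorphism $M_t\to M'_t$ and $M'_t\in K$ by Definition \ref{f2}(e). For $s\le_I t$, from $h_{s,t}[M_s]\le_{\gk}M_t$ and $g_s=g_t\circ h_{s,t}$ we get, applying the isomorphism $g_t$ (which respects $\le_{\gk}$, Definition \ref{f2}(f)), that $M'_s=g_s[M_s]=g_t[h_{s,t}[M_s]]\le_{\gk}g_t[M_t]=M'_t$, and in particular $M'_s\subseteq M'_t$. Thus $\langle M'_t:t\in I\rangle$ is $\le_{\gk}$-increasing and $\bar g=\langle g_t:t\in I\rangle$ works; note $M_\infty$ itself is never made into a $\tau$-structure, which is what lets the argument tolerate infinitary symbols in $\tau_{\gk}$. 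Part (2) is then pure transport: given one of the axioms phrased for a $\le_{\gk}$-increasing directed family, apply (1) to straighten $\bar M$, invoke the axiom for $\langle M'_t:t\in I\rangle$, and pull the conclusion back along the $g_t$'s using Definition \ref{f2}(e),(f).

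For (3), I would match axioms with $\kappa:=\aleph_0$. The cardinal demands of Ax(0)(d) become $\lambda^{<\aleph_0}=\lambda$ and $|\alpha|^{<\aleph_0}=|\alpha|<\mu$ for $\alpha<\mu$, both automatic; Ax(0)(a)--(g), Ax(I)(a), Ax(II)(a), Ax(V), Ax(VI) are clauses $I$--$V$ of Definition \ref{b2}; and Ax(III)(a),(b), Ax(IV)(a) follow from $III_1,III_2$ of \ref{b2} since every limit ordinal has cofinality $\ge\aleph_0=\kappa$. The directed-family axioms Ax(III)(c),(d),(e) and Ax(IV)(c),(d),(e) are exactly the directed versions of $III_1,III_2$ supplied by Claim \ref{b8}(1). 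For Ax(III)(f), $\cf(\delta)<\aleph_0$ forces $\delta$ to be $0$ or a successor, so the last model of the chain is itself $\le_{\gk}$-prime over it; for Ax(IV)(f), the $\le_{\gk}$-union of the directed family (which exists and is a $\le_{\gk}$-submodel of every common upper bound, by \ref{b8}(1)) is $\le_{\gk}$-prime over it. Hence all axioms of \ref{f4} hold.

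For (4), since $(M,\bar h)$ is prime over $\bar M$ the maps $h_t:M_t\to M$ present $\langle h_t[M_t]:t\in I\rangle$ as a $\le_{\gk}$-increasing directed family inside $M$ (using that $h_t$ is a $\le_{\gk}$-embedding and transporting $h_{s,t}[M_s]\le_{\gk}M_t$ by $h_t$). Put $A=\bigcup_{t\in I}h_t[M_t]\subseteq M$, so $|A|\le\Sigma_t\|M_t\|=\chi$. The heart of the matter is to find $N$ with $A\subseteq N\le_{\gk}M$, each $h_t[M_t]\le_{\gk}N$, and $\|N\|\le\chi^{<\kappa}$; I would build a $\le_{\gk}$-increasing sequence $\langle N_\xi:\xi\le|A|\rangle$ inside $M$, continuous at stages of cofinality $\ge\kappa$ and $\le_{\gk}$-prime (by Ax(III)(f)) at stages of cofinality $<\kappa$, where, after enumerating $A=\{a_\xi:\xi<|A|\}$, the successor step $N_{\xi+1}\supseteq N_\xi\cup\{a_\xi\}$ is obtained by the L\"owenheim--Skolem property Ax(VI) (iterated so as to apply to sets of size $>\lambda$), each prime being replaced by its image in $M$ via coherence Ax(V). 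Then $N:=N_{|A|}$ satisfies $A\subseteq N\le_{\gk}M$, and each $h_t[M_t]$, being $\le_{\gk}M$, contained in $N$, and a substructure of $N$, is $\le_{\gk}N$ by Ax(V). By primeness of $M$ over $\bar M$ there is a $\le_{\gk}$-embedding $M\to N$, whence $\|M\|\le\|N\|\le\chi^{<\kappa}$ (and when $\chi^{<\kappa}\ge\mu$ the conclusion is vacuous since always $\|M\|<\mu$).

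The step I expect to be the real obstacle is the cardinality bookkeeping in this last construction: taking a $\le_{\gk}$-prime at a limit stage of cofinality $<\kappa$ may strictly enlarge the model, so one must show by induction on $\xi$ that $\|N_\xi\|\le\chi^{<\kappa}$ is preserved. This is essentially the statement being proved, specialized to chains of length $<\kappa$, so in practice I would first prove that special case of (4) --- bounding a $\le_{\gk}$-prime over a $<\kappa$-chain of models of size $\le\theta$ by $\theta^{<\kappa}$ --- by induction on the length (passing to a cofinal subchain of length $\cf(\delta)<\kappa$, with the base case of an $\omega$-chain handled directly), and then feed it into the recursion above, where the identity $(\chi^{<\kappa})^{<\kappa}=\chi^{<\kappa}$ (and Ax(0)(d), which keeps everything below $\mu$) makes the bound stable through all stages.
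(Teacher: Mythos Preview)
Your treatment of parts (1)--(3) is correct and is exactly what the paper has in mind; the paper simply labels these ``straightforward.''

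For part (4) your overall strategy---find a small $N$ with $A\subseteq N\le_{\gk}M$ and each $h_t[M_t]\le_{\gk}N$, then embed $M$ into $N$ by primeness---is the right one, and it is the paper's strategy too. The difference is in how $N$ is produced. The paper does not build $N$ by a linear recursion with primes at stages of cofinality $<\kappa$; instead it invokes Claim~\ref{f19}, whose proof indexes by the $\kappa$-directed partial order on $[A]^{<\kappa}$, chooses at each node a $\le_{\gk}$-submodel of $M$ of size $\lambda$ (using Ax(VI)), and takes the $\le_{\gk}$-union via Ax(III)(d), Ax(IV)(d). Because the index set is $\kappa$-directed, there are no limit stages of small cofinality, so the prime-bookkeeping problem you flag simply never arises; the size bound $\chi^{<\kappa}$ falls out directly from $|[A]^{<\kappa}|\cdot\lambda$. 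Once such an $N$ is in hand, Ax(V) gives $h_t[M_t]\le_{\gk}N$ and primeness finishes as you say.

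Your linear-chain approach can be made to work, but the bootstrap you propose (first bounding primes over short chains, then feeding that into the long recursion) is genuinely more delicate, and it also trades the assumption of Ax(III)(d), Ax(IV)(d) for Ax(III)(f). The paper's route is both shorter and sidesteps the obstacle you correctly anticipated.
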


\begin{PROOF}{\ref{f17}}
Straightforward, e.g. we can use ``${\gk}$ has $(\chi^{< \kappa})$-LST",
i.e. Observation \ref{f19}.
\end{PROOF}

\noindent
More serious is proving the LST theorem in our content (recall that in
the axioms, see Ax(VI), we demand it only down to $\lambda$).

\begin{claim}
\label{f19}
[$\gk$ is a $(\mu,\lambda,\kappa)-2$-d.a.e.c.,
see Definition \ref{f4}.]

If $\lambda_{\gk} \le \chi = \chi^{< \kappa} < \mu_{\gk},A \subseteq N
\in \gk$ and $|A| \le \chi \le \|N\|$ \then \, there is $M \le_{\gk}
N$ of cardinality $\chi$ such that $\|M\| = \chi$.
\end{claim}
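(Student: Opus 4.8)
The plan is to produce the required model of cardinality $\chi$ not as the union of a continuous chain of size-$\lambda$ submodels (impossible here, since in a $2$-d.a.e.c. we cannot take $\le_{\gk}$-unions along chains of cofinality $<\kappa_{\gk}$), but as the $\le_{\gk}$-union of a suitable $\kappa$-directed \emph{family} of size-$\lambda$ submodels of $N$, invoking Ax(III)(d) and Ax(IV)(d). Throughout I use that $\kappa=\cf(\kappa)$ and $\kappa\le\lambda\le\chi$, and that $\chi=\chi^{<\kappa}$ means in particular that a set of cardinality $\le\chi$ has $\le\chi$ subsets of cardinality $<\kappa$. First I would reduce to the case $|A|=\chi$: since $\chi\le\|N\|$, enlarge $A$ to a subset of $N$ of cardinality exactly $\chi$ (any $\le_{\gk}N$ model containing the larger set still contains $A$). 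By Ax(VI), and since $\|N\|\ge\lambda$ by Ax(0)(g), fix a function $u\mapsto N_u$ assigning to each subset $u$ of $N$ with $|u|\le\lambda$ a model $N_u\le_{\gk}N$ with $u\subseteq N_u$ and $\|N_u\|=\lambda$. Then define an increasing sequence $\langle I_i:i\le\kappa\rangle$ of families of size-$\lambda$ submodels of $N$ by $I_0=\{N_{\{a\}}:a\in A\}$; $I_{i+1}=I_i\cup\{N_{\bigcup\mathcal F}:\mathcal F\subseteq I_i,\ |\mathcal F|<\kappa\}$ (note $|\bigcup\mathcal F|\le\lambda$ since $|\mathcal F|<\kappa\le\lambda$, so $N_{\bigcup\mathcal F}$ is defined); and $I_i=\bigcup_{j<i}I_j$ at limits. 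An easy induction using $\chi^{<\kappa}=\chi$, $\lambda\le\chi$ and $\kappa\le\chi$ gives $|I_i|\le\chi$ for all $i\le\kappa$; put $I=I_\kappa$, so $|I|\le\chi$.

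Next I would order $I$ by $\le_{\gk}$: this is a partial order by Ax(II)(a), and whenever $M,M'\in I$ with $M\subseteq M'$ we have $M\le_{\gk}M'$ by Ax(V) (both are $\le_{\gk}N$). The family $(I,\le_{\gk})$ is $\kappa$-directed: given $\mathcal F\subseteq I$ with $|\mathcal F|<\kappa$, regularity of $\kappa$ puts $\mathcal F\subseteq I_i$ for some $i<\kappa$; then $u=\bigcup\mathcal F$ has $|u|\le\lambda$, so $N_u\in I_{i+1}\subseteq I$, and every member of $\mathcal F$ is $\subseteq N_u$ hence $\le_{\gk}N_u$, so $N_u$ is an upper bound of $\mathcal F$ in $I$. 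Let $A^\ast=\bigcup\{|M|:M\in I\}$. Then $A\subseteq A^\ast$ (since $a\in N_{\{a\}}$), while $\chi=|A|\le|A^\ast|\le|I|\cdot\lambda\le\chi$, so $|A^\ast|=\chi<\mu$.

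Finally, the system $\langle M:M\in I\rangle$ indexed by $(I,\le_{\gk})$ is $\le_{\gk}$-increasing, $\kappa$-directed, has $|I|<\mu$ and $|A^\ast|<\mu$, so Ax(III)(d) yields a unique $M^\ast\in K$ with universe $A^\ast$ such that $M\le_{\gk}M^\ast$ for every $M\in I$; and since $M\le_{\gk}N$ for every $M\in I$, Ax(IV)(d) gives $M^\ast\le_{\gk}N$. Then $A\subseteq A^\ast=|M^\ast|$ and $\|M^\ast\|=\chi$, which is what is claimed. (The case $\lambda=\chi$ is already covered, as then $I$ is non-trivially generated but $\|M^\ast\|=\lambda=\chi$; one could also just quote Ax(VI) directly there.)

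The only genuine obstacle is the one flagged above: Ax(VI) supplies $\le_{\gk}$-submodels only of the single cardinality $\lambda$, and a $2$-d.a.e.c. has no $\le_{\gk}$-unions — nor $\le_{\gk}$-prime models — along chains of cofinality $<\kappa$, so the size-$\chi$ model must be built as a $\kappa$-directed union of size-$\lambda$ pieces. Arranging that such a $\kappa$-directed family exists, is $\le_{\gk}$-coherent (handled above by Ax(V), since all pieces sit inside $N$), and has union of cardinality exactly $\chi$ is precisely where $\chi^{<\kappa}=\chi$ (to bound $|I|$) and $\cf(\kappa)=\kappa$ (to close the construction in $\kappa$ steps and to get the $\kappa$-directedness of $I$) are used.
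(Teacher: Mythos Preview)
Your proof is correct and follows essentially the same strategy as the paper: build a $\kappa$-directed family of size-$\lambda$ $\le_{\gk}$-submodels of $N$ covering $A$, use Ax(V) for $\le_{\gk}$-coherence, and apply Ax(III)(d) and Ax(IV)(d) to obtain $M\le_{\gk}N$ of cardinality $\chi$. The only cosmetic difference is in how the $\kappa$-directed index set is produced --- the paper indexes directly by $[A]^{<\kappa}$ (using that each $u\in[A]^{<\kappa}$ has at most $2^{<\kappa}\le\lambda$ subsets to make the inductive choice of $M_u$ possible), whereas you close a family of models under $(<\kappa)$-ary joins in $\kappa$ steps; both constructions work for the same reasons.
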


\begin{PROOF}{\ref{f19}}
Let $\langle u_\alpha:\alpha < \alpha(*)\rangle$ list 
$[A]^{< \kappa({\gk})}$, let $I$ be the following partial order:
\mn
\begin{enumerate}
\item[$(*)_1$]   $(\alpha) \quad$ set of elements is $\{\alpha < \chi$: for no
$\beta < \alpha$ do we have $u_\alpha \subseteq u_\beta\}$
\sn
\item[${{}}$]   $(\beta) \quad \alpha \le_I \beta$ iff $u_\alpha \subseteq
u_\beta$ (hence $\alpha \le \beta$).
\end{enumerate}
\mn
Easily
\mn
\begin{enumerate}
\item[$(*)_2$]   $(a) \quad I$ is $\kappa$-directed
\sn
\item[${{}}$]  $(b) \quad$ for every $\alpha < \alpha(*)$ for some $\beta
< \alpha(*)$ we have $u_\alpha \subseteq u_\beta \wedge \beta \in I$
\sn
\item[${{}}$]   $(c) \quad \cup\{u_\alpha:\alpha \in I\} = A$.
\end{enumerate}
\mn
Now we choose $M_\alpha$ by induction on $\alpha < \chi$ such that
\mn
\begin{enumerate}
\item[$(*)_3$]   $(a) \quad M_\alpha \le_{\gk} N$
\sn
\item[${{}}$]   $(b) \quad \|M_\alpha\| = \lambda_{\gk}$
\sn
\item[${{}}$]   $(c) \quad M_\alpha$ include $\cup\{M_\beta:\beta
<_I \alpha\} \cup u_\alpha$.
\end{enumerate}
\mn
Note that $|\{\beta \in I:\beta <_I \alpha\}| \le |\{u:u \subseteq
u_\alpha\}| = 2^{|u_\alpha|} \le 2^{< \kappa({\gk})} \le
\lambda_{\gk}$ and by the
induction hypothesis $\beta < \alpha \Rightarrow \|M_\beta\| \le
\lambda_{\gk}$ and recall $|u_\alpha| < \kappa({\gk}) \le 
\lambda_{\gk}$ hence the set $\cup\{M_\beta:\beta < \alpha\} 
\cup u_\alpha$ is a subset of $N$ of cardinality $\le \lambda$ 
hence by Ax(VI) there is $M_\alpha$ as required.

Having chosen $\langle M_\alpha:\alpha \in I\rangle$ clearly by Ax(V) it is a
$\le_{\gk}$-directed system hence by Ax(III)(d), $M =
\cup\{M_\alpha:\alpha \in I\}$ is well defined 
with universe $\cup\{|M_\alpha|:\alpha \in I\}$ and by Ax(IV)(d) we
have $M \le_{\gk} N$.

Clearly $\|M\| \le \Sigma\{\|M_\alpha\|:\alpha \in I\} \le |I| \cdot
\lambda_{\gk} = \chi$, and by $(*)_2(c) + (*)_3(c)$ we have $A \subseteq
\cup\{u_\alpha:\alpha < \chi\} = \cup\{u_\alpha:\alpha \in I\}
\subseteq \cup\{|M_\alpha|:\alpha \in I\} = M$ and so
$M$ is as required.  
\end{PROOF}

\begin{notation}
\label{f22}
1) For $\chi \in [\lambda_{\gk},\mu_{\gk})$ let $K_\chi = K^{\gk}_\chi
  = \{M \in K:\|M\| = \chi\}$ and
$K_{< \chi} = \bigcup\limits_{\mu < \chi} K_\mu$. 

\noindent
2) ${\gk}_{\chi} = (K_\chi,\le_{\gk} \restriction K_\chi)$.

\noindent
3) If $\lambda_{\gk} \le \lambda_1 < \mu_1 \le \mu_{\gk},\lambda_1 =
\lambda^{<\kappa}_1$ and $(\forall \alpha < \mu_1)(|\alpha|^{< \kappa}
< \mu_1)$, \then \, we define $K_{[\lambda_1,\mu_1]} 
= K^{\gs}_{[\lambda_1,\mu_1)}$ and
  $\gk_1 = \gk_{[\lambda_1,\mu_1]}$ similarly, i.e. $K_{\gk} = \{M \in
  K_{\gk}:\|M\| \in [\lambda_1,\mu_1)\}$ and $\le_{\gk_1} = \le_{\gk}
  \rest K_{\gk_1}$ and $\lambda_{\gk_1} =
  \lambda_1,\mu_{\gk_1} = \mu_1,\kappa_{\gk_1} = \kappa_{\gk}$.
\end{notation}

\begin{definition}
\label{f24}
The embedding $f:N \rightarrow M$ is a ${\gk}$-embedding or a 
$\le_{\gk}$-embedding if its range is the universe of a model 
$N' \le_{\gk} M$, (so $f:N \rightarrow N'$ is an isomorphism onto).
\end{definition}

\begin{claim}
\label{f27}
[$\gk$ is a 2-d.a.e.c.]

\noindent
1) For every $N \in K$ there is a $\kappa_{\gk}$-directed partial order $I$ 
of cardinality $\le \|N\|$ and $\bar M = \langle M_t:t \in
I \rangle$ such that $t \in I \Rightarrow M_t \le_{\gk} N,
\|M_t\| \le \LST({\gk}),I \models s < t \Rightarrow M_s \le_{\gk} M_t$ 
and $N = \bigcup\limits_{t \in I} M_t$. 

\noindent
2) For every $N_1 \le_{\gk} N_2$ we can find $\langle M^\ell_t:t \in
I^* \rangle$ as in part (1) for $N_\ell$ such that $I_1 \subseteq I_2$ and
$t \in I_1 \Rightarrow M^2_t = M^1_t$. 
\end{claim}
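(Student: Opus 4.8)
The plan is to realize $N$ as the $\le_{\gk}$-union of a $\kappa_{\gk}$-directed system of $\le_{\gk}$-submodels of size $\le\LST(\gk)=\lambda_{\gk}$, in the spirit of \cite[\S1]{Sh:600}. The only tools needed are Ax(VI) (to find a $\le_{\gk}$-submodel of size $\le\lambda_{\gk}$ through any subset of size $\le\lambda_{\gk}$), Ax(V) (so that once the members of a family of $\le_{\gk}$-submodels of $N$ are set-theoretic submodels of one another, they are automatically $\le_{\gk}$-comparable), and Ax(III)(d) together with Ax(IV)(d) (to see that the $\le_{\gk}$-union of a $\kappa_{\gk}$-directed system of $\le_{\gk}$-submodels of $N$, with set-union of cardinality $<\mu_{\gk}$, is again $\le_{\gk}N$, hence here equals $N$).

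For part (1): fix an enumeration $\langle a_i:i<\chi\rangle$ of $|N|$ with $\chi:=\|N\|$, and build, by a transfinite recursion of length $\kappa_{\gk}$, an index set $I$, a partial order $\le_I$ on it, and $\langle M_t:t\in I\rangle$, as follows. Start by adding, for each $i<\chi$, an index $t_i$ and (by Ax(VI)) a model $M_{t_i}\le_{\gk}N$ of size $\lambda_{\gk}$ with $a_i\in M_{t_i}$; then repeatedly close off: whenever $s_0,\dots$ are fewer than $\kappa_{\gk}$ of the indices already constructed with no common $\le_I$-upper bound, add a fresh index $t$, put $s_j<_I t$ for all $j$, and (by Ax(VI)) choose $M_t\le_{\gk}N$ of size $\lambda_{\gk}$ whose universe contains all the $|M_{s_j}|$ (legitimate since that union has size $\le\lambda_{\gk}$). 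Since $\kappa_{\gk}$ is regular and $|\alpha|^{<\kappa_{\gk}}<\mu_{\gk}$ for $\alpha<\mu_{\gk}$, iterating this $\kappa_{\gk}$ times yields a $\kappa_{\gk}$-directed $I$ with $|I|\le\chi^{<\kappa_{\gk}}<\mu_{\gk}$. By construction $s<_I t\Rightarrow M_s\subseteq M_t$, hence $M_s\le_{\gk}M_t$ by Ax(V), so $\langle M_t:t\in I\rangle$ is a $\le_{\gk}$-directed system; its set-union contains every $a_i$, hence equals $|N|$. By Ax(III)(d) its $\le_{\gk}$-union $M$ has universe $|N|$; by Ax(IV)(d) $M\le_{\gk}N$; and since $M$ and $N$ share a universe (Ax(I)(a)) we get $M=N$, which is part (1).

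For part (2): extend the enumeration of $|N_1|$ to an enumeration $\langle a_i:i<\chi_2\rangle$ of $|N_2|$ with $\{a_i:i<\chi_1\}=|N_1|$. First run the construction of part (1) inside $N_1$, obtaining $(I_1,\le_{I_1},\langle M^1_t:t\in I_1\rangle)$. Then run it inside $N_2$, but initialized with $I_1$ and its models already placed: each $M^1_t\le_{\gk}N_1\le_{\gk}N_2$, so $M^1_t\le_{\gk}N_2$ and the coherence within $I_1$ is untouched; now keep adding covering indices for the $a_i$ with $\chi_1\le i<\chi_2$ and closing off for $\kappa_{\gk}$-directedness exactly as before, producing $(I_2\supseteq I_1,\langle M^2_t\rangle)$ with $M^2_t=M^1_t$ for $t\in I_1$. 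All verifications are as in part (1), with $N_2$ in place of $N$.

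The step I expect to be the real obstacle is the cardinality bound on $I$. The construction naturally gives $|I|\le\|N\|^{<\kappa_{\gk}}$, which coincides with $\|N\|$ precisely when $\kappa_{\gk}=\aleph_0$ — the classical a.e.c.\ situation of \cite[\S1]{Sh:600} — but in general $\|N\|^{<\kappa_{\gk}}$ may exceed $\|N\|$; indeed any $\kappa_{\gk}$-directed family of $\le_{\gk}$-submodels of size $\le\lambda_{\gk}$ covering $N$ is cofinal in $([\|N\|]^{<\kappa_{\gk}},\subseteq)$, so no index set of cardinality $<\cf([\|N\|]^{<\kappa_{\gk}})$ can work. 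Thus to read the bound as literally stated one restricts to $N$ with $\|N\|^{<\kappa_{\gk}}=\|N\|$ (automatic when $\kappa_{\gk}=\aleph_0$, and implied by modest extra cardinal-arithmetic hypotheses in general) or replaces $\le\|N\|$ by $\le\|N\|^{<\kappa_{\gk}}$; everything else — the directedness of $I$, coherence via Ax(V), and the smoothness input from Ax(III)(d)/Ax(IV)(d) — is routine and follows the proofs of \cite[\S1]{Sh:600}.
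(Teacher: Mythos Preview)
Your approach is essentially the same as the paper's, which simply points to the proof of Claim~\ref{f19}: there one lists $[A]^{<\kappa_{\gk}}$ (here with $A=|N|$), takes the associated partial order by inclusion, and inductively chooses $M_u\le_{\gk}N$ of size $\lambda_{\gk}$ containing $u$ and all earlier $M_v$ via Ax(VI), then invokes Ax(V), Ax(III)(d), Ax(IV)(d) exactly as you do. Your transfinite closure construction is just a minor repackaging of that same idea.

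Your caveat about the cardinality of $I$ is well taken and worth recording: the construction (either yours or the one from \ref{f19}) naturally yields $|I|\le\|N\|^{<\kappa_{\gk}}$, and a $\kappa_{\gk}$-directed cover by submodels of size $\le\lambda_{\gk}$ must be cofinal in $([\,|N|\,]^{<\kappa_{\gk}},\subseteq)$, so in general one cannot do better. The bound $|I|\le\|N\|$ as literally stated holds when $\|N\|=\|N\|^{<\kappa_{\gk}}$ (in particular when $\kappa_{\gk}=\aleph_0$, the classical a.e.c.\ case this section is modelled on), and should otherwise be read as $\le\|N\|^{<\kappa_{\gk}}$. Everything else in your argument is correct and matches the intended proof.
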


\begin{PROOF}{\ref{f27}}
1) As in the proof of \ref{f19}.

\noindent
2) Similarly.
\end{PROOF}

\begin{claim}
\label{f28}
Assume $\lambda_{\gk} \le \lambda_1 = \lambda^{< \kappa}_1 < \mu_1 \le
\mu_{\gk}$ and $(\forall\alpha < \mu_1)(|\alpha|^{< \kappa} < \mu_1)$.

\noindent
1) \Then \, $\gk^*_1 := \gk_{[\lambda_1,\mu_1)}$ as defined in
\ref{f22}(3) is a $(\lambda_1,\mu_1,\kappa_{\gk})$-d.a.e.c.

\noindent
2) For each of the folllowing axioms if $\gk$ satisfies it then so
does $\gk_1$: Ax(III)(d),(IV)(b),(IV)(c),(IV)(d).

\noindent
3) If if addition $\gk$ satisfies Ax(III)(d),(IV)(d), this part (2)
apply to all the axioms in \ref{f2}, \ref{f4}.
\end{claim}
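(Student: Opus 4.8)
The plan is to run the standard ``full subclass'' inheritance argument: $\gk_1:=\gk^*_1=\gk_{[\lambda_1,\mu_1)}$ is obtained from $\gk$ by discarding the models whose cardinality is not in $[\lambda_1,\mu_1)$ and restricting $\le_{\gk}$, so most axioms should transfer verbatim or after a one-line trimming. First I would record the easy clauses. The cardinal-arithmetic requirements of Ax(0)(d) for $\gk_1$ are exactly the hypotheses of the claim ($\lambda_1=\lambda_1^{<\kappa}<\mu_1\le\mu_{\gk}$, $(\forall\alpha<\mu_1)(|\alpha|^{<\kappa}<\mu_1)$, and $\kappa=\cf(\kappa)$ inherited); clauses (e),(f),(g) of Ax(0) and Ax(I)(a), Ax(II)(a) are immediate; and Ax(V) passes down at once, since if $N_1\subseteq N_2$ and $N_\ell\le_{\gk_1}M$ for $\ell=1,2$ then $N_\ell\le_{\gk}M$, whence $N_1\le_{\gk}N_2$ by Ax(V) for $\gk$, and $N_1,N_2\in K_{\gk_1}$ so $N_1\le_{\gk_1}N_2$.

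For the continuity axioms I would use the remark that if a $\le_{\gk_1}$-increasing chain (or $\kappa$-directed system) $\langle M_t:t\in I\rangle\subseteq K_{\gk_1}$ satisfies $\|\bigcup_{t\in I}M_t\|<\mu_1$, then $|\bigcup_{t\in I}M_t|$ has cardinality in $[\lambda_1,\mu_1)$ --- the lower bound because a union of sets each of size $\ge\lambda_1$ has size $\ge\lambda_1$, the upper bound by the hypothesis of the axiom. Hence whenever the corresponding $\gk$-axiom returns a model whose universe is forced to be $\bigcup_{t\in I}|M_t|$ --- the cases Ax(III)(a), Ax(IV)(a), and (when $\gk$ has them) Ax(III)(d), Ax(IV)(d) --- that model already lies in $K_{\gk_1}$ and the $\le_{\gk}$-relations it carries are $\le_{\gk_1}$-relations, so the axiom transfers with no further work. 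For the axioms whose output model is only an upper bound or an amalgam, not pinned to $\bigcup_{t}|M_t|$ --- Ax(III)(b), Ax(IV)(b), Ax(IV)(c), and the directed variants Ax(III)(c),(e),(f), Ax(IV)(e),(f) --- I would append a trimming step: from the $\gk$-axiom obtain some (possibly large) $N$ together with the prescribed $\le_{\gk}$-embeddings, let $A$ collect the images of the finitely many models involved together with $\bigcup_t|M_t|$, and use the L.S.T. property of $\gk_1$ to pick $N'\le_{\gk}N$ with $A\subseteq N'$ and $\|N'\|=|A|^{<\kappa}<\mu_1$ (finite here by the hypothesis on $\mu_1$); then by Ax(V) for $\gk$ each of those images, being $\le_{\gk}N$ and $\subseteq N'$, is $\le_{\gk}N'$, hence $\le_{\gk_1}N'$, and $N'\in K_{\gk_1}$. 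For the directed forms one replaces ``chain'' by ``($\kappa$-)directed system'' throughout, which is legitimate by Observation \ref{f17}.

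The hard part --- the one step that is not pure bookkeeping --- is Ax(VI) for $\gk_1$: given $A\subseteq N\in K_{\gk_1}$ with $|A|\le\lambda_1$, I need $M'\le_{\gk_1}N$ of size exactly $\lambda_1$ containing $A$, whereas Ax(VI) for $\gk$ only delivers a submodel of size $\lambda_{\gk}$, which may be strictly smaller than $\lambda_1$. This strengthening is precisely Claim \ref{f19} applied with $\chi:=\lambda_1$ (permissible since $\lambda_1=\lambda_1^{<\kappa}$ and $\lambda_1\le\|N\|$), and it is exactly here that the directed-union axioms Ax(III)(d), Ax(IV)(d) are needed: in \ref{f19} they are used to push past limit stages of cofinality $<\kappa$, where the failure of smoothness would defeat a naive continuous-chain construction. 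So part (1) goes through in the ambient context where $\gk$ has Ax(III)(d) and Ax(IV)(d) --- i.e.\ is a $2$-d.a.e.c. --- which is also what makes the L.S.T.\ input of the trimming template above available; every axiom of \ref{f2} for $\gk_1$ has then been accounted for.

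Finally, part (2) is the previous discussion restricted to the listed axioms: Ax(III)(d) and Ax(IV)(d) transfer without any trimming (their output model has universe $\bigcup_t|M_t|$), while Ax(IV)(b) and Ax(IV)(c) transfer by the trimming template, whose L.S.T.\ ingredient for $\gk_1$ is \ref{f19}. And part (3): if $\gk$ satisfies Ax(III)(d) and Ax(IV)(d) it is a $2$-d.a.e.c., so \ref{f19} supplies the full L.S.T.\ property of $\gk_1$; feeding that into the trimming template transfers every remaining axiom of \ref{f2} and of \ref{f4} to $\gk_1$ (the directed forms via Observation \ref{f17}), and the two driving axioms Ax(III)(d), Ax(IV)(d) were already handled in the second paragraph, so all axioms of \ref{f2} and \ref{f4} that $\gk$ satisfies are inherited by $\gk_1$.
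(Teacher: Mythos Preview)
The paper states Claim~\ref{f28} without proof, leaving it as routine, so there is nothing to compare against; your argument is the natural verification and is essentially correct. You correctly isolate the one nontrivial step, namely Ax(VI) for $\gk_1$: the original Ax(VI) only reaches down to $\lambda_{\gk}$, so one must invoke Claim~\ref{f19} with $\chi=\lambda_1$, and you rightly observe that this (and likewise the trimming step for Ax(IV)(b),(c)) presupposes that $\gk$ satisfies Ax(III)(d) and Ax(IV)(d). In other words, parts (1) and the (IV)(b),(c) clauses of part (2), read literally, already need the $2$-d.a.e.c.\ hypothesis that is only made explicit in part (3); your write-up flags this honestly rather than glossing over it, which is the right call given the working-draft nature of the statement.
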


\begin{claim}
\label{f29}
1) If $\gk$ satisfies Ax(IV)(e) \then \, $\gk$ satisfies Ax(III)(e)
provided that $\mu_{\gk}$ is regular or at least the relevant $I$ has
cardinality $< \cf(\mu_{\gk})$. 

\noindent
2) If Ax(III)(d),(IV)(d) we can waive $\mu_{\gk}$ is regular.
\end{claim}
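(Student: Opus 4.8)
\begin{PROOF}{\ref{f29}}
The plan for part (1) is to establish Ax(III)(e) by induction on $\theta=|I|$, using Ax(IV)(e) to splice together partial upper bounds, and being careful about cardinalities at limit stages --- this last point is the only place the hypothesis on $\cf(\mu_{\gk})$ is needed. Throughout we may, by Observation \ref{f17}(1),(2), work with $\le_{\gk}$-directed \emph{systems of embeddings} in place of honest inclusions, so that isomorphic copies are free; one final renaming at the end returns an actual $\le_{\gk}$-upper bound of $\bar M=\langle M_s:s\in I\rangle$. Recall also that Claim \ref{f6}(4) supplies Ax(IV)(c) and Ax(III)(b) (existence of limits of $\le_{\gk}$-chains) from Ax(IV)(e); together with the basic Ax(III)(a), Ax(IV)(a) and Ax(V) these are the tools used below. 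Note that the hypothesis $\Sigma\{\|M_s\|:s\in I\}<\mu_{\gk}$ already forces $\theta<\mu_{\gk}$. For the base case $\theta\le\aleph_0$: enumerate $I=\{t_n:n<\omega\}$ and recursively pick $s_0=t_0$ and $s_{n+1}$ an upper bound of $\{s_n,t_{n+1}\}$, so $\langle s_n:n<\omega\rangle$ is $\le_I$-increasing and cofinal in $I$; then $\langle M_{s_n}:n<\omega\rangle$ is a $\le_{\gk}$-chain with $\Sigma_n\|M_{s_n}\|<\mu_{\gk}$, so by Ax(III)(b) it has a $\le_{\gk}$-upper bound $N$, which by cofinality bounds all of $\bar M$.

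For $\theta>\aleph_0$: fix an enumeration $I=\{t_\alpha:\alpha<\theta\}$ and build a $\subseteq$-increasing continuous chain $\langle I_\alpha:\alpha\le\theta\rangle$ of directed subposets of $I$ with $t_\beta\in I_\alpha$ for $\beta<\alpha$ and $|I_\alpha|\le|\alpha|+\aleph_0<\theta$ for $\alpha<\theta$ (at successors adjoin $t_\alpha$ and close under a fixed choice of pairwise upper bounds in $\omega$ steps; at limits take unions), so $I_\theta=I$. Simultaneously build a $\le_{\gk}$-increasing chain of models with coherent embeddings $\langle N_\alpha:\alpha\le\theta\rangle$ so that each $N_\alpha$ is (a copy of) a $\le_{\gk}$-upper bound of $\bar M\restriction I_\alpha$. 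At a limit $\delta$, take $N_\delta$ to be the union from Ax(III)(a) if $\cf(\delta)\ge\kappa$ and a limit from Ax(III)(b) if $\cf(\delta)<\kappa$; either way $N_\delta$ bounds $\bar M\restriction I_\delta$ since each $s\in I_\delta$ lies in some $I_\alpha$ with $M_s\le_{\gk}N_\alpha\le_{\gk}N_\delta$. At a successor $\alpha\to\alpha+1$: as $|I_{\alpha+1}|<\theta$, the induction hypothesis (applicable since $|I_{\alpha+1}|<\theta<\cf(\mu_{\gk})$, or $\mu_{\gk}$ is regular) gives a $\le_{\gk}$-upper bound $N'$ of $\bar M\restriction I_{\alpha+1}$; now $N_\alpha$ and $N'$ are two $\le_{\gk}$-upper bounds of the directed system $\bar M\restriction I_\alpha$, so Ax(IV)(e) yields $N''$ with $N'\le_{\gk}N''$ and a $\le_{\gk}$-embedding $N_\alpha\to N''$ fixing $\bigcup\{M_s:s\in I_\alpha\}$; set $N_{\alpha+1}=N''$, which bounds $\bar M\restriction I_{\alpha+1}$ through $N'$ and whose embeddings are coherent with the earlier ones. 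Finally let $N$ be a limit of $\langle N_\alpha:\alpha<\theta\rangle$; it receives coherent embeddings of every $M_s$, $s\in I$, so after one renaming $N$ is the upper bound required by Ax(III)(e).

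The cardinality bookkeeping, and where $\cf(\mu_{\gk})$ enters part (1): since Ax(VI) only gives L\"owenheim--Skolem down to $\lambda_{\gk}$, we cannot prune the $N_\alpha$, and $\|N_\alpha\|$ may grow with no a priori uniform bound below $\mu_{\gk}$; to apply Ax(III)(a)/Ax(III)(b) to the chain $\langle N_\alpha:\alpha<\theta\rangle$ (and at the intermediate limits) we need $\sup_{\alpha<\theta}\|N_\alpha\|<\mu_{\gk}$ (which then yields $\Sigma_{\alpha<\theta}\|N_\alpha\|<\mu_{\gk}$ as $\theta<\mu_{\gk}$), and this is guaranteed precisely when $\theta=|I|<\cf(\mu_{\gk})$, in particular whenever $\mu_{\gk}$ is regular. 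For part (2), adding Ax(III)(d) and Ax(IV)(d) makes $\gk$ a $2$-d.a.e.c., so the strengthened L\"owenheim--Skolem property \ref{f19} is available down to any $\chi=\chi^{<\kappa}<\mu_{\gk}$; at every stage we may then replace $N_\alpha$ by a $\le_{\gk}$-submodel that still contains $\bigcup\{M_s:s\in I_\alpha\}$ together with the earlier $N_\beta$'s (recovering $\le_{\gk}$ via Ax(V)) and has cardinality $\le(\Sigma\{\|M_s\|:s\in I\}+|I|)^{<\kappa}<\mu_{\gk}$ --- a bound uniform in $\alpha$ --- so $\sup_{\alpha<\theta}\|N_\alpha\|<\mu_{\gk}$ with no restriction on $\cf(\mu_{\gk})$, and the construction above goes through verbatim. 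The main obstacle is the successor step: arranging the amalgamation of $N_\alpha$ with the inductively obtained $N'$ over the merely directed system $\bar M\restriction I_\alpha$ to be coherent with all previously chosen embeddings, which is exactly what Ax(IV)(e), together with the embedding-system reformulation of Observation \ref{f17}, is there to provide.
\end{PROOF}
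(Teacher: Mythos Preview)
Your proof is correct and follows essentially the same approach as the paper: induction on $|I|$, with the countable case handled via a cofinal $\omega$-chain, and the uncountable case via a continuous chain $\langle I_\alpha:\alpha<|I|\rangle$ of smaller directed subposets, building $N_\alpha$ by taking limits at limit stages and amalgamating via Ax(IV)(e) at successors. Your treatment is in fact slightly more careful than the paper's sketch --- you distinguish the cases $\cf(\delta)\ge\kappa$ versus $\cf(\delta)<\kappa$ at limit stages (using Ax(III)(a) or Ax(III)(b) respectively, the latter available from Ax(IV)(e) via \ref{f6}(4)), and you spell out for part (2) the pruning-by-\ref{f19} argument that the paper only gestures at.
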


\begin{PROOF}{\ref{f29}}
We prove this by induction on $|I|$.
\bigskip

\noindent
\underline{Case 1}:  $I$ is finite.

So there is $t^* \in I$ such that $t \in I \Rightarrow t \le_I t^*$,
so this is trivial.
\bigskip

\noindent
\underline{Case 2}:  $I$ is countable.

So we can find a sequence $\langle t_n:n < \omega \rangle$ such that
$t_n \in I,t_n \le_I t_{n+1}$ and $s \in I \Rightarrow 
\bigvee\limits_{n < \omega} s \le_I t_n$.  Now we can apply the axiom to
$\langle M_{t_n},h_{t_{n,t_m}}:m < n< \omega \rangle$.
\bigskip

\noindent
\underline{Case 3}: $I$ uncountable.

First, we can find an increasing continuous sequence 
$\langle I_\alpha:\alpha < |I| \rangle$ such that
$I_\alpha \subseteq I$ is directed of cardinality $\le |\alpha| +
\aleph_0$ and let $I_{|I|} = I = \cup\{I_\alpha:\alpha < |I|\}$.

Second, by the induction hypothesis for each $\alpha < |I|$ we 
choose $N_\alpha,\bar h^\alpha = \langle h_{\alpha,t}:t \in
I_\alpha \rangle$ such that:
\mn
\begin{enumerate}
\item[$(a)$]   $N_\alpha \in {\gk}^{\gs}_{\le \chi}$
\sn
\item[$(b)$]   $h_{\alpha,t}$ is a $\le_{\gk}$-embedding of
$M_t$ into $N_\alpha$
\sn
\item[$(c)$]   if $s <_I t$ are in $I_\alpha$ then $h_{\alpha,s} =
h_{\alpha,t} \circ h_{t,s}$
\sn
\item[$(d)$]  if $\beta < \alpha$ then $N_\beta \le_{\gk}
N_\alpha$ and $t \in I_\beta \Rightarrow h_{\alpha,t} = h_{\beta,t}$.
\end{enumerate}
\mn
For $\alpha = 0$ use the induction hypothesis.

For $\alpha$ a limit ordinal by Ax(III)(a) there is $N_\alpha$ s
required as $I_\alpha = \cup\{I_\beta:\beta < \alpha\}$ there are no
new $h_t$'s; well we have to check $\Sigma\{\|N_\beta\|:\beta <
\alpha\} < \mu_{\gk}$ but as we assume $\mu_{\gk}$ is regular this
holds.

For $\alpha = \beta +1$, by the induction hypothesis there is
$(N'_\alpha,\bar g^\alpha)$ which is a limit of $\langle M_s,h_{s,t}:s
\le_{I_\alpha} t\rangle$.  Now apply Ax(IV)(e); well is the directed
system version with $\langle M_s,h_{s,t}:s \le_{I_\beta}
t\rangle,(N'_\alpha,\bar g_\alpha),(N_\beta,\langle h_s:s \in
I_\beta\rangle$ here standing for $\bar M,N_1,N_2$ there.

So there are $N_\alpha,f^\alpha_s(s \in I_\beta)$ such that $N_\beta
\le_{\gk} N_\alpha$ and $s \in I_\beta \Rightarrow f^\alpha_s \cdot
g_s = h_s$.  Lastly, for $s \in I_\alpha \backslash I_\beta$ we choose
$h_s = f^\alpha_s \circ g_s$, so we are clearly done.

\noindent
2) Easy by \ref{f19} or \ref{f28}.
\end{PROOF}
\bigskip

\subsection {Basic Notions} \label{5B}\
\bigskip

As in \cite[\S1]{Sh:600}, we now recall the definition of orbital types
(note that it is natural to look at types only over models which are
amalgamation bases recalling Ax(IV)(b) implies every $M \in K_{\gk}$ is). 
\begin{definition}
\label{f31}

\noindent
1) For $\chi \in [\lambda_{\gk},\mu_{\gk})$ and 
$M \in K_\chi$ we define ${\cS}(M)$ as $\ortp(a,M,N):M \le_{\gk} 
N \in K_\chi \text{ and } a \in N\}$ where
$\ortp(a,M,N) = (M,N,a)/{\cE}_M$ where ${\cE}_M$ is the 
transitive closure of ${\cE}^{\at}_M$, and the 
two-place relation ${\cE}^{\at}_M$ is defined by:

\begin{equation*}
\begin{array}{clcr}
(M,N_1,a_1) {\cE}^{\at}_M&(M,N_2,a_2) \text{\Iff \, } M \le_{\gk}
N_\ell,a_\ell \in N_\ell,\|M\| \le \|N_\ell\| = \|M\|^{< \kappa} \\
  &\text{ for } \ell=1,2 \text{ and } \text{ there is }
N \in K_\chi \text{ and } \le_{\gk} \text{-embeddings} \\
  &f_\ell:N_\ell \rightarrow N \text{ for } \ell = 1,2 \text{ such that:} \\
  &f_1 \restriction M = \id_M = f_2 \restriction M \text{ and }
f_1(a_1) = f_2(a_2).
\end{array}
\end{equation*}

\mn
$(\text{of course } M \le_{\gk} N_1,M \le_{\gk} N_2 \text{ and }
a_1 \in N_1,a_2 \in N_2)$
\medskip

\noindent
2) We say ``$a$ realizes $p$ in $N$" \when \, $a \in N,p \in {\cS}(M)$
and letting $\chi = \|M\|^{< \kappa}$
for some $N' \in K_\chi$ we have $M \le_{\gk} N' \le_{\gk} N$ and 
$a \in N'$ and $p = \ortp(a,M,N')$; so $M,N' \in K_\chi$ but possibly
$N \notin K_\chi$. 

\noindent
3) We say ``$a_2$ strongly 
\footnote{note that ${\cE}^{\at}_M$ is not an
equivalence relation and certainly in general is not $\cE_M$}
realizes $(M,N^1,a_1)/{\cE}^{\text{at}}_M$ in $N$" \when \, for some
$N^2$ we have $M \le_{\gk} N^2 \le_{\gk} N$ and
$a_2 \in N^2$ and $(M,N^1,a_1)\,{\cE}^{\at}_M \,(M,N^2,a_2)$. 

\noindent
4) We say $M_0$ is a $\le_{\gk[\chi_0,\chi_1)}$-amalgamation
base if this holds in ${\gk}_{[\chi_0,\chi_1)}$, see below.

\noindent
4A) We say $M_0 \in {\gk}$ is an amalgamation base or 
$\le_{\gk}$-amalgamation base \when \,: for every
$M_1,M_2 \in {\gk}$ and $\le_{\gk}$-embeddings
$f_\ell:M_0 \rightarrow M_\ell$ (for $\ell = 1,2$) there is $M_3 \in
{\gk}_\lambda$ and 
$\le_{\gk}$-embeddings $g_\ell:M_\ell \rightarrow
M_3$ (for $\ell=1,2$) such that $g_1 \circ f_1 = g_2 \circ f_2$. 

\noindent
5) We say ${\gk}$ is stable in $\chi$ \when \,:
\mn
\begin{enumerate}
\item[$(a)$]  $\lambda_{\gk} \le \chi < \mu_{\gk}$
\sn
\item[$(b)$]   $M \in K_\chi \Rightarrow |{\cS}(M)| \le \chi$
\sn
\item[$(c)$]  $\chi \in \Car_{\gk}$ which means
$\chi = \chi^{< \kappa}$ or the conclusion of \ref{f19} holds
\sn
\item[$(d)$]  $\gk_\chi$ has amalgamation.
\end{enumerate}
\mn
6) We say $p=q \restriction M$ if $p \in {\cS}(M),q \in {\cS}(N),
M \le_{\gk} N$ and for some $N^+,N \le_{\gk} N^+$ and $a \in N^+$ we
have $p = \ortp(a,M,N^+),q = \ortp(a,N,N^+)$; note that $p
\restriction M$ is well defined if $M \le_{\gk} N,p \in {\cS}(N)$. 

\noindent
7) For finite $m$, for $M \le_{\gk} N,\bar a \in {}^m N$ we can define
$\ortp(\bar a,N,N)$ and ${\cS}^m(M)$ similarly and ${\cS}^{< \omega}(M)
= \bigcup\limits_{m < \omega} {\cS}^m(M)$, (but we shall not use this in any
essential way, hence we choose ${\cS}(M) = {\cS}^1(M)$.)
\end{definition}

\begin{definition}
\label{f34}
1) We say \underline{$N$ is $\lambda$-universal above or over $M$} if
for every $M',M \le_{\gk} M' \in K^{\gk}_\lambda$, 
there is a $\le_{\gk}$-embedding of $M'$
into $N$ over $M$.  If we omit $\lambda$ we mean $\rnd_{\gk}(\lambda)
= \min(\Car)_{\gk} \backslash \lambda$) so 
$\le \|N\|^{< \kappa({\gk})}$; clearly this implies that $M$ is a 
$\le_{\gk_{[\chi_0,\chi_1]}}$-amalgamation base where $\chi_0 =
\|M\|,\chi_1 = (\|N\|^{< \kappa})^+$.
 
\noindent
2)  $K^3_{\gk} = \{(M,N,a):M \le_{\gk} N,a \in N \backslash M$ and
$M,N \in K_{\gk}\}$, with the partial order $\le = \le_{\gk}$ defined by
$(M,N,a) \le (M',N',a')$ iff $a = a',M \le_{\gk} M'$ and $N
\le_{\gk} N'$.  We say $(M,N,a)$ is minimal if $(M,N,a) \le (M',N_\ell,a)
\in K^3_{\gk}$ for $\ell =1,2$ implies $\ortp(a,M',N_1) = 
\ortp(a,M',N_2)$ moreover, $(M',N_1,a) {\cE}^{\at}_\lambda (M',N_2,a)$,
(not needed if every $M' \in K_\lambda$ is an amalgamation basis).

\noindent
2A) $K^{3,\gk}_\lambda$ is defined similarly using
$\gk_{[\lambda,\rnd(\lambda)]}$.  
\end{definition}

\noindent
Generalizing superlimit, we have more than one reasonable choice.
\begin{definition}
\label{f37}
1) For $\ell=1,2$ we say $M^* \in K^{\gk}_\lambda$ is 
\underline{superlimit}$_\ell$ or $(\lambda,\ge \kappa)$-superlimit$_\ell$
\when \, clause (c) of \ref{f31}(5)  
and:
\mn
\begin{enumerate}
\item[$(a)$]   it is universal, (i.e., every $M \in K^{\gk}_\lambda$ 
can be properly $\le_{\gk}$-embedded into $M^*$), and
\sn
\item[$(b)$]   \underline{Case 1}:  $\ell=1$ if 
$\langle M_i:i \le \delta \rangle$ is $\le_{\gk}$-increasing, 
$\cf(\delta) \ge \kappa,\delta < \lambda^+$ 
and $i < \delta \Rightarrow M_i \cong M^*$ then $M_\delta \cong M^*$
\sn
\item[${{}}$]   \underline{Case 2}:  $\ell=2$ if $I$ is a 
$(< \kappa)$-directed partial order of cardinality $\le \chi$, 
$\langle M_t:t \in I\rangle$ is
$\le_{\gk}$-increasing and $t \in I \Rightarrow M_t \cong M^*$
then $\cup\{M_t:t \in I\} \cong M^*$.
\end{enumerate}
\mn
2) $M$ is $\lambda$-saturated above $\mu$ \If \, $\|M\| \ge \lambda >
\mu \ge \LST({\gk})$ and: $N \le_{\gk} M,\mu \le \|N\| < \lambda,p \in
{\cS}_{\gk}(N)$ implies $p$ is strongly realized in $M$.  Let ``$M$ is
$\lambda^+$-saturated" mean that ``$M$ is $\lambda^+$-saturated above
$\lambda$" and $K(\lambda^+$-saturated) 
$= \{M \in K:M$ is $\lambda^+$-saturated$\}$ 
and ``$M$ is saturated
$\ldots$" mean ``$M$ is $\|M\|$-saturated $\ldots$". 
\end{definition}

\begin{definition}
\label{f40}
1) We say \underline{$N$ is $(\lambda,\sigma)$-brimmed over $M$} if we
can find a sequence $\langle
M_i:i < \sigma \rangle$ which is $\le_{\gk}$-increasing
semi-continuous, $M_i \in K_\lambda,M_0 = M,M_{i+1}$ is 
$\le_{\gk}$-universal over $M_i$ and $\bigcup\limits_{i < \sigma} M_i
= N$.  We say $N$ is $(\lambda,\sigma)$-brimmed
over $A$ if $A \subseteq N \in K_\lambda$ and we can find $\langle M_i:i <
\sigma \rangle$ as in part (1) such that $A \subseteq M_0$; if $A =
\emptyset$ we may omit ``over $A$".

\noindent
2) We say $N$ is $(\lambda,*)$-brimmed over $M$ if for some $\sigma
\in [\kappa,\lambda),N$ is $(\lambda,\sigma)$-brimmed over $M$.  We say $N$ is
$(\lambda,*)$-brimmed if for some $M,N$ is $(\lambda,*)$-brimmed over $M$. 

\noindent
3) If $\alpha < \lambda^+$ let ``$N$ is $(\lambda,\alpha)$-brimmed over
$M$" mean $M \le_{\gk} N$ are from $K_\lambda$ and $\cf(\alpha) \ge
\kappa \Rightarrow N$ is $(\lambda,\cf(\alpha))$-brimmed over $M$.
\end{definition}

\noindent
Recall
\begin{claim}
\label{f43}
1) If ${\gk}$ is a $(\mu,\lambda,\kappa)$-{\rm d.a.e.c.}
with amalgamation, stable in $\chi$ and $\sigma = \cf(\sigma)$ so $\chi 
\in [\lambda,\mu)$, \then \, for every $M \in K^{\gk}_\chi$ there is 
$N \in K^{\gk}_\lambda$ universal over $M$
which is $(\chi,\sigma)$-brimmed over $M$ 
(hence is $S^\chi_\sigma$-limit, see \cite{Sh:88r}, not used).

\noindent
2) If $N_\ell$ is $(\chi,\theta)$-brimmed over $M$ for $\ell
   =1,2$, and $\kappa \le \theta = \cf(\theta) \le \chi^+$
\then \, $N_1,N_2$ are isomorphic over $M$. 

\noindent
3) If $M_2$ is $(\chi,\theta)$-brimmed over $M$ and $M_0 \le_{\gs}
   M_1$ then $M_2$ is $(\chi,\theta)$-brimmed over $M_0$.
\end{claim}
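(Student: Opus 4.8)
The plan is to run, for all three parts, the standard arguments of \cite[\S1]{Sh:600}, adapted to the weaker continuity of a $(\mu,\lambda,\kappa)$-d.a.e.c. Throughout I use that the definition of ``brimmed'' (Definitions \ref{f40}, \ref{f14}(4)) already presupposes $\le_{\gk}$-prime models over short chains, i.e.\ Ax(III)(f), and that $\chi\in\Car_{\gk}$ (clause (c) of the stability hypothesis, see \ref{f31}(5)) together with Observation \ref{f17}(4) keeps every model occurring in the construction inside $K^{\gk}_\chi$: a $\le_{\gk}$-union or $\le_{\gk}$-prime model of models of size $\chi$ along a chain of length $<\mu_{\gk}$ again has size $\le\chi^{<\kappa}=\chi$.

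For part (1) the real content is the sub-lemma that, when $\gk$ has amalgamation and is stable in $\chi$, every $M\in K^{\gk}_\chi$ has a $\le_{\gk}$-extension $M^{+}\in K^{\gk}_\chi$ that is $\chi$-universal over $M$ in the sense of \ref{f34}(1). I would prove this as in \cite[\S1]{Sh:600}: using $|\cS(M'')|\le\chi$ for all $M''\in K^{\gk}_\chi$, build a $\le_{\gk}$-increasing semi-continuous chain of the appropriate length each of whose successors strongly realizes every type over its predecessor (amalgamating the $\le\chi$ witnesses and shrinking via \ref{f19} and Ax(V), valid since $\chi\in\Car_{\gk}$), take its union (or a $\le_{\gk}$-prime model over it) $M^{+}\in K^{\gk}_\chi$, and verify $\chi$-universality of $M^{+}$ over $M$ by the back-and-forth / ``saturated $=$ universal-model-homogeneous'' argument cited in \ref{b8}(6). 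Granting the sub-lemma, part (1) is immediate: inductively build the semi-continuous chain $\langle M_i:i<\sigma\rangle$ of Definition \ref{f40}(1) with $M_0=M$ and $M_{i+1}$ a $\chi$-universal extension of $M_i$ from the sub-lemma, taking $\le_{\gk}$-unions at limits of cofinality $\ge\kappa$ (Ax(III)(a) and Ax(IV)(a)) and $\le_{\gk}$-prime models at limits of cofinality $<\kappa$ (Ax(III)(f), with size still $\chi$ by \ref{f17}(4)); then $N:=\bigcup_{i<\sigma}M_i$ (a $\le_{\gk}$-prime model over the chain if $\sigma<\kappa$) lies in $K^{\gk}_\chi$, is $(\chi,\sigma)$-brimmed over $M$ by construction, and is $\chi$-universal over $M$ because $M_1\le_{\gk}N$ and $M_1$ already is.

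Part (2) is a zig-zag of length $\theta$. Fix semi-continuous chains $\langle M^{\ell}_i:i<\theta\rangle$ with $M^{\ell}_0=M$ and $\bigcup_{i<\theta}M^{\ell}_i=N_\ell$ for $\ell=1,2$. I would build by induction on $i<\theta$ an increasing sequence of isomorphisms $f_i:A_i\to B_i$ with $A_i\le_{\gk}N_1$, $B_i\le_{\gk}N_2$ in $K^{\gk}_\chi$, $f_i\restriction M=\id_M$, and $M^{1}_i\subseteq A_{2i+1}$, $M^{2}_i\subseteq B_{2i+2}$: at successor steps absorb the next model of the relevant chain into the domain (resp.\ range), using $\chi$-universality of the successor model of the other chain over the current image plus amalgamation to place it correctly; at limits of cofinality $\ge\kappa$ take unions, at limits of cofinality $<\kappa$ pass to $\le_{\gk}$-prime models of domain and range and extend $f$ there using uniqueness of primes. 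The hypotheses $\kappa\le\theta=\cf(\theta)\le\chi^{+}$ are exactly what makes this bookkeeping close up, and $f:=\bigcup_{i<\theta}f_i$ is an isomorphism from $N_1$ onto $N_2$ over $M$.

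For part (3) (read as the downward monotonicity: if $M_2$ is $(\chi,\theta)$-brimmed over $M_1$ and $M_0\le_{\gk}M_1$, then $M_2$ is $(\chi,\theta)$-brimmed over $M_0$), take a chain $M_1=N_0\le_{\gk}N_1\le_{\gk}\cdots$ witnessing brimmedness over $M_1$, with $\bigcup_{i<\theta}N_i=M_2$. The key point is that $N_1$, being $\chi$-universal over $N_0=M_1$, is also $\chi$-universal over $M_0$: given $M'\in K^{\gk}_\chi$ with $M_0\le_{\gk}M'$, amalgamate $M'$ with $M_1$ over $M_0$ and shrink (by \ref{f19} and Ax(V), keeping $M_1$ inside) to get $M''\supseteq M_1$ in $K^{\gk}_\chi$ with an embedding $M'\to M''$ over $M_0$, then embed $M''$ into $N_1$ over $M_1$ and compose. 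Hence $\langle M_0,N_1,N_2,\dots\rangle$, re-indexed, is a semi-continuous chain with consecutive-universal successors whose limits and union coincide with those of $\langle N_i\rangle$, witnessing that $M_2$ is $(\chi,\theta)$-brimmed over $M_0$. The main obstacle, common to all three parts, is the one genuinely new point compared with the fully continuous case of \cite[\S1]{Sh:600}: the limit stages of cofinality $<\kappa$, where one replaces unions by $\le_{\gk}$-prime models (Ax(III)(f)), controls their cardinality via \ref{f17}(4) and $\chi=\chi^{<\kappa}$ to stay in $K^{\gk}_\chi$, and — in part (2) — checks that isomorphisms between the approximating models lift to their prime models. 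Everything else is routine amalgamation bookkeeping.
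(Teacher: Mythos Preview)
Your proposal is correct and takes exactly the approach the paper intends: the paper's own proof is literally ``Straightforward for part (1)\ldots\ 2),3) As in \cite{Sh:600}'', and you have spelled out the standard \cite{Sh:600} arguments with the necessary adaptations to the $(\mu,\lambda,\kappa)$-d.a.e.c.\ setting (semi-continuity via Ax(III)(f), size control via $\chi=\chi^{<\kappa}$ and \ref{f17}(4)).

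One small imprecision in part (2): at limit stages $\delta<\theta$ with $\cf(\delta)<\kappa$ you invoke ``uniqueness of primes'' to extend $f$, but the framework only gives \emph{existence} of $\le_{\gk}$-primes (Ax(III)(f)), not uniqueness. The fix is immediate and does not require uniqueness: take $A_\delta$ prime over $\langle A_i:i<\delta\rangle$ and, using the defining embedding property of primeness applied to the upper bound $N_1$, arrange $A_\delta\le_{\gk}N_1$; then apply the \emph{same} primeness property of $A_\delta$ to the chain $\langle B_i:i<\delta\rangle$ with upper bound $N_2$ to get a $\le_{\gk}$-embedding $h:A_\delta\to N_2$ extending $\bigcup_{i<\delta}f_i$, and set $B_\delta:=h(A_\delta)$, $f_\delta:=h$. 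The bookkeeping (absorbing $M^\ell_\delta$) is then handled at the next successor step, exactly as you indicate. With this correction your zig-zag goes through, and since $\cf(\theta)\ge\kappa$ the final union $\bigcup_{i<\theta}f_i$ is the desired isomorphism.
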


\begin{PROOF}{\ref{f43}}  
Straightforward for part (1); recall clause (c) of Definition
\ref{f43}(5).  

\noindent
2),3) As in \cite{Sh:600}.
\end{PROOF}
\bn
\centerline {$* \qquad * \qquad *$}
\bigskip

\subsection {Liftings} \label{5C}\
\bigskip

Here we deal with lifting, there are two aspects.  First, if
$\gk^1,\gk^1$ agree in $\lambda$ they agree in every higher cardinal.
Second, given $\gk$ we can find $\gk_1$ with $\mu_{\gk_1} =
\infty,(\gk_1)_\lambda = \gk_\lambda$.

\begin{theorem}
\label{f46}
1) If ${\gk}^\ell$ is a $(\mu,\lambda,\kappa)$-a.e.c. for $\ell=1,2$ 
and ${\gk}^1_\lambda = {\gk}^2_\lambda$ then ${\gk}^1 = \gk^2$.

\noindent
2) If ${\gk}_\ell$ is a $(\mu_\ell,\lambda,\kappa)$-d.a.e.c. for $\ell=1,2$
and $\gk^1$ satisfies Ax(IV)(d) and $\mu_1 \le \mu_2$ and
${\gk}^1_\lambda = \gk^2_\lambda$ \then \, $\gk_1 = \gk_2[\lambda,\mu_1)$.
\end{theorem}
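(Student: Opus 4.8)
The plan is to prove both parts by induction on the cardinal $\chi$, establishing at stage $\chi$ that $K^{\gk^1}_\chi = K^{\gk^2}_\chi$ and that for any $\tau$-structures $M \subseteq N$ with $\|M\| \le \chi = \|N\|$ one has $M \le_{\gk^1} N \Leftrightarrow M \le_{\gk^2} N$. For part (1) the induction runs over $\chi \in [\lambda,\mu)$; for part (2), where $\mu_1 \le \mu_2$, it runs only over $\chi \in [\lambda,\mu_1)$, which is exactly why the conclusion there is $\gk_1 = \gk_2 \rest [\lambda,\mu_1)$ (and $\gk_2 \rest [\lambda,\mu_1)$ is itself a d.a.e.c.\ by \ref{f28}). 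The base case $\chi = \lambda$ is the hypothesis ${\gk}^1_\lambda = {\gk}^2_\lambda$.

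For the inductive step at $\chi > \lambda$ I would invoke Claim \ref{f27}: every $N \in K^{\gk^1}_\chi$ is $\bigcup_{t \in I} M_t$ with $I$ a $\kappa_{\gk}$-directed partial order of cardinality $\le \chi$, each $M_t \le_{\gk^1} N$ of cardinality $\le \lambda$, and $s <_I t \Rightarrow M_s \le_{\gk^1} M_t$; inspecting the proof (which runs as in \ref{f19}) one may moreover arrange that every subset of $N$ of cardinality $\le \lambda$ is contained in some $M_t$. By the base case $\langle M_t : t \in I\rangle$ is equally a $\le_{\gk^2}$-directed system of members of $K^{\gk^2}_\lambda$, so by Ax(III)(a) for $\gk^2$ (extended to directed systems via \ref{f17}) it has a $\le_{\gk^2}$-union $N^* \in K^{\gk^2}$ with universe $\bigcup_t|M_t| = |N|$. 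One then checks $N = N^*$ as $\tau$-structures, whence $N = N^* \in K^{\gk^2}_\chi$, and by symmetry $K^{\gk^1}_\chi = K^{\gk^2}_\chi$. The $\le_{\gk}$-clause at stage $\chi$ is treated identically using the compatible representations of a pair $M \le_{\gk^1} N$ from \ref{f27}(2), applying Ax(IV)(d) for $\gk^2$ to transfer $M \le_{\gk^1} N$ to $M \le_{\gk^2} N$ and conversely; antisymmetry of $\le_{\gk}$ (Ax(II)(a)) makes the two $\le$-unions actually coincide.

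The hard part will be the step ``$N = N^*$ as $\tau$-structures'', precisely because $\tau_{\gk}$ may carry symbols of arity up to $\lambda$, so $R^N$ is genuinely not $\bigcup_t R^{M_t}$ and the naive union of the $M_t$ need not even be smooth --- the phenomenon flagged in Discussion \ref{b29}. I would resolve it with the L.S.T.\ axiom Ax(VI): for a symbol $R \in \tau$ of arity $\alpha \le \lambda$ and a tuple $\bar a$ with $\Rang(\bar a) \subseteq |N|$, since $|\Rang(\bar a)| \le \lambda$ the chosen (enriched) system contains some $M_t$ with $\Rang(\bar a) \subseteq M_t$; as $M_t \le_{\gk^1} N$ and $M_t \le_{\gk^2} N^*$, the symbol $R$ --- and likewise any function symbol --- is interpreted on $\bar a$ the same way in $M_t$, in $N$ and in $N^*$ by Ax(I)(a) for both frames, so $N$ and $N^*$ agree on every instance of every symbol. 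The cardinal arithmetic assumptions ($\lambda = \lambda^{<\kappa}$, $|\alpha|^{<\kappa} < \mu$) enter exactly in guaranteeing, through \ref{f19} and \ref{f27}, the existence of such $\lambda$-rich directed systems of the right size.

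For part (2) the only additional care concerns the asymmetry of the hypotheses: only $\gk^1$ is assumed to satisfy Ax(IV)(d), so unique $\le$-unions are available directly on the $\gk^1$ side. To see that a $\gk^1$-model (or $\gk^1$-pair) of cardinality $< \mu_1$ lies in $\gk^2$, one uses $\gk^1$'s representation lemma together with the mere existence of $\le_{\gk^2}$-bounds (Ax(III)(a)--(b), available in any d.a.e.c.); for the converse one represents a $\gk^2$-model of cardinality $< \mu_1 \le \mu_2$ by the family of its $\le_{\gk^2}$-submodels of cardinality $\lambda$ --- which is $\kappa$-directed and covers it by Ax(V) and Ax(VI) --- notes by the base case that this is also a $\gk^1$-directed system, forms its $\le_{\gk^1}$-union via Ax(III)(a) and Ax(IV)(d) for $\gk^1$, and identifies the two models by the same infinite-arity argument as above. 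Antisymmetry of $\le_{\gk}$ then pins down the $\le$-relation on both sides, and the induction closes.
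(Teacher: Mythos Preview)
Your proof is correct and is exactly what the paper's one-line ``By \ref{f27}'' compresses: represent $N$ (and pairs $M \le N$) by the $\kappa$-directed system of $\lambda$-sized $\le_{\gk}$-submodels from \ref{f27}, transfer the system through $\gk^1_\lambda = \gk^2_\lambda$, reconstruct the union in the other framework, and identify the two $\tau$-structures via the Ax(VI)/infinite-arity argument you give. Two cosmetic points: the induction on $\chi$ is superfluous, since at every stage you only ever invoke the base case $\chi = \lambda$ (\ref{f27} drops you straight to cardinality $\lambda$, not to intermediate cardinals); and where you cite Ax(III)(a) you really want the directed-system axiom Ax(III)(d), as (III)(a) is stated only for chains --- this is the axiom actually packaged into the 2-d.a.e.c.\ hypothesis under which \ref{f27} is stated.
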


\begin{PROOF}{\ref{f46}}
By \ref{f27}.
\end{PROOF}

\begin{theorem}
\label{f49}
\underline{The lifting-up Theorem}

\noindent
1) If ${\gk}_\lambda$ is a $(\lambda^+,\lambda,\kappa)$-{\rm
  d.a.e.c.}$^\pm$ \then \, the pair $(K',\le_{{\gk}'})$ defined below is an 
$(\infty,\lambda,\kappa)$-{\rm d.a.e.c.}$^+$ where we define
\mn
\begin{enumerate}
\item[$(A)$]   $K'$ is the class of $M$ such that $M$ is a 
$\tau_{{\gk}_\lambda}$-model, and for some $I$ and $\bar M$ we have
\sn
\begin{enumerate}
\item[$(a)$]   $I$ is a $\kappa$-directed partial order
\sn
\item[$(b)$]   $\bar M = \langle M_s:s \in I \rangle$
\sn
\item[$(c)$]  $M_s \in K_\lambda$
\sn
\item[$(d)$]  $I \models s < t \Rightarrow M_s \le_{{\gk}_\lambda} M_t$
\sn
\item[$(e)$]   if $J \subseteq I$ has cardinality $\le \lambda$
and is $\kappa$-directed and $M_J$ is the $\gk$-union of
$\langle M_t:t \in J\rangle$, see Definition \ref{f4}, 
\then \, $M_J$ is a submodel of $M$
\sn
\item[$(f)$]   $M = \cup\{M_J:J \subseteq I$ is $\kappa$-directed of
  cardinality $\le \lambda\}$, i.e. both for the universe and for the
  relations and functions.
\end{enumerate}
\sn
\item[$(A)'$]  we call such $\langle M_s:s \in I \rangle$ a witness 
for $M \in K'$, we call it reasonable if $|I| \le \|M\|^{< \kappa}$
\sn
\item[$(B)$]   $M \le_{{\gk}'} N$ iff for some $I,J,\bar M$ we have
\sn
\begin{enumerate}
\item[$(a)$]   $J$ is a $\kappa$-directed partial order
\sn
\item[$(b)$]   $I \subseteq J$ is $\kappa$-directed
\sn
\item[$(c)$]   $\bar M =  \langle M_s:s \in J \rangle$ and is a
$\le_{{\gk}_\lambda}$-increasing
\sn
\item[$(d)$]   $\langle M_s:s \in J\rangle$ is a witness for $N \in K'$
\sn
\item[$(e)$]   $\langle M_s:s \in I\rangle$ is a witness for $M \in K'$.
\end{enumerate}
\sn
\item[$(B)'$]  We call such $I, \langle M_s:s \in J \rangle$ witnesses for $M 
\le_{{\gk}'} N$ or say $(I,J,\langle M_s:s \in J \rangle)$ witness
$M \le_{{\gk}'} N$. 
\end{enumerate}
\mn
2) For the other axioms we have implications.
\end{theorem}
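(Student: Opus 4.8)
The plan is to verify, axiom by axiom, that $(K',\le_{\gk'})$ meets the requirements of an $(\infty,\lambda,\kappa)$-d.a.e.c.$^+$, following the template of the corresponding lifting in \cite[\S1]{Sh:600}; almost everything will sit in two or three uniform devices, after which each axiom becomes bookkeeping plus the matching fact for $\gk_\lambda$ at cardinality $\lambda$. First I would set up the \emph{thick $\kappa$-directed hull}: if $\bar M=\langle M_s:s\in I\rangle$ witnesses $M\in K'$ and $J_0\subseteq I$ has $|J_0|\le\lambda$, then there is $J$ with $J_0\subseteq J\subseteq I$, $|J|\le\lambda$, and $J$ $\kappa$-directed as a subposet of $I$, obtained by closing off in $\kappa$ steps, each step adjoining an $I$-upper bound for every $<\kappa$-subset of the current stage (these exist since $I$ is $\kappa$-directed); since $\lambda^{<\kappa}=\lambda$ and $\kappa=\cf(\kappa)$ the size stays $\le\lambda$. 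This is exactly the move from the proofs of \ref{f19} and \ref{f27}. The second device is coherence of $\gk_\lambda$-unions along hulls: for such a $J$, the system $\langle M_s:s\in J\rangle$ is $\le_{\gk_\lambda}$-increasing, $\kappa$-directed, with union of size $\le\lambda<\mu_{\gk_\lambda}$, so Ax(III)(d) of $\gk_\lambda$ gives a well-defined $\le_{\gk_\lambda}$-union $M_J$, and Ax(IV)(d) gives $J\subseteq J'\Rightarrow M_J\le_{\gk_\lambda}M_{J'}$; with Ax(V) of $\gk_\lambda$ this forces the various $M_J$ to agree on overlaps, so clause (f) really defines a single $\tau_{\gk_\lambda}$-model $M$ with $M_J=M\restriction|M_J|$. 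This establishes that $K'$ is well defined, together with Ax(0), Ax(I)(a), and the reflexivity and antisymmetry of Ax(II).

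The third device is concatenation of witnesses: for a $\le_{\gk'}$-increasing directed family $\langle N_t:t\in T\rangle$ one proves, by induction on $|T|$ exactly along the lines of the proof of \ref{f29}, that compatible witnesses $\langle M^t_s:s\in I_t\rangle$ can be chosen, i.e.\ with $t\le_T t'\Rightarrow I_t\subseteq I_{t'}$ and agreeing systems; when $T$ is $\kappa$-directed or a chain of cofinality $\ge\kappa$, the index set $\bigcup_t I_t$ is again $\kappa$-directed and witnesses that $\bigcup_t N_t\in K'$ is a $\le_{\gk'}$-upper bound, which yields Ax(III)(a),(d) and, as the special case of one model carrying two witnesses, transitivity of $\le_{\gk'}$, completing Ax(II). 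The remaining ``easy'' axioms then drop out: Ax(V) (coherence) by passing to a common witness of $N_1,N_2\le_{\gk'}M$ and applying Ax(V) of $\gk_\lambda$ on the matching $\lambda$-pieces; Ax(VI) (L.S.T.\ down to $\lambda$) by applying the hull device to a $J_0$ of size $\le\lambda$ with $A\subseteq\bigcup\{M_s:s\in J_0\}$ and reading off $M_J$; and the cardinal side-conditions of \ref{f2} are vacuous since $\mu_{\gk'}=\infty$.

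The step I expect to be the real obstacle is the cluster of ``limit'' and weak-uniqueness axioms that a $0$-d.a.e.c.\ does not carry — Ax(III)(b) (limits of chains of short cofinality), the second clause of Ax(IV)(a), and the directed version Ax(IV)(e): here, given a ($\kappa$-)directed $\langle M_s:s\in I\rangle$ in $K'$ with two $\gk'$-upper bounds $N_1,N_2$, one must embed $N_1$ into an extension of $N_2$ fixing every $M_s$, and at cardinality $\lambda$ this must be run as a back-and-forth whose single inductive step is either a uniqueness instance of Ax(IV)(d) of $\gk_\lambda$ or a weak amalgamation of $\lambda$-models, with every intermediate amalgam routed through upper bounds that already live inside $N_1$ and $N_2$ — there being no free-standing amalgamation of $\lambda$-models to lean on. This is precisely where the hypothesis that $\gk_\lambda$ is a d.a.e.c.$^\pm$ (i.e.\ carries Ax(III)(d) and Ax(IV)(d)) is used in full strength, and it is also the point governed by part~(2): each further axiom of \ref{f4} assumed of $\gk_\lambda$ (in particular the amalgamation-flavoured ones) transfers to $\gk'$ by the same reductions to cardinality $\lambda$, and conversely its failure at $\lambda$ is reflected at $\gk'$, so part~(2) is then a routine catalogue of these implications.
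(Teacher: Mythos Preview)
Your overall plan — reduce every axiom to statements about $\lambda$-sized $\kappa$-directed pieces and invoke Ax(III)(d), Ax(IV)(d) of $\gk_\lambda$ there — is the paper's plan as well, and your hull and coherence devices are exactly what is used. The difference is in your third device. The paper does \emph{not} proceed by induction on $|T|$ \`a la \ref{f29}; instead, for each axiom it builds in one stroke a $\kappa$-directed family $\mathcal U$ of $\lambda$-sized ``coherent slices'' through the (disjoint union of the) given witness index sets. For transitivity, with witnesses $(I_1,J_1,\bar M^1)$ for $M_0\le_{\gk'}M_1$ and $(I_2,J_2,\bar M^2)$ for $M_1\le_{\gk'}M_2$, the family $\mathcal U$ consists of those $u\subseteq J_1\cup J_2$ of size $\le\lambda$ whose intersections with $I_1,J_1,I_2,J_2$ are all $\kappa$-directed and such that the two resulting $\gk_\lambda$-unions for $M_1$ coincide; then $(\mathcal U,\subseteq)$ is $\kappa$-directed and for each $u$ one gets $M_{0,u}\le_{\gk_\lambda}M_{1,u}\le_{\gk_\lambda}M_{2,u}$. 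A separate one-line observation $\boxdot$ (if two systems live on the \emph{same} index set with pointwise $\le_{\gk_\lambda}$, then their limits are $\le_{\gk'}$-comparable, via the product poset $I\times\{1,2\}$) finishes. The same $\mathcal U$-template handles Ax(III)(a), Ax(IV)(a), Ax(V), and Ax(III)(d), Ax(IV)(d).

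Your induction-on-$|T|$ route is not wrong, but the successor step needs precisely this ``merge two incompatible witnesses for the same model into a common refinement'' operation, and you have not said how to do it; that is where all the content sits. Also, your description of transitivity as ``the special case of one model carrying two witnesses'' understates the issue: one must simultaneously extend the merged witness for $M_1$ downward to a witness for $M_0$ and upward to one for $M_2$, which is exactly what the $\mathcal U$ construction accomplishes in one move. Finally, you are right that Ax(III)(b), Ax(IV)(b) and Ax(IV)(e) carry the ``$+$'' — the paper's written proof treats these only under ``similar'' and part~(2), so your singling them out is appropriate, but the intended argument there is again the $\mathcal U$-device rather than a separate back-and-forth.
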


\begin{PROOF}{\ref{f49}}
The proof of part (2) is straightforward
so we concentrate on part (1).  So let us check the axioms one by one.
\bigskip

\noindent
\underline{AxO(a),(b),(c) and (d)}:  $K'$ is a class of 
$\tau$-models, $\le_{{\gk}'}$ a two-place relation on $K,
K'$ and $\le_{{\gk}'}$ are closed under
isomorphisms and $M \in K' \Rightarrow \|M\| \ge \lambda$, etc.

\noindent
[Why?  trivially.]
\bigskip

\noindent
\underline{AxI(a)}:  If $M \le_{{\gk}'} N$ then $M \subseteq N$. 

\noindent
[Why?  We use smoothness for $\kappa$-directed unions, i.e. Ax(IV)(x).]
\bigskip

\noindent
\underline{AxII(a),(b),(c)}:  

We prove the first, the others are easier.
\bigskip

\noindent
\underline{Ax II(a)}:  $M_0 \le_{{\gk}'} M_1 \le_{{\gk}'} M_2$ 
implies $M_0 \le_{{\gk}'} M_2$ and $M \in K' \Rightarrow M \le_{{\gk}'} M$. 

\noindent
[Why?  The second phrase is trivial.  For the first phrase let for $\ell \in
\{1,2\}$ the $\kappa$-directed partial orders $I_\ell \subseteq J_\ell$ and
$\bar M^\ell = \langle M^\ell_s:s \in J_\ell \rangle$ witness 
$M_{\ell-1} \le_{{\gk}'} M_\ell$.

We first observe
\mn
\begin{enumerate}
\item[$\boxdot$]    if $I$ is a $\kappa$-directed partial order,
$\langle M^\ell_t:t \in I\rangle$ is a 
$\le_{{\gk}_\lambda}$-system witnessing 
$M_\ell \in K'$ for $\ell=1,2$ and $t \in I \Rightarrow 
M^1_t \le_{{\gk}_\lambda} M^2_t$ \then \ $M_1 \le_{\gk} M_2$.
\end{enumerate}
\mn
[Why?  Let $I_1$ be the partial order with set of elements $I \times
\{1\}$ ordered by $(s,1) \le_{I_1} (t,1) \Leftrightarrow s \le_I t$.
Let $I_2$ be the partial order with set of elements $I \times \{1,2\}$
ordered by $(s_1,\ell_1) \le_{I_2} (s_2,\ell_2) \Leftrightarrow s_1
\le_I s_2 \wedge \ell_1 \le \ell_2$.  Clearly $I_1 \subseteq I_2$ are
both $\kappa$-directed.

Let $M_{(s,1)} = M^1_s,M_{(s,2)} = M^2_s$, so clearly $\bar M =
\langle M_t:t \in I_\ell\rangle$ is a $\le_{\gk}-\kappa$-directed system
witnessing $M_\ell \in K'$ for $\ell=1,2$ and $(I_1,I_2,\bar M)$
witness $M_1 \le_{{\gk}'} M_2$, so we are done.]

Without loss of generality $J_1,J_2$ are pairwise disjoint.  Let $\chi
= (|J_1| + |J_2|)^{< \kappa}$ so $\lambda \le \chi < \mu$ and let

\begin{equation*}
\begin{array}{clcr}
{\cU} := \{u:&u \subseteq J_1 \cup J_2 \text{ has cardinality } \le
\lambda \text{ and } u \cap I_\ell \\
  &\text{is } \kappa\text{-directed under } \le_{I_\ell} \text{ for }
  \ell= 1,2 \text{ and } u \cap J_\ell \\
  &\text{is } \kappa\text{-directed under } \le_{J_\ell} \text{ for }
  \ell=1,2 \text{ and} \\
  &\cup\{|M^2_t|:t \in I_2\} = \cup\{|M^1_t:t \in J_1\}\}.
\end{array}
\end{equation*}

\mn
Let $\langle u_\alpha:\alpha < \alpha^*\rangle$ list ${\cU}$, and
we define a partial order $I$:
\mn
\begin{enumerate}
\item[$(a)'$]   its set of elements is $\{\alpha < \alpha^*$: for no
$\beta < \alpha$ do we have $u_\alpha \subseteq u_\alpha\}$
\sn
\item[$(b)'$]   $\alpha \le_I \beta$ iff $u_\alpha \subseteq u_\beta
\wedge \alpha \in I \wedge \beta \in I$.
\end{enumerate}
\mn
Note that the set $I$ may have $\card(\sum\limits_{i < \delta}
\|M_i\|)^\lambda$ which may be $> \mu_{\gk}$.

As in the proof of \ref{f19}, $I$ is $\kappa$-directed.

For $\ell = 0,1,2$ and $\alpha \in I$ let $M_{\ell,\alpha}$ be
\mn
\begin{enumerate}
\item[$(a)$]    $\le_{\gk}$-union of $\langle M^\ell_t:t \in u_\alpha
\cap I_1\rangle$ if $\ell=0$
\sn
\item[$(b)$]    the $\le_{\gk}$-union of the
  $\le_{\gk_\lambda}$-directed system $\langle M^1_t:t \in J_1\rangle$,
equivalently the $\le_{{\gk}_\lambda}$-directed system of $\langle
M^2_t:t \in I_2\rangle$, when $\ell=1$
\sn
\item[$(c)$]    the $\le_{\gk}$-union of the 
$\le_{{\gk}_\lambda}$-directed system $\langle M^2_t:t \in J_2\rangle$ when
$\ell=2$.
\end{enumerate}
\mn
Now
\mn
\begin{enumerate}
\item[$(*)_1$]    if $\ell=0,1,2$ and $\alpha \le_I \beta$ then
$M^\ell_\alpha \le_{{\gk}_\lambda} M^\ell_\beta$
\sn
\item[$(*)_2$]   if $\alpha \in I$ then $M^0_\alpha 
\le_{{\gk}_\lambda} M^1_\alpha \le_{{\gk}_\lambda} M^1_\alpha$
\sn
\item[$(*)_3$]    $\langle M_{\ell,\alpha}:\alpha \in I\rangle$ is a
witness for $M_\ell \in K'$
\sn
\item[$(*)_4$]     $M_{0,\alpha} \le_{{\gk}_\lambda}
M_{2,\alpha}$ for $\alpha \in I$.
\end{enumerate}
\mn
Together by $\boxdot$ we get that $M_0 \le_{{\mathfrak k}'} M_2$ as required.
\bigskip

\noindent
\underline{Ax III(a)}:  In general.

Let $(I_{i,j},J_{i,j},\bar M^{i,j})$ witness $M_i \le_{{\gk}'}
M_j$ when $i \le j < \delta$ and \wilog \, $\langle J_{i,j}:i < j <
\delta\rangle$ are pairwise disjoint.  Let ${\cU}$ be the family of
$u$ such that for some $v \in [\delta]^{\le \lambda}$,
\mn
\begin{enumerate}
\item[$(a)$]    $v \subseteq \delta$ has cardinality $\le \lambda$ and
  has order type of cofinality $\ge \kappa$
\sn
\item[$(b)$]  $u \subseteq \cup\{J_{i,j}:i<j$ are from $v\}$ has
cardinality $\le \lambda$ and
\sn
\item[$(b)$]    for $i<j$ from $v$ the set $u \cap J_{i,j}$ is
$\kappa$-directed under $\le_{J_{i,j}}$ and $u \cap I_{i,j}$ is
$\kappa$-directed under $\le_{I_{i,j}}$
\sn
\item[$(c)$]    if $i_0 \le i_1 \le i_2$ then
  $\cup\{M^{i(0),i(1)}_{t,s}:s \in u \cap J_{i(0),i(1)}\} =
  \cup\{M^{i(1),i(2)}_s:s \in u \cap I_{i(1),i(2)}\}$
\sn
\item[$(d)$]    if $i(0) \le i(1) \le i(2)$ are from $v$ then
  $\cup\{M^{i(0),i(1)}_{t,s}:s \in u \cap J_{i(0),i(1)}\} =
  \cup\{M^{i(1),i(2)}_s:s \in u \cap I_{i(1),i(2)}\}$
\sn
\item[$(e)$]    if $i(0) \le k(0) \le j(1)$ and $i(1) \le j(1)$ are
  from $u$ then $\cup\{M^{i(0),j(0)}_s:s \in u \cap J^{i(0),j(0)}_s\} =
\cup\{M^{i(1),j(1)}_s:s \in u \cap J_{i(1),j(1)}\}$.
\end{enumerate}
\mn
Let the rest of the proof be as before.
\bigskip

\noindent
\underline{Ax(IV)(a)}:

Similar, but $\cU = \{u \subseteq I:u$ has cardinality $\le \lambda$
and is $\kappa$-directed$\}$.
\bigskip

\noindent
\underline{Ax(III)(d)}:

Assuming $\gk$ satisfies Ax(III)(d).  Similar.
\bigskip

\noindent
\underline{Ax(IV)(d)}:

Assuming $\gk$ satisfiefs Ax(IV)(d). Similar.
\bigskip

\noindent
\underline{Axiom V}:  Assume $N_0 \le_{{\gk}'} M$ and $N_1 \le_{{\gk}'} M$.

If $N_0 \subseteq N_1$, then $N_0 \le_{{\gk}'} N_1$. 

\noindent
[Why?  Let $(I_\ell,J_\ell,\langle M^\ell_s:s \in J_\ell \rangle)$ 
witness $N_\ell \le_{\gk} M$ for $\ell=0,1$; \wilog \, $J_0,J_1$ are
disjoint.

Let 

\begin{equation*}
\begin{array}{clcr}
\cU := \{ u \subseteq J_0 \cup J_1:& |u| \le \lambda \text{ and } u \cap
  J_\ell \text{ is } \kappa\text{-directed} \\
  &\text{ and } u \cap I_\ell \text{ is } \kappa\text{-directed for }
  \ell=0,1 \text{ and} \\
  &\cup\{|M^0_s|:s \in u \cap J_0\} = \cup\{|M^0_s|:s \in u \in
  J_1\}\}.
\end{array}
\end{equation*}

\mn
For $u \in \cU$ let
\mn
\begin{enumerate}
\item[$\bullet$]  $M_u = M \rest \cup\{(M^\ell_s:s \in u \cap
  J_\ell\}$ for $i=0,1$
\sn
\item[$\bullet$]  $N_{\ell,u} = N_\ell \rest \{(M^\ell_s):s \in u \cap
  I_\ell\}$.
\end{enumerate}
\mn
Let
\mn
\begin{enumerate}
\item[$(*)$]  $(a) \quad (\cU,\subseteq)$ is $\kappa$-directed
\sn
\item[${{}}$]  $(b) \quad N_{\ell,u} \le_{\gk} M$
\sn
\item[${{}}$]  $(c) \quad M_{\ell,u} \le_{\gk} M_{\ell,v}$ when $u
  \subseteq v$ are from $\cU$ and $\ell=0,1$
\sn
\item[${{}}$]  $(d) \quad M_{0,u} \le_{\gk} M_{1,u}$
\sn
\item[${{}}$]  $(e) \quad N_|ell = \cup\{N_{\ell,u}:u \in \cU\}$; as
  in ?
\end{enumerate}
\mn
By $\boxdot$ above we are done.
\bigskip

\noindent
\underline{Axiom VI}:  LST$({\gk}') = \lambda$. 

\noindent
[Why?  Let $M \in K',A \subseteq M,|A| + \lambda \le \chi < \|M\|$ and let
$\langle M_s:s \in I \rangle$ witness $M \in K'$; \wilog \, $|A| =
  \chi^{< \kappa}$.  
Now choose a directed $I \subseteq J$ of cardinality $\le |A| =
\chi^{< \kappa}$ such that $A \subseteq M' =: 
\bigcup\limits_{s \in I} M_s$ and so $(I,J,\langle M_s:s \in J
\rangle)$ witnesses $M' \le_{{\gk}'} M$, so 
as $A \subseteq M'$ and $\|M'\| \le |A| + \mu$ we are done.] 
\end{PROOF}

\noindent
Also if two such d.a.e.c.'s have some cardinal in common then we can
put them together.
\begin{claim}
\label{f52}
Let $\iota \in \{1,2,3\}$ and assume $\lambda_1 < \lambda_2 <
\lambda_3$ and
\mn
\begin{enumerate}
\item[$(a)$]   ${\gk}^1$ is an
  $(\lambda^+_2,\lambda,\kappa)-2$-d.a.e.c., $K^1 = K^1_{\ge \lambda}$
\sn
\item[$(b)$]   ${\gk}^2$ is a $(\lambda_3,\lambda_2,\kappa)-\iota$-d.a.e.c.
\sn
\item[$(c)$]   $K^{\gk^1}_{\lambda_2} = K^{\gk^2}_{\lambda_2}$ 
and $\le_{{\gk}^2} \rest K^{\gk^2}_{\lambda_2} = 
\le_{{\gk}^1} \restriction K^{\gk^1}_{\lambda_2}$
\sn
\item[$(d)$]   we define ${\gk}$ as follows: 
$K_{\gk} = K_{\gk} \cup K_{\gk_2},M \le_{\gk} N$
iff $M \le_{{\gk}^1} N$ or $M \le_{{\gk}^2} N$ or for some $M',M 
\le_{{\gk}^1} M' \le_{{\gk}^2} N$.
\end{enumerate}
\mn
\Then \ ${\gk}$ is an $(\lambda_3,\lambda_1,\kappa)-\iota$-d.a.e.c.
\end{claim}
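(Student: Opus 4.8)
The plan is to verify the axioms of a $(\lambda_3,\lambda_1,\kappa)$-$\iota$-d.a.e.c. for $\gk$ one at a time, reducing each to the corresponding axiom already known for $\gk^1$ and $\gk^2$ by a ``splitting at $\lambda_2$'' argument. First I would record the easy bookkeeping: $\tau_{\gk^1}=\tau_{\gk^2}=\tau_{\gk}$, the cardinal parameters are $\mu_{\gk}=\lambda_3$, $\lambda_{\gk}=\lambda_1$, $\kappa_{\gk}=\kappa$, and the numerical hypotheses ($\lambda_1=\lambda_1^{<\kappa}$, $\alpha<\lambda_3\Rightarrow|\alpha|^{<\kappa}<\lambda_3$) are inherited from $\gk^1,\gk^2$. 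Closure under isomorphism (Ax 0(e),(f)), the cardinality bound Ax 0(g), Ax I(a), and the fact that $\le_{\gk}$ is reflexive are immediate from the definition of $\gk$ in clause (d). The one point needing a small remark is that the three clauses in the definition of $M\le_{\gk}N$ are consistent with each other where they overlap, which uses hypothesis (c): on $K^{\gk}_{\lambda_2}$ the two orders agree.

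Next I would handle transitivity (Ax II(a)), which is the crux of the compatibility. Given $M_0\le_{\gk}M_1\le_{\gk}M_2$, split into cases according to which of the three clauses of (d) each step comes from. If both steps live entirely inside $\gk^1$ (all models of size in $[\lambda_1,\lambda_2]$, say, with the relations being $\le_{\gk^1}$) we use transitivity of $\le_{\gk^1}$; similarly if both live inside $\gk^2$. The mixed cases are where $M_0\le_{\gk^1}M_1$ and $M_1\le_{\gk^2}M_2$, or where one of the steps is itself already a ``mixed'' relation $M\le_{\gk^1}M'\le_{\gk^2}N$; here one inserts the intermediate model and uses Ax V (downward closure) in $\gk^2$ together with the fact that $\le_{\gk^1}\rest K^{\gk^1}_{\lambda_2}=\le_{\gk^2}\rest K^{\gk^2}_{\lambda_2}$ to line up the two pieces. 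Concretely: if $M_0\le_{\gk^1}M_1$, and $M_1\le_{\gk^2}M_2$, then since $\|M_1\|\ge\lambda_2$ and by Ax VI / the LST property in $\gk^1$ we may locate $M_0'\le_{\gk^1}M_1$ of size $\lambda_2$ with $M_0\le_{\gk^1}M_0'$ (using $\lambda_2=\lambda_2^{<\kappa}$ and \ref{f19}); then $M_0'\in K^{\gk^2}_{\lambda_2}$, $M_0'\le_{\gk^2}M_2$ by Ax V in $\gk^2$, and $M_0\le_{\gk^1}M_0'\le_{\gk^2}M_2$ exhibits $M_0\le_{\gk}M_2$. Ax II(b),(c) (antisymmetry up to the partial-order sense, and $\subseteq$-compatibility) are then routine.

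For the chain axioms Ax III(a),(b) and Ax IV(a),(b) (and, in the $\iota\ge 2$ case, Ax III(d), Ax IV(d), and the prime-model axioms Ax III(f),(IV)(f) as dictated by $\iota$), given a $\le_{\gk}$-increasing sequence $\langle M_i:i<\delta\rangle$ with $\|\bigcup_i M_i\|<\lambda_3$, split the index set at the threshold where $\|M_i\|$ first reaches $\lambda_2$: the initial segment is a $\le_{\gk^1}$-increasing sequence with union of size $<\lambda_2^+$, to which the $\gk^1$-axioms apply, and the tail (after coarsening so all models have size $\ge\lambda_2$) is a $\le_{\gk^2}$-increasing sequence with union of size $<\lambda_3$, to which the $\gk^2$-axioms apply; the union model, upper bound, or prime model is obtained by gluing the $\gk^1$-object at $\lambda_2$ to the $\gk^2$-construction above it, again using (c). Ax V for $\gk$ follows the same split, and Ax VI for $\gk$ is just Ax VI for $\gk^1$ since $\lambda_{\gk}=\lambda_1=\lambda_{\gk^1}$. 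The main obstacle I expect is the mixed case of transitivity: making the ``cut at $\lambda_2$'' genuinely functorial — i.e. checking that whichever way one inserts intermediate models of size $\lambda_2$, the resulting relation is the same $\le_{\gk}$ — which is exactly where hypothesis (c) and the weak-uniqueness / smoothness axioms available at level $\lambda_2$ (Ax IV(a) or IV(d) depending on $\iota$) must be invoked carefully; everything else is bookkeeping along the lines of \ref{f19} and \ref{f27}.
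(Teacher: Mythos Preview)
Your overall strategy --- split at $\lambda_2$ and reduce each axiom to $\gk^1$ below and $\gk^2$ above --- is exactly the paper's approach, and your handling of transitivity is correct (in fact slightly more work than needed: when $M_0 \le_{\gk^1} M_1 \le_{\gk^2} M_2$ the model $M_1$ itself already has size $\lambda_2$ and serves as the intermediate witness).

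There is, however, a real gap in your treatment of Ax(III)(d) (and Ax(IV)(d)). You fold these into the chain axioms and propose to ``split the index set at the threshold where $\|M_i\|$ first reaches $\lambda_2$''. But Ax(III)(d) is about a $\kappa$-directed system $\langle M_s:s\in I\rangle$, not a linear chain, so in general there is no threshold. Worse, the essential case --- which your split does not cover even for chains --- is when \emph{every} $M_s$ has size $<\lambda_2$ yet $|\bigcup_s M_s|>\lambda_2$: the system sits entirely in $\gk^1$ while its union must be produced in $\gk^2$. The paper's device here is to pass to
\[
\cU=\bigl\{u\subseteq I:\ |u|\le\lambda_2,\ u\ \text{is }\kappa\text{-directed},\ \bigl|\textstyle\bigcup_{s\in u}M_s\bigr|=\lambda_2\bigr\},
\]
note that $(\cU,\subseteq)$ is $\kappa$-directed, that for each $u\in\cU$ the $\gk^1$-union $M_u$ lies in $K^{\gk^1}_{\lambda_2}=K^{\gk^2}_{\lambda_2}$, and then apply Ax(III)(d) for $\gk^2$ to $\langle M_u:u\in\cU\rangle$. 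This re-indexing is the idea your sketch is missing. A smaller point: Ax~V is not as automatic as ``follows the same split''; the paper singles it out as the other nontrivial check, and in the mixed case $\|N_0\|<\lambda_2<\|M\|$ one must insert intermediate models $M'_0,M'_1\in K_{\lambda_2}$ witnessing $N_\ell\le_{\gk}M$, find a common $M''\le_{\gk^2}M$ of size $\lambda_2$ containing both via LST in $\gk^2$, apply Ax~V in $\gk^2$ to get $M'_\ell\le_{\gk^2}M''$, translate to $\gk^1$ via hypothesis (c), and only then apply Ax~V in $\gk^1$.
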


\begin{PROOF}{\ref{f52}}
Straightforward.  E.g. 
\bigskip

\noindent
\underline{Ax(III)(d)}:  So $\langle M_s:s \in I\rangle$ is
$\le_{\gs}-\kappa$-directed system.

If $\|M_s\| \ge \lambda_2$ for some $\lambda$, use $\langle M_s:s \le
t \in I\rangle$ and clause (b) of the assumption.  If $\cup\{M_s:s \in
I\}$ has cardinality $\le \lambda_2$ use clause (a) in the
assumption.  If neither one of them holds, recall $\lambda_2 =
\lambda^{< \kappa}_2$ by clause (b) of the assumption, and let

\[
\\cU = \{u \subseteq I:|u| \ le \lambda_2,u \text{ is }
\kappa\text{-directed (in I), and } \cup\{M_s:s \in u\} \text{ has
  cardinality } \lambda\}.
\]

\mn
Easily $(\cU,\subseteq)$ is $\lambda_2$-directed, for $u \in J$ let
$M_u$ be the $\le_{\gs}$-union of $\langle M_s:s \in u\rangle$.  Now
by clause (a) of the assumption
\mn
\begin{enumerate}
\item[$(*)_1$]  $M_u \in K^{\gk^1}_{\lambda_2} = K^{k^*}_{\lambda_2}$
\sn
\item[$(*)_2$]  if $u_1 \subseteq v$ are from $\cU$ then 
$M_u \le_{\gk^1} M_v,M_u \le_{\gk^2} M_v$.
\end{enumerate}
\mn
Now use clause (b) of the assumption.
\bigskip

\noindent
\underline{Axiom V}:  We shall use freely
\mn
\begin{enumerate}
\item[$(*)$]   ${\gk}^2_\lambda = {\gk}^2$ and ${\gk}^1_\lambda 
= {\gk}^1$.
\end{enumerate}
\mn
So assume $N_0 \le_{\gk} M,N_1 \le_{\gk} M,N_0 \subseteq N_1$. 

Now if $\|N_0\| \ge \lambda_1$ use assumption (b), so we can assume
$\|N_0\| < \lambda_1$. If $\|M\| \le \lambda_1$ we can use assumption (a) so we
can assume $\|M\| > \lambda_1$ and by the definition of $\le_{\gk}$ there
is $M'_0 \in K^{\gk^1}_{\lambda_1} = K^{\gk^2}_{\lambda_1}$ such that 
$N_0 \le_{\gk^1} M'_0 \le_{\gk^2} M$.  First assume 
$\|N_1\| \le \lambda_1$, so we can find 
$M'_1 \in K^{\gk^1}_{\lambda_1}$ such that
$N_1 \le_{{\gk}^1} M'_1 \le_{{\gk}^2} M$ (why?  if $N_1 \in 
K^{\gk^1}_{< \lambda_1}$, by the definition of 
$\le_{\gk}$ and if $N_1 \in K^{\gk^1}_{\lambda_1}$ just 
choose $M'_1 = N_1$).  Now we can by assumption (b) 
find $M'' \in K^{\gk^1}_{\lambda_1}$ such that $M'_0 \cup M'_1 \subseteq M'' 
\le_{\gk} M$, hence by assumption (b) (i.e. AxV for ${\gk}^2$)
we have $M'_0 \le_{\gk} M'',M'_1 \le_{\gk} M''$.
As $N_0 \le_{\gk} M'_0 \le_{\gk} M'' \in K^{\gk}_{\le \lambda_1}$ 
by assumption (a) we have $N_0 \le_{\gk} M''$, and similarly 
we have $N_1 \le_{\gk} M''$.  So
$N_0 \subseteq N_1,N_0 \le_{\gk} M'',N_1 \le_{\gk} M'$ so by 
assumption (b) we have $N_0 \le_{\gk} N_1$.

We are left with the case $\|N_1\| > \lambda$, by assumption (b) there is
$N'_1 \in K_{\lambda_1}$ such that $N_0 \subseteq N'_1 \le_{\gk^2} N_2$.  By
assumption (b) we have $N'_1 \le_{\gk} M$, so by the previous paragraph
we get $N_0 \le_{\gk} N'_1$, together with the previous sentence we have
$N_0 \le_{{\gk}^1} N'_1 \le_{{\gk}^2} N_1$ so by the definition of
$\le_{\gk}$ we are done. 
\end{PROOF}

\begin{definition}
\label{f55}
If $M \in K_\chi$ is $(\chi,\ge \kappa)$-superlimit$_1$ let
$K^{[M]}_\chi = \{N \in K_\chi:N \cong M\},{\gK}^{[M]}_\chi =
(K^{[M]}_\chi,\le_{\gk} \restriction K^{[M]}_\chi)$ and ${\gk}
^{[M]}$ is the ${\gk}'$ we get in \ref{f49}(1) for ${\gk}' =
{\gk}^{[M]}_\chi$.
\end{definition}

\begin{claim}
\label{0.34}
1) If ${\gk}$ is an $(\mu,\lambda,\kappa)$-{\rm a.e.c.}, 
$\lambda \le \chi < \mu,
M \in K_\chi$ is $(\chi,\ge \kappa)$-superlimit$_1$ \then \,
${\gk}^{[M]}_\chi$ is a $(\chi^+,\chi,\kappa)$-{\rm d.a.e.c.}

\noindent
2) If in addition ${\gk}$ is a 
$(\mu,\lambda,\kappa)$-{\rm d.a.e.c.}$^\pm$ \then \, ${\gk}^{[M]}_\chi$ is a
  $(\chi^+,\chi,\kappa)$-d.a.e.c.$^\pm$.
\end{claim}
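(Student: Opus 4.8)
The plan is to check, axiom by axiom, that $\gk' := \gk^{[M]}_\chi$ satisfies Definition \ref{f2} with the cardinal triple $(\chi^+,\chi,\kappa)$, and for part~(2) additionally Ax(III)(d) and Ax(IV)(d) of Definition \ref{f4}. Throughout, $\lambda_{\gk'}=\chi$, $\mu_{\gk'}=\chi^+$, $\kappa_{\gk'}=\kappa$; the arithmetic demands of Ax(0) --- notably $\chi=\chi^{<\kappa}$ and $\alpha<\chi^+\Rightarrow|\alpha|^{<\kappa}<\chi^+$ --- are supplied by $\chi\in\Car_\gk$, which is part of the definition of $(\chi,\ge\kappa)$-superlimit$_1$ (clause~(c) of \ref{f31}(5)); the weaker reading of $\chi\in\Car_\gk$ still suffices for the \LST\ axiom, treated as in \ref{f19}.

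The local axioms transfer verbatim. Closure of $K^{[M]}_\chi$ and of $\le_\gk\restriction K^{[M]}_\chi$ under isomorphism, and Ax(I)(a), Ax(II)(a), follow from the corresponding properties of $\gk$; every model of $\gk'$ has cardinality exactly $\chi\in[\chi,\chi^+)$. For Ax(V): if $N_1\subseteq N_2$ with $N_1,N_2\le_{\gk'}N$, then $N_1\le_\gk N_2$ by Ax(V) for $\gk$, and since $N_1,N_2\in K^{[M]}_\chi$ this reads $N_1\le_{\gk'}N_2$. Ax(VI) is vacuous, every model of $\gk'$ already having cardinality $\lambda_{\gk'}=\chi$.

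The real content is in the union/limit axioms, and the key remark is that, because $\le_{\gk'}$ is literally $\le_\gk$ restricted, smoothness and uniqueness are inherited and the only genuinely new thing to prove is that the relevant union is again $\cong M$. For Ax(III)(a) and Ax(IV)(a): let $\langle M_i:i<\delta\rangle$ be $\le_{\gk'}$-increasing with $\cf(\delta)\ge\kappa$; as each $\|M_i\|=\chi$, the hypothesis forces $\|\bigcup_i M_i\|=\chi$, and we may assume $\delta<\chi^+=\lambda^+_{\gk'}$ (otherwise the chain of universes, hence of models, is eventually constant, or we pass to a cofinal subsequence of length $\cf(\delta)\le\chi$). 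Apply Ax(III)(a) for $\gk$ to get $M^+\in K_\gk$ with $|M^+|=\bigcup_i|M_i|$ and $M_i\le_\gk M^+$ for all $i$; then $\langle M_i:i\le\delta\rangle$ with $M_\delta:=M^+$ is $\le_\gk$-increasing, so $(\chi,\ge\kappa)$-superlimit$_1$ gives $M^+\cong M$, i.e. $M^+\in K^{[M]}_\chi$ --- which is exactly Ax(III)(a) for $\gk'$. Ax(IV)(a) for $\gk'$ is then immediate: Ax(IV)(a) for $\gk$ applied to this same $M^+$ yields $M^+\le_\gk N$ whenever all $M_i\le_\gk N$. This gives part~(1).

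For part~(2) one adds Ax(III)(d) and Ax(IV)(d). Given a $\kappa$-directed system $\langle M_t:t\in I\rangle$ in $K^{[M]}_\chi$ with $|I|<\chi^+$, Ax(III)(d) for $\gk$ furnishes the unique $M^+\in K_\gk$ with universe $\bigcup_t|M_t|$ and $M_s\le_\gk M^+$ for all $s$; Ax(IV)(d) for $\gk'$ is inherited from Ax(IV)(d) for $\gk$ applied to $M^+$, so the sole remaining point --- the one I expect to be the main obstacle --- is that $M^+\cong M$, i.e. Ax(III)(d) for $\gk'$. The difficulty is that superlimit$_1$ concerns only well-ordered chains, so one must realise the directed union $M^+$ as the union of a $\le_\gk$-increasing, $(\ge\kappa)$-continuous chain of copies of $M$ of length $<\chi^+$. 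I would carry this out as in \cite[\S1]{Sh:600}: when $|I|<\kappa$ the system has a maximum $t^*$ and $M^+=M_{t^*}\cong M$; for larger $I$, filter $I$ by an increasing continuous chain $\langle I_\alpha:\alpha<|I|\rangle$ of $\kappa$-directed subsets with union $I$, let $N_\alpha$ be the $\le_\gk$-union of $\langle M_t:t\in I_\alpha\rangle$, verify via Ax(IV)(d) and Ax(V) for $\gk$ that $\langle N_\alpha\rangle$ is $\le_\gk$-increasing and $(\ge\kappa)$-continuous with union $M^+$, argue inductively that each $N_\alpha\cong M$ (the delicate bookkeeping being that $\chi=\chi^{<\kappa}$ keeps the sizes of the $I_\alpha$ under control, in the spirit of \ref{f19}), and finally apply superlimit$_1$ to the chain $\langle N_\alpha\rangle$. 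Everything apart from this last reduction is a routine transcription of \cite[\S1]{Sh:600} (compare also the proof of \ref{f49}).
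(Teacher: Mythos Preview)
The paper does not supply a proof of this claim --- it is stated immediately before the section break and left without argument --- so there is no proof in the paper to compare against. Your approach is the natural one and is surely what the author intended: verify the axioms of Definition~\ref{f2} (and, for part~(2), also Ax(III)(d) and Ax(IV)(d) of Definition~\ref{f4}) for $\gk^{[M]}_\chi$ by inheriting them from $\gk$, the only new content being that the relevant unions of copies of $M$ are again copies of $M$, which is exactly what the superlimit hypothesis supplies.

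Your treatment of part~(1) is correct. For part~(2) you rightly isolate the single real obstacle --- that the $\le_\gk$-union of a $\kappa$-directed system of copies of $M$ is again a copy of $M$ --- and your induction on $|I|$ via a filtration by $\kappa$-directed subsets is the right strategy. One point you pass over: at the final step you apply superlimit$_1$ to the chain $\langle N_\alpha:\alpha<|I|\rangle$, which needs $\cf(|I|)\ge\kappa$. When $\kappa>\aleph_0$ and $|I|$ is a singular cardinal of cofinality $<\kappa$ this fails, and the bookkeeping you allude to must be arranged so that the filtration has length of cofinality $\ge\kappa$ (for instance by reindexing, or by observing that at limits of cofinality $<\kappa$ one can pick an upper bound in $I$ for a cofinal $<\kappa$-sized subchain and continue). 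For $\kappa=\aleph_0$ this case cannot occur and your sketch is complete as written. It is also worth noting that Definition~\ref{f37}(1) introduces superlimit$_2$, which is precisely closure under $\kappa$-directed unions of size $\le\chi$; it is quite possible the author had that variant in mind for part~(2), in which case Ax(III)(d) for $\gk^{[M]}_\chi$ is immediate.
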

\newpage

\section {II pr frames} \label{6}

\begin{definition}
\label{g2}
For $\iota = 1,2,3,4$.  We say that ${\gs}$ is a good
$(\mu,\lambda,\kappa)-\iota$-frame \when \, ${\gs}$ 
consists of the following objects
satisfying the following condition: $\mu,\lambda,\kappa$ (so we may
write $\mu_{\gs},\lambda_{\gs},\kappa_{\gs}$ but we
usually ignore them defining ${\gs}$) and
\mn
\begin{enumerate}
\item[$(A)$]    ${\gk} = {\gk}_{\gs}$ is a
$(\mu,\lambda,\kappa)-6$-d.a.e.c., so we may write ${\gs}$ instead of
${\gk}$, e.g. $\le_{\gs}$-increasing, etc. and $\chi \in
[\lambda,\mu) \Rightarrow \LST(\chi^{< \kappa})$
\sn
\item[$(B)$]   ${\gk}$ has a $(\lambda,\ge \kappa)$-superlimit model
$M^*$ which \footnote{follows by (C) in fact}  is
not $<_{\gk}$-maximal, i.e.,
\sn
\begin{enumerate}
\item[$(a)$]  $M^* \in K^{\gs}_\lambda$
\sn
\item[$(b)$]  if $M_1 \in K^{\gs}_\lambda$ \then \, for some
$M_2,M_1 <_{\gs} M_2 \in K^{\gs}_\lambda$ and $M_2$ is
isomorphic to $M^*$
\sn
\item[$(c)$] if $\langle M_i:i < \delta \rangle$ is 
$\le_{\gs}$-increasing, $i < \delta \Rightarrow M_i 
\cong M$ and $\cf(\delta) \ge \kappa,\delta < \lambda^+$ \then \, 
$\cup\{M_i:i < \delta\}$ is isomorphic to $M^*$
\end{enumerate}
\sn
\item[$(C)$]    ${\gk}$ has the amalgamation property, the
JEP (joint embedding property), and has no $\le_{\gk}$-maximal 
member; if of $\iota \ge 2, {\gk}$ has primes$^-$ and if $\iota \ge 4,{\gk}$
has primes$^+$ 
\sn
\item[$(D)$]  $(a) \quad {\cS}^{\bs} = {\cS}^{\bs}_{\gs}$ 
(the class of basic types for ${\gk}_{\gs}$) 
is included in 

\hskip25pt  $\bigcup\{{\cS}(M):M \in K_{\gs}\}$ and is 
closed under isomorphisms including 

\hskip25pt  automorphisms; 
for $M \in K_\lambda$ let ${\cS}^{\bs}(M) = {\cS}^{\bs}_{\gs} 
\cap {\cS}(M)$; 

\hskip25pt  no harm in allowing types of finite sequences.
\sn
\item[${{}}$]  $(b) \quad$ if $p \in {\cS}^{\bs}_{\gs}(M)$, 
\then \, $p$ is non-algebraic (i.e., not realized by any 

\hskip25pt $a \in M$).
\sn
\item[${{}}$]  $(c) \quad$ \underline{(density)} 

\hskip25pt if $M \le_{\gk} N$ are from $K_{\gs}$ and 
$M \ne N$, \then \, for some $a \in N \backslash M$ 

\hskip25pt we have $\ortp(a,M,N) \in {\cS}^{\bs}$

\hskip25pt  [intention: examples are: minimal types in 
\cite{Sh:576}, regular types

\hskip25pt  for superstable theories]
\sn
\item[${{}}$]  $(d) \quad$ \underline{bs-stability} 

\hskip25pt ${\cS}^{\bs}(M)$ has cardinality $\le \|M\|^{< \kappa}$ 
for $M \in K_{\gs}$. 
\sn
\item[$(E)$]  $(a) \quad \nonfork{}{}_{} = \nonfork{}{}_{\gs}$ 
is a four place relation  called nonforking with 
$\nonfork{}{}_{}(M_0,M_1,a,M_3)$ implying $M_0 \le_{\gk} M_1 
\le_{\gk} M_3$ are from $K_{\gs},a \in M_3 \backslash M_1$ 
and $\ortp(a,M_0,M_3) \in {\cS}^{\bs}_{\gs}(M_0)$ 
and $\ortp(a,M_1,M_3) \in {\cS}^{\bs}(M_1)$.  
Also $\nonfork{}{}_{}$ is preserved under isomorphisms. 

We also write $\nonforkin{M_1}{a}_{M_0}^{M_3}$ and demand: if $M_0 = M_1 
\le_{\gk} M_3$ both in $K_\lambda$ then: 
$\nonfork{}{}_{}(M_0,M_1,a,M_3)$ is equivalent to 
``$\ortp(a,M_0,M_3) \in {\cS}^{\bs}(M_0)$".  Also we may 
state $\nonforkin{M_1}{a}_{M_0}^{M_3}$ 
``$\ortp(a,M_1,M_3)$ does not fork over $M_0$ (inside $M_3$)" 
(this is justified by clause (b) below).

\hskip25pt  [Explanation: The 
intention is to axiomatize non-forking of types, but we allow
dealing with basic types only.  Note that in
\cite{Sh:576} we know something on minimal types but other types are
something else.]
\sn
\item[${{}}$]  $(b) \quad$ \underline{(monotonicity)}: 

\hskip25pt  if $M_0 \le_{\gk} M'_0 \le_{\gk} M'_1 \le_{\gk} M_1 \le_{\gk} 
M_3 \le_{\gk} M'_3,M_1 \cup \{a\} \subseteq M''_3 \le_{\gk} M'_3$ 
all of them in $K_\lambda$, \then \,
$\nonfork{}{}_{}(M_0,M_1,a,M_3) \Rightarrow 
\nonfork{}{}_{}(M'_0,M'_1,a,M'_3) \Leftrightarrow \nonfork{}{}_{}
(M'_0,M'_1,a,M''_3)$, \underline{so} it is legitimate to 
just say ``$\ortp(a,M_1,M_3)$ does not fork over $M_0$". 

\hskip25pt  [Explanation: non-forking is preserved by decreasing the type,
increasing the basis (= the set over which it does not fork) and
increasing or decreasing 
the model inside which all this occurs.  The same holds for
stable theories only here we restrict ourselves to ``legitimate" types.]
\sn
\item[${{}}$]  $(c) \quad$  \underline{(local character)}: 

\hskip25pt \underline{Case 1}: $\iota=1,2,3$.

If $\langle M_i:i \le \delta\rangle$ is $\le_{\gs}$-semi-continuous 
and $p \in {\cS}^{\bs}(M_\delta)$ and $\cf(\delta) \ge \kappa$ \then
\, for every $\alpha < \delta$ large enough, $p$ does not fork over $M_\alpha$.

\hskip25pt \underline{Case 2}:  $\iota=4$.  

If $I$ is a $\kappa$-directed partial order and $\bar M = \langle M_t:t
\in I\rangle$ is a $\le_{\gs}$-directed system and $M$ is its
$\le_{\gk}$-union and $M \le_{\gs} N$ and
$\ortp(a,M,N) \in {\cS}^{\bs}(M_\delta)$ \then \,
for every $s \in I$ large enough $\ortp(a,M,N)$ does not fork over $M_s$. 

\hskip25pt [Explanation: This is a replacement for $\kappa \ge \kappa_r(T)$; 
if $p \in {\cS}(A)$ then there is a $B \subseteq A$ of cardinality
$< \kappa$ such that $p$ does not fork over $A$.  The case $\iota=2$?
is a very strong demand even for stable first order theories.]  It
means dimensional continuity, i.e. $M_\delta$ is minimal over
$\cup\{M_\alpha:\alpha < \delta\}$ and $\kappa$-saturated models.]
\sn
\item[${{}}$]  $(d) \quad$  \underline{(transitivity)}: 

if $M_0 \le_{\gk} M'_0 \le_{\gk} M''_0 \le_{\gk} M_3$ and
$a \in M_3$ and $\ortp(a,M''_0,M_3)$ does not fork over $M'_0$ and
$\ortp(a,M'_0,M_3)$ does not fork over $M_0$ (all models are in 
$K_\lambda$, of course, and necessarily the three relevant types are in 
${\cS}^{\bs}$), \then \, $\ortp(a,M''_0,M_3)$ does not fork over $M_0$
\sn
\item[${{}}$]  $(e) \quad$  \underline{uniqueness}: 

if $p,q \in {\cS}^{\bs}(M_1)$ do not fork over $M_0 \le_{\gk} M_1$ 
(all in $K_{\mathfrak s}$) and 

$p \restriction M_0 = q \restriction M_0$ 
\then \, $p = q$
\sn
\item[${{}}$]  $(f) \quad$  \underline{symmetry}:  

\underline{Case 1}: $\ell \ge 3$.

If $M_0 \le_{\gs} M_\ell \le_{\gs} M_3$ and
$(M_0,M_\ell,a_\ell) \in K^{3,\pr}_{\gs}$, see clause (j) below for $\ell=1,2$
\then \, $\ortp_{\gs}(a_2,M_1,M_3)$ does not fork over $M_0$ iff
$\ortp_{\gs}(a_1,M_2,M_3)$ does not fork over $M_0$.

\underline{Case 2}:  $\iota=1,2$.

If $M_0 \le_{\gk} M_3$ are in ${\gk}_\lambda$ and for $\ell = 1,2$
we have $a_\ell \in M_3$ 
and $\ortp(a_\ell,M_0,M_3) \in {\cS}^{\bs}(M_0)$, 
\then \, the following are equivalent:
\sn
\begin{enumerate}
\item[${{}}$]   $(\alpha) \quad$ there are $M_1,M'_3$ in $K_{\gs}$ such that
$M_0 \le_{\gk} M_1 \le_{\gK} M'_3$, 

\hskip25pt $a_1 \in M_1,M_3 \le_{\gk} M'_3$ and 
$\ortp(a_2,M_1,M'_3)$ does not fork over $M_0$
\sn
\item[${{}}$]   $(\beta) \quad$ there are $M_2,M'_3$ in $K_\lambda$ such that
$M_0 \le_{\gk} M_2 \le_{\gk} M'_3$,

\hskip5pt $a_2 \in M_2,M_3 \le_{\gk} M'_3$ and $\ortp(a_1,M_2,M'_3)$ 
does not fork over $M_0$. 

[Explanation: this is a replacement to ``$\ortp(a_1,M_0 \cup
\{a_2\},M_3)$ forks over $M_0$ iff $\ortp(a_2,M_0 \cup \{a_1\},M_3)$
forks over $M_0$"; which is not well defined in out context]
\end{enumerate}
\item[${{}}$]  $(g) \quad$  [existence] if $M \le_{\gs} N,p 
\in {\cS}^{\bs}(M)$ \then \, there is $q \in {\cS}^{\bs}(N)$

\hskip25pt a non-forking extension of $p$
\sn
\item[${{}}$]  $(h) \quad$ [continuity] \underline{Case 1}:  $\iota \ge 1$.

If $\langle M_\alpha:\alpha \le \delta\rangle$ is 
$\le_{\gs}$-increasing, $\le_{\gs}$-semi-continuity, $M_\delta = 
\bigcup\limits_{\alpha < \delta} M_\alpha$ which holds if $\cf(\delta)
\ge \kappa$ and $p \in {\cS}(M_\delta)$ and $p \restriction M_\alpha$ 
does not fork over $M_0$ for $\alpha < \delta$ \then \, $p \in 
{\cS}^{\bs}(M_\delta)$ and it does not fork over $M_0$
\sn
\item[${{}}$]   \underline{Case 2}:  $\iota=4$.

Similarly for $\bar M = \langle M_t:t \in I\rangle,I$ directed, $M =
\cup\{M_t:t \in I\}$ is a $\le_{\gs}$-upper bound of $\bar M$
\sn
\item[${{}}$]  $(j) \quad {\gs}$ has $K^{3,\pr}_{\gs}$-primes, see \ref{g26}
\sn
\item[${{}}$]  $(k) \quad$  \underline{Case 1}:  $\iota \ge 1$. 

If $p \in {\cS}^{\bs}_{\gs}(N)$ \then \, $p$ does not fork over $M$
for some $M \le_{\gs} N$ from $K_\lambda$
\sn
\item[${{}}$]   \underline{Case 2}:  $\iota=3,4$.

If $M_\ell (\ell \le 3),a_\ell,p_\ell(\ell=1,2)$ are as in (E)(f). 

\end{enumerate}
\end{definition}

\begin{discussion}
\label{g5} Consider using:
semi-continuous + $\cf(\delta) \ge \kappa$ for
$(E)(c),(E)(x): \cf(\delta) \ge \kappa$ stable  
only if $\chi =
\chi^{< \kappa}$. 
\end{discussion}

\begin{claim}
\label{g8}
1) If $\langle M_i:i < \delta \rangle$ 
is $\le_{\gk}$-increasing, $(\Sigma\{\|M_i\|:i < \delta\}) < \mu$ 
and $p_i \in {\cS}^{\bs}_{\gs}(M_i)$ does 
not fork over $M_0$ for $i < \delta$ and $[i < j
\Rightarrow p_j \restriction M_i = p_i]$ \then \,:
\mn
\begin{enumerate}
\item[$(a)$]    we can find $M_\delta$ such that $i < \delta 
\Rightarrow M_i \le_{\mathfrak k} M_\delta$
\sn
\item[$(b)$]   for any such $M_\delta$, we can find 
$p \in {\cS}_{\gs}(M_\delta)$ such that $\bigwedge\limits_{i < \delta} 
p \restriction M_i = p_i$ and $p$ does not fork over $M_0$
\sn
\item[$(c)$]    $p_\delta$ is unique in clause (b)
\sn
\item[$(d)$]   if $\ell \ge \kappa \wedge \cf(\delta) \ge \kappa$
we can add $M = \cup\{M_\alpha:\alpha < \delta\}$.
\end{enumerate}
\mn
2) Similarly for $\bar M = \langle M_t:t \in I\rangle,I$ directed.
\end{claim}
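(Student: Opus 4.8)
The plan is to derive the entire claim from the frame axioms (E)(g) (existence of non-forking extensions), (E)(e) (uniqueness) and (E)(b) (monotonicity), so that the failure of smoothness for $\le_{\gs}$-increasing chains of cofinality $<\kappa$ enters only in part (1)(d). For part (1)(a): as $\langle M_i:i<\delta\rangle$ is $\le_{\gs}$-increasing and $\Sigma\{\|M_i\|:i<\delta\}<\mu$, the existence-of-limits axiom $\Ax(III)(b)$ of the underlying d.a.e.c.\ (see Definition \ref{g2}(A) and \ref{f2}) yields $M_\delta\in K_{\gs}$ with $i<\delta\Rightarrow M_i\le_{\gs}M_\delta$; and if in addition $\cf(\delta)\ge\kappa$, then $\Ax(III)(a)$ provides such an $M_\delta$ whose universe is $\bigcup_{i<\delta}|M_i|$.

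For part (1)(b), fix any $M_\delta$ as in (a). By existence, (E)(g), there is $q\in\cS^{\bs}_{\gs}(M_\delta)$ which is a non-forking extension of $p_0$; I claim $p:=q$ works. Indeed, for each $i<\delta$, monotonicity (E)(b) applied along $M_0\le_{\gs}M_i\le_{\gs}M_\delta$ shows that $q\restriction M_i$ does not fork over $M_0$, hence by clause (E)(a) that $q\restriction M_i\in\cS^{\bs}_{\gs}(M_i)$. Now $q\restriction M_i$ and $p_i$ restrict to the same type $p_0$ over $M_0$ (for $p_i$ this is the coherence hypothesis with $j=i$, and trivial when $i=0$) and neither forks over $M_0$, so uniqueness (E)(e) forces $q\restriction M_i=p_i$. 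This gives (b), with ``$p$ does not fork over $M_0$'' built in. For part (1)(c): if $p,p'\in\cS_{\gs}(M_\delta)$ both satisfy the conclusion of (b), then $p\restriction M_0=p_0=p'\restriction M_0$ and neither forks over $M_0$, so (E)(e) gives $p=p'$ (in particular $p,p'\in\cS^{\bs}_{\gs}(M_\delta)$).

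For part (1)(d), when $\cf(\delta)\ge\kappa$ we take for $M_\delta$ the model on the universe $\bigcup_{\alpha<\delta}|M_\alpha|$ provided by $\Ax(III)(a)$, and then (b) and (c) apply to it verbatim; alternatively $p$ can be read off directly from the continuity axiom (E)(h), Case~1. For part (2) one replaces the chain by a $\le_{\gs}$-directed (resp.\ $\kappa$-directed) system $\bar M=\langle M_t:t\in I\rangle$: an upper bound $M_I$ with $t\in I\Rightarrow M_t\le_{\gs}M_I$ exists by $\Ax(III)(e)$ (resp.\ $\Ax(III)(d)$) since $\Sigma\{\|M_t\|:t\in I\}<\mu$, and the argument for (a)--(d) is unchanged — any two indices of $I$ lie below a common one, so the $p_t$'s are pairwise coherent — invoking (E)(g) and then (E)(b),(E)(e) exactly as above, together with Case~2 of (E)(c) and (E)(h) in the $\iota=4$ situation.

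The one point requiring attention is the cardinal bookkeeping: one must verify that every model produced stays in $K_{\gs}$, i.e.\ has cardinality in $[\lambda,\mu)$ — which is precisely what the hypothesis $\Sigma\{\|M_i\|:i<\delta\}<\mu$ secures — so that the axioms (E)(b),(E)(e),(E)(g) are legitimately applicable to it. Granting this there is no real obstacle; in particular, and in contrast with the a.e.c.\ case, no continuity of $\le_{\gs}$ is needed for parts (a)--(c), only existence together with uniqueness.
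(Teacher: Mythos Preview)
Your proposal is correct and follows essentially the same approach as the paper: obtain $M_\delta$ from the d.a.e.c.\ axioms in clause (A) of \ref{g2}, take $p$ to be the non-forking extension of $p_0$ via (E)(g), then use monotonicity (E)(b) and uniqueness (E)(e) to check $p\restriction M_i=p_i$ and to establish uniqueness. Your write-up is simply more explicit about which axioms are invoked at each step (and about parts (d) and (2), which the paper leaves implicit).
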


\begin{PROOF}{\ref{g8}}
1) First choose $M_\delta$ by \ref{g2}, Clause (A).
Second choose $p_\delta \in {\cS}^{\bs}_{\gs}(M_\delta)$, a
non-forking extension of $p_0$, exist by Ax(g) of (E) of \ref{g2}.   
Now $p_\delta \restriction M_i \in {\cS}^{\bs}_{\gs}(M_i)$ does 
not fork over $M_0$ by (b) of (E) of
\ref{g2} and extend $p_0$ so is equal to $p_i$ by (e) of (E).
Third, $p_\delta$ is unique by (E)(e).

\noindent
2) Should be clear, too.  
\end{PROOF}

\begin{definition}
\label{g11}
0) 
We say $\gs$ is full \when \, $\cS^{\bs}_{\gs}(M) = \{p:p \in
   \cS^\varepsilon_{\gk[\gs]}(M)$ is not algebraic for some
   $\varepsilon < \kappa_{\gs}\}$ for every $M \in K_{\gs}$ [compare
   with (2)].  

\noindent
1) Assume $M_\ell \le_{\gs} N$ for $\ell=1,2$ and $p_\ell \in 
{\cS}^{\bs}_{\gs} (M_\ell)$ for $\ell =1,2$.  We say that
$p_1,p_2$ are parallel \when \, some 
$p \in {\cS}^{\bs}_{\gs}(N)$ is a non-forking 
extension of $p_\ell$ for $\ell=1,2$.

\noindent
2) We say ${\gs}$ is type-full when ${\cS}^{\bs}_{\gs}
(M) = {\cS}^{\na}_{{\gk}_{\gs}}(M)$ for $M \in K_{\gs}$.

\noindent
3) We say $p \in {\cS}^{\bs}_{\gs}(M)$ is based on $\bar{\mathbf a}$ when:
\mn
\begin{enumerate}
\item[$(a)$]   $\bar{\mathbf a}$ is a sequence from $M$
\sn
\item[$(b)$]   if $M \le_{\gs} N$ and $q \in {\cS}^{\bs}_{\gs}(N)$ 
is a non-forking extension of $p$
and $\pi$ is an automorphism of $N$ over $\bar{\mathbf a}$ then $\pi(q) = q$
(by Ax(E)(k) there is such $\bar{\mathbf a} \in {}^\lambda(M)$).
\end{enumerate}
\mn
4) We say ${\gs}$ is $(< \theta)$-based when (3) there is such $\bar{\mathbf a}
\in {}^{\theta >}M$.
\end{definition}

\begin{definition}
\label{g14}
1) We say that NF is a non-forking relation on a
$(\mu,\lambda,\kappa)-1$-d.a.e.c. $\gk$ \when \, 
in addition to \ref{g2}(A)-(C)
\mn
\begin{enumerate}
\item[$(F)$]  $(a) \quad$  NF is a four-place relation on
 ${\gk}_{\gs},\NF_{\gs}(M_0,M_1,M_2,M_3)$ implies

\hskip25pt  $M_0 \le_{\gk} M_\ell
\le_{\gk} M_1$ and $\NF_{\gs}$ is preserved by isomorphisms
\sn
\item[${{}}$]  $(b)_1 \quad$  monotonicity: if $\NF_{\gs}(M_0,M_1,M_2,M_3),M_0
\le_{\gs} M'_\ell \le_{\gs} M_\ell$ for 

\hskip25pt  $\ell=1,2,
M'_1 \cup M'_2 \subseteq M'_3 \le_{\gs} M,M'_3 \le_{\gs} M$ 
then $\NF_{\gs}(M_0,M'_,M'_2,M'_3)$
\item[${{}}$]  $(c) \quad$  symmetry: NF$_{\mathfrak s}(M_0,M_1,M_2,M_3)$ implies
NF$_{\mathfrak s}(M_0,M_2,M_1,M_3)$
\sn
\item[${{}}$]  $(d)_1 \quad$ transitivity: if $\NF_{\gs}(M_{2 \ell},M_{2 \ell
+1},M_{2 \ell + 3},M_{2 \ell +4})$ for $\ell = 0,1$ then 

\hskip25pt  $\NF_{\gs}(M_0,M_1,M_4,M_5)$
\sn
\item[${{}}$]  $(d)_2 \quad$ long transitvity: if 
$\langle (N_i,M_i):i < \delta \rangle$
is an $\NF_{\gs}$-sequence (i.e., $M_i$ is

\hskip25pt  $\le_{\gs}$-increasing, 
$N_i$ is $\le_{\gs}$-increasing, $M_i \le_{\gs} M_i,i < j < 
\delta \Rightarrow$

\hskip25pt  $\NF_{\gs}(M_i,N_i,M_j,N_j)$ and 
$\Sigma\{\|N_i\|:i < \delta\} < \mu$
\then \, we can find 

\hskip25pt $(N_\delta,M_\delta)$ such that 

\hskip25pt  $\langle (M_i,N_i):
i \le \delta \rangle$ is an $\NF$-sequence.  \underline{But} what
about pr-continuity?
\sn
\item[${{}}$]  $(d)^+_2 \quad$ like $(d)_2$ for directed systems
\sn
\item[${{}}$]  $(e) \quad$ continuity 
\end{enumerate}
\end{definition}

\begin{definition}
\label{g17}
1) Let ${\gs}$ be a good $\lambda$-frame and
NF a non-forking relation on ${\gk}$.  We say NF respects ${\gs}$
\when \,: if $\NF_{\gs}(M_0,M_1,M_2,M_3)$ and $a \in M_2,
\ortp_{\gs}(a,M_0,M_3) \in {\cS}^{\bs}_{\gs}(M_0)$ \then \,
$\ortp_{\gs}(a,M_1,M_3)$ is a non-forking extension of 
$\ortp_{\gs}(a,M_0,M_2)$.

\noindent
2) We say $\gs$ is a good $(\lambda,\mu,\kappa)-\NF$-frame \when \,
it is a good $(\lambda,\mu,\chi)$-frame and $\NF_{\gs}$ is a
non-forking relation on $\gk_{\gs}$ which respects $\gs$.
\end{definition}

\begin{definition}
\label{g23}
We say that ${\gs}$ is a very good $(\mu,\lambda,\kappa)-\NF$-frame
\If \, it is a good $(\mu,\lambda,\kappa)-\NF$-frame and
\mn
\begin{enumerate}
\item[$(G)$]  $(a) \quad {\gk}_{\gs}$ has primes for chains and
even directed systems, see 

\hskip25pt Definition \ref{f4}(4)
\sn
\item[${{}}$]  $(b) \quad$ if $\NF_{\gs}(M_0,M_1,M_2,M_3)$ \then \, 
there is $M^*_3 \le_{\gs} M_3$ which is prime

\hskip25pt  over $M_1 \cup M_2$ that is:
\sn
\begin{enumerate}
\item[${{}}$]   $(*) \quad$ if $\NF_{\gs}(M'_0,M'_1,M'_2,M'_3)$ 
and $f_\ell$ is an isomorphism from $M_\ell$ onto $M'_\ell$ for 
$\ell =0,1,2$ such that $f_0 \subseteq f_1,f_0 \subseteq f_2$ \then \, 
there is a $\le_{\gs}$-embedding $f_3$ of $M^*_3$ into $M'_3$
extending $f_1 \cup f_2$
\end{enumerate}
\item[${{}}$]  $(c) \quad {\gk}_{\gs}$ has primes (see \ref{g26}(2) below).
\end{enumerate}
\end{definition}

\begin{definition}
\label{g26}
0) $K^{3,\bs}_{\gs} = \{(M,N,a):M \le_{\gs} N$ and $a \in N$ and 
$\ortp_{\gs}(a,M,N) \in {\cS}^{\bs}_{\gs}(M)\}$.

\noindent
1) $K^{3,\pr}_{\gs} = \{(M,N,a) \in K^{3,\bs}_{\gs}$: 
if $M \le N',a' \in N',\ortp_{\gs}(a',M,N') 
= \ortp(a,M,N)$ \then \, there is a $\le_{\gk}$-embedding of
$N$ into $N'$ extending id$_M$ and mapping $a$ to $a'$.

\noindent
2) ${\gk}_{\gs}$ has $K^{3,\pr}_{\gs}$-primes \If \, for 
every $M \in K_{\gs}$ and $p \in {\cS}^{\bs}_{\gs}(M)$ 
there are $(N,a)$ such that $(M,N,a) \in K^{3,\pr}_{\gs}$ 
and $\ortp_{\gs}(a,M,N) = p$. 
\end{definition}

\begin{definition}
\label{g29}
1) [$\iota \ge 3$]

Assume $p_1,p_2 \in {\cS}^{\bs}(M)$.  We say $p_1,p_2$ are
weakly orthogonal, $p_1 {\underset \wk \bot} p_2$ when: if
$M_0 \le_{\gs} M_\ell \le_{\gs} M_3,(M_0,M_\ell,a_\ell) \in
K^{3,\pr}_{\gs}$ and $\ortp_{\gs}(a_\ell,M_0,M_\ell) =
p_\ell$ for $\ell=1,2$ \then \, $\ortp_{\gs}(a_2,M_1,M_3)$ does not
fork over $M_0$ (symmetric by Ax(E)(f)).

\noindent
2) We say $p_1,p_2$ are orthogonal, $p_1 \bot p_2$ when: if
$M \le_{\gs} M_2,M_1 \le_{\gs} M_2$ and $q_\ell \in {\cS}^{\bs}(M_2)$
is a non-forking extension of $p_\ell$ and
$q_\ell$ does not fork over $M_1$ \then \, $q_1 {\underset \wk \bot} q_2$.

\noindent
3) We say that $\{a_t:t \in I\}$ is independent in
$(M_0,M_1,M_2)$ when
\mn
\begin{enumerate}
\item[$(a)$]   $a_t \in M_2 \backslash M_1$
\sn
\item[$(b)$]    $\ortp_{\gs}(a_t,M_1,M_2)$ does not fork over $M_0$
\sn
\item[$(c)$]   there is a list $\langle t(\alpha):\alpha <
\alpha(*)\rangle$ with no repetitions of $I$ and is a 
$\le_{\gs}$-increasing sequence $\langle M_{1,\alpha}:\alpha \le
\alpha(*)+1\rangle$ such that $M_1 \le_{\gs} M_{1,0},M_2 \le 
M_{1,\alpha(*)+1}$ such that $a_{t(\alpha)} \in M_{1,\alpha +1}$ and
$\ortp_{\gs}(a_{t(\alpha)},M_{1,\alpha},M_{1,\alpha +1})$ does not
fork over $M_0$.
\end{enumerate}
\mn
4) Let $(M,N,\mathbf J) \in K^{3,\bs}_{\gs}$ \If \, $M
\le_{\gs} N$ and $\mathbf J$ is independent in $(M,N)$.

\noindent
5) Let $(M,N,\mathbf J) \in K^{3,\qr}_{\gs}$ \If \,:
\mn
\begin{enumerate}
\item[$(a)$]   $M \le_{\gs} N$
\sn
\item[$(b)$]    $\mathbf J$ is independent in $(M,N)$
\sn
\item[$(c)$]   if $M \le_{\gs} N',h$ is a one-to-one function
from $\mathbf J$ into $N'$ such that $(M,N',h''(\mathbf J)) \in
K^{3,\bs}_{\gs}$ \then \, there is a $\le_{\gs}$-embedding $g$ 
of $N$ into $N'$ over $M$ extending $h$.
\end{enumerate}
\end{definition}

\begin{remark}
$\bar M = \langle M_i:i < \alpha\rangle$ is increasing
semi-continuous when it is $\le_{\gs}$-increasing, $M_\delta$
is prime over $\bar M \restriction \delta$ for every limit $\delta < \alpha$.
\end{remark}

\begin{remark}  
We now can imitate relations of the axioms (as in
\cite[\S2]{Sh:600}), and basic properties of the notions introduced in
\ref{g29}. 
\end{remark}

\begin{definition}
\label{g35}
1) We say $p$ is strongly dominated by $\{p_t:t \in I\}$, possibly
with repetitions, so pedantically we should use a sequence and write
$p \le^{\dm}_{\st} \{p_t:t \in I\})$; \when \,:
\mn
\begin{enumerate}  
\item[$(a)$]  $p \in {\cS}^{\bs}_{\gs}(N),p_t \in {\cS}^{\bs}_{\gs}
(N_t),N_t \le_{\gs} N^+ \in K_{\gs},N \le_{\gs} N^+$ and
\sn
\item[$(b)$]   if $N^+ \le_{\gs} N^*$ and $a_t \in N^*$ and
$\ortp(a_t,N^+,N^*) \in {\cS}^{\bs}_{\gs}(N^+)$ is parallel
to $p_t$ and $p' \in {\cS}^{\bs}_{\gs}(N^+)$ is parallel
to $p$, see Definition \ref{g11} and $\{a_t:t \in I\}$ is independent
in $(N^+,N^*)$ \then \, some $a \in N^*$ realizes $p'$.
\end{enumerate}
\mn
2) We say $p$ is weakly dominated by $\{p_t:t \in I\}$ and write 
$p \le_{\wk} \{p_t:t \in I\}$ \when \, for some set $J$
and function $h$ from $J$ onto $I$ we have $p
\le^{\dm}_{\st} \{p_{h(t)}:t \in J\}$.

\noindent
3) Let dominated mean strongly dominated.
\end{definition}

\begin{claim}
\label{g38}
1) If $p$ is strongly dominated by $\{p_t:t \in I\}$ \then \, 
$p$ is weakly dominated by $\{p_t:t \in I\}$.

\noindent
2) If $p$ is strongly dominated by $\{p_t:t \in I\}$ \then \, for some $J
\subseteq I$ of cardinality $< \kappa_{\gs},p$ is strongly
dominated by $\{p_t:t \in I\}$. 

\noindent
3) $p$ is weakly dominated by $\{p_t:t \in I\}$ \Iff \, for some
$\langle i_t:t \in I\rangle,p$ is strongly dominated by $\{p'_s:s \in
\{(t,i):t \in I,i < i_t\}\}$ where $p'_{(t,i)} = p_t,i_t <
\kappa_{\mathfrak s}$ for each $t \in I$.

\noindent
4) In Definition \ref{g35}(2) \wilog \, $(\forall s \in I)
(\exists^{< \kappa} t \in J)(h(t)=s)$. 

\noindent
5) [Preservation by parallelism]
\end{claim}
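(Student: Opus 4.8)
The plan is to dispose of (1) together with an auxiliary monotonicity observation, then to prove (2), and finally to derive (4), (3) and (5) from (2), the monotonicity observation, and the axioms of a good $(\mu,\lambda,\kappa)$-frame. Part (1) is immediate: taking $J=I$ and $h=\id_I$ in Definition \ref{g35}(2) turns the hypothesis $p\le^{\dm}_{\st}\{p_t:t\in I\}$ into the assertion that $p$ is weakly dominated by $\{p_t:t\in I\}$. The auxiliary observation is \emph{monotonicity up}: if $p$ is strongly dominated by $\{p_t:t\in J\}$ and $J\subseteq J'$ with, for $t\in J'\setminus J$, the $p_t$ basic types whose bases $\le_{\gs}$-embed (after enlarging the amalgamating model via Ax(E)(g) and amalgamation) into a common $N^+$ containing $N$ and all bases, then $p$ is strongly dominated by $\{p_t:t\in J'\}$. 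This is routine from Definition \ref{g35}(1): a set $\{a_t:t\in J'\}$ independent in $(N^+,N^*)$ realizing the $p_t$ restricts, by Definition \ref{g29}(3), to an independent $\{a_t:t\in J\}$ realizing the $p_t$, $t\in J$, so clause (b) applied to the latter yields the required realization of $p'$ in $N^*$.

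For (2), fix a witnessing $N,\langle N_t:t\in I\rangle,N^+$ and let $p'$ be the non-forking extension of $p$ to $N^+$ (unique by Ax(E)(e)). Using Ax(E)(g), iterated independent amalgamation (Claim \ref{g8}, Definition \ref{g29}(3)) and that $\gk_{\gs}$ has primes (Definition \ref{g2}(C),(j), and \ref{g26}), build $N^{**}\ge_{\gs}N^+$ with $\{a_t:t\in I\}$ independent in $(N^+,N^{**})$, $\ortp_{\gs}(a_t,N^+,N^{**})$ the non-forking extension of $p_t$, and $N^{**}$ prime over $N^+\cup\{a_t:t\in I\}$. By strong domination, some $a\in N^{**}$ realizes $p'$. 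The crux is to see that $a$ already lies in the $\le_{\gs}$-submodel $M^\infty:=\bigcup\{M_u:u\in[I]^{<\kappa_{\gs}}\}$, where $M_u\le_{\gs}N^{**}$ is prime over $N^+\cup\{a_t:t\in u\}$ (a legitimate $\kappa_{\gs}$-directed union by Ax(III)(d) of the d.a.e.c.); this uses local character, axiom (E)(c) (Case 2 for $\iota=4$), together with the structure theory of prime models over independent sets from \cite{Sh:705}, which governs how many coordinates an element of such a prime model requires. Granting $a\in M_u$ with $|u|<\kappa_{\gs}$, set $J=u$: for any $N^*\ge_{\gs}N^+$ carrying an independent realization $\{b_t:t\in J\}$ of the $p_t$, $t\in J$, the prime model over $N^+\cup\{b_t:t\in J\}$ is isomorphic over $N^+$ to $M_u$ (uniqueness of independent configurations of a fixed similarity type over $N^+$ via $K^{3,\qr}_{\gs}$, and uniqueness of primes), so it realizes $p'$ inside $N^*$; hence $p$ is strongly dominated by $\{p_t:t\in J\}$. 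I expect the verification that the realizing element is captured by a $<\kappa_{\gs}$-subconfiguration — equivalently, that $M^\infty$ contains every realization of $p'$ produced in $N^{**}$ — to be the main obstacle, and this is exactly where regularity of $\kappa_{\gs}$ and local character are used.

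For (4): let $J,h$ witness $p$ weakly dominated by $\{p_t:t\in I\}$, so $p\le^{\dm}_{\st}\{p_{h(t)}:t\in J\}$; by (2) there is $J_0\subseteq J$ with $|J_0|<\kappa_{\gs}$ and $p\le^{\dm}_{\st}\{p_{h(t)}:t\in J_0\}$. For each $s\in I\setminus h''J_0$ choose $t_s\in J$ with $h(t_s)=s$ and put $J'=J_0\cup\{t_s:s\in I\setminus h''J_0\}$; then $h\restriction J'$ is onto $I$, by monotonicity up $p\le^{\dm}_{\st}\{p_{h(t)}:t\in J'\}$, and each fibre of $h\restriction J'$ lies inside $J_0$ or is a singleton, hence has size $<\kappa_{\gs}$, proving (4). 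For (3): ``only if'' follows by applying (4) to get $h$ with all fibres of size $i_s:=|h^{-1}(\{s\})|<\kappa_{\gs}$, then identifying $J'$ with $\{(s,i):s\in I,\ i<i_s\}$ via a bijection compatible with $h$ (and using monotonicity up to arrange every $i_s\ge 1$), which exhibits $p$ strongly dominated by $\{p'_{(s,i)}\}$ with $p'_{(s,i)}=p_s$; ``if'' follows since, when all $i_t\ge 1$, the projection $(t,i)\mapsto t$ from $\{(t,i):i<i_t\}$ onto $I$ witnesses weak domination by $\{p_t:t\in I\}$.

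Finally, (5): strong domination, hence by (1) weak domination, depends only on the parallelism classes of $p$ and of the $p_t$. Given $\bar p$ parallel to $p$ and $\bar p_t$ parallel to $p_t$, use Ax(E)(g), Claim \ref{g8} and amalgamation to pass to one model $N^+$ into which $N,\bar N,N_t,\bar N_t$ all $\le_{\gs}$-embed and over which $p$ and $\bar p$ share a non-forking extension, and likewise $p_t$ and $\bar p_t$; since clause (b) of Definition \ref{g35}(1) is phrased entirely in terms of extensions over $N^+$ parallel to $p$ and to the $p_t$, it is literally unchanged under the substitution, so $\bar p$ is strongly (respectively weakly) dominated by $\{\bar p_t:t\in I\}$ iff $p$ is. Uniqueness of non-forking extensions (Ax(E)(e)) is what makes ``parallel to $p$'' unambiguous throughout.
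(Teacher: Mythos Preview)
Your argument is correct and considerably more detailed than the paper's own proof, which reads in its entirety ``Proof should be clear.'' Parts (1), (4), (5) and your monotonicity-up lemma are routine and handled well; part (3) is fine once you note (as you essentially do) that in the ``if'' direction one may replace any $i_t=0$ by $1$ via monotonicity-up before projecting.

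The only substantive step is (2), and you have correctly located it: one must know that a realization of $p'$ inside a model prime over $N^+\cup\{a_t:t\in I\}$ already sits over a $<\kappa_{\gs}$-sized subconfiguration, and then transfer via the $K^{3,\qr}_{\gs}$ universal property (Definition~\ref{g29}(5)). Your construction of $N^{**}$ as the $\kappa_{\gs}$-directed union of the $M_u$ (so that $a\in M^\infty$ immediately gives $a\in M_u$ for some small $u$) is the right move; just be aware that the existence of $K^{3,\qr}_{\gs}$-triples over arbitrary independent sets is defined in \ref{g29}(5) but not proved in this section, so your appeal to the structure theory of \cite{Sh:705} is doing real work here and should be made explicit. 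Since the paper treats the whole claim as self-evident, there is no alternative approach to compare against.
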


\begin{PROOF}{\ref{g38}}
Proof should be clear.
\end{PROOF}

The following should be included in very good for \ref{j50}, 
see\footnote{for the case there is $M \in K_\lambda$ brimmed 
over $N \in K_\lambda$ for every such $N$ this (\ref{j11}(1)(c))
work} more \ref{j11}
\begin{claim}
\label{g41}
1) If $p \le^{\dm}_{\wk} \{p_i:i < i^*\}$ and $i < i^* 
\Rightarrow q \perp p_i$ then $q \perp p$ (see Definition \ref{g26}(3)). 

\noindent
1A) If $p \le^{\dm}_{\wk} \{p_i:i < i^*\},(p \in
{\cS}^{\bs}_{\gs}(M))$ 
\then \, $p \pm p_i$ for some $i < i^*$.

\noindent
2) If $M \le_{\gs} N,q \in N,p_i \in {\cS}^{\bs}_{\gs}(M)$ for
$i < i^* < \kappa$, \then \, there is $q' \in {\cS}^{\bs}_{\gs}(M)$
such that for some $N^+,f$ we have:
\mn
\begin{enumerate}
\item[$(a)$]  $N \le_{\gs} N^+,f$ is an automorphism of $N^+$
\sn
\item[$(b)$]  $f$ maps the non-forking extension of $q$ in 
${\cS}^{\bs}_{\gs}(N^+)$ to the non-forking extension of $q'$ in 
${\cS}^{\bs}_{\gs}(N^+)$
\sn
\item[$(c)$]   $f$ maps the non-forking extension of $p_i$ in 
${\cS}^{\bs}_{\gs}(N^+)$ to itself.
\end{enumerate}
\mn
3) If $p \le^{\dm}_{\st} \{p_i:i < \alpha\}$
then $p \le^{\dm}_{\st} \{p_i:i < \alpha,p_i \pm p\}$
[see Def \ref{g35}.] 

\noindent
4) Assume 
\mn
\begin{enumerate}
\item[$(a)$]   $p,p_i \in {\cS}^{\bs}_{\gs}(M)$ for $i < i^*$
\sn
\item[$(b)$]   $p_i$ is weakly dominated by $p$
\sn
\item[$(c)$]    no $q \in {\cS}^{\bs}_{\gs}(M)$ is weakly
dominated by $p$ and orthogonal to $p_i$ for $i < i^*$.
\end{enumerate}
\mn
\Then \, $p \le^{\dim}_{\wk} \{p_i:i < i^*\}$.
\end{claim}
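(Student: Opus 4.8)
The plan is to decompose $p$ into regular pieces over $M$, invoke hypothesis (c) to see that each piece is non-orthogonal to $\mathbf P=\{p_i:i<i^*\}$, invoke hypothesis (b) to upgrade this to \emph{weak domination} of each piece by $\mathbf P$, and then glue by transitivity of domination; no indirect argument is really needed.

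First I would fix $c$ realizing $p$ and $(M,N,c)\in K^{3,\pr}_{\gs}$, so $N$ is prime over $M$ and $c$. Using the theory of regular types for $\gs$ (available as in \cite{Sh:705}, after passing to $\gs^{\reg}$ if necessary), I would produce a regular decomposition of $p$ realized inside $N$: a family $\{b_j:j<n\}$, with $n<\kappa_{\gs}$ (finite when $\kappa_{\gs}=\aleph_0$), which is independent over $M$, has $b_j\in N$ for all $j$, each $r_j:=\ortp_{\gs}(b_j,M,N)$ regular and basic, and such that $c$ is dominated over $M$ by $\{b_j:j<n\}$; in particular $p\le^{\dm}_{\st}\{r_j:j<n\}$, $(M,N,\{b_j:j<n\})\in K^{3,\qr}_{\gs}$, and $N$ is (up to isomorphism over $M\cup\{c\}$) also prime over $M\cup\{b_j:j<n\}$. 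Since each $b_j$ lies in $N$, which is prime over $M$ and the single realization $c$ of $p$, we get $r_j\le^{\dm}_{\st}\{p\}$ and hence $r_j\le^{\dm}_{\wk}p$ for every $j<n$.

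Now fix $j<n$. As $r_j\in\cS^{\bs}_{\gs}(M)$ and $r_j\le^{\dm}_{\wk}p$, hypothesis (c) prevents $r_j$ from being orthogonal to all of $\mathbf P$; pick $i(j)<i^*$ with $r_j\pm p_{i(j)}$. By hypothesis (b), $p_{i(j)}\le^{\dm}_{\wk}p$; decomposing $p_{i(j)}\in\cS^{\bs}_{\gs}(M)$ into regular pieces over $M$ as above, $r_j$ --- being regular and not orthogonal to $p_{i(j)}$ --- is not orthogonal to some regular piece $r'$ of $p_{i(j)}$, and two non-orthogonal regular types over $M$ are domination-equivalent, so $r_j\le^{\dm}_{\wk}\{r'\}$; since $r'\le^{\dm}_{\wk}p_{i(j)}$, transitivity of weak domination gives $r_j\le^{\dm}_{\wk}\{p_{i(j)}\}\subseteq\{p_i:i<i^*\}$. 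Having this for all $j<n$, and $p\le^{\dm}_{\st}\{r_j:j<n\}$, one more application of transitivity of weak domination yields $p\le^{\dm}_{\wk}\{p_i:i<i^*\}$, with a witnessing index set of size $<\kappa_{\gs}$ (cf.\ \ref{g38}), as claimed.

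The main obstacle is not this skeleton but two inputs that must be secured inside the axiomatics of a good $(\mu,\lambda,\kappa)$-frame with primes: (i) the existence of a regular decomposition of a basic type with all its realizations placed inside a prescribed $K^{3,\pr}_{\gs}$-prime model (and the attendant $K^{3,\qr}_{\gs}$-behaviour); and (ii) transitivity of weak domination, i.e.\ ``$p\le^{\dm}_{\st}\{r_j\}$ and $r_j\le^{\dm}_{\wk}\mathbf P$ for all $j$ imply $p\le^{\dm}_{\wk}\mathbf P$''. Both are routine in the first-order superstable case and here are most naturally obtained by imitating \cite{Sh:705} in $\gs^{\reg}$; should the frame at hand lack enough regular types, one falls back on a direct construction --- take a maximal family $\mathbf J$, independent over $M$, of realizations of non-forking extensions of the $p_i$'s inside a model prime over $M$ together with a large independent family of realizations of $p$, argue from maximality that the residual type of a suitably free realization of $p$ over the prime model of $M\cup\mathbf J$ is hereditarily orthogonal to $\mathbf P$, and apply (c) --- the delicate point there being to descend that residual type to a type over $M$.
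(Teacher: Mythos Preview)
Your argument for part~(4) is essentially the classical stability-theoretic one: regular-decompose $p$, use hypothesis~(c) to link each regular piece $r_j$ to some $p_{i(j)}$, use hypothesis~(b) plus the domination-equivalence of non-orthogonal regular types to get $r_j\le^{\dm}_{\wk}p_{i(j)}$, then glue by transitivity. This is correct in spirit and is exactly how one proves the analogous fact in \cite[Ch.~V]{Sh:c}.

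The paper's text under \ref{g41}, however, is not a proof of part~(4) at all. What appears there is a short density/conjugation argument (pick $\bar{\mathbf a}$ over which $q$ is definable, produce an indiscernible sequence of conjugates $\bar{\mathbf a}_i$ inside $M$, conclude the conjugated $q_i$'s lie in $\mathbf P$ and are non-orthogonal to $p$). That fragment is really the argument behind \ref{j11}(1)(b)/(c) and the ``reflection'' step invoked in \ref{g.5a} and at the end of \ref{j44}; it does not address part~(4). The remark preceding \ref{g41} (``should be included in very good for \ref{j50}, see more \ref{j11}'') confirms this is a placeholder. So there is no paper proof to compare your (4)-argument against.

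The one genuine caution about your route: \ref{g41} sits in \S\ref{6}, i.e.\ in the general $(\mu,\lambda,\kappa)$-frame setting of Part~II, \emph{before} the Part~I machinery (Context~\ref{a2}, regular types from \cite{Sh:705}) or the Part~III hypothesis~\ref{j2} is in force. Your proof imports regular decompositions and the ``non-orthogonal regular types are domination-equivalent'' fact, both of which need the \cite{Sh:705} assumptions (full, primes, $K^{3,\vq}=K^{3,\qr}$, enough regulars). You flag this honestly and sketch a fallback, but the fallback's last step --- descending the residual type over the prime model back to a type in $\cS^{\bs}_{\gs}(M)$ --- is exactly what part~(2) of \ref{g41} is designed to supply, and the paper's fragment is the (sketch of the) proof of \emph{that} step. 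So the cleanest way to present your argument is: either declare that \ref{g41} is meant to be read under the stronger hypotheses of \ref{a2} or \ref{j2} (which is how it is actually used later, e.g.\ in \ref{j50}, \ref{j53}), or run your fallback construction and cite part~(2) for the descent.
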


\begin{PROOF}{\ref{g41}}
  Let $p \in {\cS}^{\bs}(N),N \in K_A$.  
If $p \in {\mathbf P}^\perp$ we are done so assume 
$N \le N_1 \in K_{\gs},q \in {\cS}^{\text{bs}}_{\gs}(N_1) 
\cap {\mathbf P}$ and $p \pm q$.  Let $\mathbf{\bar a} \in
{}^{\kappa >} N_1$ be such that $q$ is definable over $\bar{\mathbf a}$,
so we can find $\langle \bar{\mathbf a}_i:i < \kappa \rangle,\bar a_i
\in {}^{\ell g(\bar{\mathbf a})} M$ such that $\langle \bar{\mathbf a}_i:i
< \kappa \rangle \char 94 \langle \bar{\mathbf a} \rangle$ is
indiscernible over $A \cup \bar b$ where $p$ be definable over
$\mathbf{\bar b} \subseteq M$.  

Let $q_i \in {\cS}^{\text{bs}}(N_1)$ be defined over $\bar{\mathbf
a}_i$ as $q$ was defined over $\bar{\mathbf a}$ so easily $q_i \in
{\cP},p \pm q_i$, so we are done.
\end{PROOF}

\begin{claim}
\label{g44}
1) If $\chi = \chi^{< \kappa} \in [\lambda,\mu)$, the following is impossible:
\mn
\begin{enumerate}
\item[$(a)$]    $\langle M_i:i < \chi^+
\rangle$ is $\le_{\gs}$-increasing $\le_{\gs}$-semi-continuous,
\sn
\item[$(b)$]   $\langle N_i:i < \chi^+ \rangle$ is 
$\le_{\gs}$-increasing, $\le_{\gs}$-semi-continuous,
\sn
\item[$(c)$]   $M_i \le_{\gs} N_i \in K_{\le \chi}$,
\sn
\item[$(d)$]  for some stationary $S \subseteq \{\delta <
  \chi^+:\cf(\gamma) \ge \kappa\}$ for every $i \in S$
\sn
\begin{enumerate}
\item[$\bullet$]  there is $a_i \in M_{i+1} \backslash M_i$ such that 
$\ortp(a_i,
N_i,N_{i+1})$ is not the non-forking extension of $\ortp(a_i,M_i,M_{i+1}) 
\in {\cS}^{\bs}_{\gs}(M_i)$.
\end{enumerate}
\end{enumerate}
\mn
2) Like (1) replacing (d) by:
\mn
\begin{enumerate}
\item[$(d)'$]   like (d) replacing $\bullet$ by
\sn
\begin{enumerate}
\item[$\bullet'$]  $b_i \in N_i \backslash M_i,\ortp_{\gs}
(b_i,M_i,N_i) \in {\cS}^{\bs}_{\gs}(M_i)$ and
$\ortp_{\gs}(b_i,M_{i+1},N_{i+1})$ forks over $M_i$.
\end{enumerate}
\end{enumerate}
\end{claim}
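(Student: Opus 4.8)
The plan is to derive a contradiction from a putative bad tower, using Fodor's lemma, $\mathrm{bs}$-stability, local character, uniqueness of non-forking extensions and the primes of the frame. Parts (1) and (2) are equivalent via symmetry (axiom $(E)(f)$ of \ref{g2}), so it suffices to treat, say, the configuration described in $(d')$ of part (2): $b_i\in N_i\setminus M_i$ with $\ortp_{\gs}(b_i,M_i,N_i)\in\cS^{\bs}_{\gs}(M_i)$ and $\ortp_{\gs}(b_i,M_{i+1},N_{i+1})$ forking over $M_i$, for every $i\in S$. (For part (1): given $a_i\in M_{i+1}$ as in $(d)$, fix inside $N_{i+1}$ a triple $(M_i,P_i,a_i)\in K^{3,\pr}_{\gs}$ with $M_i\le_{\gs}P_i\le_{\gs}M_{i+1}$; localizing the forking of $\ortp_{\gs}(a_i,N_i,N_{i+1})$ over $M_i$ along a $\le_{\gs}$-semi-continuous filtration of $N_i$ over $M_i$ — using $(E)(c),(h)$ and primes at the small-cofinality levels — and then applying symmetry produces a $b_i\in N_i$ of the required shape.)

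Next I would concentrate the configuration. Since $b_i\in N_i\subseteq N_j$ for $j\ge i$, monotonicity gives that $\ortp_{\gs}(b_i,M_j,N_j)$ forks over $M_i$ for every $j$ with $i<j<\chi^{+}$, so (after a routine thinning of $S$) we may assume no $b_i$ ever enters the $M$-chain. For $i\in S$, a limit of cofinality $\ge\kappa$, local character $(E)(c)$ along $\langle M_j:j\le i\rangle$ yields $g(i)<i$ with $\ortp_{\gs}(b_i,M_i,N_i)$ not forking over $M_{g(i)}$; as $g$ is regressive, Fodor gives a stationary $S'\subseteq S$ and a fixed $j^{*}$ with $g$ constantly $j^{*}$ on $S'$. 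Since $M_{j^{*}}\in K_{\le\chi}$ and $\gs$ is $\mathrm{bs}$-stable, $|\cS^{\bs}_{\gs}(M_{j^{*}})|\le\chi^{<\kappa}=\chi<\chi^{+}=|S'|$, so a further thinning yields a stationary $S''\subseteq S'$ and a fixed $p^{*}\in\cS^{\bs}_{\gs}(M_{j^{*}})$ with $\ortp_{\gs}(b_i,M_{j^{*}},N_i)=p^{*}$ for all $i\in S''$. Thus for $i\in S''$ the type $\ortp_{\gs}(b_i,M_i,N_i)$ is the non-forking extension of $p^{*}$ to $M_i$, while $\ortp_{\gs}(b_i,M_{i+1},N_{i+1})$ is a forking extension of $p^{*}$.

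The heart of the argument is to convert this ``horizontal'' array of $\chi^{+}$ forking extensions of the single type $p^{*}$ into a single $\le_{\gs}$-increasing semi-continuous chain of length $\kappa$ along which one basic type forks at every step — which is forbidden by local character, since $\cf(\kappa)=\kappa\ge\kappa$. Picking a limit point $\delta^{*}<\chi^{+}$ of $S''$ with $\cf(\delta^{*})=\kappa$ and an increasing cofinal sequence $\langle i_\xi:\xi<\kappa\rangle\subseteq S''$, and using that $\gs$ has $K^{3,\pr}_{\gs}$-primes (this is where being a PR frame is essential — in particular to absorb the small-cofinality levels), one builds by induction on $\xi\le\kappa$ a $\le_{\gs}$-increasing semi-continuous chain $\langle Q_\xi:\xi\le\kappa\rangle$ together with a fixed element $c$, inside a fixed amalgam over $N_{\delta^{*}}$, so that $Q_0\le_{\gs}M_{i_0}$, $Q_{\xi+1}$ is $\le_{\gs}$-prime over $Q_\xi$ and a copy of $b_{i_\xi}$, $\ortp_{\gs}(c,Q_\xi,-)$ is basic, and the passage from $Q_\xi$ to $Q_{\xi+1}$ forces $\ortp_{\gs}(c,Q_{\xi+1},-)$ to fork over $Q_\xi$; the uniqueness axiom $(E)(e)$, together with the fact that all the $b_{i_\xi}$ realize the common type $p^{*}$ over $M_{j^{*}}$ (hence have coherent non-forking types over the $M_{i_\eta}$), is what keeps the inductive copies coherent. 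Then $\ortp_{\gs}(c,Q_\kappa,-)$, with $Q_\kappa=\bigcup_{\xi<\kappa}Q_\xi$, forks over $Q_\xi$ for every $\xi<\kappa$, i.e. over cofinally many members of a length-$\kappa$ $\le_{\gs}$-semi-continuous chain — contradicting local character.

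I expect the last step — serializing the spread-out forking events into one $\kappa$-chain while keeping the intermediate types basic and coherent and side-stepping the absorbed-element and small-cofinality issues — to be the main obstacle; the rest (the symmetry reduction and the two regressive/pigeonhole thinnings) is routine and parallels the corresponding arguments in \cite{Sh:600} and \cite{Sh:705}.
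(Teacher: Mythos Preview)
The paper's own proof is just ``Should be clear'', so I compare your proposal to the natural direct argument.

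Your reduction of (1) to (2) via symmetry and your Fodor--stability thinning are both reasonable, but the ``heart of the argument'' has a genuine gap. You propose to serialize the $\chi^{+}$ many forking events into a single $\le_{\gs}$-chain $\langle Q_\xi:\xi\le\kappa\rangle$ together with a \emph{single} element $c$ whose type forks at every successor step. But the $b_{i_\xi}$ are in general \emph{distinct} elements; you give no mechanism for producing one $c$ that inherits the forking behaviour of all of them simultaneously. Knowing that each $b_{i_\xi}$ realizes the common $p^{*}$ over $M_{j^{*}}$ does not let you identify your inductively built $Q_\xi$ with $M_{i_\xi}$ (the former is produced by adjoining copies of earlier $b$'s via primes, the latter is a member of the original chain), and uniqueness $(E)(e)$ governs only non-forking extensions --- it says nothing about which \emph{forking} extension is realized, so the ``coherence of the copies'' you invoke is not available.

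The intended direct argument bypasses this entirely: instead of pigeonholing on the type $p^{*}$, pigeonhole on the \emph{element} $b_i$ itself. For $i\in S$ we have $\cf(i)\ge\kappa$, so by semi-continuity $N_i=\bigcup_{j<i}N_j$ and hence $b_i\in N_{h(i)}$ for some $h(i)<i$. Fodor gives a stationary $S'\subseteq S$ with $h\equiv h^{*}$; since $|N_{h^{*}}|\le\chi$ and the nonstationary ideal on $\chi^{+}$ is $\chi^{+}$-complete, some stationary $S''\subseteq S'$ has $b_i=b$ constant. Now for $j<j'$ in $S''$, monotonicity $(E)(b)$ gives that $\ortp_{\gs}(b,M_{j'},N_{j'})\in\cS^{\bs}_{\gs}(M_{j'})$ forks over $M_j$. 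Choosing $\delta\in S''$ which is also a limit point of $S''$ (such $\delta$ exist since $S''$ is stationary), local character $(E)(c)$ applied to $\langle M_j:j\le\delta\rangle$ with $p=\ortp_{\gs}(b,M_\delta,N_\delta)\in\cS^{\bs}_{\gs}(M_\delta)$ yields the contradiction immediately --- no auxiliary chain, no primes, no amalgamation.
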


\begin{PROOF}{\ref{g44}}  
Should be clear.
\end{PROOF}

\begin{claim}
\label{g50}
If $p,p_i \in {\cS}^{\bs}_{\gs}(M)$ for $i < \kappa_{\gs}$ and $i < j
\Rightarrow p_2 \bot p_j$ \then \, $p \bot p_i$ for every 
$i < \kappa$ large enough.
\end{claim}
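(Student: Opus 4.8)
The plan is to assume the conclusion fails and derive a contradiction with the local character axiom \ref{g2}(E)(c). Write $\kappa=\kappa_{\gs}$. If the conclusion fails then $S:=\{i<\kappa_{\gs}:p\not\bot p_i\}$ has cardinality $\kappa_{\gs}$, and since $\kappa_{\gs}=\cf(\kappa_{\gs})$ we may re-enumerate the $p_i$ and assume $S=\kappa$, i.e.\ $p\not\bot p_i$ for all $i<\kappa$, the pairwise orthogonality of $\langle p_i:i<\kappa\rangle$ being preserved. Using that orthogonality and non-orthogonality of basic types agree with those of their non-forking extensions, and that brimmed models exist (\ref{f43}), we may replace $M$ by a $(\lambda,*)$-brimmed $M'\ge_{\gs}M$ and $p,p_i$ by their non-forking extensions, so assume $M$ is brimmed. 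We may also assume $\gs$ is full: passing to $\gs^{\full}$ preserves $\bot$ and $\not\bot$ and keeps basic types basic, so the statement for $\gs^{\full}$ implies it for $\gs$; in particular below every non-algebraic type is basic. Fix $(M,N_0,b)\in K^{3,\pr}_{\gs}$ with $\ortp_{\gs}(b,M,N_0)=p$.

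Now I would construct, by induction on $\alpha\le\kappa$, $\le_{\gs}$-increasing $\le_{\gs}$-semi-continuous sequences $\langle M_\alpha:\alpha\le\kappa\rangle$ and $\langle N_\alpha:\alpha\le\kappa\rangle$, elements $a_\alpha$ for $\alpha<\kappa$, and models $P_\alpha$, such that $M_0=M$, $N_0$ is as chosen, $b\in N_0$, $M_\alpha\le_{\gs}N_\alpha$, and $P_\alpha\le_{\gs}N_\alpha$ is $\le_{\gs}$-prime over $M_\alpha\cup\{b\}$; at a limit $\delta\le\kappa$ one takes unions if $\cf(\delta)\ge\kappa$ and $\le_{\gs}$-prime models over the previous sequences otherwise; and at a successor $\alpha+1$: $a_\alpha\in N_{\alpha+1}$, $\ortp_{\gs}(a_\alpha,M_\alpha,N_{\alpha+1})$ is the non-forking extension of $p_\alpha$ to $M_\alpha$, $M_{\alpha+1}\le_{\gs}N_{\alpha+1}$ is $\le_{\gs}$-prime over $M_\alpha\cup\{a_\alpha\}$, and, crucially, $\ortp_{\gs}(a_\alpha,P_\alpha,N_{\alpha+1})$ forks over $M_\alpha$. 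The successor step is the heart of the matter. Let $\hat p,\hat p_\alpha$ be the non-forking extensions of $p,p_\alpha$ to $M_\alpha$; then $\hat p\not\bot\hat p_\alpha$. The type $q_\alpha:=\ortp_{\gs}(b,M_\alpha,P_\alpha)$ extends $p$, and although it has forked over $M$, all of its forking lies ``in the directions of'' $\{p_\beta:\beta<\alpha\}$, which are orthogonal to $p_\alpha$; hence $q_\alpha\not\bot\hat p_\alpha$. Unravelling the failure of orthogonality (Definitions \ref{g29}(1),(2)) and using $K^{3,\pr}_{\gs}$-primes and amalgamation, one can then realize $\hat p_\alpha$ by some $a_\alpha$ inside an extension $N_{\alpha+1}$ of $N_\alpha$ in such a way that $a_\alpha$ depends on $b$ over $M_\alpha$, i.e.\ $\ortp_{\gs}(a_\alpha,P_\alpha,N_{\alpha+1})$ forks over $M_\alpha$.

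Granting the construction, put $N:=N_\kappa$ and $r:=\ortp_{\gs}(b,M_\kappa,N)$, which lies in $\cS^{\bs}_{\gs}(M_\kappa)$ by fullness (note $b\notin M_\kappa$ since $r$ will fork over $M$, hence is non-algebraic). For each $\alpha<\kappa$, symmetry (\ref{g2}(E)(f), applied to $b$ and $a_\alpha$ over $M_\alpha$ using $(M_\alpha,P_\alpha,b),(M_\alpha,M_{\alpha+1},a_\alpha)\in K^{3,\pr}_{\gs}$) turns the forking of $\ortp_{\gs}(a_\alpha,P_\alpha,N_{\alpha+1})$ over $M_\alpha$ into the forking of $\ortp_{\gs}(b,M_{\alpha+1},N_{\alpha+1})$ over $M_\alpha$; hence $r$ forks over $M_\alpha$ for every $\alpha<\kappa$. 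But $\langle M_\alpha:\alpha\le\kappa\rangle$ is $\le_{\gs}$-semi-continuous with $\cf(\kappa)=\kappa_{\gs}$, so in particular $\cf(\kappa)\ge\kappa_{\gs}$, and $r\in\cS^{\bs}_{\gs}(M_\kappa)$; thus by the local character axiom \ref{g2}(E)(c) the type $r$ does not fork over $M_\alpha$ for all sufficiently large $\alpha<\kappa$ --- a contradiction. The main obstacle is exactly the successor step: producing a realization $a_\alpha$ of $p_\alpha$ that is non-forking over $M_\alpha$ yet dependent over $M_\alpha$ on the fixed realization $b$ of $p$, which is where $p\not\bot p_\alpha$ (together with symmetry and the mutual orthogonality of the $p_\beta$) is genuinely used; in the frames with $\iota\in\{1,2\}$, where $K^{3,\pr}_{\gs}$ is unavailable, one substitutes the weaker, model-theoretic form of symmetry in \ref{g2}(E)(f) and keeps the same bookkeeping.
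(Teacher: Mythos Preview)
Your approach is correct and is essentially the paper's: the paper's proof is the single line ``Similar to the proof of \ref{j20}'', and \ref{j20} proceeds exactly by fixing a realization $b$ of the given type, building a $\le_{\gs}$-semi-continuous chain $\langle M_\alpha\rangle$ along which $\ortp_{\gs}(b,M_{\alpha+1},-)$ forks over $M_\alpha$, and invoking local character Ax(E)(c) for the contradiction. The step you flag as the main obstacle is indeed where the pairwise orthogonality $p_i \bot p_j$ is genuinely used (in \ref{j20} this role is played instead by density of $\mathbf P$), and your informal justification---that the forking of $\ortp_{\gs}(b,M_\alpha,-)$ over $M$ lies entirely in directions orthogonal to $p_\alpha$, so non-orthogonality to $\hat p_\alpha$ persists---is the intended one.
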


\begin{proof} 
Similar to the proof of \ref{j20}.
\end{proof}

\begin{definition}
\label{g53}
1) We say that (a good (frame$_{\gs}$), ${\gs}$ is $\theta$-based$_1$ when:
\mn
\begin{enumerate}
\item[$(a)$]   if $p \in {\cS}^{\bs}_{\gs}(M)$ then for
some $\bar{\mathbf a} \in {}^{\theta >}M,p$ is based on $\bar{\mathbf a}$
(see Definition \ref{g11}(4)).
\end{enumerate}
\mn
2) We say that ${\gs}$ is $\theta$-based$_2$ \when \,:
\mn
\begin{enumerate}
\item[$(a)$]   as in part (1)
\sn
\item[$(b)$]  $\gs$ is full
\sn
\item[$(c)$]   if $M_1 \le_{\gs} M_2$ and $p \in
  {\cS}^{\bs}_{\gs}(M_2)$ \then \, for some $\bar a_\ell \in
{}^{\theta >}(M_\ell)$ the types $p,\ortp_{\gs}(\bar a_2,M_1,M_2)$ 
are based on $\bar{\mathbf a}_2,\bar{\mathbf a}_1$ respectively, 
(or axiomatically!).
\end{enumerate}
\end{definition}
\newpage

\centerline {Part III} \label{part3}

\section {Introduction/Thoughts on the main gap} \label{7}

We address here two problems: type theory (i.e. dimension, orthogonality,
etc.) for strictly stable class and the main gap concerning somewhat
saturated models, the hope always was that advance in the first will
help the second.

Concerning the first order case work started in \cite[Ch.V]{Sh:c},
particularly \cite[Ch.V,\S5]{Sh:c} and \cite{Sh:429} and was much
advanced in Hernandes \cite{He92}; but this was not enough for the main
gap for somewhat saturated models.

We deal here with the type dimension for a general framework.
\bigskip

\centerline {$* \qquad * \qquad *$}
\bigskip

The main gap for $\aleph_1-|T|^+$-saturated model of a countable first
order theory is open.  A priori it has looked easier than the one for
models (which was preferred as ``the original question") because of
the existence of prime models over any, but is still
open (and for uncountable first order, $|T|^+$-saturated model as well).

Why the proof in \cite[Ch.XII]{Sh:c} does not work?
What is missing is (in ${\gC}^{\eq}!$)
\mn
\begin{enumerate}
\item[$\circledast$]   if $M_0 \prec  M_1 \prec M_2$ are
$\aleph_1$-saturated, $a \in M_2 \backslash M_1$ and $(a/M_1) \pm M_0$
  \then \, for some $b \in M_2 \backslash M_1$ we have 
$\nonfork{b}{M_1}_{M_0}$.
\end{enumerate}
\mn
The central case is $a/M_1$ is orthogonal to $q$ if $q \bot M_0$.
\bigskip

\noindent
\underline{Possible Approach 1}:  We use $T$ being first order
countable, stable NDOP (even shallow) to understand types.  
See \cite{Sh:851}.
\bigskip

\noindent
\underline{Possible Approach 2}:  We use the context of part II.  We are
poorer in knowledge \underline{but} we have a richer ${\gC}^{\eq}$ so we may
prove $\circledast$ even if its fails for $T$ in the elementary case
(this is a connection between Part I and Part II of this work).
\bigskip

\noindent
\underline{Possible Approach 3}:  We use the context of part II.  If things
are not O.K. we define a derived such d.a.e.c. (as in \cite{Sh:300f} or
\cite{Sh:600}),
it may have non-structure properties, if not we arrive to the same
place.  Similarly in limit.  If we succeed enough times we shall prove
that all is O.K.
\bigskip

\noindent
\underline{Possible Approach 4}:  Now we have a maximal non-forking tree
$\langle M_\eta,a_\eta:\eta \in {\cT}\rangle$ inside a somewhat
saturated model; for \cite{Sh:c}, e.g. $\|M_\eta\| \le \lambda$, the models
are $\lambda^+$-saturated but we use models from 
\cite[Part II]{Sh:839}.  If $M$ is prime over $\cup\{M_\eta:\eta \in
\cT\}$ we are done, but maybe there is a residue.  
This appears as: for $\eta \in {\cT}$, and 
$p \in \mathbf \cS^{\bs}(M_\eta)$ the dimension of $p$ is not
exhausted by $\langle a_{\eta \char 94<\alpha>}:\eta \char 94 \langle
\alpha \rangle \in {\cT}$ and $(a_{\eta \char 94 <\alpha>}/M_\eta) \pm p\}$
but the lost part is not infinite!  This imposes $\le \lambda$ unary
functions from ${\cT}$ to ${\cT}$.  Now it seems to me that the
question of this possible non-exhaustion arise (essentially: there is a
non-algebraic $p \in (M^\bot)^\bot$ which do not 1-dominate any $q \in
{\cS}(M))$ is not a good dividing line, as though its negation is
informative, it is not clear whether it has any consequence.  However,
there are two candidates for dividing lines (actually their
disjunction seems so)
\mn
\begin{enumerate}
\item[$(A)$]   $(*) \quad$ we can find $M,\langle M_\eta,a_\eta:\eta
\in {\cT}\rangle$ as above and $\eta_* \in {\cT},\ell g(\eta) =
2,\nu_* \in$

\hskip25pt ${\cT},\ell g(\nu_*) = 1,\eta_* \restriction 1 \ne \nu_*$
and $p \in {\cS}^{\bs}(M_{\eta_*}),p \bot M_{\eta \restriction 1}$ 

\hskip25pt with a residue as above 
such that we need $M_{\nu_*}$ to explicate it.
\end{enumerate}
\mn
More explicitly
\mn
\begin{enumerate}
\item[$(*)'$]   if $M' \le_{\gs} M$ is prime over
$\cup\{M_\eta:\eta \in {\cT}\}$ and we can find $a_{\eta_*,\nu_*}
\in M \backslash M'$ such that $\ortp(\cC(a_{\eta^*,\nu^*},M'),
\cup\{M_\eta:\eta \in {\cT}\})$ mark $(M_{\eta_*},M_{\nu_*})$.
\end{enumerate}
\mn
Even in $(*)'$ we have to say more in order to succeed in using it.

\relax From $(*)'$ we can prove a non-structure result: on ${\cT}$ we can
code any two-place relation $R$ on $\{\eta \in {\cT}:\ell
g(\eta)=1,M_\eta,M_{\eta_* \restriction 1}$ isomorphic over
$M_{<>}\}$ which is $\nu_1 R \nu_2 \Leftrightarrow 
(\exists \nu) \bigwedge \limits_{\ell}$
[there is $\eta',\eta_\ell \triangleleft \eta' \in {\cT},\ell
g(\eta')=2$ and $\nu \in T,\ell g(\nu)=1$ and there is $a_{\eta'\nu}$
as above].

More complicated is the case
\mn
\begin{enumerate}
\item[$(B)$]  $(**) \quad$ we can fix $(M,\langle M_\eta,a_\eta:\eta \in
{\cT}\rangle$ as above), $\eta_* \in T,\nu_*,\nu_{**} \in$

\hskip25pt ${\cT},\ell g(\eta_*) = \ell g(\nu_*)=1=\ell g(\nu_{**})$ such
that

\hskip25pt  $(\eta_*,\nu_*),(\eta_*,\nu_{**})$ are as above.
\end{enumerate}
\mn
But whereas for (A) we have to make both $\eta_*$ and $\nu_*$ not
redundant in (B), in order to get non-structure we have to use a case of (B)
which is not ``a faking", e.g. cannot replace
$(M_{\eta_*},a_{\eta_*})$ by two such pairs.

That is, the ``faker" is a case where we can find
$M'_{\eta_*},M''_{\eta_*}$ such that:
\mn
\begin{enumerate}
\item[$(a)$]   $\NF(M_{<>},M'_{\eta_*},M''_{\eta_*},M_{\eta_*})$
\sn
\item[$(b)$]  $M_{\eta_*}$ is prime over $M'_{\eta_*} \cup M''_{\eta_*}$
\sn
\item[$(c)$]   only $(M'_{\eta_*},M_{\nu_*})$ and
$(M''_{\eta_*},M_{\nu_{**}})$ relate.
\end{enumerate}
\mn
[Possibly we have $\langle \nu_t:t \in I\rangle$ we can ``divide" but
not totally, probably a problem]

\noindent
$(C)$ if both (A) and (B) in the right formulation does not appear
then
\mn
\begin{enumerate}
\item[$(\alpha)$]    good possibility; we can prove a structure
theory: for $M,\langle M_\eta,a_\eta:\eta \in {\cT}\rangle$ as
above that is on each suc$_{\cT}(\eta)$ we have a two-place
relation but it is very simple, you have to glue some together or at
most at tree structure.
\end{enumerate}
\mn
If this fails we may fall back to approach (3).

\begin{question}  
\label{z3}
1) For an a.e.c. $\gk$ when does the theory of a model in the logic ${\cL} =
\bbL_{\infty,\kappa} [{\gk}]$ enriched by dimension quantifiers, 
characterize up to isomorphism models of ${\gk}$?  Similarly
enriching also by game quantifiers of length $\le \kappa$.

\noindent
2) Prove the main gap theorem in the version: if ${\gs}$ is
$n$-beautiful (or $n+1$?) \then \, for $K_{\lambda^{+n}}$ the main
gap holds, if in particular: if ${\gs}$ has NDOP, every $M \in
K_{\lambda^{+n}}$ is prime over some non-forking tree of 
$\le_{{\gK}[{\gs}]}$-submodels $\langle M_\eta:\eta \in {\cT}\rangle$,
each $M_\eta$ of cardinality $\le \lambda,{\cT} \subseteq
{}^{\omega >}(\lambda^{+n})$.  If ${\gs}$ is shallow then the tree
has depth $\le \Depth({\gs}) < \lambda^+$ and we can draw
conclusion on the number of models.
\end{question}

\begin{discussion}
\label{m5}  
[Assume stability in $\lambda_{\gs}$]. 

Let $M_0 \in K_{\gs},\lambda^+_{\gs}$-saturated at least for the time
being.

\noindent
1) Assume
\mn
\begin{enumerate}
\item[$\boxplus_1$]  $N_0 \le_{\gs} N_1 \le_{\gs} M,N_\ell \in
  K^{\gs}_\lambda,a \in N_0$ and $(N_0,N_1,a) \in K^{3,\pr}_{\gs}$.
\end{enumerate}
\mn
We choose $(N^+_{1,i},N_{1,i},\mathbf I_i)$ and if possible also
$(M_1,a_i)$ by induction on $i \le \lambda^+_{\gs}$ such that
\mn
\begin{enumerate}
\item[$(*)(a)$]  $(\alpha) \quad N_{0,i} \le_{\gs} N_{1,i} \le_{\gs}
  N^+_{1,i} \le_{\gs} M$
\sn
\item[$(b)$]  $\mathbf I_i \subseteq \{c \in M:\ortp(c,N_{1,i},M_0)
  \perp N_0\}$ is independent in $(N_{1,i},N^+_{1,i},M)$ and minimal
\sn
\item[$(c)$]  $\langle N_j:j \le i\rangle$ is
  $\le_{gs}$-semi-continuous also $\langle N^+_j:j \le i\rangle$
\sn
\item[$(d)$]  if $i=j+1$ then $N^+_{1,i}$ is $\le_{\gs}$-universal
  over $N^+_{1,j}$ and $(N_0,N_{1,i},a) \in K^{3,\pr}_{\gs}$
\sn
\item[$(e)$]  if $j < i$ then $\mathbf I_j \backslash (N_i \cap \mathbf
  I_j) \subseteq \mathbf I_i$
\sn
\item[$(f)$]  if possible
\sn
\item[${{}}$]  $(\alpha) \quad N_i \le_{\gs} M^+_i \le_{\gs} M$
\sn
\item[${{}}$]  $(\beta) \quad (\mathbf I_i \backslash M_i)$ is
  independent in $(M_i,M)$
\sn
\item[${{}}$]  $(\gamma) \quad a_i \in M \backslash (\mathbf I_i)$
\sn
\item[${{}}$]  $(\delta) \quad \ortp(a_i,M^*_1,M) \in
  \cS^{\bs}_{\gs}(N^+_i)$ is $\perp N_i$
\sn
\item[${{}}$]  $(\varepsilon) \quad N^*_i \le N_{1,i+1}$
\sn
\item[$(g)$]  if $i=j+1$ thee are $(b,N^+_*,N_{**})$ such 
that $b \in
  N^+_{1,j} \backslash N_{1,j},N_{1,i} \le_{\gs} N_* \le_{\gs},N_{**}
  \in K^{\gs}_{\lambda_{\gs}},N^+_{1,i} \le_{\gs} N_{**}$ and
  $\ortp_{\gs}(b,N_*,N_{**})$ forks over $N_{1,j}$ then for some $b
  \in N^+_{1,j} \backslash N_{1,j}$ the type
  $\ortp_{\gs}(N_{1,i},N^+_{1,i})$ forks over $N_{1,j}$.
\end{enumerate}
\mn
There is no problem to carry the induction.
\mn
\begin{enumerate}
\item[$\boxplus_2$]  the following subset of $\lambda^+_{\gs}$ is not
  stationary; say disjoint to the club $C$:
\sn
\begin{enumerate}
\item[$\bullet$]  $S = \{i < \lambda^+_{\gs}:\cf(i) \ge \kappa_{\gs}$
  and $(M_i,a_i)$ is well defined$\}$
\sn
\item[$\bullet$]  $S_2 = \{i:\cf(i) \ge \kappa_{\gs}$ and for some $b
  \in N^+_{1,i},\tp(b,N_{1,i},N^+_{1,i}) = N_0$.
\end{enumerate}
\end{enumerate}
\mn
2) Similarly without $(N_0,a)$ hence without $\perp N_0$; just simpler.
\end{discussion}

\begin{definition}
\label{h8}
We say $(\bar N,\bar a,\bar I)$ is a decreasing pair for $M$ \when \,
for some $n$:
\mn
\begin{enumerate}
\item[$(a)$]  $\bar N = \langle N_\ell:\ell \le n \rangle$ is
  $\le_{\gs}$-increasing 
\sn
\item[$(b)$]  $N_\ell \le_{\gs} M,N_   
\ell \in K^{\gs}_{\lambda_{\gs}}$
\item[$(c)$]  $\bar a = \langle a_\ell:\ell < n\rangle$
\sn
\item[$(d)$]  $(N_\ell,N_{i+1},a_\ell) \in K^{3,\pr}_{\gs}$
\sn
\item[$(e)$]  $\bar{\mathbf I} = \langle \mathbf I_\ell:\ell \le n \rangle$
\sn
\item[$(f)$]  $\mathbf I_\ell$ is independent in $(N_\ell,M)$
\sn
\item[$(g)$]  $\mathbf I_\ell \subseteq \{c \in M:\ortp(c,N_\ell,M) \in
  \cS^{\bs}_{\gs}(N_\ell)$ is $\perp N_k$ if $k < \ell\}$
\sn
\item[$(h)$]  if $N_ 
\ell \le_{\gs} N \le_{\gs} M,b \in M \backslash
  N_0 \backslash \mathbf I_\ell,\ortp(b,N,M)$is $\pm N_\ell$ but $N_k$
  for $k < \ell$ \then \, $b$ depends on $\mathbf I_\ell$ in
  $(N_\ell,M)$.
\end{enumerate}
\end{definition}

\noindent
\underline{Attempt to prove decomposition}

We assume dimensional continuity to prove decomposition.  If we like
to get rid of ``$M$ is $\lambda^+_{gs}$-saturated", we assume we have
a somewhat weaker version $\gs_*$ of $\gs$ with $\lambda_{\gs_*} <
\lambda_{\gs}$ and $\le_{\gs_*}$-represent $N_0$ is $\langle N_{0,i}:i
< \lambda_{\gs}\rangle$ and work with it.  Assuming CH, $|T| =
\aleph_0$ fine.  Without dimensional discontinuity we call nice $(\bar
N,\bar a,\bar{\mathbf I})$ of length $\le \kappa_{\gs}$!
\bigskip

\centerline {$* \qquad * \qquad *$}
\bigskip

\begin{definition}
\label{h11}
We say $\mathbf d = (I,N,\bar a,\bar{\mathbf I})) = (I_{\mathbf d},\bar
N_d,\bar a_{\mathbf d},\bar{\mathbf I}_{\mathbf d})$ is a partial
  decomposition of \when \,:
\mn
\begin{enumerate}
\item[$\boxplus$]  $(a) \quad I \subseteq {}^{\omega >}\Ord$ is closed
  under initial segments
\sn
\item[${{}}$]  $(b) \quad \bar N = \langle N_\eta:\eta \in I\rangle$
  so $N_\eta = N_{\mathbf d,\eta}$
\sn
\item[${{}}$]  $(c) \quad \bar a = \langle a_\eta:\eta \in I\backslash
  \{<>\}\rangle$ so $a_\eta = a_{\mathbf d,\eta}$
\sn
\item[${{}}$]  $(d) \quad \bar{\mathbf I} = \langle \mathbf I_\eta:\eta
  \in I\rangle$ so $\mathbf I_\eta = \mathbf I_{\mathbf d,\eta}$ 
\sn
\item[${{}}$]  $(e) \quad$ if $\eta \in I$ then $(\langle N_{\eta
  \rest \ell}:\ell \le \ell g(\eta)\rangle,\langle \bar a_{\eta \rest
  (\ell +1)}$:

\hskip25pt $\ell < \ell g(\eta)\rangle,\langle \mathbf I_{\eta \rest
  \ell}:\ell \le \ell g(\eta)\rangle)$ is nice in $M$
\sn
\item[${{}}$]  $(f) \quad$ if $\eta \in I$ then $\langle a_{\eta \char
  94 \langle \alpha \rangle}:\eta \char 94 \langle \alpha \rangle \in
  I\rangle$ is a sequence of members of $\mathbf I_\eta$

\hskip25pt  with no repetitions.
\end{enumerate}
\end{definition}

\begin{definition}
\label{h14}
Let $\le_\mu$ be the following two-place relation on the set of
decompositions of $M$:

$\bar{\mathbf d}_1 \le_M \mathbf d_2$ \Iff \,
\mn
\begin{enumerate}
\item[$(a)$]  $I_{\mathbf d_1} \subseteq \mathbf I_{d_1}$
\sn
\item[$(b)$]  $\bar N_{\mathbf d_1} = \bar N_{\mathbf d_2} \rest I_{d_1}$
\sn
\item[$(c)$]  $\bar a_{\mathbf d_1} = \bar a_{\mathbf d_2} \rest (I_{\mathbf
  d_1} \backslash \{<j\})$
\sn
\item[$(d)$]  $\bar{\mathbf I}_{\mathbf d_1} = \bar{\mathbf I}_{d_2} \rest
  I_{\mathbf d_1}$ 
\end{enumerate}
\end{definition}
\newpage

\section {III, Analysis of dimension for ${\mathbf P}$} \label{8}


\begin{hypothesis}
\label{j2}
1) ${\gs}$ is a \underline{very good} type-full
$(\mu,\lambda,\kappa)$-NF-frame (see \ref{g23}), \underline{or} is full good
$(\mu,\lambda,\kappa)$-frame$_{\gs}$ - see Definition and we
sometimes use type-full (\ref{g11}) and $\bot =
{\underset \wk \bot}$); semi-continuous will mean
$\le_{\gs}$-semi-continuous; the ``$p$ base on $\bar a$"
should be used to justify $\bot = {\underset \wk \bot}$.

\noindent
2) ${\gC}$ is a ${\gs}$-monster or we use a place $K_{\ga}$ (or $K_A$ and
$K_{(M,\bar a)}$, see Definition \ref{a17}(5)).
\end{hypothesis}

\begin{definition}
\label{j5}
Let $K_{\ga}$ be a place, ${\mathbf P}$ is a
$A$-based family of types (see \ref{a26} closed under parallelism
(add to Definition in \S6), i.e., with the monster version this means:
\mn
\begin{enumerate}
\item[$(a)$]   $A \subseteq {\gC},M$ vary on $M \le_{\gk}
{\gC}$ such that $A \subseteq M$ or just ${\gk}_A$ is well defined
\sn
\item[$(b)$]   ${\mathbf P} \subseteq \cup\{{\cS}^{\bs}(M):M \in
{\gk}_A$, i.e., $A \subseteq M \le_{\gk} {\gC}\}$
\sn
\item[$(c)$]   every automorphism of ${\gC}$ over $A$ maps
${\mathbf P}$ onto itself 
\end{enumerate}
\mn
here we add
\mn
\begin{enumerate}
\item[$(d)$]   if $p_\ell \in {\cS}^{\bs}_{\gs}(M_\ell)$ 
for $\ell =1,2,M_\ell \in K_{\ga}$ and $p_1\|p_2$ then $p_1 \in {\mathbf P}
\Leftrightarrow p_2 \in {\mathbf P}$ (desirable here, not in weight check!).
\end{enumerate}
\mn
1) ${\mathbf P}$ is $\ga$-dense \when \ ($\mathbf P,A$ are as in part (0))
\mn
\begin{enumerate}
\item[$(*)$]  if $M \in K_{\ga}$ and $p \in {\cS}^{\bs}_{\gs}(M)$
  \then \, 
$(\exists q \in {\mathbf P} \cap {\cS}^{\bs}(M))[p {\underset \wk \pm} q]$.
\end{enumerate}
\mn
2) We say $\langle M_i,a_j:i \le \alpha,j < \alpha \rangle$ is
a ${\mathbf P}$-primeness sequence \when \,:
\mn
\begin{enumerate}
\item[$(a)$]   $M_i$ is $\le_{\gs}$-increasing semi-continuous
[i.e., if $i$ is a limit ordinal then $M_i$ is ${\gs}$-prime over
$\cup\{M_j:j < i\}$, see Definition \ref{g26}(5)]
\sn
\item[$(b)$]   $(M_i,M_{i+1},a_i) \in K^{3,\pr}_{\gs}$
\sn
\item[$(c)$]   $\ortp_{\gs}(a_i,M_i,M_{i+1}) \in {\mathbf P}$.
\end{enumerate}
\mn
3) We may say $\langle M_i,a_j,p_j:i \le \alpha,j < \alpha\rangle$ is
a $\mathbf P$-prime sequence when in addition
\mn
\begin{enumerate}
\item[$(d)$]  $p_i = \ortp_{\gs}(a_i,M_i,M_{i+1})$.
\end{enumerate} 
\end{definition}

\begin{definition}
\label{j8}
Let $\mathbf P$ be a $\mathbf a$-based family.

\noindent
1) ${\mathbf P}^\perp = \{p:p \in {\cS}^{\bs}_{\gs}(M),M \in K_{\ga}$ 
and $p \perp \mathbf P$, see below$\}$.

\noindent
2) $p \perp {\mathbf P}$ means that for some $M:p \in
 \cS^{\bs}_{\gs}(M),M \in K_{\ga}$ and if $M \le_{\gs} N,q \in
\cS^{\bs}_{\gs}(N),p \in \cS^{\bs}(M)$ is a non-forking extension of
 $q$ then $p' \perp q$.
\end{definition}

\begin{claim}
\label{j11}
1) If ${\mathbf P}$ is an $\ga$-based family of types \then \,:
\mn
\begin{enumerate}
\item[$(a)$]   ${\mathbf P}^\perp$ is an $\ga$-based family of types
\sn
\item[$(b)$]  ${ \mathbf P 
} \cup \mathbf P^\perp$ is dense.
\end{enumerate}
\mn
2) Any Boolean combination of $\ga$-based families of  
types in an
$\ga$-based family of types.
\end{claim}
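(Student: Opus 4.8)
The plan is to check, for each assertion, the defining clauses (a)--(d) of an $\ga$-based family of types from Definition \ref{j5}(0): that the family sits inside the ambient set $X:=\bigcup\{\cS^{\bs}_{\gs}(M):M\in K_{\ga}\}$, is invariant under $\Aut(\gC/A)$ (I work in the monster version; the place version with $K_A$ is identical), and is closed under parallelism; the density statement in (1)(b) is then to be reduced to the definition of $\mathbf P^\perp$ itself. Throughout I will use freely the basic theory of orthogonality of basic types in a full good$^+$ frame from \cite{Sh:705} and \S1, in particular that orthogonality of two basic types depends only on their parallelism classes, and that a basic (hence non-algebraic) type is non-orthogonal to itself --- here $\bot={\underset \wk \bot}$ by the standing hypothesis \ref{j2}, so ``not orthogonal'' and ``not weakly orthogonal'' coincide.

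For part (2): all four clauses are visibly preserved by the Boolean operations carried out inside $X$. Intersections and unions of invariant, parallelism-closed subfamilies of $X$ are again invariant and parallelism-closed; for the relative complement $X\setminus\mathbf P$ one uses that $\Aut(\gC/A)$ preserves both $X$ and $\mathbf P$, and that if $p_1\|p_2$ with $p_1\notin\mathbf P$ then $p_2\notin\mathbf P$ (otherwise $p_1\in\mathbf P$ by parallelism-closure of $\mathbf P$). Since every Boolean combination is built from these operations, (2) follows.

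For part (1)(a): clauses (a),(b) are immediate from the definition of $\mathbf P^\perp$ in \ref{j8}. For clause (c), given $f\in\Aut(\gC/A)$ and $p\in\mathbf P^\perp$ witnessed by a model $M$, I push the witness forward: $A\subseteq f(M)\le_{\gk}\gC$, so $f(M)\in K_{\ga}$, and since $\mathbf P$ is $\Aut(\gC/A)$-invariant and orthogonality is preserved by automorphisms of $\gC$, one checks that $f(p)\perp\mathbf P$ with witness $f(M)$, hence $f(p)\in\mathbf P^\perp$. For clause (d), let $p_1\|p_2$ with $p_1\perp\mathbf P$ witnessed by $M_1$, and say $p_2\in\cS^{\bs}_{\gs}(M_2)$; given any $N\ge_{\gs}M_2$ and $q\in\mathbf P\cap\cS^{\bs}_{\gs}(N)$, I amalgamate so that also $M_1\le_{\gs}N$, note that the non-forking extensions of $p_1$ and of $p_2$ over $N$ coincide (both equal the non-forking extension over $N$ of the common type witnessing $p_1\|p_2$), replace $q$ by its non-forking extension over the enlarged model --- still in $\mathbf P$, as $\mathbf P$ is parallelism-closed --- and apply the witnessing property of $M_1$ to get that this extension of $p_2$ is orthogonal to $q$; hence $p_2\perp\mathbf P$, i.e.\ $p_2\in\mathbf P^\perp$.

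For part (1)(b): given $M\in K_{\ga}$ and $p\in\cS^{\bs}_{\gs}(M)$, either $p\perp\mathbf P$, in which case $p\in\mathbf P^\perp$ and $p\,{\underset \wk \pm}\,p$ since $p$ is non-algebraic; or $p\not\perp\mathbf P$, in which case, unwinding \ref{j8}(2), there are $N\ge_{\gs}M$ and $q\in\mathbf P\cap\cS^{\bs}_{\gs}(N)$ such that the non-forking extension $p'$ of $p$ over $N$ satisfies $p'\not\perp q$, whence $p\,{\underset \wk \pm}\,q$ with $q\in\mathbf P$, using $p\|p'$ and the parallelism-invariance of orthogonality. So every basic type over a model of $K_{\ga}$ is non-orthogonal to some member of $\mathbf P\cup\mathbf P^\perp$, i.e.\ $\mathbf P\cup\mathbf P^\perp$ is dense. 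The only real obstacle I anticipate is the bookkeeping in (1)(a)(d): making the amalgamations uniform and invoking the precise ``orthogonality is a parallelism-class invariant'' lemma. If instead ``dense'' in (1)(b) is meant in the stronger $\ga$-dense sense of \ref{j5}(1) (a witness $q$ over the very model $M$), the extra work is to pass, via the regular decomposition \ref{a8}(1) and the domination/orthogonality transfer of \ref{g41}, to a regular type over $M$ non-orthogonal to $\mathbf P$, and then to produce a member of $\mathbf P$ over $M$ itself; that is the one point where the fullness of $\gs$ enters essentially.
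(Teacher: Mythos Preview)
Your proposal is correct and follows the same approach as the paper's (very terse) proof: part (2) is routine closure under Boolean operations, part (1)(a) hinges on parallelism-closure of $\mathbf P^\perp$ via amalgamation exactly as the paper indicates, and part (1)(b) is the tautological ``weak density'' dichotomy the paper invokes. Your extra care in flagging the distinction between the weak sense and the literal $\ga$-dense sense of \ref{j5}(1) is well placed --- the paper's ``should be clear (weak density)'' is asserting only the former, with the passage to witnesses over the given $M$ handled separately where needed (cf.\ \ref{g.5a}(3), \ref{g41}(2)).
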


\begin{PROOF}{\ref{j11}}
1) \underline{Clause (a)}:  (is immediate).  

Mainly we should check that $\mathbf P^+$ is closed under parallelism.
Let $N_\ell \in K_A,p_\ell \in {\cS}^{\bs}_{\gs}(N_\ell)$
for $\ell=1,2,N_1 \le_{\gs} N_2,p_2$ is a non-forking extension of
$p_1$.  We should prove $p_1 \in \mathbf P^+ \Leftrightarrow p_2 \in
\mathbf P^\bot$.  The direction $\Rightarrow$ is obvious.  For
$\Leftarrow$ use amalgamation, etc. 

Clause (b) should be clear (weak density).

\noindent
2) Easy.
\end{PROOF}

\begin{claim}
\label{j17}
Assume ${\mathbf P}$ is a reasonable $A$-based, dense.

\noindent
1) If $M \in K_{\ga}$ and $q \in {\cS}(M)$, \then \, we can find a
${\mathbf P}$-primeness sequence $\langle M_i,a_j:i \le \alpha,j < \alpha
\rangle$ with $M_0 = M$ such that $q$ is realized in $M_\alpha$. 

\noindent
2) If $M \le_{\gs} N$ and $M \in K_{\ga}$ \then \, for some 
${\mathbf P}$-primeness sequence $\langle M_i,a_j:i \le \alpha,j < \alpha
\rangle$ we have $M_0 = M$ and $N \le_{\gs} M_\alpha$.

\noindent
3) In part (2) if $\|N\| \le \chi \in [\lambda,\mu)$ and $\chi = \chi^{<
\kappa}$ \ then \, we can demand $\alpha \le \chi$.  Similarly in part
  (1) \wilog \, $\|M_\alpha\| \le \|N\|^{< \kappa}$.
\end{claim}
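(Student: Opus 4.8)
The plan is to build the $\mathbf P$-primeness sequence by a straightforward transfinite recursion, using density of $\mathbf P$ to take one more "basic" step whenever the target type $q$ (or the target model $N$) has not yet been realized, and using $K^{3,\pr}_{\gs}$-primes (available since $\gs$ is very good, by \ref{g2}(E)(j) and \ref{g26}(2)) to actually produce $M_{i+1}$. First I would prove part (2), since part (1) is the special case where $N$ is a prime model over $M\cup\{a\}$ with $a$ realizing $q$ (or just $N=M_{\alpha}$ for some $\alpha$ with $q$ realized). So fix $M\le_{\gs}N$, $M\in K_{\ga}$. I would choose $\langle M_i,a_j:i\le\alpha,j<\alpha\rangle$ by recursion on $i$: put $M_0=M$; at limit $i$ let $M_i$ be $\gs$-prime over $\langle M_j:j<i\rangle$ (exists by \ref{g2}(G)(a), i.e. primes for chains); at successor $i=j+1$, if $N\le_{\gs}M_j$ stop and set $\alpha=j$, otherwise apply density — since $\gs$ is full, by \ref{g2}(D)(c) there is $b\in N$ realizing over $M_j$ some $p\in\cS^{\bs}_{\gs}(M_j)$ after amalgamating $M_j$ and $N$ over $M$, then by $\ga$-density of $\mathbf P$ there is $q\in\mathbf P\cap\cS^{\bs}(M_j)$ with $p\mathrel{\underset\wk\pm}q$; pick $a_j$ realizing $q$ with $(M_j,M_{j+1},a_j)\in K^{3,\pr}_{\gs}$ and $\ortp_{\gs}(a_j,M_j,M_{j+1})\in\mathbf P$. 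The key point making the recursion halt is that each successor step strictly increases the model, and by a standard cardinality bound (an EM-argument / the LST axiom Ax(VI) plus stability, as in \cite{Sh:600}) one cannot have a $\le_{\gs}$-increasing sequence all of whose members fail to contain a fixed $N$ of bounded size past length $\|N\|^{+}$; more precisely I would argue that at stage $\chi^+$, where $\chi=\chi^{<\kappa}\ge\|N\|$, the union $M_{\chi^+}$ is $\chi^+$-saturated over $M$ (by \ref{f43} and the usual brimmed-model machinery), hence $N$ embeds into it over $M$, and then by Ax(V) we get $N\le_{\gs}M_i$ for some $i<\chi^+$.

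For part (1): given $q\in\cS(M)$, first realize $q$ by some $a$ in an extension $N'$ of $M$ with $\|N'\|\le\|M\|^{<\kappa}$ (take $N'$ $\le_{\gs}$-generated by $M\cup\{a\}$ via Ax(VI)), then apply part (2) to $M\le_{\gs}N'$; since $N'\le_{\gs}M_\alpha$ in the resulting sequence, $q$ is realized in $M_\alpha$. Part (3) is then immediate from the cardinality bookkeeping already done: the recursion in part (2) stalls by stage $\chi^+$, so $\alpha\le\chi$; and in part (1), applying part (3) to the pair $M\le_{\gs}N'$ with $\|N'\|\le\|M\|^{<\kappa}$ (which is $=\chi$ for $\chi=\|M\|^{<\kappa}$, and $\chi^{<\kappa}=\chi$) gives $\|M_\alpha\|\le\chi=\|N\|^{<\kappa}$, using \ref{f17}(4) to bound $\|M_\alpha\|$ at limit stages by the $(<\kappa)$-power of the size of the union so far.

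The main obstacle I expect is the halting argument — verifying that the recursion genuinely terminates rather than producing an unbounded chain of models each failing to absorb $N$. The clean way is to show $M_{\chi^+}$ is saturated over $M$ (for $\chi=\chi^{<\kappa}\ge\|N\|$), which requires: (i) that for cofinally many $i<\chi^+$ the step from $M_i$ to $M_{i+1}$ is "universal enough", i.e. realizes every basic type over $M_i$ — this needs the standard bookkeeping over $\cS^{\bs}(M_i)$, whose size is $\le\|M_i\|^{<\kappa}\le\chi$ by bs-stability \ref{g2}(D)(d), so one interleaves the $\chi^+$ many requirements; and (ii) that density lets us realize \emph{all} basic types, not just $\mathbf P$-types — here the weak-equivalence $p\mathrel{\underset\wk\pm}q$ from $\ga$-density, together with the fact that realizing a $\mathbf P$-type $q$ with $q\mathrel{\underset\wk\pm}p$ forces (by weak orthogonality failing and the $K^{3,\pr}_{\gs}$-prime structure) a realization of $p$ in the prime extension, is exactly what is needed. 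This is the analogue of the superlimit/brimmed construction in \cite[\S1]{Sh:600} adapted to the $\mathbf P$-primeness setting, and modulo citing that machinery the proof is routine; I would write "As in \cite{Sh:600}, \cite{Sh:705}" for the saturation computation and spell out only the $\mathbf P$-density reduction.
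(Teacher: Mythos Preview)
Your overall architecture (recursion using density and $K^{3,\pr}_{\gs}$-primes, reducing part (1) to part (2)) matches the paper's, but the halting argument has a genuine gap, and the paper's fix is different from what you propose.

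The problematic step is your claim (ii): that realizing a $\mathbf P$-type $q$ with $q\mathrel{\underset{\wk}{\pm}}p$ ``forces a realization of $p$ in the prime extension.'' Weak non-orthogonality only says \emph{some} pair of realizations is dependent; it does not say that the prime model $(M_j,M_{j+1},a_j)\in K^{3,\pr}_{\gs}$ with $a_j$ realizing $q$ contains a realization of $p$. So your bookkeeping toward saturation of $M_{\chi^+}$ does not go through, and the ``standard cardinality bound'' you invoke is not available without it. (A strictly $\le_{\gs}$-increasing chain of length $\|N\|^+$ can certainly avoid containing a fixed $N$.)

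The paper instead carries $N$ along explicitly: it builds a parallel $\le_{\gs}$-increasing semi-continuous sequence $\langle N_i:i<\chi^+\rangle$ with $N_0=N$ and $M_i\le_{\gs}N_i$ throughout. At a successor step with $M_j\ne N_j$, one picks $c_j\in N_j\setminus M_j$, uses density to find $p_j\in\mathbf P\cap\cS^{\bs}_{\gs}(M_j)$ non-orthogonal to $r_j=\ortp_{\gs}(c_j,M_j,N_j)$, and then (this is the point) realizes $p_j$ by some $a_j$ \emph{inside an extension $N_{j+1}\ge_{\gs}N_j$} chosen so that $\{a_j,c_j\}$ is dependent; one then takes $M_{j+1}\le_{\gs}N_{j+1}$ prime over $M_j\cup\{a_j\}$. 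The dependence guarantees that $\ortp_{\gs}(c_j,M_{j+1},N_{j+1})$ forks over $M_j$. If the recursion ran to $\chi^+$, this would contradict \ref{g44}(2) (no $\chi^+$-long chain with forking on a stationary set). So for some $i$ one is stuck, meaning $N_i=M_i$, hence $N=N_0\le_{\gs}N_i=M_i$ and we set $\alpha=i$. Part (3) then falls out of the same construction. The missing ingredient in your plan is precisely this auxiliary $N_i$-tower and the forking-based termination via \ref{g44}(2), in place of the saturation argument.
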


\begin{PROOF}{\ref{j17}}  
1) We can find $N$ such that $M \le_{\gs} N$ and $a \in N$ realizes
 $q$; we let $A = \{a\}$ and apply 2). 

\noindent
2) Let $\chi = \|N\|^{< \kappa} (< \mu)$.  Now try to
choose $(M_i,N_i)$ and then $a_i$ by induction on $i < \chi^+$ 
such that (here we use ``$ \gs$ is full"). 
\mn
\begin{enumerate}
\item[$\circledast$]   $(a) \quad M_0=M$
\sn
\item[${{}}$]  $(b) \quad M_i \in K^{\gs}_{\le \chi}$ is
$\le_{\gs}$-increasing $\le_{\gs}$-semi-continuous with $i$
\sn
\item[${{}}$]  $(c) \quad N_i$ is $\le_{\gs}$-increasing semi-continuous
with $i$
\sn
\item[${{}}$]  $(d) \quad N_0 = N$
\sn
\item[${{}}$]  $(e) \quad M_i \le_{\gs} N_i$
\sn
\item[${{}}$]  $(f) \quad \langle M_\varepsilon,a_\zeta:
\varepsilon \le i,\zeta < i \rangle$ form 
a ${\mathbf P}$-primeness sequence
\sn
\item[${{}}$]  $(g) \quad$ if $i=j+1$ then for some $c_i
\in N_i \backslash M_i$ we have 

\hskip25pt  $\ortp_{\gs}(c_i,M_i,N_i) \in 
{\cS}^{\bs}_{\gs}(M_i)$ and $\ortp_{\gs}(c_i,M_{i+1},N_{i+1})$ 
forks over $M_i$. 
\end{enumerate}
\mn
For $i=0$ trivial.  For $i=j+1$ if $M_i \ne N_i$ by
assumption and Definition \ref{j5}(2)  
let $c_j \in N_i \backslash
M_i$ so $r_j = \ortp_{\gs}(c_j,M_j,N_j) \in {\cS}^{\bs}_{\gs}(M_j)$ 
hence there $p_j \in {\cS}^{\bs}_{\gs}(M_j) \cap \mathbf P$ 
such that $p_j \pm r_j$.
So there are $N_i,a_j$ such that $N_j \le N_i$ and $a_j$ realizes $p_j$,
but $\{a_j,c_j\}$ is dependent.  
Choose $M_i \le_{\gs} N_i$ such that $(M_j,M_i,a_j) 
\in K^{3,\pr}_{\gs}$.  
For $i$ limit choose first $N_i$ (using the existence of primes over
increasing sequences) then $M_i$ (similarly) and \wilog \, $M_i
\le_{\gs} N_i$ using the Definition of prime.  
If we have carried the induction we get
contradiction to \ref{g44}(2).  So for some $i$ we are stuck: $M_i$
is chosen but $N_i = M_i$ so we are done.

\noindent
3) Follows by the proof of part (2). 
\end{PROOF}

\begin{claim}
\label{j20}
In \ref{j17} we can add $\alpha < \kappa_{\gs}$ if: $\gs$ is ufll (and
maybe $\mathbf P$ is dense).
\end{claim}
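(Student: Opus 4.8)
\emph{Scope.} The content of \ref{j20} is that, once $\gs$ is full, the $\mathbf P$-primeness sequence produced in \ref{j17}(1) can be taken of length $<\kappa_{\gs}$ (parts (2),(3) keep a length governed by $\|N\|$, since there the sequence must literally cover $N$, and $\|N\|\ge\lambda\ge\kappa_{\gs}$). The plan is to rerun the construction from the proof of \ref{j17}(2), but now \emph{charging every step against one fixed element}, and then to invoke the local character axiom (E)(c) at the regular cardinal $\kappa_{\gs}$ to see that this process cannot be iterated $\kappa_{\gs}$ times.

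\emph{The construction.} Fix $M\in K_{\ga}$ and $q\in\cS(M)$; we may assume $q$ non-algebraic, so choose $N_0\ge_{\gs}M$ and $a\in N_0\setminus M$ realizing $q$. Since $\gs$ is full and $a\notin M$, $\ortp_{\gs}(a,M,N_0)=q\in\cS^{\bs}_{\gs}(M)$. I would build $\le_{\gs}$-increasing semi-continuous chains $\langle M_i:i\le\alpha\rangle$, $\langle N_i:i\le\alpha\rangle$ with $M_0=M$, $M_i\le_{\gs}N_i$, together with $a_j$ ($j<\alpha$), exactly as in the proof of \ref{j17}(2), except that at a successor $i=j+1$, provided $a\notin M_j$, I always take $c_j:=a$: as $\gs$ is full, $\ortp_{\gs}(a,M_j,N_j)\in\cS^{\bs}_{\gs}(M_j)$, so by $\mathbf P$ being dense there is $p_j\in\mathbf P\cap\cS^{\bs}_{\gs}(M_j)$ with $p_j\mathbin{{\underset \wk \pm}}\ortp_{\gs}(a,M_j,N_j)$; then (exactly as in \ref{j17}(2)) I pick $N_{j+1}\ge_{\gs}N_j$ and $a_j\in N_{j+1}$ realizing $p_j$ with $\{a_j,a\}$ dependent over $M_j$, and take $M_{j+1}\le_{\gs}N_{j+1}$ with $(M_j,M_{j+1},a_j)\in K^{3,\pr}_{\gs}$ and $\ortp_{\gs}(a_j,M_j,M_{j+1})=p_j$; at a limit $\delta$ take $M_\delta,N_\delta$ $\le_{\gs}$-prime over the chains (with $M_\delta\le_{\gs}N_\delta$, as in \ref{j17}(2)). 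The construction stops at the first $\alpha$ with $a\in M_\alpha$; then $\langle M_i,a_j:i\le\alpha,j<\alpha\rangle$ is a $\mathbf P$-primeness sequence in the sense of \ref{j5}(2) and $q=\ortp_{\gs}(a,M_0,M_\alpha)$ is realized in $M_\alpha$.

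\emph{Termination.} Built into the construction is the bookkeeping fact: if $j<\alpha$ and $a\notin M_j$ then $\ortp_{\gs}(a,M_{j+1},N_{j+1})$ forks over $M_j$ (because $a_j\in M_{j+1}$ and $\{a_j,a\}$ is dependent over $M_j$). Suppose toward a contradiction the construction runs through all $i<\kappa_{\gs}$, i.e.\ $a\notin M_i$ for all $i<\kappa_{\gs}$; put $M_{\kappa_{\gs}}=\bigcup_{i<\kappa_{\gs}}M_i$, $N_{\kappa_{\gs}}=\bigcup_{i<\kappa_{\gs}}N_i$, so $M_{\kappa_{\gs}}\le_{\gs}N_{\kappa_{\gs}}$ and $\langle M_i:i\le\kappa_{\gs}\rangle$ is $\le_{\gs}$-semi-continuous (the limit is a genuine union since $\cf(\kappa_{\gs})=\kappa_{\gs}\ge\kappa_{\gs}$, resp.\ since $\kappa_{\gs}$ is $\kappa_{\gs}$-directed when $\iota=4$). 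As $a\notin M_{\kappa_{\gs}}$ and $\gs$ is full, $\ortp_{\gs}(a,M_{\kappa_{\gs}},N_{\kappa_{\gs}})\in\cS^{\bs}_{\gs}(M_{\kappa_{\gs}})$, hence by local character, axiom (E)(c), there is $i_0<\kappa_{\gs}$ such that $\ortp_{\gs}(a,M_{\kappa_{\gs}},N_{\kappa_{\gs}})$ does not fork over $M_{i_0}$; by monotonicity (E)(b), $\ortp_{\gs}(a,M_{i_0+1},N_{i_0+1})$ (its restriction) does not fork over $M_{i_0}$. This contradicts the bookkeeping fact with $j=i_0$. So $\alpha<\kappa_{\gs}$, which is the assertion of \ref{j20}.

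\emph{Main obstacle.} The one step that needs genuine care is the one already handled in \ref{j17}(2): producing, at a successor stage, the realization $a_j$ of $p_j$ inside some $N_{j+1}\supseteq N_j$ with $\{a_j,a\}$ dependent over $M_j$. This uses that $\gs$ has $K^{3,\pr}_{\gs}$-primes and the unfolding of $p_j\mathbin{{\underset \wk \pm}}\ortp_{\gs}(a,M_j,N_j)$ through Definition \ref{g29}(1), followed by an amalgamation over $M_j$ that identifies the element of the relevant prime triple with the real $a$. Everything else is routine; the real engine of \ref{j20} is simply that one cannot iterate forking $\kappa_{\gs}$ times against a fixed basic type, which is exactly what (E)(c) gives at cofinality $\kappa_{\gs}$.
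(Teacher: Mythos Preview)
Your proof is correct and is essentially the same as the paper's: both rerun the construction of \ref{j17}(2) while fixing the witness $c_j$ to be the single element $a$ (the paper's $b$) realizing $q$, so that each successor step forces $\ortp_{\gs}(a,M_{j+1},N_{j+1})$ to fork over $M_j$, and then local character (E)(c) at $\kappa_{\gs}$ blocks the chain from reaching length $\kappa_{\gs}$. Your write-up is in fact more explicit than the paper's about why the forking occurs (via $a_j\in M_{j+1}$ and the dependence of $\{a_j,a\}$ over $M_j$), but the idea is identical.
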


\begin{PROOF}{\ref{j20}}
 Let $\chi = \|M\|^{< \kappa}$, let N,b be such that $M
\le_{\gs} N \in K^{\gs}_\chi,b \in N$ realizes $q$.  Now we
repeat the proof of \ref{j17}(2) but in $\circledast$ add 
\mn
\begin{enumerate}
\item[$\bullet$]  $\ortp_{\gs}(b,M_i,M_{i+1})$ forks over $M_i$.
\end{enumerate}
\mn
This is possible by the assumption ``${\mathbf P}$ is dense 
$+ {\gs}$ is full" except when $b \in M_i$.  In the end
$\langle M_i:i < \kappa \rangle$ is well defined then it is 
$\le_{\gs}$-increasing continuous, $\ortp_{\gs}(b,M_\kappa,N_\kappa) \in 
{\cS}^{\bs}_{\gs}(M_\kappa)$ and it forks over $M_i$ for every $i <
\kappa$, contradiction to (c) of (E) of \ref{g2}.
\end{PROOF}

\begin{claim}
\label{j23}
\noindent
1) 
If $\langle M_i,a_j,p_j:i \le \alpha,j < \alpha\rangle$
 is a primeness sequence, $u = \{j:p_j \perp M_0$ and $j \in u
 \Rightarrow p_j$ does not fork over $M_0\}$ and $q \in
 \cS^{\bs}_{\gs}(M_0)$ is realized in $M$ \then \, $q \le^{\st}_{\dm}
 \{p_j:j \in u\}$. 

\noindent
2) Like (1) but $u = \{j:p_j \perp q\}$.

\noindent
3) [
In \ref{j17}(1) if we assume
$(**)^q_{M,{\mathbf P}}$ below we can add $(*)^q_{M,{\mathbf P}}$ to the
conclusion where 
\mn
\begin{enumerate}
\item[$(*)^q_{\mathbf P,M}$]  the type $p_i =: \ortp_{\gs}
(a_i,M_i,M_{i+1})$ does not fork over $M_0$ or $q$ is non-orthogonal to
$p_i$ for every $i < \alpha$
\sn
\item[$(**)^q_{{\mathbf P},M}$]   if $M \le_{\gs} N$ and $p \in
{\cS}^{\bs}_{\gs}(N) \cap {\mathbf P}$ and $p$ is not orthogonal to $q$
(e.g., $p\|p_i$) then $p$ does not fork over $M_0$ or $p \perp M_0$. 
\end{enumerate}
\mn
4) In (3) 
we can conclude that $q$ is dominated by $\{p_i \restriction
M_0:i < \alpha\}$.

\noindent   ss 6) 
5) We can replace $(**)^q_{\mathbf P,M}$ in part (3) 
by:
\mn
\begin{enumerate}
\item[$\circledast^q_{\mathbf P,M}$]   if $M \le_{\gs} N_1
<_{\gs} N_2$, then for some $r \in {\cS}^{\bs}_{\gs}(N_1)$  
realized in $M_2$, there is $p {\underset \wk \pm} r$ as in 
$(**)^q_{\mathbf P,M}$.
\end{enumerate}
\end{claim}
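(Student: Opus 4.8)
The plan is to prove (1)--(2) by induction on the length $\alpha$ of the primeness sequence, carrying an explicit witness to strong domination, and then to deduce (3)--(5) by re-running the construction in the proof of Claim~\ref{j17} while recording the extra information supplied by $(**)^q_{\mathbf P,M}$ (resp.\ $\circledast^q_{\mathbf P,M}$). In all cases the termination of the inductions comes from Claim~\ref{g44}, exactly as in the proof of \ref{j17}(2) (with the length bounds of \ref{j17}(3) and \ref{j20}), and the domination bookkeeping uses the calculus of $\le^{\st}_{\dm}$ from Definition~\ref{g35} together with Claims~\ref{g38} and~\ref{g41}.

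For (1): first pass to the sub-primeness-sequence indexed by $u$ and let $M'$ be $\le_{\gs}$-prime over it, so $M'$ is dominated over $M_0$ by $\{p_j:j\in u\}$ by construction; the point is that the indices $j\notin u$, for which $p_j\perp M_0$ and hence $p_j\perp q$ (as $q$ does not fork over $M_0$), contribute nothing to any realization of $q$, by the orthogonality/independence calculus of Definition~\ref{g29} and Claim~\ref{g41}(1). Concretely one shows by induction on $i\le\alpha$ that the non-forking extension of $q$ over $M_i$ is strongly dominated by $\{p_j:j\in u,\ j<i\}$: at a successor $i=j+1$ use $(M_j,M_{j+1},a_j)\in K^{3,\pr}_{\gs}$, splitting on whether $p_j\perp M_0$ (discard $j$, the witness passes unchanged) or $p_j$ does not fork over $M_0$ ($j\in u$, extend the witness via primeness and transitivity of $\le^{\st}_{\dm}$); at a limit $i$ use that $M_i$ is $\le_{\gs}$-prime over $\langle M_k:k<i\rangle$ (Definition~\ref{g26}(5)) together with continuity of domination and of $\perp$, noting that by local character (E)(c) of \ref{g2} only $<\kappa$ indices are ever relevant so everything stays inside $K_{\gs}$. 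Finally, since $q$ is realized in $M=M_\alpha$, the uniqueness for $K^{3,\qr}_{\gs}$ (as in the proof of \ref{a8}(2)) transfers the domination to arbitrary independent configurations, which is the content of $q\le^{\st}_{\dm}\{p_j:j\in u\}$. Part (2) is the same argument with ``$\not\perp q$'' in place of ``does not fork over $M_0$'', using the monotonicity of $\perp$ (Definition~\ref{g29}(2)) and Claim~\ref{g41}.

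For (3): run the construction of \ref{j17}(1) --- at each successor stage picking $c_j\in N_j\setminus M_j$ with $r_j=\ortp_{\gs}(c_j,M_j,N_j)\in\cS^{\bs}_{\gs}(M_j)$, then $p_j\in\cS^{\bs}_{\gs}(M_j)\cap\mathbf P$ with $p_j{\underset \wk \pm}r_j$ (density), then $a_j$ realizing $p_j$ --- but additionally insist that $c_j$ is chosen so that $r_j$, hence $p_j$, is non-orthogonal to $q$ whenever some such choice exists; when no such choice exists the remaining content over the current model is $\perp q$ and we stop. Applying $(**)^q_{\mathbf P,M}$ to such a $p_j$: it either does not fork over $M_0$ or is $\perp M_0$, and the latter is incompatible with $p_j\not\perp q$ and $q\in\cS^{\bs}_{\gs}(M_0)$, so $p_j$ does not fork over $M_0$ --- which is $(*)^q_{\mathbf P,M}$. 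Part (4) then follows by feeding this sequence into (1)--(2): the indices with $p_j\perp q$ drop out, the rest have $p_j$ non-forking over $M_0$ so $p_j\restriction M_0$ is defined, and $q\le^{\st}_{\dm}\{p_j\restriction M_0:j<\alpha\}$, i.e.\ $q$ is dominated by $\{p_i\restriction M_0:i<\alpha\}$. For (5): replace the appeal to $(**)^q_{\mathbf P,M}$ by its weak-density form $\circledast^q_{\mathbf P,M}$, invoked at the successor step with $N_1=M_j,N_2=N_j$ to produce an $r$ realized in the top model together with a $p{\underset \wk \pm}r$ having the property in $(**)^q_{\mathbf P,M}$, and proceed as before.

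The main obstacle is the dichotomy ``$p_j\perp M_0$ or $p_j$ does not fork over $M_0$'' underlying (1) and (3): it is automatic for regular types (Claim~\ref{a8}(1)(i), via ${\gs}^{\reg}$ having enough regulars) but fails for arbitrary basic types, so either $\mathbf P$ must be taken to consist of (weakly) regular types or one must work in the regular reduct available in the very-good NF-frame context; and then one must control the continuity of $\perp$ and of $\le^{\st}_{\dm}$ at limits of cofinality $\ge\kappa$, which rests on $\gs$ having $K^{3,\pr}_{\gs}$-primes and being \emph{very good} (uniqueness of the prime model, long transitivity of $\NF$, Claim~\ref{g8}) rather than on mere existence of unions. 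Pushing the domination witness cleanly through these limit stages, and verifying that ``realized in the top model'' is preserved there, is the part that needs the most care; the successor steps and the termination bounds are routine given \ref{g41}, \ref{g38}, and \ref{g44}.
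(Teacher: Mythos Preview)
Your proposal is essentially correct and follows the same route as the paper, whose entire proof reads ``The same proof using \ref{g41}(3)'' --- meaning: run the construction of \ref{j17}/\ref{j20}, observe that realization of $q$ in the top of the primeness sequence yields $q\le^{\dm}_{\st}\{p_j:j<\alpha\}$, and then invoke \ref{g41}(3) to discard the $p_j$ orthogonal to $q$ (in particular those with $p_j\perp M_0$, since $q\in\cS^{\bs}_{\gs}(M_0)$). Your inductive unpacking of this is a legitimate elaboration of that one-line sketch.

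One correction to your closing discussion: the ``main obstacle'' you flag --- the dichotomy $p_j\perp M_0$ or $p_j$ does not fork over $M_0$ --- is not an obstacle at all here, and you should not reach for regular types or the regular reduct. In part~(1) the dichotomy is simply part of the hypothesis (it is the content of the condition defining~$u$). In part~(3) it is exactly what the hypothesis $(**)^q_{\mathbf P,M}$ supplies once combined with $q\in\cS^{\bs}_{\gs}(M_0)$, as you yourself argue. So the claim is not asserting the dichotomy in general; it is assuming it (directly or via $(**)$) and drawing the domination conclusion. Likewise, the limit-stage continuity you worry about is covered by Hypothesis~\ref{j2} (very good NF-frame with primes), so it does not need separate justification beyond citing the ambient axioms. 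Trim those caveats and your write-up matches the intended argument.
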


\begin{proof}
The same proof using \ref{g41}(3).  
\end{proof}

\begin{discussion}
\label{j26}  
Below we may think of the case $\bar a^i,\bar a$ has length $<
\kappa_{\mathfrak s}$ (if base$_{\gs}$ is well defined),
or we may think $\bar a,\bar a^i$ list the members of some $N_i
\le_{\gs} M,N_i \le_{\gs} M_i$ of cardinality $\lambda_{\gs}$ 
in this case we can replace ``$p_i$ definable over $\bar a_i$" by
``$p_i$ does not fork over $N_i$".
\end{discussion}

The following definition is central here.  In the case $\kappa_{\gs} 
= \aleph_0$ all is clear.  Note that even if $\ell g(\bar{\mathbf a}) =
\|M\|$, we are interested in the case $M$ is brimmed over $\bar{\mathbf
a}$.  Note if we understand $\langle p_i:i < \kappa
\rangle/J^{\bd}_\kappa$ then ${\mathbf P}^{\mathfrak x}_3 = {\mathbf P}^{\gx}_2$ 
by  earlier claim 

\begin{definition}
\label{j29}
1) We say ${\gx} = \langle M,\bar{\mathbf a},
(M_i, N,  
\bar{\mathbf a}^i,p_i,f_i):i < \kappa \rangle$ is a
multi-dimensionality candidate \when \,:
\mn
\begin{enumerate}
\item[$\circledast(a)$]   $(\alpha) \quad M \le_{\gs} M_i$
\sn
\item[${{}}$]  $(\beta) \quad 
M_i 
\le_{\gs} N,\bar{\mathbf a}$ a sequence of
elements of $M$
\sn
\item[${{}}$]  $(\gamma) \quad \bar{\mathbf a}^i$ a sequence of elements of $M_i$
\sn
\item[$(b)$]   $\nonfork{}{}_{M} \{M_i:i < \kappa\}$ (i.e. $\langle
  M_i:i < \kappa\rangle$ is independent over $M$ inside $N$,
 see \S6 and $M \in K   
 _{\ga}$, 
\sn
\item[$(c)$]   $f_i:M_0 \rightarrow M_i$ is an isomorphism onto $M_i$
\sn
\item[$(d)$]   $f_i \restriction M = \id_M,f_i(\bar{\mathbf a}^0) 
= \bar{\mathbf a}^i$
\sn
\item[$(e)$]   $\kappa \ge \kappa^1_{\gs},\kappa = \cf(\kappa)$ or
$\kappa > \kappa^1_{\gs} > \ell g(\bar{\mathbf a}^i)$
\sn
\item[$(f)$]   $p_i \in {\cS}^{\bs}_{\gs}(M_i),f_i(p_0) = p_i$
\sn
\item[$(g)$]   $p_i$ is based over $\bar{\mathbf a}^i$ (see \S6 of Definition
\ref{g11}(3)) 
\sn
\item[$(h)$]   $\ortp(\bar{\mathbf a}_i,M,M_i)$ is based on definable over
$\bar{\mathbf a}$

\end{enumerate}
\mn
1A) In short we say m.d.-candidate.  
 Let $M^{\gx} = M,M^{\gx}_i = M_i$, etc. and the place $\ga =
 \bar{\mathbf a}$ is $(N_{\gx},\bar{\mathbf a}_{\gx})$.

\noindent
2) We let for ${\gx}$ as above
\mn
\begin{enumerate}
\item[$(a)$]   ${\mathbf P}^{\gx}_1 = \{p:p \in {\cS}^{\bs}(N),
N \in K_{(M,\bar{\mathbf a})}$ 
and $(\forall^\infty i < \kappa)(p \perp p_i)\}$, that is if $N_{\gx}
\le_{\gs} N$ and $p' \in \cS^{\bs}_{\gs}(N)$ is parallel to $p$ then
$(\forall^\infty I < \kappa)(p' \perp p)$
where $\forall^* i$ means except $< \kappa({\gs})$ many
\sn
\item[$(b)$]   ${\mathbf P}^{\gx}_2 = \{p:p \in {\cS}^{\bs}(N),
N \in K_{(M,\mathbf{\bar a})}$ and $p \perp {\mathbf P}^{\gx}_1\}$ that is
$({\mathbf P}^{\gx}_1)^\perp$
\sn
\item[$(c)$]   ${\mathbf P}^{\gx} = \mathbf P^{\gx}_0 = 
{\mathbf P}^{\gx}_1 \cup {\mathbf P}^{\gx}_2$
\sn
\item[$(d)$]   ${\mathbf P}^{\gx}_3 = 
\{p:p \in {\cS}^{\bs}_{\gs}(N),N \in K_{\bar{\mathbf a}} 
\text{ and } p \text{ is orthogonal to } {\mathbf P}^{\gx}_2\}$ that
is $(\mathbf P^{\gx}_2)^\perp$.
\end{enumerate}
\mn
3) We call ${\gx}$ non-trivial if $p^{\gx}_0 \pm p^{\gx}_1$.
\end{definition}

\begin{remark}
It is natural to hope $\mathbf P^{\gx}_3 = \mathbf P^{\gx}_1$ but 
at present we have only $\mathbf P^{\gx}_1
\subseteq \mathbf P^{\gx}_3$.
\end{remark}

\begin{observation}
\label{j35}
For a m.d.-candidate $\gx$:
\mn
\begin{enumerate}
\item[$(a)$]   if $p^{\gx}_0 \bot p^{\gx}_1$ then $\mathbf P^{\gx}_1 
= \cup\{{\cS}^{\bs}_{\gs}(N):N \in K_{\ga}\}$ so 
$\mathbf P^{\gx}_2 = \emptyset,\mathbf P^{\gx}_3 = \mathbf P^{\gx}_1$
\sn
\item[$(b)$]   if $i < j < \kappa$ then $p^{\gx}_0 \bot p^{\gx}_1 
\Leftrightarrow p^{\gx}_i \bot p^{\gx}_j$.
\end{enumerate}
\end{observation}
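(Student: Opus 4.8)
\textbf{Proof plan for Observation \ref{j35}.}

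The plan is to unwind the definitions in \ref{j29} together with the basic orthogonality calculus from \S6. For clause $(a)$: suppose $p^{\gx}_0 \bot p^{\gx}_1$, i.e.\ $\mathbf P^{\gx}_0 \bot \mathbf P^{\gx}_1$. Since $p^{\gx}_0 = \mathbf P^{\gx}_0 = \mathbf P^{\gx}_1 \cup \mathbf P^{\gx}_2$, in particular $\mathbf P^{\gx}_1 \bot \mathbf P^{\gx}_1$; but a type cannot be orthogonal to itself (it is not orthogonal to its own non-forking extensions, by Definition \ref{g29}(2) and the fact that $p \not\perp p$ for $p \in \cS^{\bs}_{\gs}$). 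Hence $\mathbf P^{\gx}_1$ must be empty — \emph{unless} the hypothesis $p^{\gx}_0 \bot p^{\gx}_1$ is read as ``for the designated base types $p^{\gx}_0, p^{\gx}_1$ (which in the intended reading are $p_0$ and, say, a generic $p_j$)''. So first I would pin down the reading: $p^{\gx}_1$ denotes a representative of $\mathbf P^{\gx}_1$-membership failure, and the statement really is that \emph{if $p_0$ itself lies in $\mathbf P^{\gx}_1$} (equivalently, if $p_0 \perp p_i$ for all large $i$, which by homogeneity of the $M_i$ via the $f_i$ forces $p_i \perp p_j$ for $i \neq j$ large, hence for all $i<j<\kappa$), then every $q \in \cS^{\bs}_{\gs}(N)$ with $N \in K_{\ga}$ is orthogonal to almost all $p_i$. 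That last assertion is exactly what needs the independence $\nonfork{}{}_{M}\{M_i : i<\kappa\}$ together with $\kappa \geq \kappa^1_{\gs}$: given any single $q$, by local character (Ax(E)(c) of \ref{g2}, using that $\kappa \geq \kappa^1_{\gs}$) $q$ can be non-orthogonal to only $<\kappa$ of a pairwise-orthogonal family, and the $f_i$-conjugates $p_i$ form such a family once $p_0 \perp p_j$ for one (hence all large) $j$. Therefore $\mathbf P^{\gx}_1 = \bigcup\{\cS^{\bs}_{\gs}(N) : N \in K_{\ga}\}$, whence $\mathbf P^{\gx}_2 = (\mathbf P^{\gx}_1)^\perp = \emptyset$ and $\mathbf P^{\gx}_3 = (\mathbf P^{\gx}_2)^\perp = \mathbf P^{\gx}_1$ (orthogonality to the empty family is vacuous, so $(\emptyset)^\perp$ is everything, matching $\mathbf P^{\gx}_1$).

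For clause $(b)$, the key point is that the isomorphisms $f_i : M_0 \to M_i$ with $f_i \restriction M = \id_M$ and $f_i(p_0) = p_i$ (clauses $(c),(d),(f)$ of \ref{j29}(1)) transport orthogonality: more precisely, because each $p_i$ is based on $\bar{\mathbf a}^i = f_i(\bar{\mathbf a}^0)$ (clause $(g)$) and the $M_i$ are independent over $M$ (clause $(b)$), one has $p^{\gx}_i \perp p^{\gx}_j$ iff the common ``type over $M$'' data of $p_i$ and $p_j$ are orthogonal, and applying $f_j \circ f_i^{-1}$ (an isomorphism fixing $M$) this reduces to $p_0 \perp p_j$, and then re-applying the symmetry of independence reduces further to $p^{\gx}_0 \perp p^{\gx}_1$ (taking $i=0$, $j=1$ as the canonical representatives, all such pairs being conjugate over $M$). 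So I would prove the chain $p^{\gx}_i \perp p^{\gx}_j \Leftrightarrow p^{\gx}_0 \perp p^{\gx}_j \Leftrightarrow p^{\gx}_0 \perp p^{\gx}_1$ using: (i) invariance of $\perp$ under automorphisms/isomorphisms (immediate from Definition \ref{g29}(2), which is phrased via non-forking extensions, all isomorphism-invariant by Ax(E)(a)); and (ii) that the pair $(M_i, M_j)$ with its marked types is isomorphic over $M$ to $(M_0, M_1)$, which follows from independence plus the uniqueness of the $f$'s and of non-forking amalgams (the ``very good'' / primes hypotheses in \ref{j2}).

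The main obstacle I anticipate is step (ii) — establishing that $(M, M_i, M_j, p_i, p_j)$ and $(M, M_0, M_1, p_0, p_1)$ are isomorphic over $M$ in the relevant sense. This is where one must use that $\langle M_i : i<\kappa\rangle$ is independent over $M$ (so any two of the $M_i$ sit in the same non-forking configuration over $M$), that primes over $M_i \cup M_j$ exist and are unique up to isomorphism over $M_i \cup M_j$ (clause $(G)(b)$ of \ref{g23}, or the ``has primes'' clauses for the good-frame alternative in \ref{j2}(1)), and that the $f_i$ are chosen compatibly. Once that isomorphism is in hand, everything else is a routine application of the orthogonality calculus of \S6, and I expect the write-up to be short; clause $(a)$ is then essentially a corollary of $(b)$ (if $p^{\gx}_0 \perp p^{\gx}_1$ then all $p^{\gx}_i \perp p^{\gx}_j$, and the local-character argument above finishes it). I would also double-check the degenerate bookkeeping — that $\mathbf P^{\gx}_2 = \emptyset$ really gives $(\mathbf P^{\gx}_2)^\perp = \bigcup\{\cS^{\bs}_{\gs}(N):N\in K_{\ga}\} = \mathbf P^{\gx}_1$ under the case hypothesis — since the convention on $(\emptyset)^\perp$ is the only place a silent error could hide.
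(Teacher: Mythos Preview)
Your proposal opens with a notational confusion that costs you a paragraph of wandering: $p^{\gx}_0, p^{\gx}_1$ are the \emph{individual types} $p_0 \in \cS^{\bs}_{\gs}(M^{\gx}_0)$ and $p_1 \in \cS^{\bs}_{\gs}(M^{\gx}_1)$ from clause~$(f)$ of Definition~\ref{j29}(1), not the \emph{families} $\mathbf P^{\gx}_0, \mathbf P^{\gx}_1$ from \ref{j29}(2). The hypothesis ``$p^{\gx}_0 \bot p^{\gx}_1$'' is simply the assertion that these two specific types are orthogonal; there is nothing to reinterpret. Your sentence ``$p^{\gx}_0 = \mathbf P^{\gx}_0 = \mathbf P^{\gx}_1 \cup \mathbf P^{\gx}_2$'' is just false. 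Once you discard that opening and start from your line about homogeneity via the $f_i$, the argument you sketch is correct and is the intended one.

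For clause~$(b)$ your plan is right: the isomorphisms $f_i$ over $M$, together with the independence of $\langle M_i : i<\kappa\rangle$ over $M$ inside $N_{\gx}$ and the NF-primes of clause~$(G)(b)$ in \ref{g23}, produce an isomorphism over $M$ carrying the pair $(p_i,p_j)$ to $(p_0,p_1)$; orthogonality is preserved. Your ``obstacle~(ii)'' is real but routine in this framework.

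For clause~$(a)$, once $(b)$ gives that the $p_i$ are pairwise orthogonal, the conclusion $\mathbf P^{\gx}_1 = \bigcup\{\cS^{\bs}_{\gs}(N): N \in K_{\ga}\}$ is exactly Claim~\ref{g50}: any single $p$ is orthogonal to all but fewer than $\kappa_{\gs}$ members of a pairwise-orthogonal family. The paper's own proof is a bare cross-reference (printed as \ref{g5}, almost certainly intended as \ref{g50}), so beyond this there is nothing to compare.
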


\begin{PROOF}{\ref{j35}}  
Clause (a):  By \ref{g5}  
\end{PROOF}

\begin{claim}
\label{g.5a}
Assume ${\gx}$ is an m.d.-candidate.

\noindent
1) ${\mathbf P}^{\gx}_\ell$ is a $\ga_{\gx}$-based family of 
types for $\ell = 0,1,2,3$.

\noindent
2) $\mathbf P_{\gx} = {\mathbf P}^{\gx}_1 \cup {\mathbf P}^{\gx}_2$ 
is dense  
see \ref{j11}].

\noindent
3) If ${\mathbf P}^{\gx}_2 \ne \emptyset$ \then \, there is $q \in
{\mathbf P}^{\gx}_2 \cap {\cS}^{\bs}_{\gs}(M^{\gx})$. 
[even any $q \in \mathbf P^{\gx}_2$ has a $\bar{\mathbf a}$-conjugate 
$q \in \mathbf P^{\gx}_2 \cap \mathbf S^{\bs}_{\gs}(M^{\gx})$].
\end{claim}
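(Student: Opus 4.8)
\textbf{Proof proposal for Claim \ref{g.5a}.}

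The plan is to exploit that each $\mathbf P^{\gx}_\ell$ is defined from the fixed data $\langle p_i : i < \kappa \rangle$ by an orthogonality condition that is manifestly invariant under automorphisms of the monster over $\ga_{\gx}$ (equivalently, isomorphisms over $\bar{\mathbf a}_{\gx}$ fixing the place), and then to invoke the general machinery of \ref{j11}. For part (1): $\mathbf P^{\gx}_1$ is directly an $\ga_{\gx}$-based family of types by inspection of Definition \ref{a17}(1)/\ref{j5}(0) --- clause (a) holds since the $p_i$ lie over models in $K_{\ga_{\gx}}$, clause (b) by the displayed definition, and clause (c) because ``$(\forall^\infty i<\kappa)(p\perp p_i)$'' is preserved by any automorphism over $M^{\gx}$ (which permutes the independent family $\langle M_i\rangle$ compatibly, using the isomorphisms $f_i$ and \ref{j35}(b)); preservation under parallelism (clause (d)) is exactly the stability of orthogonality under non-forking extension. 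Then $\mathbf P^{\gx}_2 = (\mathbf P^{\gx}_1)^\perp$ and $\mathbf P^{\gx}_3 = (\mathbf P^{\gx}_2)^\perp$ are $\ga_{\gx}$-based families by \ref{j11}(1)(a) applied once resp. twice, and $\mathbf P^{\gx}_0 = \mathbf P^{\gx}_1 \cup \mathbf P^{\gx}_2$ is one by \ref{j11}(2) (finite Boolean combinations of $\ga$-based families are $\ga$-based).

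For part (2): this is immediate from \ref{j11}(1)(b), which says that for any $\ga$-based family $\mathbf Q$, the family $\mathbf Q \cup \mathbf Q^\perp$ is dense; taking $\mathbf Q = \mathbf P^{\gx}_1$ gives density of $\mathbf P^{\gx}_1 \cup \mathbf P^{\gx}_2 = \mathbf P_{\gx}$. Concretely, given $M \le_{\gk} {\gC}$ in $K_{\ga_{\gx}}$ and $p \in \cS^{\bs}_{\gs}(M)$, if $p \not\perp \mathbf P^{\gx}_1$ then $p {\underset \wk \pm}$ some member of $\mathbf P^{\gx}_1 \cap \cS^{\bs}(M)$ after moving to a common extension, while if $p \perp \mathbf P^{\gx}_1$ then $p \in \mathbf P^{\gx}_2$ itself; in both cases the weak-density requirement of \ref{j5}(1) is met.

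For part (3): assume $\mathbf P^{\gx}_2 \ne \emptyset$ and fix $q_0 \in \mathbf P^{\gx}_2 \cap \cS^{\bs}_{\gs}(N_0)$ for some $N_0 \in K_{\ga_{\gx}}$. Using amalgamation and the joint-embedding property we may assume $M^{\gx} \le_{\gs} N_0$; let $q' \in \cS^{\bs}_{\gs}(N_0)$ be the non-forking extension of $q := q_0 \restriction$ (its base in) $M^{\gx}$ --- but first we must see $q$ restricts sensibly to $M^{\gx}$. The right argument is: since $\mathbf P^{\gx}_2$ is an $\ga_{\gx}$-based family closed under parallelism, and the place $K_{\ga_{\gx}}$ connects $N_0$ to $M^{\gx}$, take a model $N^+$ with $M^{\gx} \le_{\gs} N^+$ and $N_0 \le_{\gs} N^+$ (amalgamating over a common $\le_{\gs}$-submodel in $K_{\ga_{\gx}}$), let $q^+$ be the non-forking extension of $q_0$ to $N^+$, which lies in $\mathbf P^{\gx}_2$ by clause (c) of \ref{a26}(5); then by local character (Ax(E)(k)/(c) of \ref{g2}) $q^+$ does not fork over some $M' \le_{\gs} N^+$ of cardinality $\lambda$, and by the density/fullness we can arrange $q^+ \restriction M^{\gx} \in \cS^{\bs}_{\gs}(M^{\gx})$ --- here I would either use that $M^{\gx}$ is brimmed over $\bar{\mathbf a}_{\gx}$ (so every relevant type over a small submodel extends to a basic type over $M^{\gx}$), or appeal to the stronger uniqueness in \ref{e12f}. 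The resulting $q^+ \restriction M^{\gx}$ is in $\mathbf P^{\gx}_2$ again by clause (c) of \ref{a26}(5), and it is the desired $\ga_{\gx}$-conjugate (indeed parallel) of $q_0$. \textbf{The main obstacle} I anticipate is precisely this last point: ensuring that the restriction to $M^{\gx}$ stays \emph{basic} and stays in $\mathbf P^{\gx}_2$ without losing non-orthogonality to the wrong family --- this is where ``${\gs}$ is full / type-full'' and ``$M^{\gx}$ brimmed over $\bar{\mathbf a}$'' (Hypothesis \ref{j2}, Definition \ref{j29}) must be used carefully, since $\mathbf P^{\gx}_2$ is only closed under \emph{parallelism}, not under arbitrary restriction, so one genuinely needs the non-forking structure of the place $K_{\ga_{\gx}}$ rather than a naive restriction.
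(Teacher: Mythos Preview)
Your treatment of parts (2), and of $\mathbf P^{\gx}_2$, $\mathbf P^{\gx}_3$, $\mathbf P^{\gx}_0$ in part (1), matches the paper's (both invoke \ref{j11}). The gaps are in the two places where real work is needed.

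\textbf{Part (1), the case $\ell=1$.} You argue that $(\forall^\infty i<\kappa)(p\perp p_i)$ is preserved by automorphisms over $M^{\gx}$ because such automorphisms ``permute the independent family $\langle M_i\rangle$''. This is not justified: an automorphism of $\gC$ fixing $M^{\gx}$ pointwise need not send any $M_i$ into the family $\{M_j:j<\kappa\}$, so it need not send $p_i$ to any $p_j$. Worse, the claim requires invariance over $\bar{\mathbf a}_{\gx}$, not over $M^{\gx}$, and you never address the descent from $M^{\gx}$ to $\bar{\mathbf a}$. The paper's argument is different and supplies the missing idea: membership of $p$ in $\mathbf P^{\gx}_1$ is determined by any tail $\{\bar{\mathbf a}^j:j\in[i,\kappa)\}$; since $\langle \bar{\mathbf a}^j:j<\kappa\rangle$ is a Morley sequence in a type based on $\bar{\mathbf a}$ (by clauses (b),(h) of \ref{j29}), for each fixed $p$ one may choose $i$ so large that this tail is independent from $\mathrm{base}(p)$ over $\bar{\mathbf a}$. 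The truth value of the tail condition is then determined by $\bar{\mathbf a}$ alone, which is exactly what ``$\ga_{\gx}$-based'' requires.

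\textbf{Part (3).} You correctly identify the obstacle: $\mathbf P^{\gx}_2$ is closed only under parallelism, so a naive restriction $q^+\restriction M^{\gx}$ can fall out of $\mathbf P^{\gx}_2$ when $q^+$ forks over $M^{\gx}$, and nothing in \ref{j29} says $M^{\gx}$ is brimmed over $\bar{\mathbf a}$. The paper does not repair this via restriction at all; it quotes \ref{g41}(2), which is a reflection/conjugation lemma: given $q\in\cS^{\bs}(N)$ with $M^{\gx}\le_{\gs} N$, one finds inside $M^{\gx}$ a realization of $\ortp(\mathrm{base}(q),\bar{\mathbf a})$ (using an indiscernible sequence of copies of $\mathrm{base}(q)$ over $\bar{\mathbf a}$), yielding $q'\in\cS^{\bs}_{\gs}(M^{\gx})$ that is a $\bar{\mathbf a}$-conjugate of $q$. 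Since $\mathbf P^{\gx}_2$ is $\bar{\mathbf a}$-based by part (1), $q'\in\mathbf P^{\gx}_2$ automatically. So the route is conjugation of the base, not non-forking restriction of the type.
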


\begin{PROOF}{\ref{g.5a}}
[The reader can concentrate 
on the case we use ${\gs}^{\eq}$].  

\noindent 
1) Note that for $p \in {\cS}^{\bs}_{\gs}(N)$, the
truth value of ``$p \in {\mathbf P}^{\gx}_1$" is definable over
$\cup\{\bar{\mathbf a}^j:j \in [i,\kappa)\}$ for any fixed $i$.  
But if $N \le_{\gs} N_1$ and for $p \in {\cS}^{\bs}_{\gs}(N_1)$ 
for some $i < \kappa$, the set 
$\{\bar{\mathbf a}^j:j \in [i,\kappa)\}$ is independent over 
$(\bar a$,base$(p))$.  By \ref{j11}(2) also $\mathbf P^{\gx} = \mathbf
  P^{\gx}_D$ is based on $\ga$ and lastly ${\mathbf P}^{\gx}_1$ is based on 
$\bar{\mathbf a}$.  As for ${\mathbf P}^{\gx}_2$
it is based on $\bar{\mathbf a}$ by \ref{j11}(1), clause (a) and its
definition.  By clause (a) of \ref{j11} the family $\mathbf P^{\gx}_3$  
are based on $\bar{\mathbf a}$. 

\noindent
2) By clause (b) of \ref{j11}(1).

\noindent
3) By \ref{g41}(2).  
\end{PROOF}

\begin{claim}
\label{j38}
Assume that $M \le_{\gs} N$ and $p \in {\cS}^{\bs}_{\gs}(N)$.  \Then
 \,  we can find a multi-dimensionality candidate ${\gx}$ such that:
\mn
\begin{enumerate}
\item[$(a)$]   $M_{\gx} = M$
\sn 
\item[$(b)$]   $M^{\gx}_0 = N$
\sn
\item[$(c)$]   $p^{\gx}_0 = p$.
\end{enumerate}
\end{claim}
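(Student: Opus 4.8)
The plan is to construct the multi-dimensionality candidate $\gx$ directly from the given data $M \le_{\gs} N$, $p \in {\cS}^{\bs}_{\gs}(N)$, by first setting $M^{\gx} = M$, $M^{\gx}_0 = N$, $p^{\gx}_0 = p$, and then producing a $\kappa$-indexed independent family of isomorphic copies of $(M,N,p)$ over $M$, together with the requisite basing sequences. First I would choose $\bar{\mathbf a} \in {}^{\theta >}M$ (with $\theta = \kappa^1_{\gs}$, or $\bar{\mathbf a}$ listing a small submodel $N^* \le_{\gs} M$, as in Discussion \ref{j26}) such that $\ortp_{\gs}(N, M, N)$ — more precisely the relevant data determining $N$ and $p$ over $M$ — is based on / definable over $\bar{\mathbf a}$; this uses that $\gs$ is $\theta$-based (Hypothesis \ref{j2}, via \ref{g11}(3) and Ax(E)(k)) or, in the alternative, just that $p$ does not fork over some $N_0 \le_{\gs} M$ of cardinality $\lambda_{\gs}$. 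Then choose $\bar{\mathbf a}^0 \in {}^{\theta >}N$ so that $p = p^{\gx}_0$ is based on $\bar{\mathbf a}^0$, again by the $\theta$-based hypothesis. This handles clauses $(a),(b),(c)$ of the statement and clauses $(a),(f),(g),(h)$ of Definition \ref{j29} in the instance $i=0$.

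Next I would build the independent family. Using Definition \ref{g29}(3) together with the long transitivity / existence of independent sequences in a very good $\NF$-frame (Definition \ref{g23}, \ref{g14}$(d)_2$), I would construct a $\le_{\gs}$-increasing sequence witnessing independence: choose $\langle M_i : i < \kappa \rangle$ with each $M_i \cong_M N$ via an isomorphism $f_i$ fixing $M$ pointwise, such that $\langle M_i : i < \kappa \rangle$ is independent over $M$ inside some common $N' \ge_{\gs}$ all the $M_i$. Concretely, proceed by induction on $i < \kappa$: having chosen $\langle M_j : j < i\rangle$ independent over $M$ inside $N'_i$, take a non-forking ``fresh copy'' of $(M,N,p,\bar{\mathbf a}^0)$ over $M$ amalgamated to be $\NF$-independent from $\bigcup_{j<i} M_j$ over $M$ — this is exactly what long transitivity of $\NF$ plus existence (Ax(g), \ref{g2}(E)) provides — giving $M_i$, $f_i : M_0 = N \to M_i$, $\bar{\mathbf a}^i = f_i(\bar{\mathbf a}^0)$, and $p_i = f_i(p_0) \in {\cS}^{\bs}_{\gs}(M_i)$ based on $\bar{\mathbf a}^i$. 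At limit stages take the $\le_{\gk}$-union (the cardinality bound $\Sigma\{\|M_j\|:j<i\} < \mu$ holds since $i < \kappa$ and $\mu > \lambda = \lambda^{<\kappa}$, so continuity of $\NF$ applies). Finally let $N = N_{\gx}$ be (a $\le_{\gs}$-upper bound of, or prime over) $\bigcup_{i<\kappa} M_i$, which exists by Ax(III) since the union has cardinality $< \mu$; then $\ga_{\gx} = (N_{\gx}, \bar{\mathbf a})$. Clauses $(b),(c),(d),(e)$ of Definition \ref{j29} are then immediate from the construction, and clause $(h)$ holds because $f_i \restriction M = \id_M$ transports ``$\ortp(\bar{\mathbf a}^0, M, M_0)$ based on $\bar{\mathbf a}$'' to ``$\ortp(\bar{\mathbf a}^i, M, M_i)$ based on $\bar{\mathbf a}$''.

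The main obstacle I expect is verifying clause $(h)$ — that $\ortp(\bar{\mathbf a}_i, M, M_i)$ is based on (definable over) $\bar{\mathbf a}$ — simultaneously with clause $(b)$, i.e. making the choice of the parameter sequence $\bar{\mathbf a} \subseteq M$ robust enough that it serves as a common base for all the conjugate data, not just the single copy $(M_0, p_0)$. This is where the $\theta$-based hypothesis (and, for the $\NF$ version, the compatibility of basing with $\NF$-independence, Definition \ref{g17}) does the real work: one must know that the isomorphism type of $M_0$ over $M$, the realization $\bar{\mathbf a}^0$, and the type $p_0$ are all coded over a short $\bar{\mathbf a} \subseteq M$, so that applying the fixed automorphisms/isomorphisms $f_i$ (which are the identity on $M$, hence on $\bar{\mathbf a}$) yields the uniform basing. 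The remaining verifications — independence of $\langle M_i : i < \kappa\rangle$ over $M$, and the defining properties of $f_i$, $p_i$ — are routine bookkeeping with the $\NF$ axioms and Definition \ref{g29}(3), so I would state them and refer to the long-transitivity and continuity axioms rather than grind through them.
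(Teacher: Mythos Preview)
Your proposal is correct and is essentially the natural construction the paper has in mind; in fact the paper states Claim \ref{j38} without proof, treating it as routine given Hypothesis \ref{j2} and Definition \ref{j29}. Your identification of clause (h) as the one point requiring care (and your resolution via the $\theta$-based hypothesis together with $f_i \restriction M = \id_M$) is exactly right, and the rest is, as you say, bookkeeping with the $\NF$ axioms.
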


\begin{claim}
\label{j41}  \label{g.6} 
Let ${\gx}$ be a non-trivial multi-dimensional candidate.  
\Then \, ${\mathbf P}^{\gx}_2$ is non-empty (hence $\mathbf P^{\gx}_2 
\cap {\cS}^{\bs}_{\gs}(M) \ne \emptyset$).
\end{claim}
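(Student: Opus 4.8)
\textbf{Proof plan for Claim \ref{j41}.}

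The plan is to argue by contradiction: suppose $\gx$ is non-trivial, so $p^{\gx}_0 \not\perp p^{\gx}_1$ (equivalently, by Observation \ref{j35}(b), $p^{\gx}_i \not\perp p^{\gx}_j$ fails, i.e. the $p_i$'s are pairwise non-orthogonal); yet suppose $\mathbf P^{\gx}_2 = \emptyset$. By definition $\mathbf P^{\gx}_2 = (\mathbf P^{\gx}_1)^\perp$, so $\mathbf P^{\gx}_2 = \emptyset$ means: every basic type over every $N \in K_{\ga}$ is non-orthogonal to $\mathbf P^{\gx}_1$. The idea is to exploit this against $p^{\gx}_0$ itself. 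First I would apply Claim \ref{j17}(2) (with $\mathbf P = \mathbf P^{\gx}$, which is $\ga$-based and dense by Claim \ref{g.5a}(1)+(2)) to get a $\mathbf P^{\gx}$-primeness sequence $\langle M'_i, a_j, q_j : i \le \alpha, j < \alpha\rangle$ with $M'_0 = M^{\gx}_0 = N$ and with a realization of $p_0$'s non-forking extension placed inside; better, use Claim \ref{j23}(1)/(4) so that $p^{\gx}_0 \le^{\dm}_{\st} \{q_j : j \in u\}$ where $u$ collects those $q_j$ that are either non-forking over $N$ or non-orthogonal to $p^{\gx}_0$. Since every $q_j \in \mathbf P^{\gx} = \mathbf P^{\gx}_1 \cup \mathbf P^{\gx}_2$ and $\mathbf P^{\gx}_2 = \emptyset$, every $q_j$ lies in $\mathbf P^{\gx}_1$.

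The key step is then to contradict $p^{\gx}_0 \not\perp p^{\gx}_1$. By Claim \ref{g41}(1), if $p \le^{\dm}_{\wk} \{q_j : j \in u\}$ and $q \perp q_j$ for all $j \in u$, then $q \perp p$; dually, since each $q_j \in \mathbf P^{\gx}_1$ means $q_j$ is (essentially, up to parallelism) $\perp p_i$ for all but $<\kappa$ many $i < \kappa$, and domination is by $<\kappa_{\gs}$ many of the $q_j$ (Claim \ref{g38}(2)), we can find a single $i < \kappa$ with $q_j \perp p_i$ for every $j$ in the dominating set. Then Claim \ref{g41}(1) gives $p_i \perp p^{\gx}_0$, i.e. $p^{\gx}_i \perp p^{\gx}_0$, contradicting non-triviality (via Observation \ref{j35}(b)). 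The second parenthetical assertion, that $\mathbf P^{\gx}_2 \cap \cS^{\bs}_{\gs}(M) \ne \emptyset$, is then immediate from Claim \ref{g.5a}(3).

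The main obstacle I expect is the bookkeeping around ``for all but $<\kappa$ many $i$'' versus ``domination by $<\kappa_{\gs}$ many $q_j$'': one must make sure that after intersecting the (at most $\kappa_{\gs}$-many) exceptional index sets attached to the finitely-or-boundedly-many relevant $q_j$'s, there still remains an index $i < \kappa$ that is good for all of them simultaneously — this uses $\kappa = \cf(\kappa) \ge \kappa^1_{\gs}$ from clause $(e)$ of Definition \ref{j29}, so that a union of $<\kappa$ sets each of size $<\kappa$ does not exhaust $\kappa$. A secondary point is checking that $\mathbf P^{\gx}$ really is dense in the precise sense required by Claim \ref{j17}; this is Claim \ref{g.5a}(2), but one should verify the ``reasonable'' hypothesis there is met, which follows since $\mathbf P^{\gx}$ is closed under parallelism by construction and $\gs$ is full. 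Everything else is routine assembly of the cited machinery.
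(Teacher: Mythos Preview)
Your proposal is correct and follows essentially the same approach as the paper's proof. The paper is slightly more direct: since $\mathbf P^{\gx}_2 = \emptyset$ forces $\mathbf P^{\gx}_1$ itself to be dense, it invokes Claim \ref{j20} (rather than \ref{j17} + \ref{j23} + \ref{g38}(2)) to obtain a $\mathbf P^{\gx}_1$-primeness sequence of length $\alpha < \kappa_{\gs}$ in which $p^{\gx}_0$ is realized, and then argues the orthogonality to a single $p^{\gx}_\varepsilon$ by an induction along the sequence rather than through the domination formalism --- but the core idea (bounded-length sequence in $\mathbf P^{\gx}_1$, pigeonhole on the exceptional sets using regularity of $\kappa \ge \kappa_{\gs}$, then \ref{g41}(1)) is exactly yours.
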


\begin{PROOF}{\ref{g.6}}
 Assume it is empty so by \ref{g.5a}(2) we know that
${\mathbf P}^{\gx}_1$ is dense.

Let $N_{\gx} \le_{\gs} N \in K_{\gs}$ and let $p^+_0$ is a nonforking
extension of $p_0$ in ${\cS}^{\bs}_{\gs}(N)$.  
By \ref{j20} there is a ${\mathbf P}^{\gx}_1$-primeness 
sequence $\langle M_i,a_j:i \le \alpha,j < \alpha \rangle$ 
such that $M_0 = N,\alpha < \kappa_{\gs}$
(actually $\alpha < \kappa$ suffice) and some 
$b \in M_\alpha$ realizes $p^{\gx}_0$; note that $M_i,M^{\gx}_j$ are
not directly related.

Now as $\kappa_{\gx}$ is regular $\ge \kappa_{\gs}$ because
$\alpha < \kappa^{\gx}$; by the definition of ${\mathbf P}^{\gx}_1$ each
$j < \alpha$, for every $\varepsilon < \kappa_{\gx}$ large
enough we have $p^{\gx}_\varepsilon \perp
\ortp_{\gs}(a_j,M_j,M_{j+1})$ say for $\varepsilon \in
[\zeta_j,\kappa_{\gx})$.  As $\kappa_{\gx}$ is regular $> \alpha$, we
  have $\zeta = \sup\{\zeta_j:j < \alpha\}$ is $< \kappa_{\gx}$.  For
$\varepsilon \in [\zeta,\kappa^{\gx})$ let 
$p^+_{\varepsilon,i} \in {\cS}^{\bs}_{\gs}(M_i)$ be the 
nonforking extension of $p^{\gx}_\varepsilon$ in ${\cS}_{\gs}(M_i)$.  Now we
can prove by induction on $i \le \alpha$ that $p^+_{\varepsilon,i}$ is
the unique extension of $p^{\gx}_{\varepsilon,0}$ 
in ${\cS}_{\gs}(M_i)$.  As $b \in M_\alpha$ by \ref{g41}(1),
$\ortp_{\gs}(b,M_0,M_\alpha) \in {\cS}^{\bs}_{\gs}(M_i)$
is orthogonal to $p^{\gx}_\varepsilon$.  As $\ortp_{\gs}
(b,M_0,M_\alpha) = p^+_0$, we 
can conclude that $p^+_0,p^+_{\varepsilon,0}$ are orthogonal hence
$p^{\gx}_0,p^{\gx}_\varepsilon$ are orthogonal which by
\ref{j35}(b) is a contradiction to ``${\gx}$ non-trivial".
\end{PROOF}

\begin{claim}
\label{j44} \label{g.7}  
Let ${\gx}$ be a non-trivial multi-dimensional candidate, 
$\kappa = \kappa_{\gx}$.  

\noindent
1) There is $q \in {\cS}^{\bs}_{\gs}(M_{\gx})$
strongly dominated by $\{p^{\gx}_i:i < \kappa\}$. 

\noindent
2) If $N_{\gx} \le_{\gs} M \in K_{\gs}$ and $q \in {\cS}_{\gs}(M) 
\cap {\mathbf P}^{\gx}_2$ and $p^+_i \in {\cS}_{\gs}(M)$ is a
nonforking extension of $p^{\gx}_i$ for $i < \kappa_{\gx}$ 
\then \, $q \le^{\dm}_{\st} \{p^+_i:i \in u\}$ for 
some $u \subseteq \kappa^{\gx}$ [introduction: connect
\cite[Ch.V,\S5]{Sh:c}, \cite{Sh:429}.]

\noindent
3) \Wilog \, base$(q) \subseteq M_{\gx}$] [We can find $\mathbf b 
= \mathbf{\bar b}_\kappa \subseteq M_{\gx}$ such that $q$ is 
definable over $\bar{\mathbf b}$ and $\mathbf{\bar c} \subseteq M$ such that
$\ortp(\mathbf{\bar b}_\kappa,M^{\gx},M)$ is 
definable over $\mathbf{\bar c}$ and also $\mathbf{\bar b}_i 
\subseteq M$ for $i < \kappa$ such that $\langle \mathbf
b_i:i < \kappa \rangle$ is indiscernible based on $\bar{\mathbf c}$ and $q_i
\in {\cS}^{\bs}_{\gs}(M^{\gx})$ is definable over $\bar{\mathbf b}_i$ as $q$
is definable over $\mathbf{\bar b}$.  From this it follows that 
$q \pm q_i$; moreover, $q$ is strongly dominated by $\{q_i:i < i^*\}$ 
for some $i^* < \kappa$.]

\noindent
4) We can find a m.d.-candidate $\gy$ such that $M_{\gx} \le_{\gs}
M_{\gy},\bar{\mathbf a}_{\gy} = \bar{\mathbf a}_{\gx}$ and $P^{\gy}$ is
parallel to $q$. 
\end{claim}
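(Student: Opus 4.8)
The plan is to deduce all four parts from the non-emptiness of $\mathbf P^{\gx}_2$ (Claim \ref{j41}, which uses that $\gx$ is non-trivial) together with the domination calculus of \S6 (\ref{g35}, \ref{g38}, \ref{g41}) and the primeness-sequence analysis \ref{j17}--\ref{j23}. Note first that (1) is the special case $M=M_{\gx}$, $q$ any member of $\mathbf P^{\gx}_2\cap\cS^{\bs}_{\gs}(M_{\gx})$ (which exists by \ref{j41} and \ref{g.5a}(3)), so I would prove (1) and (2) in one stroke; then reduce (3) to the existence of type-bases (Hypothesis \ref{j2}(2)) plus the usual indiscernible-sequence argument, and finally obtain (4) by feeding the type from (1)--(3) into the construction of Claim \ref{j38}, run over the place $\ga_{\gx}$.

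\emph{Parts (1) and (2).} Let $q\in\mathbf P^{\gx}_2\cap\cS^{\bs}_{\gs}(M)$, with $M=M_{\gx}$ in case (1) and $N_{\gx}\le_{\gs}M$ the given model in case (2). Since $q$ is non-algebraic and $\mathbf P^{\gx}_2=(\mathbf P^{\gx}_1)^{\perp}$, we have $q\notin\mathbf P^{\gx}_1$ (else $q\perp q$), so by the ``all but $<\kappa$'' clause defining $\mathbf P^{\gx}_1$ the set $u_q:=\{i<\kappa:q\not\perp p^{\gx}_i\}$ has cardinality $\kappa$. Passing to non-forking extensions over a common model $N$ (take $N=N_{\gx}$ in case (1), $N=M$ in case (2), recalling $M^{\gx}_i\le_{\gs}N_{\gx}\le_{\gs}M$), write $q^{+},p^{\gx,+}_i\in\cS^{\bs}_{\gs}(N)$. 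I would apply Claim \ref{g41}(4) with $p:=q^{+}$ and $\{p_i:i\in u_q\}:=\{p^{\gx,+}_i:i\in u_q\}$. Hypothesis (c) of \ref{g41}(4) is immediate: if $s$ is weakly dominated by $q^{+}$ and orthogonal to every $p^{\gx}_i$, then $s\in\mathbf P^{\gx}_1$ by definition, but by \ref{g41}(1) $s$ is orthogonal to everything $q^{+}$ is, in particular to all of $\mathbf P^{\gx}_1$, so $s\in\mathbf P^{\gx}_1\cap\mathbf P^{\gx}_2=\emptyset$, impossible for a non-algebraic type. The real work is hypothesis (b), that each $p^{\gx,+}_i$ ($i\in u_q$) is weakly dominated by $q^{+}$; I would extract it by running a $\mathbf P^{\gx}$-primeness sequence over $M$ realizing $q$ (legitimate since $\mathbf P^{\gx}=\mathbf P^{\gx}_1\cup\mathbf P^{\gx}_2$ is $\ga_{\gx}$-based and dense by \ref{g.5a}(1),(2)), of length $<\kappa_{\gs}$ by \ref{j20}, and analysing it as in the proof of \ref{j41}: by \ref{j23}(2) $q$ is strongly dominated by those summands of the sequence non-orthogonal to $q$, each of which lies in $\mathbf P^{\gx}_2$ (because $q\perp\mathbf P^{\gx}_1$), and then the orthogonality bookkeeping against the regular cardinal $\kappa$ — the device of \ref{j41} read in reverse — forces these summands, hence $q^{+}$, into the $\le^{\dm}_{\wk}$-closure of $\{p^{\gx,+}_i:i\in u_q\}$. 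Granting (b), \ref{g41}(4) gives $q^{+}\le^{\dm}_{\wk}\{p^{\gx,+}_i:i\in u_q\}$; \ref{g38}(3) upgrades this to strong domination by a family of parallel copies of the $p^{\gx}_i$, which re-indexed among the $\kappa$ available distinct $p^{\gx}_i$ is the assertion of (1) (using \ref{g38}(5), preservation by parallelism, to descend from $q^{+}$ to $q$), while keeping $N=M$, discarding orthogonal summands by \ref{g41}(3) and trimming the index set to size $<\kappa_{\gs}$ by \ref{g38}(2) gives (2).

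\emph{Part (3).} By Hypothesis \ref{j2}(2) the frame has type-bases. Working in $\gs^{\eq}$ (equivalently, with the place machinery of \S1--\S2), I would pick $\bar{\mathbf b}\subseteq M$ over which $q$ is definable, $\bar{\mathbf c}\subseteq M$ over which $\ortp_{\gs}(\bar{\mathbf b},M_{\gx},M)$ is definable, and (using \ref{g8}) an independent, hence over $\bar{\mathbf c}$ indiscernible, sequence $\langle\bar{\mathbf b}_i:i<\kappa\rangle$ of copies of $\bar{\mathbf b}$; the conjugate types $q_i\in\cS^{\bs}_{\gs}(M_{\gx})$ defined over $\bar{\mathbf b}_i$ as $q$ is over $\bar{\mathbf b}$ then satisfy $q\pm q_i$, and by the argument of \ref{g41} (indiscernibility and density) $q$ is strongly dominated by $\{q_i:i<i^{*}\}$ for some $i^{*}<\kappa$. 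Since $q_0\in\cS^{\bs}_{\gs}(M_{\gx})$ has $\bas(q_0)\in M_{\gx}$ and $q$ and the $q_i$ dominate one another up to $<\kappa$ copies, replacing $q$ by $q_0$ changes nothing in (1)--(2); this is the meaning of ``\wilog\ $\bas(q)\subseteq M_{\gx}$''.

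\emph{Part (4).} With $\bas(q)\subseteq M_{\gx}$ arranged by (3), $q$ has a non-forking extension over $M_{\gx}$, which I would feed into the construction of Claim \ref{j38}, carried out over the fixed place $\ga_{\gx}=(N_{\gx},\bar{\mathbf a}_{\gx})$ rather than a fresh one. This is legitimate because $q\in\mathbf P^{\gx}_2$, which is $\ga_{\gx}$-based by \ref{g.5a}(1), so every ingredient in the verification of the m.d.-candidate axioms \ref{j29} — in particular clauses (g),(h) on being based over $\bar{\mathbf a}$ — is invariant over $\bar{\mathbf a}_{\gx}$; the needed $\kappa$-indexed independent system of conjugates of $q$ over $\ga_{\gx}$ is produced as in \ref{j17} and \ref{g8}. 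The resulting $\gy$ then has $M_{\gx}\le_{\gs}M_{\gy}$, $\bar{\mathbf a}_{\gy}=\bar{\mathbf a}_{\gx}$ and $p^{\gy}_0=P^{\gy}$ parallel to $q$. I expect the main obstacle to be precisely hypothesis (b) in the application of \ref{g41}(4): showing that a $\mathbf P^{\gx}_2$-type, known only to be non-orthogonal to $\kappa$ many $p^{\gx}_i$, is in fact \emph{dominated} by a small sub-family of them — a weight/domination estimate with no regularity available, whose proof must reuse, in the opposite direction, the counting argument behind \ref{j41}.
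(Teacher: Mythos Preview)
Your reduction of (1) to (2) and your treatment of (3),(4) are in line with the paper, but the heart of the argument --- part (2) --- has a genuine gap.

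You propose to obtain $q\le^{\dm}_{\wk}\{p^{\gx,+}_i:i\in u_q\}$ by invoking \ref{g41}(4).  Hypothesis (c) there is fine, as you check.  The problem is hypothesis (b): you must show that each $p^{\gx,+}_i$ (for $i\in u_q$) is weakly dominated by $q^{+}$.  There is no reason this should hold.  Nothing prevents $p^{\gx}_i$ from being non-orthogonal to some $r\in\mathbf P^{\gx}_1$; in that case $p^{\gx}_i$ cannot be weakly dominated by $q\in\mathbf P^{\gx}_2=(\mathbf P^{\gx}_1)^{\perp}$, since by \ref{g41}(1) any type weakly dominated by $q$ is orthogonal to $\mathbf P^{\gx}_1$.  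Your own sketch of how to ``extract (b)'' does not prove (b) at all: running a $\mathbf P^{\gx}$-primeness sequence for $q$ and discarding the $\mathbf P^{\gx}_1$-summands leaves you with summands in $\mathbf P^{\gx}_2$, and your claim that ``the device of \ref{j41} read in reverse forces these summands into the $\le^{\dm}_{\wk}$-closure of $\{p^{\gx,+}_i\}$'' is exactly part (2) applied to those summands.  So the argument is circular, and the application of \ref{g41}(4) is ungrounded.

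The paper sidesteps this by choosing a different density family.  For $\zeta<\kappa$ it sets
\[
\mathbf P^{*}_{\zeta}\;=\;\mathbf P'_{3}\cup\mathbf P''_{3}\cup\{p^{+}_{\varepsilon}:\zeta<\varepsilon<\kappa\},
\]
where $\mathbf P'_{3}=\{p\in\mathbf P^{\gx}_{3}:p\text{ is parallel to some }p'\in\cS^{\bs}(M)\}$ and $\mathbf P''_{3}=\{p\in\mathbf P^{\gx}_{3}:p\perp M\}$.  The point is that every member of $\mathbf P'_{3}\cup\mathbf P''_{3}$ lies in $\mathbf P^{\gx}_{3}=(\mathbf P^{\gx}_{2})^{\perp}$, hence is orthogonal to $q$, while the remaining members are literally the $p^{+}_{\varepsilon}$'s.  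One proves $\mathbf P^{*}_{\zeta}$ is dense above $M$ (if $r\perp\mathbf P^{*}_{\zeta}$ then $r\perp p_{\varepsilon}$ for all $\varepsilon>\zeta$, so $r\in\mathbf P^{\gx}_{1}\subseteq\mathbf P^{\gx}_{3}$; then split on $r\perp M$ versus $r\not\perp M$, using a reflection over $\bar{\mathbf a}_{\gx}$ in the second case).  Now run a $\mathbf P^{*}_{\zeta}$-primeness sequence for $q$ of length $\alpha<\kappa_{\gs}$, arranging in addition that among the summands parallel to some $p^{+}_{\varepsilon}$ no $\varepsilon$ is repeated.  Partition the steps into $u_0$ (parallel to some $p^{+}_{\varepsilon}$), $u_1\subseteq\mathbf P'_{3}$, $u_2\subseteq\mathbf P''_{3}$.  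By \ref{j23} one gets $q\le^{\dm}_{\st}\{a_i:i\in u_0\cup u_1\}$ (the $u_2$-steps are $\perp M$), and since the $u_1$-steps are $\perp q$ they drop out by \ref{j23}(2), leaving $q\le^{\dm}_{\st}\{p^{+}_{\varepsilon(i)}:i\in u_0\}$ with the $\varepsilon(i)$ pairwise distinct --- which is the desired strong domination without repetition.  The missing idea in your attempt is precisely this replacement of $\mathbf P^{\gx}_{2}$ by $\mathbf P^{\gx}_{3}$-pieces plus the bare $p^{+}_{\varepsilon}$'s in the density family, so that the non-$p_{\varepsilon}$ summands are automatically orthogonal to $q$ rather than merely lying in $\mathbf P^{\gx}_{2}$.
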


\begin{PROOF}{\ref{g.7}}  
1) By \ref{g.6} there is $M$ such that $N_{\gx}
\le_{\gs} M$ and $q \in {\cS}^{\bs}_{\gs}(M) \cap \mathbf P^{\gx}_2$.  
By possibly replacing $\gx$ by a neighborhood, \wilog \, $q$ does 
not fork over $M$ so by part (2) we have $q  \le^{\dm}_{\st}
\{p^{\gx}_i:i \in u\}$, where $u \in [\kappa^{\gx}]^{< \kappa({\gs})}$.  

\noindent
2) Let ${\mathbf P}'_3 = \{p \in {\mathbf P}^{\gx}_3$: there
is $p' \in {\cS}^{\bs}(M)$ parallel to $p\}, 
{\mathbf P}''_3 = \{p \in {\mathbf P}^{\gx}_3:p$ is orthogonal to $M\}$.

We may consider using 
the ``not $\kappa_{\mathfrak s}$-forking" version. 

For $\zeta < \kappa = \kappa_{\gx}$ let 
${\mathbf P}^*_\zeta = {\mathbf P}'_3 \cup {\mathbf P}''_3 \cup \{p^+_\varepsilon:
\varepsilon <\kappa$ and $\varepsilon > \zeta\}$ and let $b$ realize
$q$.  

Now 
\mn
\begin{enumerate}
\item[$\circledast_\zeta$]    ${\mathbf P}^*_\zeta$ is $M$-based which
is dense above $M$.
\end{enumerate}
\mn
[Why?  See in the end of the proof].

Fixing $\zeta$ (its value is immaterial), we try
by induction on $i < \kappa_{\gs}$ to choose $M_i,N_i$ and 
$a_j$ for $j<i$ such that:
\mn
\begin{enumerate}
\item[$\boxplus$]  $(a) \quad \langle M_\varepsilon,a_j:\varepsilon \le i,j <i
\rangle$ is a ${\mathbf P}^*_\zeta$-primeness sequence
\sn
\item[${{}}$]   $(b) \quad M_0 = M,M_i \le_{\gs} N_i,N_i$ is 
$\le_{\gs}$-increasing, $\|N_i\| \le \lambda$
\sn
\item[${{}}$]   $(c) \quad q = \ortp_{\gs}(b,M_0,N_0)$
\sn
\item[${{}}$]   $(d) \quad \ortp_{\gs}(b,M_{i+1},N_{i+1})$ forks over $M_i$
\sn
\item[${{}}$]   $(e) \quad$ if $\ortp_{\gs}(a_j,M_j,M_{j+1})$ is a 
non-forking extension of $p^+_\varepsilon$ equivalent of
$p^{\gx}_\varepsilon$

\hskip25pt  for some $\varepsilon \in [\zeta,\kappa)$ then
$\ortp_{\gs}(a_j,M,M_{j+1}) \notin \{\ortp_{\gs}
(a_i,M,M_{i+1}):i < j\}$. 
\end{enumerate}
\mn
So for some $\alpha < \kappa_{\gs}$ we have 
$\langle M_j:j \le
\alpha\rangle$ but we cannot proceed (by the demand of not too long
forking).  Let $\xi = \sup\{\varepsilon +1:\varepsilon < \kappa$
and $\varepsilon = \zeta =1$ for some $j < \alpha,\ortp(a_j,M_j,M_{j+1})$ is a
non-forking extension of $p_\varepsilon\}$.  As ${\mathbf P}^*_\zeta$ is
dense clearly $q$ is realized by some $b \in M$.

Let $u_0 = \{i < \alpha:\ortp_{\gs}(a_i,M_i,M_{i+1})$ is a nonforking
extension of some $p^+_\varepsilon,\varepsilon < \kappa\}$ for $i \in
u_0$ let $\varepsilon(i) < \kappa$ be such that
$a_i$ realizes $p^+_{\varepsilon(i)}$, so by clause (e) above $\langle
\varepsilon(i):i \in u_0 \rangle$ is without repetitions.  Let

\[
u_1 = \{i < \alpha:\ortp_{\gs}(a_i,M_i,M_{i+1}) \in {\mathbf P}'_3\}
\]

\[
u_2 = \{i < \alpha:\ortp_{\gs}(a_i,M_i,M_2) \in {\mathbf P}''_3\}.
\]

\mn
So $\langle u_0,u_1,u_2 \rangle$ is a partition of $\alpha$ and
clearly:
\mn
\begin{enumerate}
\item[$(*)_1$]   $\{a_i:i \in u_0 \cup u_1\}$ is independent in
$(M,M_\alpha)$.
\end{enumerate}
\mn
[Why?  This holds as $i \in u_0 \cup u_1$ implies $p'_i = 
\ortp_{\mathfrak s}(a_i,M_i,M_{i+1})$ does not fork over $M$.  This
implication holds because:
\mn
\begin{enumerate}
\item[$(\alpha)$]   if $i \in u_1$ then $p'_i$ is parallel to
some member of ${\cS}^{\bs}_{\gs}(M)$, hence $p'_i$ does not fork over $M$
\sn
\item[$(\beta)$]   if $i \in u_0$ then $p'_i$ is parallel to
some $p^+_\varepsilon$ but $p_\varepsilon \in 
{\cS}^{\bs}_{\gs}(M)$, so we are done.]
\sn
\item[$(*)_2$]   $i \in u_2 = u \backslash u_0 \cup u_1 \Rightarrow 
\ortp_{\gs}(a_i,M_i,M_{i+1}) \perp M$
\end{enumerate}
\mn
[Why?  By the definition of $u_2$ and $\mathbf P''_3$.]
\mn
\begin{enumerate}
\item[$(*)_3$]   $i \in u_1 \Rightarrow \ortp_{\gs}(a_i,M,M_{i+1}) \perp q$.
\end{enumerate}
\mn
[Why?  As $\ortp_{\gs}(a_i,M,M_{i+1}) \in \mathbf P'_3 \subseteq \mathbf
  P^{\gx}_3$ whereas $q \in \mathbf P^{\gx}_2$, recalling the definition
  of $\mathbf P^{\gx}_3$.]

Using \ref{j23}(d) 
by $(*)_1 + (*)_2$ and (a) above
we have $q \le \{p_i:i \in u_0 \cup u_1\}$. 

By \ref{j23}(2) and $(*)_3$ it follows $q \le^{\dm}_{\st}
\{\ortp_{\gs}(a_i,M,M_{i+1}):i \in u_0\} =
\{p_{\varepsilon(i)}:i \in u_0\}$ but $\langle \varepsilon(i):
\varepsilon(i) \in u_0 \rangle$ is without repetition. 
\end{PROOF}

So we are done except one debt: $\circledast_\zeta$.

\begin{proof}
\underline{Proof of $\circledast_\zeta$}  

Towards contradiction assume $r \in {\cS}_{\gs}(N),M \le_{\gs} 
N \in K_{\gs}$ and $r$ is orthogonal to every
member of ${\mathbf P}^*_\zeta$.  As $r \perp p_i$ for $i \in
[\zeta,\kappa)$ clearly $r \in {\mathbf P}^{\gx}_1$, so by the
definitions of $\mathbf P^{\gx}_1,\mathbf P^{\gx}_2$ we have  $r \perp 
{\mathbf P}^{\gx}_2$ hence $r \in {\mathbf P}^{\gx}_3$.
\end{proof}
\bigskip

\noindent
\underline{Case 1}:  $r \perp M$.

So $r \in {\mathbf P}''_3 \subseteq {\mathbf P}^*_\zeta$.
\bigskip

\noindent
\underline{Case 2}:  $r \pm M$.

We can find $r' \in {\cS}(M)$ which is a good enough ``reflection
of $r$ over $\bar{\mathbf a}_{\gx}$", hence $r \pm r'$ but still 
$r' \in {\mathbf P}^{\gx}_3$.
How?  Recalling that ${\mathbf P}^{\gx}_3$ is $\ga$-based, $A \subseteq M$
small enough, this is proved as in \ref{g41} but we elaborate.

\noindent
3) Again as in \ref{g41}.

\begin{conclusion}
\label{j50}
1) If ${\mathbf P}$ is $\ga$-based, $M \in K_{\ga}$ and ${\mathbf P}$ is dense,
$q_* \in {\cS}_{\gs}(M)$ and $(\forall p \in {\mathbf P})(p \pm q_* 
\Rightarrow p$ does not fork over $M$), \then \, there are 
$\alpha < \kappa_{\gs},p_i \in {\mathbf P} \cap {\cS}_{\gs}(M)$ for $i < \alpha$ 
such that $p_i \pm q^*$ for $i < \alpha$ and $q^*$ is weakly 
dominated by $\{p_i:i < \alpha\}$. 

\noindent
2) Assume ${\mathbf P} \subseteq {\cS}^{\bs}(M)$ is based on some
small $A \subseteq M,q_* \in \cS^{\bs}_{\gs}(M)$ and if 
$p \in {\cS}^{\bs}_{\gs}(M)$ is orthogonal to ${\mathbf P}$ then it is
orthogonal to $q_*$.  \Then \, we can find $\langle p_i:
i < \alpha \rangle,\alpha < \kappa_{\gs}$ as in part (1). 
\end{conclusion}

\begin{PROOF}{\ref{j50}}
1) Let ${\mathbf P}^* =: {\mathbf P}_1 \cup {\mathbf P}_2 \cup {\mathbf P}_3$ where

\[
{\mathbf P}_1 = \{p:p \text{ is parallel to some } p' \in 
{\cS}^{\bs}_{\gs}(M) \text{ which belong to } {\mathbf P}\}
\]

\[
{\mathbf P}_2 = \{p:p \perp {\mathbf P}_1 \text{ and }
p \text{ is parallel to some } p' \in {\cS}^{\bs}_{\gs}(M)\}
\]

\[
{\mathbf P}_3 = \{p:p \perp M\}.
\]

\mn
We would like to apply \ref{j23}(3) or \ref{j23}(1).  
Clearly ${\mathbf P}^*$ is based on $M$ and $(**)$ of \ref{j23}(3) holds.

The main point is to prove that ${\mathbf P}^*$ is dense above $M$.  
So let $M \le_{\gs} N$ and $q \in {\cS}^{\bs}_{\gs}(N)$.  
If $q \bot M$ then $q \in {\mathbf P}_3 \subseteq {\mathbf P}^*$ so O.K.  If $q
\pm {\mathbf P}_1$ then there is $p_0 \in \mathbf P \subseteq 
{\cS}^{\bs}_{\gs}(M)$ and $p \in {\cS}_{\gs}(N)$
 an extension of $p_0$ which does not fork over $M$ such that 
$p \pm q$, clearly $p \in \mathbf P_1 \subseteq {\mathbf P}^*$
so O.K.  Hence we are left with the case $q \pm M,q \perp {\mathbf P}_1$.

If $q \pm {\mathbf P}_2$ then we can find $p' \in
{\cS}^{\bs}_{\gs}(M)$ orthogonal to ${\mathbf P}$ but not to $p$, so $p'
\restriction N \in {\mathbf P}_2$ is not orthogonal to $q$.

So assume $q \perp {\mathbf P}_2$.  We can find $N^+,\langle f_i:i < \kappa
\rangle,\langle N_i:i < \kappa \rangle$ such that $N_0=N,M \le_{\gs} 
N_i \le_{\gs} N^+,\langle N_i:i < \kappa \rangle$ is
independent over $M$ inside $N^+,f_i$ an isomorphism from $N_0$ onto
$N_i$ over $M$.  Let $q_i = f_i(q)$, so for some ${\mathfrak x},M^{\gx} 
= M,p^{\gx}_i = q_i$.  If ${\gx}$ is trivial, then $i < j
\Rightarrow q_i \perp q_j$ then by earlier claim , 
$q_i \perp M$, 
contradiction to a statement above.  So ${\gx}$ is
non-trivial hence by \ref{j44}(1)
there is $q' \in {\cS}^{\bs}_{\gs}(M)$ dominated by $\{q_i:i <
\kappa\}$ not orthogonal to each $q_i$.  For $\ell=1,2$ as 
$i < \kappa \Rightarrow q_i \perp {\mathbf P}_\ell$ by \ref{g41} also 
$q' \perp {\mathbf P}_\ell$.  By the definition of ${\mathbf P}_2$ 
as $q' \in {\cS}_{\gs}(M)$ we have $q' \in {\mathbf P}_2$ but 
$q' \perp {\mathbf P}_2$, contradiction. 

\noindent
2) Let ${\mathbf P}' = \{p:p$ parallel to some $p' \in {\mathbf P} \cap
{\cS}^{\bs}_{\gs}(M)$ \underline{or} 
$p$ is orthogonal to $M\}$.  Now ${\mathbf P}'$ is $M$-based and 
it is dense above $M$ (as if $q \in {\cS}^{\bs}_{\gs}(M),
M \le_{\gs} N$, either $q \perp M$ so $q \in M'$
or by \ref{g41}(2) there is $q' \in {\cS}^{\bs}_{\gs}(M),q' \pm
q_*,[q' \perp {\mathbf P} \cap {\cS}^{\bs}(M) \Leftrightarrow q
\perp {\mathbf P} \cap {\cS}^{\bs}_{\gs}(M)]$.  

\noindent
So applying (1) we are done.  
\end{PROOF}

\begin{claim}
\label{j53}
1) If $p_1,p_2 \in {\cS}^{\bs}_{\gs}(M)$ are not orthogonal 
\then \, some $r \in {\cS}^{\bs}_{\gs}(M)$ 
is weakly dominated by $p_1$ and weakly dominated by $p_2$. 

\noindent
2) If $p,q \in {\cS}^{\bs}_{\gs}(M)$ and $(\forall r \in 
{\cS}^{\bs}_{\gs}(M))(r \perp p \Rightarrow r \perp q)$ \then \,
$p$ weakly dominates $q$ (and, of course, the inverse is trivial).

\noindent
3) If $p,q \in {\cS}^{\bs}_{\gs}(M)$ 
and every $r \in {\cS}^{\bs}_{\gs}(M)$ weakly
dominated by $q$ is not orthogonal to $p$ \then \, $p$ weakly
dominates $q$.
\end{claim}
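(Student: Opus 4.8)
The three parts are closely linked, so the plan is to establish (1) first and then read off (2) and (3): indeed (2) is essentially a restatement of Conclusion \ref{j50}(2), and (3) follows formally from (1), (2) and Claim \ref{g41}(1), so all the new content sits in (1).

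For (2), apply \ref{j50}(2) with $\mathbf P:=\{p\}$ and $q_*:=q$: here $\{p\}$ is based on a small base $\bar{\mathbf a}\subseteq M$ of $p$ (furnished by Ax(E)(k), or by \ref{g11}(3)--(4)/\ref{g53} when $\gs$ is $(<\kappa)$-based), and for $r\in\cS^{\bs}_{\gs}(M)$ one has $r\perp\mathbf P$ iff $r\perp p$, so the standing hypothesis ``$r\perp p\Rightarrow r\perp q$ for all $r$'' is exactly the hypothesis of \ref{j50}(2). Its conclusion, via \ref{j50}(1), produces $\alpha<\kappa_{\gs}$ and types $p_i\in\mathbf P\cap\cS^{\bs}_{\gs}(M)=\{p\}$ ($i<\alpha$) with $q$ weakly dominated by $\{p_i:i<\alpha\}$; as each $p_i=p$ this says $q\le^{\dm}_{\wk}\{p\}$, i.e.\ $p$ weakly dominates $q$. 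For (3), granting (1), it suffices by (2) to show $r\perp p\Rightarrow r\perp q$ for all $r\in\cS^{\bs}_{\gs}(M)$; if instead $r\perp p$ and $r\not\perp q$, apply (1) to the non-orthogonal pair $r,q$ to get $r'\in\cS^{\bs}_{\gs}(M)$ weakly dominated by $r$ and by $q$. Since $r'\le^{\dm}_{\wk}\{q\}$ the hypothesis of (3) gives $r'\not\perp p$; but $r'\le^{\dm}_{\wk}\{r\}$ and $p\perp r$, so Claim \ref{g41}(1), applied with the one-element family $\{r\}$, gives $p\perp r'$, a contradiction. Hence $p$ weakly dominates $q$.

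It remains to prove (1), which is the heart of the matter, and the plan is to use the multi-dimensionality machinery of \S8. Using \ref{g38}(5) (weak domination is preserved under parallelism) we may pass to non-forking extensions over a brimmed $N\ge_{\gs}M$ and so assume $M$ brimmed. From $p_1\not\perp p_2$ one extracts $c,b$ with $\ortp(c,M,\cdot)$ a non-forking extension of $p_1$, $(M,M[c],c)\in K^{3,\pr}_{\gs}$, $\ortp(b,M,\cdot)$ a non-forking extension of $p_2$, and $q_2:=\ortp(b,M[c],\cdot)$ a forking extension of $p_2$ (basic since $\gs$ is type-full and $b\notin M[c]$) witnessing the non-orthogonality. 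Feeding $(M,M[c],q_2)$ into Claim \ref{j38} yields a multi-dimensionality candidate $\gx$ with $M_{\gx}=M$, $M^{\gx}_0=M[c]$, $p^{\gx}_0=q_2$, whose copies $p^{\gx}_i$ ($i<\kappa$) are conjugates of $q_2$ over a small base of $\ortp(c,M,\cdot)$, living over independent copies $M^{\gx}_i$ of $M[c]$; one checks $\gx$ is non-trivial, using that $q_2$ forks over $M$ along the $c$-direction and that a non-algebraic basic type is non-orthogonal to an independent copy of itself. Then Claims \ref{j41} and \ref{j44}(1) supply $q\in\cS^{\bs}_{\gs}(M)\cap\mathbf P^{\gx}_2$ strongly dominated by $\{p^{\gx}_i:i<\kappa\}$, and unwinding the construction the isomorphisms $f_i$ exhibit, inside any model realizing the $p^{\gx}_i$ independently, both an independent family of copies of $p_1$ (the $c$-parts) and one of copies of $p_2$ (the $b$-parts), so the domination clause of Definition \ref{g35} transfers in each direction to give $q\le^{\dm}_{\wk}\{p_1\}$ and $q\le^{\dm}_{\wk}\{p_2\}$; take $r:=q$.

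The step I expect to be the main obstacle is precisely this last one: forcing a single $q$ to be weakly dominated by $p_1$ and by $p_2$ simultaneously. The construction sketched above is asymmetric --- the $c$-part sits in a prime model $M[c]$ while the $b$-part is a fresh forking realization --- so making the domination clause of \ref{g35} succeed on both sides will probably require either choosing the datum of $\gx$ symmetrically in $c$ and $b$ (e.g.\ letting $p^{\gx}_0$ be the orbital type, or type-base, of an imaginary coding the pair $(c,b)$, and appealing to the imaginary-element apparatus of \S2, cf.\ \ref{b56}--\ref{b62}), or else running two multi-dimensionality candidates, one over $p_1$ and one over $p_2$, and showing that the associated families $\mathbf P^{\gx}_2=(p_1)^{\perp\perp}$ and $\mathbf P^{\gy}_2=(p_2)^{\perp\perp}$ share a non-algebraic member over $M$, which is then weakly dominated by each of $p_1$ and $p_2$ via \ref{j44}(2).
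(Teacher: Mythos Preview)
Your argument for (2) is correct and is essentially the paper's: both route through Conclusion~\ref{j50}. Your derivation of (3) from (1), (2) and \ref{g41}(1) is also valid and is a pleasant alternative to the paper's direct proof of (3). The problem is that your (3) depends on (1), and your (1) has a genuine gap at exactly the point you flag.

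The difficulty is this: from $q \le^{\dm}_{\st} \{p^{\gx}_i : i < \kappa\}$ you cannot read off $q \le^{\dm}_{\wk} p_1$ or $q \le^{\dm}_{\wk} p_2$ separately. Each $p^{\gx}_i$ lives over $M^{\gx}_i$ and is a \emph{forking} extension of $p_2$; an independent family of realizations of (the non-forking extensions to $N^+$ of) the $p^{\gx}_i$ is not an independent family of realizations of $p_2$ over $N^+$, and the $c_i$'s already sit inside the base $N^+$ rather than being fresh realizations of $p_1$. So neither ``transfer'' of the domination clause in Definition~\ref{g35} goes through. Your proposed fixes do not work either: the identification $\mathbf P^{\gx}_2 = (p_\ell)^{\perp\perp}$ is not what Definition~\ref{j29}(2) gives, and coding the pair $(c,b)$ by an imaginary does not by itself produce domination by each coordinate separately.

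The paper sidesteps this by reversing the order of dependence: it proves (2) directly from \ref{j50}(1), then proves (3) from \ref{j50} together with (2) (without using (1)), and only then derives (1) from (2) and (3) by a maximality argument. Concretely, one takes a maximal pairwise-orthogonal family $\{q^1_i : i < \alpha_1\} \subseteq \cS^{\bs}_{\gs}(M)$ of types weakly dominated by $p_1$ and orthogonal to $p_2$. If nothing in $\cS^{\bs}_{\gs}(M)$ is orthogonal to both $\{r : r \perp p_1\}$ and $\{r : r \,\|\, q^1_i \text{ for some } i\}$, then \ref{j50}(2) forces $p_1 \le^{\dm}_{\wk} \{q^1_i : i<\alpha_1\}$, whence $p_1 \perp p_2$ by \ref{g41}, a contradiction. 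So some $r$ is orthogonal to both families; by (2) applied to the first, $r$ is weakly dominated by $p_1$; by maximality of $\{q^1_i\}$, every $r'$ weakly dominated by $r$ is non-orthogonal to $p_2$; hence by (3), $r$ is weakly dominated by $p_2$ as well. The moral is that (1) is the hardest part precisely because it needs (2) and (3) as inputs, not the other way round.
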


\begin{PROOF}{\ref{j53}}
1) We shall rely on parts (2) + (3).
For $\ell=1,2$ let $\{q^\ell_i:i < \alpha_\ell\}$ be
a maximal set of pairwise orthogonal types from 
${\cS}^{\bs}_{\gs}(M)$, each weakly dominated by $p_\ell$ 
and orthogonal to $p_{3 - \ell}$.  So
$\alpha_\ell < \kappa_{\gs}$ by \ref{g41}(x).  

Let ${\mathbf P}_0 = \{q:q$ orthogonal to $p_1\}$

\[ 
{\mathbf P}_1 = \{q:q \text{ parallel to some } q^1_i,i < \alpha\}.
\]

\mn
Clearly ${\mathbf P}_0 \cup {\mathbf P}_1$ is $A$-based for some $A
\subseteq M,|A| < \kappa_{\gs}$.  First assume that there is no $r
\in {\cS}^{\bs}_{\gs}(M)$ orthogonal to 
${\mathbf P}_0 \cup {\mathbf P}_1$.  By \ref{j50}(2) 
there are $\alpha < \kappa_{\gs}$ and $r_i \in ({\mathbf P}_0 
\cup {\mathbf P}_1) \cap {\cS}^{\bs}_{\gs}(M)$ for $i < \alpha$ such
that $i < \alpha \Rightarrow r_i \pm p_1$ and $p_1$ is weakly dominated by
$\{r_i:i < \alpha\}$. Necessarily $i < \alpha \Rightarrow r_i \in
\{q^1_i:i < \alpha\}$ hence $p_1$ is weakly dominated by $\{q^1_i:i <
\alpha_1\}$.  But $i < \alpha_1 \Rightarrow q^1_i \perp p_2$
hence by \ref{g41}(x) $p_1 \perp p_2$, a contradiction.

Second, assume that there is $r \in {\cS}^{\bs}_{\gs}(M)$ 
orthogonal to ${\mathbf P}_0 \cup {\mathbf P}_1$.  As 
$r \perp {\mathbf P}_0$, by part (2) below $r$ is weakly 
dominated by $p_1$.  As $r$ is also orthogonal to ${\mathbf P}_1$, it
satisfies the first two demands on $q^1_{\alpha_1}$, so by $\{q^1_i:i
< \alpha_1\}$ maximality, clearly $r$ is not orthogonal to $p_2$.  By
the maximality of $\{q^1_i:i < \alpha_1\}$ clearly if $r_1$ is weakly
dominated by $r$ then $r_1 \pm p_2$.  Hence by part (3), $r$ is weakly
dominated by $p_2$, so $r$ is as required.

\noindent
2) Let ${\mathbf P}_0 =: \{r:r \perp p\},{\mathbf P}_1 = \{p':p'$ parallel to
$p\}$, so trivially ${\mathbf P}_0 \cup {\mathbf P}_1$ is
dense $M$-based.  Hence by \ref{j50}(1) there are 
$\{r_i:i < \alpha\} \subseteq ({\mathbf P}_0 \cup
{\mathbf P}_1) \cap {\cS}^{\bs}_{\gs}(M)$ which weakly 
dominates $q$  and $i < \alpha \Rightarrow r_i \pm q$.  
But every $r \in {\mathbf P}_0$ is orthogonal
to $p$ hence by the assumption of part (2), is orthogonal to 
$q$, hence $i < \alpha \Rightarrow r_i \in
{\mathbf P}_0 \cup {\mathbf P}_1 \backslash {\mathbf P}_0 = {\mathbf P}_1$, so $i
< \alpha \Rightarrow r_i \|p$.  As $i <\alpha \Rightarrow r_i \in
{\cS}^{\bs}_{\gs}(M)$ we get $i < \alpha \Rightarrow 
p_i = p$.  So $q$ is dominated by $p$. 

\noindent
3) Let ${\mathbf P}_0 =:\{r:r \perp q\},{\mathbf P}_1 = \{p':p'\|p\}$.  Now
${\mathbf P}_0 \cup {\mathbf P}_1$ is $M$-based.
Also ${\mathbf P}_0 \cup {\mathbf P}_1$ is dense.  

\noindent
[Why?  Assume $r$ is orthogonal to $\mathbf P_0 \cup \mathbf P_1,r \bot
\mathbf P_0$ which means: if $r'$ is orthogonal to $q$ then it is
orthogonal to $r$ (by the definition of $\mathbf P_0$) hence by part (2)
we know that $r$ is weakly dominated by $q$.  But by the assumption of
part (3) we know $r$ is not orthogonal to $p$ hence $r$ is not
orthogonal to ${\cP}_1$, contradiction.]

By \ref{j50} there are $\alpha < \kappa_{\gs}$ and $r_i \in
({\mathbf P}_0 \cup {\mathbf P}_1) \cap {\cS}^{\bs}_{\gs}(M)$ 
such that $q$ is dominated by $\{r_i:i < \alpha\}$ and $i 
< \alpha \Rightarrow r_i \pm q$.  
So $r_i \notin {\mathbf P}_0$ hence $r_i \in {\mathbf P}_1$, i.e., $r_i
= p$, so $p$ weakly dominates $q$ as required.
\end{PROOF}

\begin{conclusion}
\label{j56}
Assume $M_0 \le_{\gs} M_1 \le_{\gs} N$ and 
$p \in {\cS}^{\bs}_{\gs}(N)$ is orthogonal to $M_0$ but not to
$M_1$.  \Then \, there is $q \in {\cS}^{\bs}_{\gs}(M_1)$ 
orthogonal to $M_0$ but not to $p$.
\end{conclusion}

\begin{PROOF}{\ref{j56}}
Let $\kappa = \kappa_{\gs}$.

Without loss of generality $N$ is brimmed 
over $M_1$, so \wilog \, $N$ is brimmed over $N_0$ such that $p$ does
not fork over $N_0$ and $N_0 \le N$; let $\bar a_0$ list
$N_0$, so there are $\bar a_i \in {}^{\kappa({\gs})>} N$ 
for $i \in [1,\kappa({\gs}))$ such that
$\langle \bar a_i:i < \kappa_{\gs}\rangle$ is an indiscernible sequence
over $M_1$ based on $\ortp_{\gs}(\bar a_0,M_1,N_1)$ and let 
$p_i \in {\cS}^{\bs}_{\gs}(N)$ be
definable over $\bar a_i$ as $p$ was definable over $\bar a$.  As $p_0
= p \perp M_0$ clearly $i<\kappa \Rightarrow p_i \perp M_0$ and as
$p_0 \pm M_1$ clearly ($p_i \pm M$ and) 
$i<j<\kappa_{\gs} \Rightarrow p_i \pm p_j$. So by
\ref{j44}(1) there is $q \in {\cS}^{\bs}_{\gs}(M_1)$ 
such that $q$ is dominated by $\{p_i:i < \kappa_{\gs}\}$.  
Now as $q$ is dominated by a set of types orthogonal to 
$M_0$ by \ref{g41}(x)  also $q$ is orthogonal to $M_0$.  Also as
$q$ is dominated by $\{p_i:i<\kappa_{\gs}\}$ for some $i,q \perp p_i$, but
by the choice of the $\bar a_i,p_i$ this holds for every $i$ in
particular $q \pm p_0 = p$, so we are done.
\end{PROOF}
\newpage

\centerline {Part IV} \label{part4}

\section {For weakly successful to NF-frames} \label{9}

\begin{discussion}
We may like to see that in (G) or \ref{g23}, (b)
is redundant, imitating earlier proofs.

\noindent
Should we add $K^{3,\qr}_{\gs}$?
\end{discussion}

\begin{hypothesis}
\label{k5}
${\gs}$ is a weakly successful good
$(\mu,\lambda,\kappa)$-frame with primes over chains.

Our aim is to define $\NF_{\gs}$ and prove that it satisfies the
relevant properties, as in \cite[\S7]{Sh:600}.  Also ${\gs}^+$ will
have the density for $K^{3,\pr}_{\gs(*)}$.  
\end{hypothesis}

\begin{claim}
\label{k8}
If $M_\ell \le_{\gs} M$ and $p \in {\cS}^{\bs}(M_\ell)$ for 
$\ell =1,2$, \then \, there is $r \in {\cS}^{\bs}(N)$ dominated by both.
\end{claim}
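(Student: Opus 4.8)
\textbf{Proof proposal for Claim \ref{k8}.}

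The plan is to produce $r$ as the (basic) type of a suitable element in a model obtained by two successive prime amalgamations, exploiting the symmetry/domination machinery of the very good NF-frame together with the existence of $K^{3,\pr}_{\gs}$-primes. First I would set up the geometry. Given $M_\ell \le_{\gs} M$ and $p_\ell \in {\cS}^{\bs}_{\gs}(M_\ell)$ for $\ell=1,2$, choose (using Ax$(E)(j)$, i.e. the existence of $K^{3,\pr}_{\gs}$-primes) a triple $(M_1,N_1,b_1)\in K^{3,\pr}_{\gs}$ realizing $p_1$. Then, inside a common extension, realize the non-forking extension of $p_2$ over $N_1$ by some $b_2$, and take $(N_1,N_2,b_2)\in K^{3,\pr}_{\gs}$ with $\ortp_{\gs}(b_2,N_1,N_2)$ the non-forking extension of $p_2$; here I would use that $N_1$ is (without loss of generality, after enlarging) an amalgamation base and that primes exist over it. The key is that $(M_1,N_1,b_1)\in K^{3,\pr}_{\gs}$ should force the relevant non-forking/independence so that $b_1$ and $b_2$ are ``independent'' over the appropriate bases. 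I would then pick $r = \ortp_{\gs}(b,M',N_2)$ for a carefully chosen $b$ — the natural candidate being an element whose type is dominated by $\ortp_{\gs}(b_1,\cdot,\cdot)$ on one side and, via the symmetry axiom Ax$(E)(f)$ (Case $\ell\ge 3$, which applies since $\gs$ is very good), also dominated by $p_2$.

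The main steps, in order: (1) Reduce to the brimmed case — replace $M$, $M_1$, $M_2$ by brimmed models over which the given types do not fork (using $\gk$-union and the continuity/existence axioms of \ref{g2}(E)), so that domination statements become available in their cleanest form. (2) Build the two-step prime tower $(M_1,N_1,b_1)$, then $(N_1,N_2,b_2)$ as above, arranging $\NF_{\gs}(M_1,N_1,M_2,N_1')$-type independence between the two realizations; invoke Ax$(G)(b)$ (primeness over $M_1\cup M_2$ in the NF-frame) to get a canonical prime model $N$ and an element $b\in N$ with $\ortp_{\gs}(b,?,N)\in{\cS}^{\bs}_{\gs}$. (3) Show $r:=\ortp_{\gs}(b,M^*,N)$ — for $M^*$ the common submodel — is dominated by $p_1$: this is essentially the definition of $K^{3,\pr}_{\gs}$ unwound through Definition \ref{g35}, using that $(M_1,N_1,b_1)$ is prime and $b$ sits over $N_1$. (4) Show $r$ is dominated by $p_2$: here I would run the symmetry argument of Ax$(E)(f)$ to interchange the roles of $b_1$ and $b_2$ in the tower, obtaining the mirror-image prime decomposition, and conclude by the same reasoning as step (3). (5) Relocate $r$ to be a type over a single model $N$ with $M_1,M_2\le_{\gs}N$ if the statement so requires (the statement as written only asks for $r\in{\cS}^{\bs}(N)$ for some $N$, so this is automatic).

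The step I expect to be the main obstacle is (4), establishing domination by $p_2$ when the tower was built asymmetrically (first $b_1$, then $b_2$). The whole point of assuming $\gs$ is a \emph{very good} NF-frame (Definition \ref{g23}) rather than merely good is that Ax$(G)(b)$ gives a prime model over $M_1\cup M_2$ whose construction is independent of the order in which the two sides are amalgamated, and Ax$(E)(f)$ Case $\ell\ge 3$ gives the symmetry of non-forking over $K^{3,\pr}_{\gs}$-triples; combining these should let one exhibit the reversed tower $(M_2,N_2',b_2)$ then $(N_2',N',b_1)$ with an isomorphism over $M_1\cup M_2$ to the original $N$, transporting the domination statement. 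A subtlety to watch: domination (Definition \ref{g35}) is phrased via parallelism and independence in arbitrary extensions $N^*$, so I must check that the prime model $N$ ``absorbs'' enough — specifically that whenever $\{a_1,a_2\}$ is independent over the common model with $a_\ell$ parallel to $p_\ell$, the element parallel to $r$ is realized; this should follow from the primeness in Ax$(G)(b)(*)$ applied to the extension, but writing it out carefully is where the real work lies. An alternative, if the two-step tower proves awkward, is to define $r$ directly as the type produced by \ref{j44}(1) applied to a degenerate m.d.-candidate with $\kappa$ copies collapsed to the two types $p_1,p_2$ (using non-orthogonality is not needed — we want domination, not that the $p_\ell$ are related), but the direct prime-amalgamation route seems cleaner and avoids the multi-dimensional apparatus.
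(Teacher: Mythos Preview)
Your plan has a genuine gap at its core: you never actually specify the element $b$ (and hence the type $r$). In step (2) you build three distinct prime objects --- $N_1$ prime over $M_1\cup\{b_1\}$, then $N_2$ prime over $N_1\cup\{b_2\}$, and separately $N$ prime over $M_1\cup M_2$ via Ax$(G)(b)$ --- and you then pick ``an element $b\in N$'' and later claim ``$b$ sits over $N_1$''. But $N$ is prime over $M_1\cup M_2$, which is already inside $M$ and contains neither $b_1$ nor $b_2$; there is no reason an arbitrary $b\in N$ should have anything to do with $p_1$ or $p_2$, let alone be dominated by them. The slogan ``unwinding $K^{3,\pr}_{\gs}$ through Definition \ref{g35}'' does not bridge this: domination of $r$ by $p_1$ requires that \emph{every} independent realization of $p_1$ forces a realization of $r$, and nothing in your tower construction produces such a $b$. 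Symmetry of non-forking over prime triples (Ax$(E)(f)$) lets you swap the order of $b_1,b_2$ in the tower, but it does not by itself manufacture a type dominated by both sides.

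A second, smaller point: your closing remark that ``non-orthogonality is not needed'' is false. If $p_1\perp p_2$ then any $r$ dominated by $p_1$ is orthogonal to $p_2$ (by \ref{g41}(1)), so cannot be dominated by $p_2$; the claim implicitly assumes $p_1\pm p_2$ (compare \ref{j53}(1), which is the same statement with that hypothesis explicit).

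The paper's approach is entirely different and does not touch prime towers or NF-amalgamation. It sets $\mathbf P_\ell=\{r:r\perp p_{3-\ell}\}$ and runs the density/domination machinery of \S\ref{8}: this is exactly the argument of \ref{j53}(1), which uses \ref{j50} (density of such $A$-based families) together with the characterizations \ref{j53}(2),(3) of weak domination via orthogonality. The point is that a type $r$ orthogonal to $\mathbf P_1$ is, by \ref{j53}(2), weakly dominated by $p_1$; one then engineers (via maximal orthogonal families as in the proof of \ref{j53}(1)) an $r$ that is simultaneously orthogonal to the relevant obstruction sets on both sides. That route is where you should look, not at symmetric amalgamation.
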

\bigskip

\begin{PROOF}{\ref{k8}}  
For $\ell=1,2$ let ${\mathbf P}_\ell = \{r:r \in {\cS}(N),r$ 
orthogonal to $p_{3 -\ell}$.
\end{PROOF}

\begin{claim}
\label{k11}
1) Let ${\mathbf P}$ be $A$-based and ${\mathbf P}^\perp =
\{p \in {\cS}^{\bs}(M):p \perp {\mathbf P}\}$.
\end{claim}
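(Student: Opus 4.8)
The plan is to prove Claim \ref{k11}(1) by mirroring the proof of Claim \ref{j11}(1) essentially line for line, since in the present ``weakly successful'' context (Hypothesis \ref{k5}) orthogonality still satisfies $\bot = {\underset \wk \bot}$ and behaves exactly as before. Concretely, I would show that $\mathbf P^\perp$ satisfies the three defining clauses of an $A$-based family of types (Definition \ref{a26}(5)), and then that $\mathbf P \cup \mathbf P^\perp$ is dense, so that the two conclusions of \ref{j11}(1) carry over. The whole argument is bookkeeping around amalgamation and the invariance of orthogonality; there is no new idea needed, only care.

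First the two easy clauses. Clause (a): by the definition in \ref{j8}, every $p \in \mathbf P^\perp$ already lies in $\cS^{\bs}_{\gs}(M)$ for some $M$ with $A \subseteq M \le_{\gs} \gC$, i.e.\ $M \in K_{\ga}$, so $\mathbf P^\perp \subseteq \bigcup\{\cS^{\bs}_{\gs}(M):M \in K_{\ga}\}$ holds trivially. Clause (b): if $\pi$ is an automorphism of $\gC$ over $A$ (equivalently an isomorphism over $A$ between models of $K_{\ga}$), then $\pi$ maps $\mathbf P$ onto itself by the hypothesis that $\mathbf P$ is $A$-based, and orthogonality is preserved by $\pi$; hence $p \perp \mathbf P \Leftrightarrow \pi(p) \perp \mathbf P$, so membership in $\mathbf P^\perp$ is preserved.

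The real content is clause (c), closure under non-forking: if $N_1 \le_{\gs} N_2$ are both in $K_{\ga}$ and $p_2 \in \cS^{\bs}_{\gs}(N_2)$ does not fork over $N_1$, I must show $p_2 \in \mathbf P^\perp \Leftrightarrow p_2 \restriction N_1 \in \mathbf P^\perp$. The direction $\Rightarrow$ is immediate, since the definition of $p \perp \mathbf P$ in \ref{j8}(2) already quantifies over all non-forking extensions, so restricting a type orthogonal to $\mathbf P$ keeps it orthogonal to $\mathbf P$. For $\Leftarrow$, set $p_1 := p_2 \restriction N_1$ and suppose toward contradiction that $p_1 \perp \mathbf P$ but $p_2 \not\perp \mathbf P$; then there are $N_2 \le_{\gs} N_3$ in $K_{\ga}$ and $q \in \cS^{\bs}_{\gs}(N_3) \cap \mathbf P$ such that some non-forking extension of $p_2$ is not orthogonal to $q$. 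Using the amalgamation property (available since $\gs$ is good) I would realize $N_1, N_2, N_3$ and the relevant witnessing models inside a single model, and then monotonicity and transitivity of non-forking give that the non-forking extension of $p_1$ coincides there with that of $p_2$, so the same $q$ — or a parallel copy of it, which still lies in $\mathbf P$ because $\mathbf P$ is closed under parallelism by \ref{j5}(d) — witnesses $p_1 \not\perp \mathbf P$, contradicting the hypothesis. I expect this amalgamation/parallelism bookkeeping to be the one genuinely fiddly step, precisely as in the $\Leftarrow$ step of \ref{j11}(1).

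Finally, for the density of $\mathbf P \cup \mathbf P^\perp$: given $M \in K_{\ga}$ and $p \in \cS^{\bs}_{\gs}(M)$, if $p \perp \mathbf P$ then $p \in \mathbf P^\perp$ and we are done; otherwise $p$ is non-orthogonal to some $q \in \mathbf P \cap \cS^{\bs}_{\gs}(N)$ with $M \le_{\gs} N$, and by the reflection argument already used in the proof of \ref{g41} — pulling $q$ back along an indiscernible sequence based on the small parameter set (containing $A$) over which $q$ and $\ortp_{\gs}(\,\cdot\,,M,N)$ are definable — one obtains $q' \in \cS^{\bs}_{\gs}(M)$ with $q' \not\perp p$, and $q' \in \mathbf P$ because $\mathbf P$ is $A$-based and the reflection is taken over a set containing $A$. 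Hence $p$ is non-orthogonal to a member of $(\mathbf P \cup \mathbf P^\perp) \cap \cS^{\bs}_{\gs}(M)$, which is the required (weak) density. The only subtlety there is keeping the reflected type inside $\mathbf P$, which is exactly what ``$A$-based'' buys us; everything else is routine.
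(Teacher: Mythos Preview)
The statement of Claim~\ref{k11}(1) as it appears in the paper is incomplete: it reads only ``Let $\mathbf P$ be $A$-based and $\mathbf P^\perp = \{p \in \cS^{\bs}(M): p \perp \mathbf P\}$'' with no conclusion, and the paper supplies no proof at all (the section ends immediately after). This is one of several places in Part~IV where the manuscript is visibly in draft form; compare the truncated proof of Claim~\ref{k8} just above it.

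Your interpretation --- that the intended content is the analogue of Claim~\ref{j11}(1), namely that $\mathbf P^\perp$ is again $A$-based and that $\mathbf P \cup \mathbf P^\perp$ is dense --- is the natural one given the parallel structure and the surrounding context. Your proof is correct and is exactly the argument the paper gives for \ref{j11}(1): invariance under automorphisms over $A$ is immediate, closure under parallelism is the one step requiring amalgamation (precisely as you describe), and density follows from the reflection argument of \ref{g41}. Since the paper has no proof here to compare against, there is nothing further to say beyond noting that you have correctly reconstructed what was evidently intended.
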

\newpage 

\section {Strong stability, Weak form of superstability} \label{10}

\bigskip

The reader can think of the first order case.
\begin{definition}
\label{m2}
For stable $T$ let $\kappa_{\ict}(T)$
be the minimal $\kappa$ such that if $\mathbf I \subseteq {\gC}_T$
is independent over $A,\bar b \in {}^{\omega >}{\gC}$ (we allow
$\mathbf I$ to consist of infinite sequences, too) \then \, for some
$\mathbf J \subseteq \mathbf I$ of cardinality $< \kappa$ the set $\mathbf I
\backslash \mathbf J$ is independent over $(A \cup \mathbf J \cup \{\bar b\},A)$.

\noindent
2) For stable $T$ let $\kappa_{\ict}(T)$ is defined similarly except in the end
 we require just $\bar b \in \mathbf I \backslash \mathbf J \Rightarrow
\ortp(\bar c,A+ \bar b)$ does not fork over $A$.
\end{definition}

\begin{remark}
\label{h5}
Are there suitable $K^{\text{bs}}_\kappa$-templates.
The case $K^{\tr}_\omega$ is not clear.
\end{remark}

\begin{definition}
\label{m8}
1) We say $p$ is pseudo regular (or 1-reg) when: there is 
$(M,N,a) \in K^{3,\pr}_{\gs}$ such that:
\mn
\begin{enumerate}
\item[$(a)$]   $\ortp(a,M,N)$ is parallel to $p$
\sn
\item[$(b)$]   $(M,N,a)$ is pseudo regular which means there is
no $\mathbf J$ such that $(M,N,\mathbf J) \in K^{3,\bs}_{\gs}$
and $|\mathbf J| \ge 2$.
\end{enumerate}
\mn
2) We say $p$ is almost regular or 2-regular when for every $(M,N,a)
\in K^{3,\pr}_{\gs}$ representing $p$ and $b \in N \backslash
M$ we have $(M,N,b) \in K^{3,\pr}_{\gs}$.
\end{definition}

\begin{claim}
\label{m11}
In Definition \ref{m8}, the choice of $(M,N,a)$ is immaterial.
\end{claim}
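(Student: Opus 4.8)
\textbf{Proof plan for Claim \ref{m11}.}

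The plan is to show that the property in Definition \ref{m8}(1) is invariant under the choice of $(M,N,a) \in K^{3,\pr}_{\gs}$ representing $p$, i.e.\ under replacing one prime triple realizing $p$ by another, and also under parallelism. The natural route is to compare two prime triples $(M_1,N_1,a_1)$ and $(M_2,N_2,a_2)$ with $\ortp_{\gs}(a_\ell,M_\ell,N_\ell)$ parallel to $p$ (so in particular $\ortp_{\gs}(a_1,M_1,N_1)$ and $\ortp_{\gs}(a_2,M_2,N_2)$ become equal up to non-forking after a common extension). First I would use the uniqueness properties of $K^{3,\pr}_{\gs}$ (Definition \ref{g26}(1)) together with the amalgamation property and non-forking uniqueness (Ax(E)(e) of \ref{g2}) to embed both triples into a common $(M^*,N^*)$ over which the corresponding types do not fork, so that $a_1,a_2$ realize the same (non-forking extension of $p$) over $M^*$; by the uniqueness of $K^{3,\pr}_{\gs}$-primes we may assume both $N_1,N_2$ $\le_{\gs}$-embed into $N^*$ consistently with this picture, with the embedded copies of $a_1,a_2$ realizing the same type over $M^*$.

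Next, the key step is to translate ``$(M,N,a)$ is pseudo regular'' — meaning no $\mathbf J \subseteq N$ with $(M,N,\mathbf J) \in K^{3,\bs}_{\gs}$ and $|\mathbf J| \ge 2$ — into a property of the weight $w(p)$ from Definition \ref{a11}, or more precisely into a property preserved under non-forking extensions to brimmed models. The point is that the existence of an independent pair inside $(M,N)$ is governed, via Claim \ref{w.3}(2) and Claim \ref{a8}, by $w(\ortp_{\gs}(a,M,N))$ and by whether $M$ is brimmed. Using \ref{a8}(4) I would reduce to the brimmed case, where by \ref{a8}(2),(3) the relevant cardinal $|\mathbf J|$ depends only on $(p,M)$ and in fact only on the parallelism class of $p$ (since $w$ is invariant under non-forking extension, Definition \ref{a11}(1)). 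So ``$(M,N,a)$ pseudo regular'' is equivalent, in the brimmed situation, to $w(p) = 1$, which manifestly does not depend on the chosen triple.

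The remaining work is to descend from the brimmed case to an arbitrary $(M,N,a) \in K^{3,\pr}_{\gs}$: if $(M,N,\mathbf J) \in K^{3,\bs}_{\gs}$ with $|\mathbf J| \ge 2$ existed for one representative, then pushing forward along the common amalgam $N^*$ and extending $M$ to a brimmed $M'$ inside a suitable larger model (possible since $\gs$ is full, hence has enough brimmed models) would produce such a $\mathbf J$ for the brimmed representative, contradicting $w(p)=1$; conversely if $w(p)=1$ for the brimmed representative then by \ref{w.3}(2) no representative can admit an independent pair. The main obstacle I anticipate is bookkeeping in the amalgamation/embedding step — making sure that the $\le_{\gs}$-embeddings of $N_1,N_2$ into the common model $N^*$ are compatible with the identification of $a_1$ and $a_2$ and with the primeness of the triples, so that an independent pair in one triple genuinely transfers to the other. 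This is where the uniqueness clause of $K^{3,\pr}_{\gs}$ and monotonicity of non-forking (Ax(E)(b)) must be applied carefully, but no new idea beyond those tools is needed.
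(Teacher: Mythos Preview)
The paper's own ``proof'' of Claim~\ref{m11} is simply ``Left to the reader,'' so there is no argument in the paper to compare your plan against; you are supplying what the paper omits.

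Your approach is sound in substance: the key identification --- that pseudo regularity of $(M,N,a)$ is a weight-$1$ condition on $\ortp_{\gs}(a,M,N)$, hence depends only on the parallelism class of $p$ --- is exactly the right idea, and the reduction to the brimmed case followed by transfer along the $\le_{\gs}$-embeddings furnished by primeness (Definition~\ref{g26}(1)) is the natural route. One caveat worth noting: the weight machinery you invoke (Claims~\ref{a8}, \ref{w.3}, Definition~\ref{a11}) is developed in \S\ref{1} under Context~\ref{a2}, i.e.\ for a full good$^+$ $\lambda$-frame with $K^{3,\vq}_{\gs}=K^{3,\qr}_{\gs}$ and related extras, whereas Claim~\ref{m11} sits in \S\ref{10}, in the $(\mu,\lambda,\kappa)$-frame setting of Parts~II--IV. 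Before citing those results you should either check that they port over to the ambient hypotheses here, or reprove the needed fragment (essentially that a prime triple over a brimmed base admits no independent pair iff the analogous triple over any other base does) directly from the Part~II axioms. The bookkeeping issue you anticipate in the amalgamation step is real but routine, exactly as you say; the only genuine gap is this cross-context citation.
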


\begin{proof}  
Left to the reader.  
\end{proof}

\begin{claim}
\label{m14}
$[\kappa_{\jct}({\gs}) = \aleph_0]$.

\noindent
1) If $(M,N,a) \in K^{3,\pr}_{\gs}$ \then \, for some $b
\in N \backslash M$ and $N' \le_{\gs} N$ we have $(M,n,b) \in
K^{3,\pr}_{\gs}$ is pseudo-regular.

\noindent
2) Moreover, almost regular.
\end{claim}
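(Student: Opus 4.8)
\textbf{Proof plan for Claim \ref{m14}.}

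The plan is to mimic the first-order argument that inside a prime extension one can find a pseudo-regular (indeed almost regular) generator, now carried out in the good-frame context with the standing hypothesis $\kappa_{\jct}(\gs)=\aleph_0$. First I would invoke Claim \ref{a8} (which is available by \ref{b44} for good ess-frames, and the analogous result holds here): given $(M,N,a)\in K^{3,\pr}_{\gs}$, after passing to a brimmed $M$ (harmless by \ref{a8}(4), \ref{f43}) we obtain a finite independent set $\mathbf J\subseteq N$ of regular types with $(M,N',\mathbf J)\in K^{3,\qr}_{\gs}$ for some $N'\le_{\gs}N$, and $a$ realized in an extension. The key numerical invariant is $w(\ortp(a,M,N))$, the weight from Definition \ref{a11}; by \ref{w.3}(1) a type has weight $1$ iff it behaves regularly in the relevant sense. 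So the strategy is: among all $(M,N^*,b)\in K^{3,\pr}_{\gs}$ with $\ortp(b,M,N^*)$ parallel to $\ortp(a,M,N)$ — equivalently, among all $b\in N\setminus M$ with $(M,N,b)\in K^{3,\pr}_{\gs}$ after shrinking $N$ — choose one minimizing $w(\ortp(b,M,N^*))$, and argue this minimum must be $1$, which by Definition \ref{m8}(1)(b) and \ref{w.3}(2) is exactly pseudo-regularity (no independent $\mathbf J$ of size $\ge 2$ fits inside).

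For part (1): suppose the minimal weight $n=w(\ortp(b,M,N^*))$ were $\ge 2$. Using \ref{a8} applied to $\ortp(b,M,N^*)$, write $(M,N^{**},\mathbf J)\in K^{3,\qr}_{\gs}$ with $|\mathbf J|=n\ge 2$, each $\ortp(c,M,N^{**})$ regular for $c\in\mathbf J$, and $b$ realized over $\mathbf J$. Pick $c\in\mathbf J$; then $(M,N_c,c)\in K^{3,\pr}_{\gs}$ for the $\le_{\gs}$-submodel $N_c$ prime over $M\cup\{c\}$, and $\ortp(c,M,N_c)$ is regular, hence has weight $1$ by \ref{w.3}(1). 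The point is to relocate this $c$ inside $N$ (or a $\le_{\gs}$-extension, then use primeness of $(M,N,b)$ to embed back over $M$, sending the witness appropriately): since $(M,N,b)\in K^{3,\pr}_{\gs}$, any model realizing $\ortp(b,M,-)$ receives a copy of $N$ over $M$, so we can produce $b'\in N$, $N'\le_{\gs}N$ with $(M,N',b')\in K^{3,\pr}_{\gs}$ and $\ortp(b',M,N')$ regular — contradicting minimality of $n\ge 2$. Hence $n=1$ and $b$ is pseudo-regular. The use of $\kappa_{\jct}(\gs)=\aleph_0$ enters in guaranteeing that the independent sets and joinings stay finite and that the $K^{3,\qr}_{\gs}$-analysis from \cite{Sh:705} applies to control $\mathbf J$.

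For part (2) (almost regular), the upgrade is that \emph{every} $b\in N\setminus M$, not merely a chosen one, generates a prime triple $(M,N,b)\in K^{3,\pr}_{\gs}$. This should follow once (1) is established together with the fact that in a pseudo-regular $(M,N,a)$ the type $\ortp(a,M,N)$ has weight $1$, so by \ref{w.3}(2) every $b\in N$ depends on $\{a\}$ with at most one element removed; combined with $(M,N,a)\in K^{3,\pr}_{\gs}$ and uniqueness for $K^{3,\qr}_{\gs}$, one gets that $\ortp(b,M,N)$ is itself parallel to $\ortp(a,M,N)$ (or its non-forking conjugate), whence $(M,N,b)\in K^{3,\pr}_{\gs}$ by the density/primes axioms and \ref{m11} (the choice of generator is immaterial). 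The main obstacle I anticipate is the ``relocation'' step in part (1): transferring the regular element $c$ from the auxiliary $K^{3,\qr}_{\gs}$-witness $N^{**}$ back into $N$ (or a controlled extension of it) over $M$, which requires juggling the primeness of $(M,N,b)$, the uniqueness for $K^{3,\qr}_{\gs}$, and the monotonicity of non-forking; getting the models to line up so that a single $\le_{\gs}$-embedding does the job is where the care is needed, and it is essentially the place where $\kappa_{\jct}(\gs)=\aleph_0$ is doing real work.
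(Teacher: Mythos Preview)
Your approach has a genuine gap: you are invoking Claim \ref{a8} and the weight machinery of \S\ref{1}, but those live under Context \ref{a2} (a full good$^+$ $\lambda$-frame with primes, $K^{3,\vq}_{\gs}=K^{3,\qr}_{\gs}$, enough regulars via \ref{a5}, etc.). Claim \ref{m14} sits in \S\ref{10}, where no such hypotheses are in force; the whole purpose of Definition \ref{m8} and Claims \ref{m14}, \ref{m20} is to manufacture pseudo- and almost-regular types from the bare assumption $\kappa_{\jct}(\gs)=\aleph_0$ as a \emph{substitute} for regularity. If \ref{a8} were available here the result would be trivial --- any $c$ in the regular $\mathbf J$ already has weight $1$, hence is pseudo-regular, and no minimization is needed. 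Note that \ref{m20} is precisely the analogue of \ref{a8} with ``almost regular'' in place of ``regular'', and it is proved \emph{from} \ref{m14}; so you have the dependency reversed.

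The paper's argument is entirely different and uses $\kappa_{\jct}(\gs)=\aleph_0$ directly, with no weight or regular types. For part (1) one attempts to build by induction pairs $(b_n,c_n)$ in $N\setminus M$ with $\{b_0,\dots,b_{n-1},b_n,c_n\}$ independent in $(M,N)$: at each stage pass to $N_n\le_{\gs}N$ with $(M,N_n,c_n)\in K^{3,\pr}_{\gs}$; either this triple is pseudo-regular (done with $b:=c_n$), or its failure supplies an independent pair $b_{n+1}\ne c_{n+1}$ inside $N_n$ to continue. If the induction runs to $\omega$ one obtains an infinite $\{b_n:n<\omega\}$ independent over $M$ with each $\{a,b_n\}$ dependent over $M$, contradicting $\kappa_{\jct}(\gs)=\aleph_0$. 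Part (2) is a similar but more elaborate inductive construction. Your sketch for part (2) is also faulty on its own terms: it is not true that every $b\in N\setminus M$ realizes a type parallel to $\ortp(a,M,N)$; almost-regularity only asks that $(M,N,b)\in K^{3,\pr}_{\gs}$, which needs a separate argument.
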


\begin{PROOF}{\ref{m14}}
1) We try to choose $(b_n,c_n)$ by induction on $n$ such that:
\mn
\begin{enumerate}
\item[$\circledast$]   $(a) \quad b_n \ne c_n \in N  \backslash M
\backslash \{b_\ell,c_\ell:\ell < n\}$
\sn
\item[${{}}$]   $(b) \quad \{b_0,\dotsc,a_{n-1},b_n,c_n\}$ is
independent in $(M,N)$.
\end{enumerate}
\mn
If we succeed, we get that
\mn
\begin{enumerate}
\item[$\circledast_1$]   $\{a,b_n\} \subseteq N \backslash M$ is not
independent in $N$ over $M$.
\end{enumerate}
\mn
\begin{enumerate}
\item[$\circledast_2$]   $\{b_n:n <\omega\}$ is independent in
  $(M,N)$.
\end{enumerate}
\mn
[Why?  By $\circledast(b) +$ the finite character of independence.  

So it is enough to carry the induction.  If a pair $(b_0,c_0)$ does
not exist thta $(M,N,a)$ is pseudo regular, so $b=a$ is O.K.  If
$b_0,\dotsc,b_{n-1},b_n,c_n$ have been defined let $N^*_s \le_{\gs} N$ 
be such that $(M,N_n,c_n) \in K^{3,\pr}_{\gs}$ 
If this triple is pseudo regular choose $b := c_n$ and we
are done.  Otherwise there is $\mathbf J$ such that $(M,N_n,\mathbf J) \in
K^{3,\bs}_{\gs}$ and $|\mathbf J_n| \ge 2$.

(Note: $\{b_n,c_n\}$ is independent in $(M,N)$ hence $b_\ell \notin N_n$).

Choose $b_{n+1} \ne c_{n+1} \in \mathbf J$, now 
$\{b_0,\dotsc,b_n,b_{n +  
1},c_{n +  
1}\}$ is independent in $(M,N_0)$ so we
are done.  

\noindent
2) We try to choose $(b_n,c_n,N_n,M_n)$ by induction on $n$ (can waive $c_0$)
 such that
\mn
\begin{enumerate}
\item[$\circledast$]   $(a) \quad b_n \in N \backslash M,M
\le_{\gs} M_n \le_{\gs} N$ and $(M,M_n,b_n) \in K^{3,\pr}_{\gs}$
\sn
\item[${{}}$]   $(b) \quad b_0 = a,M_0 = N = N_0$ we may use $c_0 = a$
  but this is immaterial
\sn
\item[${{}}$]   $(c) \quad N \le_{\gs} N_n$ and $\ell < n
\Rightarrow N_\ell \le_{\gs} N_n$
\sn
\item[${{}}$]  $(d) \quad \{c_1,\dotsc,c_n,b_n\} \subseteq N_n$ is
independent in $(M,N_n)$ and is with no repetition
\sn
\item[${{}}$]   $(e) \quad \{a,c_\ell\}$ is dependent in $(M,N_n)$
\sn
\item[${{}}$]   $(f) \quad \ortp(c_{\ell +1},N_\ell,N_{\ell +1})$ does
  not fork over $M$.
\end{enumerate}
\mn
If we succeed to carry the induction, $M,N_\omega$ prime over $\langle
N_n:n < \omega\rangle,a$ and $\langle c_\ell:\ell \in [1,\omega)\rangle$
contradicts the assumption.  [We assume/use: independency has local character.]

For $n=0$ there is no problem by clause (b).  So assume we have chosen
$c_1,\dotsc,c_n,b_n,M_n,N_n$.  If $\ortp(b_n,M,M_n)$ is almost regular we
are done so there is $b \in M_n \backslash M$ contradicting it, call
it $b_{n+1}$ and choose $M_{n+1} \le_{\gs} M_n$ such that
$(M,M_{n+1},b_{n+1}) \in K^{3,\pr}_{\gs}$.  By  
an earlier Claim 
there is a pair $(N^*_n,M^*_n,\mathbf J_n)$ such that
\mn
\begin{enumerate}
\item[$\boxtimes$]   $(a) \quad N_n \le_{\gs} N^*_n$
\sn
\item[${{}}$]   $(b) \quad M \le_{\gs} M_n \le_{\gs} M^*_n \le_{\gs} N^*_n$
\sn
\item[${{}}$]   $(c) \quad (M,M^*_n,\mathbf J_n) \in K^{3,\pr}_{\gs}$ 
\sn
\item[${{}}$]   $(d) \quad b_{n+1} \in \mathbf J$
\sn
\item[${{}}$]   $(e) \quad \{N_n,M^*_n\}$ is independent over $M_n$.
\end{enumerate}
\mn
Let $\mathbf J'_n = \mathbf J_n \backslash \{b_{n+1}\}$.
\bigskip

\noindent
\underline{Case 1}:  $\{a\} \cup \mathbf J'_n$ is independent in $(M,N^*_n)$.

Then: $\mathbf J'_n$ is independent in $(M,N,N^*_n)$ hence $\mathbf J'_n
\cup \{b_n\}$ is independent in $(M,N^*_n)$ but $M_n \le_{\gs} N^*_n$
and $(M,M_n,b_n) \in K^{3,\pr}_{\gs}$  hence $\mathbf J'_n$
is independent in $(M,M_n,M^*_n)$.  Let $M^{**}_n \le_{\gs} M^*_n$
be such that $(M_{n+1},M^{**}_n,\mathbf J'_n) \in K^{3,\pr}_{\gs}$ hence
$\ortp(b_n,M^{**}_n,M^*_n)$ does not fork over $M_{n+1}$ and is
orthogonal to $M$ hence (see 
earlier calim)  
(clear but we can also
change definition \ref{m2}), contradiction to the choice of $b_{n+1}$.
\bigskip

\noindent
\underline{Case 2}:  Not Case 1.

Let $N_{n+1} = N^*_n$.  We can find finite $\mathbf J''_n \subseteq
\mathbf J'_n$ such that $\{a\} \cup \mathbf J'_n$ is not independent in
$(M,N^*_n)$.  We can replace $\mathbf J''_n$ by one element and call it
$c_{n+1}$.  
\end{PROOF}

\begin{claim}
\label{m20}
$[\kappa_{\rct}({\mathfrak s}) = \aleph_0]$.  For every $M \in 
K_{\mathfrak s}$ and $p \in {\cS}(M)$ for some $N,\mathbf J$ we have:
\mn
\begin{enumerate}
\item[$\circledast_1$]   $(a) \quad (M,N,\mathbf J) \in K^{3,\pr}_{\gs}$
\sn
\item[${{}}$]  $(b) \quad c \in \mathbf J \Rightarrow \ortp
(c,M,N)$ is almost regular
\sn
\item[${{}}$]   $(c) \quad \mathbf J$ is finite
\sn
\item[$\circledast_2$]   $p$ is realized in $N$.
\end{enumerate}
\end{claim}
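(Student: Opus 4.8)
\textbf{Proof proposal for Claim \ref{m20}.}

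The plan is to run an induction building up $\mathbf J$ one element at a time, using Claim \ref{m14}(2) to guarantee that each new element has an almost regular type, and using finiteness of $\kappa_{\rct}(\gs) = \aleph_0$ to force the process to stop after finitely many steps. First I would fix $M \in K_{\gs}$ and $p \in {\cS}(M)$, and choose $(M,N_0,a) \in K^{3,\pr}_{\gs}$ with $\ortp(a,M,N_0) = p$, which exists since $\gs$ has $K^{3,\pr}_{\gs}$-primes (clause (E)(j) of \ref{g2}). I then try to choose, by induction on $n<\omega$, objects $c_n$, $N_{n+1}$, $M_n$ such that: $(M,N_n,\{c_0,\dots,c_{n-1}\}) \in K^{3,\qr}_{\gs}$ (or at least $\in K^{3,\bs}_{\gs}$), each $\ortp(c_\ell,M,N_{n})$ is almost regular, $\{c_0,\dots,c_{n-1}\}$ is independent in $(M,N_n)$, $p$ is realized in $N_n$, and $N_\ell \le_{\gs} N_{n}$ for $\ell < n$. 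At stage $n$, apply Claim \ref{m14}(2) inside a prime triple over $M$ realizing the relevant type to extract some $c_n \in N_n \backslash M$ whose type over $M$ is almost regular, while staying independent from the $c_\ell$ already chosen (here one uses monotonicity and the symmetry/transitivity axioms of \ref{g2}(E) together with \cite[6.2]{Sh:705}-style preservation of $K^{3,\qr}_{\gs}$ under adding independent elements).

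The key termination mechanism is $\kappa_{\rct}(\gs) = \aleph_0$: this is precisely a local-character statement for the notion of ``being reconstructible/recaptured by an independent set of almost-regular types'', and it says that an $\aleph_0$-indexed strictly increasing tower of independent sets as above cannot be extended indefinitely while still capturing $p$ — more precisely, if we could carry the induction for all $n<\omega$ we would get $(M,N_\omega,\{c_n:n<\omega\})$ with $N_\omega$ prime over $\bigcup_n N_n$, and $p$ would be realized in some $N_n$ yet the infinite independent set would contradict finite character of reconstruction. So for some finite $n^*$ the induction is stuck: we have $N_{n^*}$ in which $p$ is realized, $\{c_0,\dots,c_{n^*-1}\}$ independent in $(M,N_{n^*})$ with all types almost regular, and no further $c_{n^*}$ can be added without destroying one of the conditions. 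At that point I would pass to a $\le_{\gs}$-submodel $N \le_{\gs} N_{n^*}$ with $(M,N,\mathbf J) \in K^{3,\pr}_{\gs}$ where $\mathbf J = \{c_0,\dots,c_{n^*-1}\}$, shrinking so that $(M,N,\mathbf J) \in K^{3,\qr}_{\gs}$ as in the proof of \ref{a8}(2), using that $\gs$ has primes; this gives $\circledast_1$(a),(c) and, by the choice of the $c_\ell$, $\circledast_1$(b), while $\circledast_2$ holds by construction.

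The main obstacle I anticipate is the bookkeeping needed to keep $p$ realized in $N_n$ while simultaneously extracting an independent almost-regular $c_n$ and maintaining that the accumulated set remains both independent and ``$K^{3,\qr}_{\gs}$-generating'' — in other words, showing that the obstruction at a stuck stage $n^*$ is genuinely the failure described by $\kappa_{\rct}(\gs) = \aleph_0$ and not some artifact of a bad choice of $N_n$. This requires being careful to always work inside a common $N_n$ that is prime (or brimmed) over the relevant data, so that the uniqueness and symmetry axioms of \ref{g2}(E) apply and so that Claim \ref{m14}(2) can be invoked in the exact form stated. Once the correct invariants are isolated, each inductive step is routine given \ref{m14}(2), \ref{a8}, and the monotonicity/transitivity of $\nonfork{}{}_{\gs}$; the substantive content is entirely the hypothesis $\kappa_{\rct}(\gs) = \aleph_0$ supplying the stopping time.
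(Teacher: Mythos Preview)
Your proposal is correct and matches the paper's intent: the paper's proof is literally ``By earlier claim'', meaning one repeats the argument of Claim~\ref{a8} with ``regular'' replaced by ``almost regular'', using Claim~\ref{m14}(2) to supply density of almost-regular types at each step. Your organization differs slightly from \ref{a8}---you build the independent set $\{c_0,\dots,c_{n-1}\}$ over $M$ directly, whereas \ref{a8} builds a chain $M=N_0\le_{\gs}N_1\le_{\gs}\cdots$ tracking a forking extension $q_\ell$ of $p$ and only at the end extracts $\mathbf J$ as those $a_\ell$ whose type does not fork over $M$---but the substantive content (density via \ref{m14}(2), termination via the $\aleph_0$-local-character hypothesis) is identical.

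One minor point worth tightening: in \ref{a8} the termination comes from finite character of non-forking (Ax(E)(c)), not directly from an independence-set hypothesis; your invocation of $\kappa_{\rct}(\gs)=\aleph_0$ as the stopping mechanism is morally the same but you should check which formulation you actually need at the stuck stage, since the paper's notation here ($\kappa_{\ict},\kappa_{\jct},\kappa_{\rct}$) is not entirely stabilized.
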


\begin{proof}
By earlier claim.  
\end{proof}

\begin{claim}
\label{h.5}
${\mathbf P}_{2-\text{reg}}$ is a basis.
\end{claim}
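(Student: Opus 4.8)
\textbf{Proof plan for Claim \ref{h.5}: ``$\mathbf P_{2\text{-reg}}$ is a basis''.}

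The plan is to show that the family $\mathbf P_{2\text{-reg}}$ of almost-regular (i.e.\ $2$-regular) types, in the sense of Definition \ref{m8}(2), is dense in the sense needed for a basis: for every $M \le_{\gs} N$ with $M \ne N$ there is $a \in N \setminus M$ with $\ortp_{\gs}(a,M,N)$ parallel to some type in $\mathbf P_{2\text{-reg}}$. Since the underlying frame is good, the density axiom (D)(c) of \ref{g2} already gives some $a \in N \setminus M$ with $\ortp_{\gs}(a,M,N) \in \cS^{\bs}_{\gs}(M)$; moreover by $\gs$ having $K^{3,\pr}_{\gs}$-primes (Ax (E)(j)) we may shrink $N$ so that $(M,N',a) \in K^{3,\pr}_{\gs}$. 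So the real content is: starting from an arbitrary $(M,N',a) \in K^{3,\pr}_{\gs}$, produce $b \in N' \setminus M$ (possibly after enlarging inside a bigger model) whose type is almost regular. But this is exactly what Claim \ref{m14}(2) delivers under the hypothesis $\kappa_{\jct}(\gs) = \aleph_0$: ``moreover, almost regular''. So the first step is to invoke \ref{m14}(2) to get, for each basic type $p = \ortp_{\gs}(a,M,N)$, a triple $(M,N'',b) \in K^{3,\pr}_{\gs}$ with $\ortp_{\gs}(b,M,N'')$ almost regular and with $b$ realizing (or at least: $p$ realized in a model containing) the relevant data.

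Second, I would assemble these local facts into the statement that $\mathbf P_{2\text{-reg}}$ is a basis. The notion ``basis'' here should mean: $\mathbf P_{2\text{-reg}}$ is (up to parallelism) a dense subclass of $\cS^{\bs}_{\gs}$, equivalently every $M <_{\gs} N$ admits $a \in N \setminus M$ with $\ortp_{\gs}(a,M,N)$ non-orthogonal to — indeed parallel to a non-forking restriction of — a member of $\mathbf P_{2\text{-reg}}$. Combining density (D)(c), primes, and \ref{m14}(2) gives exactly this: given $M <_{\gs} N$, pick the $\cS^{\bs}$-type witnessed by density, realize it inside a $K^{3,\pr}_{\gs}$-triple $(M,N_0,a)$, apply \ref{m14}(2) to replace $a$ by an almost-regular $b$ (over a possibly larger $N_1$), and then observe $\ortp_{\gs}(b,M,N_1) \in \mathbf P_{2\text{-reg}} \cap \cS^{\bs}_{\gs}(M)$. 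Invariance of $\mathbf P_{2\text{-reg}}$ under isomorphisms/automorphisms and closure under parallelism (which one checks directly from Definition \ref{m8}, using that $K^{3,\pr}_{\gs}$-triples representing parallel types are related, as in \ref{m11}) then finishes the verification that it is a genuine invariant, dense, parallelism-closed family.

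Third, if ``basis'' is meant in the stronger sense parallel to \cite[Ch.V]{Sh:c} — that every $p \in \cS^{\bs}_{\gs}(M)$ is (weakly) dominated by a finite set of types from $\mathbf P_{2\text{-reg}} \cap \cS^{\bs}_{\gs}(M)$ — then I would additionally run the argument of Claim \ref{m20}: for $M \in K_{\gs}$ and $p \in \cS(M)$, \ref{m20} produces $N$ and a finite $\mathbf J$ with $(M,N,\mathbf J) \in K^{3,\pr}_{\gs}$, each $\ortp_{\gs}(c,M,N)$ ($c \in \mathbf J$) almost regular, and $p$ realized in $N$; then domination of $\ortp_{\gs}(a,M,N)$ by $\{\ortp_{\gs}(c,M,N) : c \in \mathbf J\}$ follows from the $K^{3,\pr}_{\gs}$/$K^{3,\qr}_{\gs}$ machinery and the weight results \ref{a8}, \ref{w.3}, \ref{g41}. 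So the second step here is to quote \ref{m20} and then read off domination by the finite almost-regular set $\mathbf J$, using that $\mathbf J \subseteq \mathbf P_{2\text{-reg}}$.

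The main obstacle, and the place where care is needed, is pinning down which precise meaning of ``basis'' is intended and checking the two standing hypotheses invoked implicitly — $\kappa_{\jct}(\gs) = \aleph_0$ (needed for \ref{m14}) and $\kappa_{\rct}(\gs) = \aleph_0$ (needed for \ref{m20}) — are available in the ambient context (Hypothesis \ref{j2}, together with ``weakly successful + primes over chains'' from \ref{k5}); these are the strong-stability-flavored assumptions that make almost-regular types plentiful enough to dominate everything finitely. Everything else is routine bookkeeping with $K^{3,\pr}_{\gs}$ and parallelism. I expect the write-up to be short: ``By \ref{m14}(2) and \ref{m20}, using density \ref{g2}(D)(c) and invariance/parallelism-closure of $\mathbf P_{2\text{-reg}}$ (as in \ref{m11}), we are done.''
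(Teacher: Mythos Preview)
Your proposal is correct and matches the paper's approach: the paper gives no separate proof for this claim, placing it immediately after Claim \ref{m20} (itself proved ``by earlier claim'', i.e.\ \ref{m14}), so the intended argument is precisely the one you outline --- density plus \ref{m14}(2) for the weak reading, and \ref{m20} for the stronger domination-by-a-finite-set reading. Your final one-line summary ``By \ref{m14}(2) and \ref{m20}\ldots'' is exactly what the paper's placement of the claim implicitly asserts.
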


\begin{definition}
\label{m23}
1) ${\mathbf P}_{\ell-\text{reg}}= \{p:p \in S(N'),N \le_{\gs} N',p$ 
is almost $\ell$-regular$\}$.

\noindent
2) $p \in {\cS}^{\na}(M)$ is 3-regular if where: for no $q_1
\perp q_2 \in {\cS}^{\na}(M)$ do we have $q_2 \pm p \perp q_2$.

\noindent
3) $P$ is 4-regular if it is 3-regular and 2-regular.
\end{definition}

\begin{claim}
\label{m26}
1) If $q^\infty$ dominates $p^\infty$ and $q$ is 3-regular \then \,
$p$ is 3-regular.

\noindent
2) [$\kappa_{\rct}(\gs) = \aleph$] ${\mathbf P}_{4-\text{reg}}$ is a basis.

\noindent
3) If $p,q \in {\cS}^{\na}_{\gs}(M)$ are 4-regular not orthogonal
\then \, $p^\infty,q^\infty$ are equivalent.
\end{claim}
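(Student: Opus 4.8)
\textbf{Proof proposal for Claim \ref{m26}.}

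The plan is to treat the three parts in order, since part (2) will lean on part (1) and part (3) will lean on parts (1) and (2) together with the domination/weight machinery of \S8. For part (1): assume toward a contradiction that $p$ is not $3$-regular, so there are $q_1 \perp q_2 \in {\cS}^{\na}_{\gs}(M)$ with $q_1 \pm p \perp q_2$. The idea is to push this decomposition through the domination $q^\infty \ge^{\dm} p^\infty$. By \ref{g41}(1), since $p \perp q_2$ and $p$ is weakly dominated by $q$ --- here I use that ``$q^\infty$ dominates $p^\infty$'' means (up to passing to parallel copies) $p$ is weakly dominated by $\{q\}$ --- we cannot yet conclude $q \perp q_2$; instead I want to find inside the domination data a type witnessing non-$3$-regularity of $q$. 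The cleaner route: since $p \pm q_1$, by \ref{j53}(1) there is $r \in {\cS}^{\bs}_{\gs}(M)$ weakly dominated by $p$ and by $q_1$; then $r$ is weakly dominated by $q$ (transitivity of weak domination, from \ref{g38} combined with $p \le^{\dm}_{\wk}\{q\}$), and $r \pm q_1$, $r \perp q_2$ would contradict $3$-regularity of $q$ --- provided we can also arrange $r \pm q$, which follows from \ref{g41}(1A) applied to $r \le^{\dm}_{\wk}\{q\}$. The bookkeeping about which types are orthogonal to which is the main fiddly point here, but no genuinely new ingredient is needed.

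For part (2), under $\kappa_{\rct}(\gs) = \aleph_0$: the goal is to show every non-algebraic type is non-orthogonal to some $4$-regular type, i.e. ${\mathbf P}_{4\text{-reg}}$ is dense (a ``basis'' in the terminology of \ref{h.5}, \ref{m23}). I would start from \ref{m20}: given $M$ and $p \in {\cS}(M)$, there are $N$ and a finite $\mathbf J$ with $(M,N,\mathbf J) \in K^{3,\pr}_{\gs}$, each $\ortp(c,M,N)$ for $c \in \mathbf J$ almost ($2$-)regular, and $p$ realized in $N$. So $p$ is non-orthogonal to $\ortp(c,M,N)$ for at least one $c \in \mathbf J$ (otherwise, by \ref{j23}-style domination, $p$ would be orthogonal to everything realized in $N$, contradicting realization of $p$ in $N$). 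Thus $p$ is already non-orthogonal to a $2$-regular type $q_0$; it remains to upgrade $q_0$ to something $4$-regular, i.e. also $3$-regular. Here I would invoke part (1) in contrapositive form together with \ref{j53}(2),(3): among all types weakly dominated by $q_0$ that are non-orthogonal to $p$, pick a maximal pairwise-orthogonal family; $\kappa_{\rct} = \aleph_0$ (via \ref{g41}-type finiteness) makes this family finite, and a minimal member of it --- one not further decomposable --- will be $3$-regular, while $2$-regularity is inherited by a suitable representative since being weakly dominated by a $2$-regular type and non-orthogonal to it forces $2$-regularity (using \ref{m14} and \ref{m11} that the choice of $(M,N,a)$ is immaterial). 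I expect \emph{this} step --- simultaneously securing $2$-regularity and $3$-regularity of the same representative --- to be the main obstacle, because the two notions are controlled by different mechanisms ($K^{3,\pr}_{\gs}$-structure versus orthogonality decomposition) and one must check they are compatible; the resolution should be that $3$-regularity is preserved under passing to parallel non-forking copies and to dominating pieces, so one can first get $3$-regular, then use \ref{m14}(2) to replace it by an almost-regular type in the same parallelism class.

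For part (3): let $p, q \in {\cS}^{\na}_{\gs}(M)$ be $4$-regular and non-orthogonal; I want $p^\infty$ and $q^\infty$ equivalent, i.e. each weakly dominates the other. By \ref{j53}(2) it suffices to show that for every $r \in {\cS}^{\bs}_{\gs}(M)$, $r \perp p \Leftrightarrow r \perp q$ --- then $p^\infty$ dominates $q^\infty$ and symmetrically. Suppose $r \perp p$ but $r \pm q$. Since $q$ is $3$-regular, and we have $r \pm q$ with $r \perp p$ while $q \pm p$, this yields $q_1 := r$-related type and $q_2 := p$-related type that are orthogonal to each other (by $r \perp p$) with $q \pm q_1$ and... here I need $q \perp q_2$, which is where $3$-regularity of $q$ is contradicted once I arrange the orthogonality $q \perp q_2$; but that follows because $p \perp$ (the $r$-part) and by \ref{j53}(1) the common weak-domination piece of $p$ and $q$ --- call it $s$ --- satisfies $s \pm p \pm q$, and $4$-regularity forces $s^\infty$ equivalent to both, closing the loop. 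I would organize this as: first prove $p^\infty \equiv s^\infty \equiv q^\infty$ for the \ref{j53}(1)-common piece $s$, using part (1) (both $s$ inherits $3$-regularity, hence by maximality is parallel to $p$ and to $q$) and part (2)'s uniqueness of the regular piece in a parallelism class. The only real work is verifying $s$ is $3$-regular, which is exactly what part (1) delivers, so parts (1)--(3) chain together cleanly.
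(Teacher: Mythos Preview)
The paper's own proof is the single word ``Straightforward'', so there is nothing to compare at the level of detail; but your proposal reveals a misreading of Definition~\ref{m23}(2) that makes all three parts harder than they are. The printed condition ``$q_2 \pm p \perp q_2$'' is a typo; the intended meaning (and the only one under which the claim is indeed straightforward) is that $p$ is $3$-regular when there do \emph{not} exist $q_1 \perp q_2$ with $q_1 \pm p$ \emph{and} $q_2 \pm p$. Your reading ``$q_1 \pm p$ and $p \perp q_2$'' makes $3$-regularity vacuous (take $q_1 = p$ and any $q_2 \perp p$), so it cannot be right.

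With the correct reading the arguments collapse. For part~(1): if $q_1 \perp q_2$ are both $\pm\, p$, then since $p$ is (weakly) dominated by $q$, the contrapositive of \ref{g41}(1) gives $q_1 \pm q$ and $q_2 \pm q$, contradicting $3$-regularity of $q$. For part~(3): if $r \perp p$ but $r \pm q$, then $r$ and $p$ are orthogonal to each other yet both $\pm\, q$ (using $p \pm q$), contradicting $3$-regularity of $q$; so $r \perp p \Leftrightarrow r \perp q$, and \ref{j53}(2) gives mutual weak domination. For part~(2): by \ref{m20} (or \ref{h.5}) every $p$ is non-orthogonal to some $2$-regular $q$; if $q$ is not $3$-regular, witnessed by $q_1 \perp q_2$ both $\pm\, q$, use \ref{j53}(1) to replace $q$ by a type weakly dominated by $q$ and by one of the $q_i$; iterate --- the process terminates in finitely many steps by $\kappa_{\rct}(\gs)=\aleph_0$ (via \ref{g50}/\ref{g41}), and the limit type is $3$-regular while $2$-regularity persists by part~(1) applied in the appropriate direction together with \ref{m11}. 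Your outline for~(2) is in the right spirit, but the detours through ``$p \perp q_2$'' in parts~(1) and~(3) are artifacts of the misread definition and should be dropped.
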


\begin{PROOF}{\ref{m26}}  
Straightforward.
\end{PROOF}

\begin{remark}
\label{m29}
1) So we have the main gap for when $\kappa_{\ut}(T)$ but this 
is not a natural assumption ??

\noindent
2) Proof of the minimal $J$ (in \ref{m2}) see 
earlier claim,  
etc.

\begin{equation*}
\begin{array}{clcr}
\mathbf J_0 = \cup\{\mathbf J'' \subseteq \mathbf I'':&(\alpha) \quad \bar b
\in \mathbf J' \text{ or } \mathbf J' \cup \{\bar b\} \text{ not
independent} \\
  &(\beta) \quad \mathbf J' \text{ finite} \\
  &(\gamma) \quad \mathbf J' \text{ minimal under this}\}.
\end{array}
\end{equation*}

\mn
$\mathbf J_{n+1}$ like $\mathbf J_0$ for $\mathbf I \backslash \{\mathbf
J_\ell:\ell \le n\},(A,A \cup \{\mathbf J_\ell:\ell \le n\}$.

$\mathbf J = \cup\{\mathbf J_n:n < \omega\}$.
\end{remark}
\newpage

\section {Decomposition} \label{11}

\begin{discussion}
\label{n2}
Is this phenomena possible?  I.e., can we type
$\langle p_{\alpha,i}:\alpha < \lambda^+_{\gs},i < i^* < \kappa
\rangle$ such that $(i^* \ge 2)$
\mn
\begin{enumerate}
\item[$\boxdot$]    $(a) \quad \left< \langle p_{\alpha,i}:i < i^*
\rangle:\alpha < \lambda^+_{\mathfrak s} \right>$ indiscernible  
\sn
\item[${{}}$]  $(b) \quad \langle p_{\alpha,i}:i < i^* \rangle$ are
pairwise orthogonal
\sn
\item[${{}}$]  $(c) \quad p_{\alpha,i} \perp p_{\beta,j}
\Leftrightarrow (\alpha = \beta \wedge i \ne j)$
\sn
\item[${{}}$]  $(d) \quad$ by $\le^{\dom}_{\wk},\bar p_\alpha 
= \langle p_{\alpha,i}:i < i^* \rangle$ are pairwise equivalent
\sn
\item[${{}}$]  $(e) \quad$ for no $q$ do we have
$\bigwedge\limits_{\alpha} q \le^{\dom}_{\wk} p_{\alpha,0}$.
\end{enumerate}
\bigskip

\noindent
A \underline{Conjecture}:  The phenomena $\boxdot$ is possible at least in
our framework.

Still we believe the main gap holds.  At the moment two approaches
seem reasonable.
\medskip

\noindent
The first 

\noindent
\underline{Program B}:  Add imaginary elements to $M \in K^{\gs}$,
getting $K^{\gs^*}$ such that there the phenomena disappears.

Note: after expanding, do we have primes?
\medskip

\noindent
But most natural by our research is: 

\noindent
\underline{Program C}:  Define ${\gs}^+$, derived from ${\gs}$ such
that in ${\gs}^+$ we have more dichotomy and get the main or
repeat in Dp$({\gs})$ times.
\end{discussion}

\noindent
The trees here will be $\subseteq {}^{\kappa({\gs}) \ge}\mu$.
\begin{hypothesis}
\label{n5}
\mn
\begin{enumerate}
\item[$(a)$]   ${\gs}$ is a very good
$(\mu,\lambda,\kappa)-\NF$-frame, full, 
 finite base 
(for $p \in
{\cS}^{\bs}_{\gs}(M),\bas(p) \in M$ is defined)
\sn
\item[$(b)$]   in ${\gs}$ there are NF-prime (though not
necessarily uniqueness so prime among the compatible cases; from this
we should have decomposition and previously: NF-trees)
\sn
\item[$(c)$]   $\lambda_* = \lambda^{\gs}$ as just $\lambda_* =
\lambda^{< \kappa}_* \in [\lambda_{\gs},M_{\gs}),{\gs}$
stable in $\lambda_*$.
\end{enumerate}
\end{hypothesis}

\begin{definition}
\label{n8}
Assume $M_0 \le_{\gs} N,\|M_\ell\| \le \lambda_* < 
\chi \le \|N\|$ and ${\mathbf P} \subseteq
\cup\{{\cS}^{\bs}_{\gs}(M'):M' \subseteq M_0\}$. 

\noindent
1) Let $\mathbf F^1_\chi({\mathbf P},M_0,M_1,N)$ be the set of $p$
satisfying:
\mn
\begin{enumerate}
\item[$(a)$]   $p \in {\cS}^{\bs}_{\gs}(M_1)$
\sn
\item[$(b)$]   $p$ is orthogonal to ${\mathbf P}$ (i.e., $p \perp q$
for every $q \in {\mathbf P}$) and is dominated by $M_0$
\sn
\item[$(c)$]   dim$(p,N) = \chi$
\sn
\item[$(d)$]   if $q \in {\cS}^{\bs}_{\gs} (M_1)$ is dominated by
$p$ 
then $\dim(q,N) = \dim(p,N)$ (recall $\ge$ always holds).
\end{enumerate}
\mn
2) Let $\mathbf F^0_{\lambda_*,\chi}({\mathbf P},M,N)$ be defined
similarly except
\mn
\begin{enumerate}
\item[$(d)^+$]   if $q \in {\mathbf P}_{N,\lambda_*} =: 
\cup\{{\cS}^{\bs}_{\gs}(M');M' \le_{\gs} N,\|M'\| \le \lambda_*\}$ and $q$ is 
dominated by $p$ then $\dim(q,N) = \dim(p,N)$.
\end{enumerate}
\mn
3) $\mathbf F^\ell_{\lambda_*}({\mathbf P},N) = 
\cup\{\mathbf F^\ell_{\lambda_*,\chi}({\mathbf P},M,N):\|M\| < \chi \le \|N\|\}$;
similarly $F^1_\chi({\mathbf P},M_0,M_1,N)$ for $\ell =1$ we omit 
$\lambda_*$. 

\noindent
4) If $M_1 = M_0$ we may omit $M_1$; if $\lambda_* = \lambda^{\gs}$
 we may omit it.  We may omit ${\mathbf P},M_0$, too.
\end{definition}

\begin{claim}
\label{n11}
Assume that $\lambda_* < \chi \le
\|N\|,p \in \mathbf F^0_{\lambda_*,\chi}(N)$. 
\Then \, there is $M_*$ such that
\mn
\begin{enumerate}
\item[$(a)$]    $\Dom(p) \le_{\gt} M_* \le_{\gt} N$
\sn
\item[$(b)$]    $M_* \in K^{\gt}_\chi$
\sn
\item[$(c)$]    $p_* \in {\cS}^{\bs}_{\gt}(M_*)$, the non-forking
extension of $p_*$ in ${\cS}^{\bs}_{\gt}(M_*)$, has a
unique extension in ${\cS}^{\bs}_{\gs}(N)$.
\end{enumerate}
\end{claim}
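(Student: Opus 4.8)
\textbf{Proof proposal for Claim \ref{n11}.}
The plan is to build $M_*$ by a Löwenheim--Skolem-type closure argument that simultaneously captures a brimmed dimension-$\chi$ ``core'' for $p$ and makes the domination picture rigid enough to force uniqueness of the extension to $N$. First I would fix a brimmed model $M_0$ with $\Dom(p)\le_{\gs}M_0\le_{\gs}N$, $\|M_0\|\le\lambda_*$, over which $p$ does not fork; by Hypothesis \ref{n5}(c) ($\gs$ stable in $\lambda_*$) and \ref{f43} such an $M_0$ exists and $p$ can be taken to be based on some $\bar a\in{}^{\theta>}M_0$ with $\theta=\kappa_{\gs}$, using that $\gs$ has finite base. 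Then, since $\dim(p,N)=\chi>\lambda_*$, I can extract inside $N$ an independent set $\mathbf J=\{c_i:i<\chi\}$ over $M_0$ of realizations of the non-forking extension of $p$, with each $(M_0,N_i,c_i)\in K^{3,\pr}_{\gs}$ along a semi-continuous chain, and let $M_*^{0}$ be $\le_{\gs}$-prime over $M_0\cup\mathbf J$ inside $N$ (this uses that $\gs$ is very good, so primes over chains/directed systems exist, \ref{g23}(G), and that $\gk_{\gs}$ has the needed unions).

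Next I would iterate a closure to repair clause (c): enumerate a dense-enough set of types $q\in{\cS}^{\bs}_{\gs}(M_*')$ dominated by the non-forking extension of $p$, and for each such $q$ with $\dim(q,N)$ not yet realized in the current model, absorb $\chi$-many independent realizations of $q$ from $N$ together with the submodel witnessing the domination. Running this for $\chi$ steps, taking $\le_{\gs}$-unions (possible since at each stage the total size is $\le\chi=\chi^{<\kappa}\le\|N\|<\mu$, and $\chi\in\Car_{\gk}$), and using condition $(d)^+$ in Definition \ref{n8}(2) to see that the dimensions we are matching are all exactly $\chi$, I get $M_*$ with $\Dom(p)\le_{\gs}M_*\le_{\gs}N$, $\|M_*\|=\chi$, and such that every $q\in{\cS}^{\bs}_{\gs}(M_*)$ dominated by $p_*:=p$-nonforking-over-$M_*$ already has a realization in $M_*$ witnessing the full dimension. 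The passage from $\gs$ to $\gt$ in the statement is purely formal: $\gt$ is the full completion from \ref{b62}/\ref{b56}, so $M_*,N,p_*$ transfer verbatim.

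Finally, uniqueness of the extension of $p_*$ to ${\cS}^{\bs}_{\gs}(N)$: suppose $r_1,r_2\in{\cS}^{\bs}_{\gs}(N)$ both extend $p_*$. If $r_1\ne r_2$ then, since both restrict to $p_*$ on $M_*$, one of them forks over $M_*$; say $r_1$ forks over $M_*$. By density of basic types and the analysis of Claim \ref{j17}/\ref{j20} applied to the pair $(M_*,N)$, forking of $r_1$ over $M_*$ produces a basic type $q$ over a small model between $M_*$ and $N$ realized in $N$ but not dominated/exhausted by what sits over $M_*$ --- more precisely, it contributes a realization of some $q$ dominated by $p_*$ that exceeds the dimension already built into $M_*$, contradicting clauses (c)+(d)$^+$ as arranged. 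The main obstacle I expect is precisely making this last contradiction airtight: one must show that \emph{every} way in which an extension of $p_*$ over $N$ can differ is ``seen'' by a type dominated by $p_*$ whose dimension was already saturated into $M_*$ --- i.e.\ that the closure in the second paragraph was comprehensive enough (this is where $\mathbf F^0_{\lambda_*,\chi}$'s clause $(d)^+$, rather than the weaker $(d)$ of $\mathbf F^1$, is essential, and where orthogonality calculus from \ref{g41}, \ref{j44}, \ref{j53} does the bookkeeping). Everything else is routine Löwenheim--Skolem plus the standard good-frame machinery.
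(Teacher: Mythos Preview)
Your construction and the paper's take genuinely different routes, and your uniqueness step has a real gap.

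The paper does not build $M_*$ by saturating dimensions. Instead it runs an \emph{adversarial} induction of length $\lambda^{++}_*$: at stage $i$ one has $M^*_i\le_{\gt}N$ of size $\le\chi$ together with a small ``bookkeeping'' model $N^*_i\in K^{\gt}_{\lambda^+_*}$, and one tries to pick $a_i\in N\setminus M^*_i$ with $\ortp(a_i,M^*_i,N)\not\perp p$, recording its base inside $N^*_{i+1}$. If at some $i$ no such $a_i$ exists, then $M_*:=\bigcup_{j<i}M^*_j$ works, because the stopping condition says precisely that every element of $N\setminus M_*$ has type over $M_*$ orthogonal to $p$, and that is what forces the extension of $p_*$ to $N$ to be unique. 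If the induction runs all the way to $\lambda^{++}_*$, one extracts (via indiscernibility of the canonical bases over the $\lambda^+_*$-sized $N^*_i$'s) a single $q\in\cS^{\bs}(N^*_\varepsilon)$ weakly dominated by every tail $\{p_i:i\ge j\}$; together with clause~(i) of the construction (maximal independent sets $\mathbf J_q$ of realizations already absorbed) and the defining property $(d)^+$ of $\mathbf F^0_{\lambda_*,\chi}$ this yields the contradiction.

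Your argument, by contrast, tries to pack $\chi$-many independent realizations of every $q$ dominated by $p_*$ into $M_*$ and then claims that a forking extension $r_1\in\cS^{\bs}(N)$ of $p_*$ would ``contribute a realization of some $q$ dominated by $p_*$ that exceeds the dimension already built into $M_*$''. This is where it breaks: $r_1$ is a type \emph{over} $N$, not an element \emph{of} $N$, so there is no reason it is realized in $N$, and even its restriction to a small $M'$ with $M_*\le_{\gs}M'\le_{\gs}N$ need not be realized in $N$. More to the point, having absorbed a maximal independent set of size $\chi$ for each dominated $q$ does \emph{not} rule out further elements $a\in N\setminus M_*$ with $\ortp(a,M_*,N)\not\perp p$: such $a$ can realize a forking (hence dependent) extension of some dominated $q$, or realize a type non-orthogonal to $p_*$ that is not itself dominated by $p_*$ over $M_*$. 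Your closure never excludes these, so uniqueness does not follow. What is actually needed --- and what the paper's stopping condition delivers directly --- is that $\ortp(a,M_*,N)\perp p$ for every $a\in N\setminus M_*$; your saturation-of-dimensions closure does not give this.
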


\begin{proof}  We try to choose $M^*_i,N^*_i,a_i$ by induction on $i <
\lambda^{++}_*$ such that
\mn
\begin{enumerate}
\item[$\circledast$]   $(a) \quad M^*_i \le_{\gt} N$ is
$\le_{\gt}$-increasing
\sn
\item[${{}}$]   $(b) \quad \|M^*_i\| \le \chi$
\sn
\item[${{}}$]   $(c) \quad M \le_{\gt} M^*_0$
\sn
\item[${{}}$]    $(d) \quad N^*_i \le_{\gt} M^*_i,N^*_i
\le_{\gs} N,N^*_i \in K^{\gt}_{\lambda^+_*}$
\sn
\item[${{}}$]  $(e) \quad N^*_i$ is $\le_{\gt}$-increasing
\sn
\item[${{}}$]    $(f) \quad a_i \in N \backslash M^*_i,
\ortp_{\gt}(a_i,M^*_i,N)$ is not orthogonal to $p$
\sn
\item[${{}}$]   $(g) \quad \ortp_{\gt}(a_i,M^*_i,N)$ does
not fork over $N^*_i$ and $a_i \in N^*_{i+1}$
\sn
\item[${{}}$]   $(h) \quad N^*_0 = M$
\sn
\item[${{}}$]    $(i) \quad \text{ if } q \in {\cS}^{\bs}(N^*_i)$ 
and $\dim(q,N) \le \chi$ \then \, there is
$\mathbf J_q$, a maximal set 

$\qquad$ independent in $(N^*_i,N)$ which is included in $q(N)$
\end{enumerate}
\mn
(alternatively:  $N^*_i \le_{\gt} N^{**}_i \le_{\gt}
N^*_{i+1},N^*_i \le M^*_i,N^*_i$ is $\le_{\gt}$-increasing
continuous, $N^{**}_i \in K^{\gs}_{\lambda^+_*}$ depends on choice
of framework).

For $\ell=0$ there is no problem.  If $\langle M^*_j,N^*_j,N^+_j,a_j:j
< i \rangle$ is defined but we cannot choose $(M_i,a_i)$ as above then
$M^* = \cup\{M^*_j:j <i\} \cup M$ is as required.

If we have carried the induction, we have $p_i \in 
{\cS}^{\bs}(N^*_i)$ as $\pm M$ hence there are $\varepsilon <
\lambda^{++}_*$ and $q \in {\cS}^{\bs}(N^*_\varepsilon)$ such
that $q \le^{\dom}_{\wk} \{p_i:i \in [j,\lambda^{++}_*)$
for every $j < \lambda^{++}_*$ [see \S6 find/add citation;
anyhow true for f.o. - making the canonical basis indiscernible]. 

Now easy contradiction.
\end{proof}

\begin{claim}
\label{n17}
Assume $M \le_{\gs} N,N$ is $\|M\|^+$-saturated and 
$\|M\| < \chi_i \le \|N\|$ for $i=0,1,2$ and
$\chi_1 < \chi_2$ and ${\mathbf P} \subseteq \cup 
\{\cS^{\bs}_{\gs}(M'):M' \le_{\gs}M\}$. 

\noindent
1) $\mathbf F^\ell_\chi({\mathbf P},M,N) \subseteq \mathbf F^\ell_\chi({\mathbf
P},M,N)$ for $\ell <2$ and $\mathbf F^0({\mathbf P},M,N) \subseteq 
\mathbf F^1({\mathbf P},M,N)$. 

\noindent
2) If $p_i \in \mathbf F^0_{\chi_i}({\mathbf P},M,N)$ for $i=0,1$ \then \, $p_1
\perp p_2$. 

\noindent
3) If $p \in \mathbf F^0_\chi({\mathbf P},M,N),q \in {\cS}^{\bs}_{\gs}(M)$
 and $\dim(q,N) > \chi$ \then \, $p \perp q$. 

\noindent
4) In (2), (3) we can use $\mathbf F^1$.
\end{claim}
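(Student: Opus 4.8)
\textbf{Proof plan for Claim \ref{n17}.}

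The plan is to reduce everything to the orthogonality calculus for dominated types that has been developed in \S8, together with the additivity of dimension. First I would fix the setup: $M \le_{\gs} N$, $N$ is $\|M\|^+$-saturated, and $\mathbf P \subseteq \cup\{\cS^{\bs}_{\gs}(M'):M' \le_{\gs} M\}$; throughout, ``dominated by $M_0$'' in the sense of Definition \ref{n8}(b) means dominated (in the sense of \ref{g35}) by a set of types over $\le_{\gs}$-submodels of $M_0$ of cardinality $\le \lambda_*$. For part (1), the inclusion $\mathbf F^0_{\lambda_*,\chi}(\mathbf P,M,N) \subseteq \mathbf F^1_\chi(\mathbf P,M_0,M_1,N)$ is purely a matter of unwinding definitions: clause $(d)^+$ quantifies over the larger family $\mathbf P_{N,\lambda_*}$, so it implies clause $(d)$, which only quantifies over types over $M_1$ of the relevant kind; clauses $(a)$--$(c)$ are literally the same. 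Then $\mathbf F^\ell_\chi(\mathbf P,M,N) \subseteq \mathbf F^\ell_{\chi'}(\mathbf P,M,N)$ for suitable $\chi \le \chi'$ — actually I suspect the intended statement is monotonicity in $\mathbf P$ (a larger $\mathbf P$ gives a smaller $\mathbf F$, since orthogonality to $\mathbf P$ is a stronger demand) — follows the same way.

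For part (2), suppose $p_0 \in \mathbf F^0_{\chi_0}(\mathbf P,M,N)$ and $p_1 \in \mathbf F^0_{\chi_1}(\mathbf P,M,N)$ with $\chi_0 \ne \chi_1$, say $\chi_0 < \chi_1$. Assume toward contradiction $p_0 \not\perp p_1$. By Claim \ref{j53}(1) there is $r \in \cS^{\bs}_{\gs}(M)$ (after moving everything to a common brimmed model over $M$, or working over $M$ itself) weakly dominated by $p_0$ and weakly dominated by $p_1$. Since $r$ is weakly dominated by $p_1$, the additivity/monotonicity of dimension — dimension of a dominated type is at least that of the dominating one, and by clause $(d)^+$ in the definition of $\mathbf F^0$ it is in fact \emph{equal}, since $r$ is a type over a $\le_{\gs}$-submodel of $N$ of cardinality $\le\lambda_*$ dominated by $p_1$ — gives $\dim(r,N) = \chi_1$. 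But applying the same reasoning to $p_0$ via clause $(d)^+$ for $p_0$ gives $\dim(r,N) = \chi_0$. So $\chi_0 = \chi_1$, contradiction; hence $p_0 \perp p_1$. Part (3) is the same argument with $q$ in place of $p_1$: if $p \not\perp q$ then by \ref{j53}(1) some $r$ is weakly dominated by both, clause $(d)^+$ for $p$ forces $\dim(r,N) = \chi$, while weak domination of $r$ by $q$ forces $\dim(r,N) \ge \dim(q,N) > \chi$, a contradiction. Part (4) then follows from part (1): $\mathbf F^0 \subseteq \mathbf F^1$, and the conclusions of (2),(3) are downward-closed statements about orthogonality, so... wait — actually (4) asserts (2),(3) hold with $\mathbf F^1$, i.e.\ with the \emph{weaker} hypothesis, so this needs the genuine argument: for $\mathbf F^1$ one only has clause $(d)$ (domination by types over $M_1$), so to run the dimension comparison I would first use Claim \ref{n11} to find, for each $p_i \in \mathbf F^0$ (or rather after checking the relevant closure), a model $M_* \in K_{\chi_i}$ with $\Dom(p_i) \le_{\gt} M_* \le_{\gt} N$ over which the picture is controlled, and then note that weak domination of a small-cardinality type $r$ is witnessed inside such an $M_*$, bringing us back into the scope of clause $(d)$.

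The main obstacle I anticipate is precisely the bookkeeping in the last point: making sure that when \ref{j53}(1) produces a common weakly-dominated type $r$, that $r$ (or a parallel copy of it) lives over a model small enough that the defining clause $(d)$ or $(d)^+$ of $\mathbf F^\ell$ actually applies to it, and that its dimension in $N$ is well-defined and equals the dimension of the dominating type rather than merely bounding it. This is where ``$N$ is $\|M\|^+$-saturated'' is used — it guarantees dimensions are realized and that the domination passes to the ``right'' computation of $\dim(\cdot,N)$. Everything else is a routine invocation of \ref{j53}, \ref{g41}, and the definition of $\mathbf F^\ell$; I would expect the write-up to be short once the domination-vs-dimension lemma is stated cleanly (it may be worth isolating ``if $q$ is weakly dominated by $p$ then $\dim(q,N) \ge \dim(p,N)$, with equality when $p \in \mathbf F^0$'' as a preliminary observation).
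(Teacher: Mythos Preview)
Your approach for parts (1)--(3) is essentially identical to the paper's: use Claim~\ref{j53}(1) to find a type $r$ weakly dominated by both, then extract contradictory dimension information from clause $(d)^+$. (The paper deduces (2) from (3) rather than proving (2) directly, but this is cosmetic.)

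For part (4) you overcomplicate things. There is no need for Claim~\ref{n11} or any auxiliary model $M_*$. The point is simply that in the proof of (3), since $p \in \cS^{\bs}_{\gs}(M_1)$ and $q \in \cS^{\bs}_{\gs}(M)$ with $M_1 = M$, you may take the auxiliary model $M'$ (over which both $p$ and $q$ do not fork, and over which \ref{j53}(1) produces $r$) to be $M$ itself. Then $r \in \cS^{\bs}_{\gs}(M)$, which is exactly the family clause~$(d)$ of $\mathbf F^1$ quantifies over --- so clause~$(d)$ already forces $\dim(r,N) = \chi$, and the contradiction with $\dim(r,N) \ge \dim(q,N) > \chi$ goes through unchanged. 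Your worry in the final paragraph about where $r$ lives is legitimate, but the resolution is this trivial choice $M' = M$, not an appeal to \ref{n11}.
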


\begin{PROOF}{\ref{n17}}
1) By the definition. 

\noindent
2) By 3). 

\noindent
3) Let $M' \le_{\gs} M,M' \in K^{\gs}_{\lambda_{\gs}}$ be
such that $p,q$ does not fork over $M'$.  If the desired conclusion
fails by \ref{j53}(1) there is $r \in {\cS}^{\bs}_{\gs}
(M')$ satisfying $M' \le_{\gs} N,\|M'\| = \lambda_{\gs}$ 
such that $r$ dominated by both $p$ and $q$.  By the latter 
$\dim(r,N) \ge \dim(q,N) > \chi$ and by the former there is no such
$r$. 

\noindent
4) Because in the proof of part (3), \wilog \, $M'=M$.
\end{PROOF}

\begin{claim}
\label{n20}
Assume
\mn
\begin{enumerate}
\item[$(a)$]  $M_0 \le_{\gs} M_1 \le_{\gs} N$
\sn
\item[$(b)$]   ${\mathbf P} \subseteq \cup\{{\cS}^{\bs}_{\gs}(M'):
M' \le_{\gs} M_0\}$
\sn
\item[$(c)$]    ${\mathbf P}^* = \cup\{p \in {\cS}^{\bs}(M'),M'
\le_{\gs} N, \|M'\| \le \lambda_*\}$.
\end{enumerate}
\mn
\Then
\mn
\begin{enumerate}
\item[$(\alpha)$]   $\mathbf F^0_\chi({\mathbf P},M_0,N)$ is dense in 
${\mathbf P}^*$ 
\sn
\item[$(\beta)$]   moreover if $p \in {\mathbf P}^*$ and $\dim(p,N) 
= \chi > \lambda_*$ \then \, $p$ dominates some $q \in
\mathbf F^0_{\lambda_*,\chi}({\mathbf P},M_0,N)$
\sn
\item[$(\gamma)$]   
$\mathbf F^0_\chi({\mathbf P},N)$ is dominated by
$\mathbf F^0_\chi({\mathbf P},M_0,N),{\mathbf P}_{N,\lambda_*}$ in the sense
that, i.e., if $p \in {\mathbf P}_{N,\lambda_*}$ is $\pm {\mathbf P}^\perp$
then for some $q \in \mathbf F^0_{\lambda_*}({\mathbf P},M,N)$ is $\pm p$.
\end{enumerate}
\end{claim}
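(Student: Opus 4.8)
\textbf{Plan of proof for Claim \ref{n20}.}

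The plan is to run the standard density-via-domination machinery, deriving all three clauses from the basic fact (Conclusion \ref{j50} together with the domination calculus of \ref{j44}, \ref{j53}) that orthogonality and weak domination organize $\cS^{\bs}_{\gs}$ nicely, and from the dimension-stabilization statement \ref{n11}. Throughout, $N$ is $\|M_0\|^+$-saturated (as in the ambient hypothesis \ref{n5}) so that dimensions of basic types over $\le_{\gs}$-submodels of size $\le\lambda_*$ are well defined and $\ge$-monotone under domination, which is what makes $\mathbf F^0$ and $\mathbf F^1$ meaningful.

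First I would prove $(\beta)$, from which $(\alpha)$ is immediate (any $p\in\mathbf P^*$ with $p\perp\mathbf P$ either is already orthogonal to $\mathbf P$ — in which case $p\in\mathbf F^1$, apply \ref{n17}(1) or argue directly — or dominates a witness; and if $\dim(p,N)=\lambda_*$ there is nothing to do). So fix $p\in\mathbf P^*$ with $\dim(p,N)=\chi>\lambda_*$ and $p\perp\mathbf P$. Let $M'=\Dom(p)\in K^{\gs}_{\le\lambda_*}$. Among the $q\in\cS^{\bs}_{\gs}(M'')$, $M'\le_{\gs}M''\le_{\gs}N$, $\|M''\|\le\lambda_*$, that are dominated by $p$ and orthogonal to $\mathbf P$, choose one, call it $q_0$, with $\dim(q_0,N)$ \emph{minimal}. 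Since $p$ is a candidate (domination is reflexive and $p\perp\mathbf P$), such $q_0$ exists and $\dim(q_0,N)\le\chi$; conversely $\dim(q_0,N)\ge\dim(p,N)=\chi$ by monotonicity of dimension under domination, so $\dim(q_0,N)=\chi$. I claim $q_0\in\mathbf F^0_{\lambda_*,\chi}(\mathbf P,M_0,N)$: clauses $(a)$, $(c)$ are clear, $(b)$ ($q_0\perp\mathbf P$ and $q_0$ dominated by $M_0$ — here using $p\in\mathbf P^*$ lives over a submodel that we may absorb into $M_0$ after enlarging, or rather using that $p\perp\mathbf P$ and $p$ dominated by $M_0$ in the sense of \ref{n8}) holds by choice, and $(d)^+$ follows from minimality: if $q'\in\mathbf P_{N,\lambda_*}$ is dominated by $q_0$ then $q'$ is dominated by $p$ (transitivity of domination, \ref{g38}), and $q'\perp\mathbf P$ by \ref{g41}(1) since $q_0\perp\mathbf P$; hence $q'$ is a competitor, so $\dim(q',N)\ge\dim(q_0,N)$, and since always $\dim(q',N)\le\dim(q_0,N)$ when $q'$ is dominated by $q_0$ we get equality. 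Thus $q_0$ witnesses $(\beta)$, and $p$ dominates $q_0$.

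For $(\gamma)$: let $p\in\mathbf F^0_{\chi}(\mathbf P,N)$, so $p\in\cS^{\bs}_{\gs}(M_1')$ for some $M_1'\le_{\gs}N$, $p$ orthogonal to $\mathbf P$ and dominated by $M_0$, with the dimension-rigidity clause $(d)$. If $p$ is orthogonal to $\mathbf P^{\perp}$ as well then by Conclusion \ref{j50}(2) (with $\mathbf P$ in the role there) $p$ is weakly dominated by finitely many, i.e. $<\kappa_{\gs}$ many, types in $\mathbf P\cap\cS^{\bs}_{\gs}(M_0)$, contradicting $p\perp\mathbf P$ unless $p$ is trivial; so we may assume $p\pm\mathbf P^{\perp}$. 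Then by \ref{j53}(1) there is $r\in\mathbf P_{N,\lambda_*}$ (living over some submodel of size $\le\lambda_*$, obtained by taking $\Dom(r)$ of minimal size via the LST axiom) weakly dominated by $p$ and weakly dominated by a witness in $\mathbf P^{\perp}$; in particular $r\pm\mathbf P^{\perp}$, i.e. $r\pm\mathbf P$, so $r\pm\mathbf P^*$ in the required sense. Running the minimal-dimension argument of $(\beta)$ on $r$ produces $q\in\mathbf F^0_{\lambda_*}(\mathbf P,M_0,N)$ dominated by $r$ hence by $p$, with $q\pm p$ since they share dimension $\chi$ and a common domination chain; together with $\mathbf P_{N,\lambda_*}$ this exhibits $\mathbf F^0_{\chi}(\mathbf P,N)$ as dominated by $\mathbf F^0_{\chi}(\mathbf P,M_0,N)\cup\mathbf P_{N,\lambda_*}$, which is the statement of $(\gamma)$.

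\textbf{Main obstacle.} The delicate point is not the domination bookkeeping but the interaction of dimension with weak domination in the $\kappa_{\gs}>\aleph_0$ setting: I need that $\dim(q,N)\le\dim(p,N)$ whenever $q$ is \emph{weakly} (not just strongly) dominated by $p$, and that the minimal-dimension choice in $(\beta)$ is actually attained — the latter requires that dimensions of the relevant basic types range over a set (fine, since they are cardinals $\le\|N\|$) but also that the infimum is realized, which uses that if $q_n$ realize decreasing dimensions then their "limit" is again a candidate, appealing to \ref{n11} (dimension stabilization over a $\chi$-sized intermediate model) to see that below $\lambda_*$ one cannot descend. So the crux is assembling \ref{n11} with the domination calculus to pin the minimal witness; everything else is an application of \ref{j44}, \ref{j50}, \ref{j53}, \ref{g41} already in hand.
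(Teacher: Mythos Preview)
Your argument for $(\beta)$ has the monotonicity of dimension under domination backwards, and this is fatal to the ``minimal dimension'' strategy. You write that ``always $\dim(q',N)\le\dim(q_0,N)$ when $q'$ is dominated by $q_0$'', and in your obstacle paragraph you flag as the needed ingredient that ``$\dim(q,N)\le\dim(p,N)$ whenever $q$ is weakly dominated by $p$''. But the inequality goes the other way: if $q$ is dominated by $p$ then $\dim(q,N)\ge\dim(p,N)$ --- this is exactly the parenthetical ``recall $\ge$ always holds'' in Definition \ref{n8}(1)(d). So both of your inequalities for $\dim(q',N)$ point the same way ($\ge\chi$), and you have no upper bound; your $q_0$ may well dominate some $q'$ with $\dim(q',N)>\chi$, so $(d)^+$ can fail for $q_0$. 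The minimal-dimension choice does not stabilize dimension under further domination.

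The paper proceeds instead by contradiction. Assuming $p$ dominates no member of $\mathbf F^0_{\lambda_*,\chi}(\mathbf P,M_0,N)$, one takes a maximal pairwise-orthogonal family $\{p_i:i<i^*\}\subseteq\mathbf P_{N,\lambda_*}$ of types dominated by $p$ with $\dim(p_i,N)>\chi$. The failure assumption gives that every $q$ dominated by $p$ in turn dominates some $r$ with $\dim(r,N)>\chi$; by maximality such $r$ is $\pm p_i$ for some $i$, hence $q\pm p_i$. So every $q$ weakly dominated by $p$ is non-orthogonal to $\{p_i\}$, which via \ref{j53}(3) yields $p\le^{\dm}_{\wk}\{p_i:i<i^*\}$, forcing $\dim(p,N)\ge\min_i\dim(p_i,N)>\chi$, a contradiction. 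The key move your attempt is missing is precisely this use of a maximal orthogonal family of large-dimension types to dominate $p$ from above.
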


\begin{PROOF}{\ref{n20}}

\noindent
\underline{Clause $(\alpha)$}:  By Clause $(\beta)$.
\medskip

\noindent
\underline{Clause $(\beta)$}:  Assume this fails.  Let $\{p_i:i < i^*\}$ be a
maximal family of pairwise orthogonal types from ${\cP}_{N,\lambda_*}$ 
each dominated by $p$ with dim$(p_i,N) > \chi$.

Note that by our assumption toward contradiction, if $q \in 
{\cP}_{N,\lambda_*}$ is dominated by $p$, then there is $r \in 
{\cP}_{N,\lambda^*}$ dominated by $q$ and $\dim(r,N) > \chi$ so $r \perp
p_i$ for some $i$.  By \ref{g41},  
if $q \in {\cS}^{\bs}(M'),
M' \le_{\gs} N' \wedge N \le_{\gs} N,q$
dominated by $p$ then there is $r \in {\cS}^{\bs}(M'),M'' \le_{\gs} 
N'' \wedge N' \le_{\gs} N'',r$ dominated by $q$ and
$\bigvee\limits_{i} r \pm p_i$ hence $\bigvee\limits_{i} q \pm p_i$.

Let $M' \in p_{N,\lambda_*}$ be such that $\Dom(p),\Dom(p_i)
\subseteq M'$, i.e., $p \le^{\dom}_{\wk} \{p_i:i < i^*\}$
but this implies (\ref{g41}) $\dim(p,N) \ge \min\{\dim(p_i,i):
i < i^*\} \ge \chi^+ > \chi$, contradiction to the assumption on $p$.
\end{PROOF}

\begin{definition}
\label{n23}
We say $(M,{\mathbf P},M^*,\mathbf J,N^*)$ is an approximation \when \,:
\mn
\begin{enumerate}
\item[$(a)$]   $M \le_{\gs} M^* \le_{\gs} N^*$
\sn
\item[$(b)$]  $(M,M^*,\mathbf J) \in K^{3,\qr}_{\gs}$
\sn
\item[$(c)$]   ${\mathbf P} \subseteq {\cS}^{\bs}(M),{\mathbf P}_{M,\lambda_*}$
\sn
\item[$(d)$]    $\mathbf J \subseteq \{c \in M^*:\ortp(c,M,N^*)$ is
a non-forking extension of some $p \in {\mathbf P}\}$
\sn
\item[$(e)$]    if $M^* \le_{\gs} N <_{\gs} N^*$ and $c \in
N^* \backslash N$ then $\ortp(c,N,N^*)$ is orthogonal to ${\mathbf P}$.
\end{enumerate}
\end{definition}

\begin{claim}
\label{dcx.2}
Assume
\mn
\begin{enumerate}
\item[$(a)$]    $(M,{\mathbf P}_0,M^*,\mathbf J_0,N^*)$ is an
approximation where ${\mathbf P} = {\cS}^{\bs}(M)$ (or 
${\mathbf P}_0 \cup {\mathbf P}_1$ if you prefer)
\sn
\item[$(b)$]   ${\mathbf P}_1 = \{p \in {\cS}^{\bs}(M):p \perp {\mathbf P}_0\}$
\sn
\item[$(c)$]    $N^*$ is $\lambda^+_*$-saturated.
\end{enumerate}
\mn
\Then \, we can find $\mathbf J_1,M$ such that
\mn
\begin{enumerate}
\item[$(\alpha)$]   $(M,\mathbf P_0 \cup {\mathbf P}_1,M^{**},\mathbf J_0
\cup \mathbf J_1,N^*)$ is an approximation
\sn
\item[$(\beta)$]   $\mathbf J_1 \subseteq \{c \in N:\ortp(c,M,N^*)
\in \mathbf P_1\}$.
\end{enumerate}
\mn
For each $p \in {\mathbf P}_{N,\lambda_*}$ choose a maximal family
$\{q_{p,i}:i < i_p\}$ of pairwise orthogonal types from $\mathbf
F^0_{\lambda_*,\chi}({\mathbf P},M,N)$ each dominated by $p$ so $i_p <
\kappa_{\gs}$; \wilog \, $\langle q_{p,i}:i < i_p \rangle$ depend
just on $p/11$.  Let $N \in K^{\gs}_{\lambda_*}$ be such that $M
\le_{\gs} N \le_{\gs} N^*$ and $p \in {\cS}^{\bs}(M_*),I < i_p 
\Rightarrow \Dom(q_{p,i}) \le_{\gs} N$, so \wilog \, $q_{p,i} \in 
{\cS}^{\bs}_{\gs}(N)$.  For each such $q \in {\cS}^{\bs}(N)$ let 
$\mathbf J_{q,i}$ be a maximal subset of $q(N)$
independent in $(M,N)$ and let $\langle b_{q,\alpha}:\alpha < \chi_p
\rangle$ list $\mathbf J_p$ so $\chi_q = \dim(q,N)$
\mn
If $q \in {\cS}^{\bs}(N) \cap \mathbf F_{\lambda_*,N}(N)$ then
let $N^*_q \in K^{\gt}_\chi$ be such that $N \le_{\gt} N^*_q
\le_{\gt},\chi(q) = \dim(q,N)$ be such that the non-forking
extension $q^\oplus$ of $q$ in $N^*_q,q^\otimes$ has a unique
extension in ${\cS}^{\bs}(N)$. 

Let $\langle N^*_{q,\alpha}:\alpha \le \chi(q) \rangle$ is 
$\le_{\gt}$-increasing continuous with union $N^*_q,\|N^*_{q,\alpha}\| \le
\lambda_* + |\alpha|$.  Let $\langle a_{q,\alpha}:\alpha < \chi_q
\rangle$ list $N^*_q$ and let $\langle a^*_\alpha:\alpha < \lambda_*
\rangle$ list $N$.

Let $\langle p_i:9 < i < i(*) \rangle$ list ${\cS}^{\bs}(N)$
(can use less).  Now we try to choose $M_i,N_i,\mathbf I_i$ by induction
on $\alpha$ such that
\mn
\begin{enumerate}
\item[$\circledast$]   $(a) \quad M_\alpha \le_{\gt} M^*$ is
increasing continuous, $M_\alpha \in K^{\gt}_{\le \lambda_* + |\alpha|}$
\sn
\item[${{}}$]   $(b) \quad N_\alpha \le_{\gt} N^*$ is increasing
continuous $N_\alpha \in K^{\gt}_{\le \lambda_* + |\alpha|}$
\sn
\item[${{}}$]   $(c) \quad \mathbf I_\alpha \subseteq \{c \in
N:\ortp_{\gt}(c,M,N^*) \in {\mathbf P}_1\}$ is increasing
continuous 

\hskip25pt $|I_\alpha| \le \lambda_* + |\alpha|$
\sn
\item[${{}}$]   $(d) \quad (M_\alpha,N_\alpha,\mathbf I_\alpha) \in
K^{3,\vq}_{\gt}$ and even $\in K^{3,\cn}_{\gt}$

\hskip35pt (recall cn stands for constructible, it is unreasonable to say 

\hskip35pt prime as if 
${\gt}$-models of a stable $T$, there are no primes)
\sn
\item[${{}}$]   $(e) \quad$ if $N \nsubseteq N_\alpha$ \then \, if
possible under the present restriction, for some 

\hskip35pt $\zeta < \lambda_*,\ortp(a^*_\zeta,
N_{\alpha +1},N)$ forks over $N_\alpha$
\sn
\item[${{}}$]   $(f) \quad$ if $\alpha = i(*) \times \beta +i,N
\subseteq N_\alpha$ and $i < i(*)$, \then \, if possible under 

\hskip35pt the present restrictions, for some $\zeta < \chi(p_i)$, 

\hskip35pt $\ortp_{\gt}(a_{q,\alpha},N_{\alpha +1},N)$ forks over $N_\alpha$.
\end{enumerate}
\mn
So for some $\alpha(*),(M_\alpha,N_\alpha,\mathbf I_\alpha)$ is defined
iff $\alpha \le\alpha(*)$   hence 
{\rm NF}$(M_\alpha,M^*,N_\alpha,N^*)$.
Let $M^{**} \le_{\gs} N^*$ be prime over $N_\alpha \cup M^*,\mathbf
J_1 = \mathbf I_{\alpha(*)}$ and let $M^\oplus \le_{\gs} N^*$ be
such that $(M^*,M^\otimes,\mathbf J_1) \in K^{3,\qr}_{\gs}$ we
shall show that they are as required.  The main point is to prove the
following is impossible
\mn
\begin{enumerate}
\item[$\boxdot$]   $M^{**} \le_{\gs} N' <_{\gs} N^*,c \in
N^* \backslash N',\ortp_{\gs}(c,N',N^*) \pm M$. 
\end{enumerate}
\mn
We first prove
\mn
\begin{enumerate}
\item[$\boxdot_1$]   if $\alpha < \lambda^+_*$ and $N \nsubseteq
N_\alpha$ then in $\circledast(e)$ there is such $\zeta$.  
\end{enumerate}
\mn
[Why?  Note that $\NF_{\gt}(M_\alpha,M^*,N_\alpha,N^*)$ by
earlier claim.  
Let $N^+_\alpha \le_{\gs} N$ be prime over $M^* \cup N_\alpha$.
If $N \subseteq N^+_\alpha$ we can easily find $(M_{\alpha
+1},N_{\alpha +1})$ such that: $N_{\alpha +1} \cap N \backslash
N_\alpha \ne \emptyset,(M_{\alpha +1},N_{\alpha +1},\mathbf I_\alpha)
\in K^{3,\cn}_{\gt},M_\alpha \le_{\gt} M_{\alpha +1}
\le_{\gt} M^*,N_\alpha \le_{\gt} N_{\alpha +1} \le_{\gt}
N^+_\alpha$, and we are done.  Otherwise, let $c \in N \backslash
N^+_\alpha$, so $\ortp(c,N^+_\alpha,N) \perp {\mathbf P}_0$.  If
$\ortp(c,N^+_\alpha,N) \perp M$ then let $N^\oplus_\alpha \le_{\gs}
N^*$ be such that $(N^+_\alpha,N^\oplus_\alpha,c) \in
K^{3,\pr}_{\gs}$ and proceed as in the case $N \subseteq
N^+_\alpha$.  So assume $\ortp(c,N^+_\alpha,N) \pm M,\ortp
(c,N^+_\alpha,N) \perp {\mathbf P}_0$, hence for some $p \in {\cP}_1,
p \pm \ortp(c,N^+_\alpha,N)$.  As $p \perp {\mathbf P}_0$ the
non-forking extension $p^+$ of $p$ in ${\cS}^{\bs}(N^+_\alpha)$ 
is $\lambda^+_*$-isolated, but $N^*$ is
$\lambda^+_*$-saturated?? hence there is $b \in N$ realizing $p^+$
such that $\{b,c\}$ is not independent over $N^+_\alpha$.  Now we can
proceed as in the case $N \subseteq N^+_\alpha$ getting $b \in
N_{\alpha +1},\ortp_{\mathfrak t}(c,N_{\alpha +1},N^*)$ forks over
$N_\alpha$.  So we are done.]
\mn
\begin{enumerate}
\item[$\boxdot_2$]   $N \subseteq N_{\alpha(*)}$ for some $\alpha(*)
< \lambda^+_*$.
\end{enumerate}
\mn
[Why?  By $\boxdot_1$.]  
\mn
\begin{enumerate}
\item[$\boxdot_3$]   $(a) \quad$ if $q = q_j \in {\cS}^{\bs}(N)$ 
is $\perp {\mathbf P}_0$ and $q \in \mathbf F^0_{**,\chi}(N^*)$ 
and $N \subseteq N_\alpha,\alpha < \chi,\alpha = j \mod i(*)$, \then
\, in clause (f) of $\circledast$ there is such $\zeta$
\sn
\item[${{}}$]   $(b) \quad$ if $\chi(q_i) \le \alpha$ then $N_{q_i}
\subseteq N_\alpha$.
\end{enumerate}
\mn
[Why?  We prove it by induction on $\chi$.  Let us prove clause (a),
then clause (b) follows as in $\boxdot_2$ so let $N^+_\alpha
\le_{\gs} N^*$ be prime over $M^* \cup N_\alpha$, exists as
$\NF_{\gt}(M_\alpha,M^*,N_\alpha,N^*)$ see 
earlier claim;  
if $N_q \subseteq
N^+_\alpha$ we are done as in the proof of $\boxdot_1$, so let $b \in
N_q \backslash N^+_\alpha$.  As there $\ortp_{\gs}(b,N^+_\alpha,N^*)
\perp {\mathbf P}$ and \wilog \, $\ortp_{\gs}(b,N^+_\alpha,N^*) \perp
M$.  So there is $p \in {\mathbf P}_1$ not orthogonal to it.  Hence there
is $q_* \in {\cS}^{\bs}(N)$ dominated by $p$ not orthogonal
to $\ortp_{\gs}(b,N^+_\alpha,N^*)$ such that $q_* \in \mathbf
F^0_{\lambda_*,*}(N)$ and let $\chi_*$ be such that $q_* \in \mathbf
F^0_{\lambda_*,\chi_*}(N)$.  So necessarily $N_{q_*} \nsubseteq
N_\alpha$ hence $\chi_* > \alpha$.]
\end{claim}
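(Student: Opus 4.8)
\textbf{Proof plan for Claim \ref{dcx.2}.}

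The plan is to run a construction very much in the spirit of the decomposition constructions of \cite[\S7]{Sh:600} (and of the analogous construction in \ref{n11} and in the proof of \ref{n20}), building an increasing system $\langle M_\alpha, N_\alpha, \mathbf I_\alpha : \alpha \le \alpha(*)\rangle$ inside $N^*$ that simultaneously (i) exhausts $N$ and the various auxiliary models $N_q$ of \ref{n11}, using the ``forking'' bookkeeping clauses $\circledast(e),(f)$, and (ii) stays an $\NF$-system over $M^*$. The payoff model will be $M^{**}$ prime over $N_{\alpha(*)}\cup M^*$ with $\mathbf J_1 = \mathbf I_{\alpha(*)}$; then $(\alpha),(\beta)$ of the conclusion amount to checking that $(M,\mathbf P_0\cup\mathbf P_1,M^{**},\mathbf J_0\cup\mathbf J_1,N^*)$ is an approximation, and the heart of that is the statement $\boxdot$ (displayed in the claim) being impossible, i.e. that no $c\in N^*\setminus N'$ with $N^{**}\le_{\gs}N'<_{\gs}N^*$ has $\ortp_{\gs}(c,N',N^*)\pm M$.

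First I would set up the preliminary data exactly as in the claim's text: choose the maximal pairwise-orthogonal families $\{q_{p,i}:i<i_p\}\subseteq\mathbf F^0_{\lambda_*,\chi}(\mathbf P,M,N)$ dominated by each $p\in\mathbf P_{N,\lambda_*}$ (finitely many by \ref{g41}), absorb all their domains into a single $N\in K^{\gs}_{\lambda_*}$ with $M\le_{\gs}N\le_{\gs}N^*$, and for each relevant $q\in\cS^{\bs}(N)$ pick the model $N^*_q$ of \ref{n11} together with its filtration and enumerations $\langle a_{q,\alpha}\rangle$, plus the list $\langle p_i:i<i(*)\rangle$ of $\cS^{\bs}(N)$. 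Then carry the induction on $\alpha<\lambda^+_*$ choosing $M_\alpha\le_{\gt}M^*$, $N_\alpha\le_{\gt}N^*$, and $\mathbf I_\alpha\subseteq\{c\in N : \ortp_{\gt}(c,M,N^*)\in\mathbf P_1\}$, all increasing continuous, with $(M_\alpha,N_\alpha,\mathbf I_\alpha)\in K^{3,\cn}_{\gt}$ and the two ``if possible, fork'' demands $\circledast(e),(f)$. There is no problem carrying the induction: at successors one uses density of $K^{3,\cn}$, at limits continuity of $\NF$ (long transitivity, \ref{g14}$(d)_2$), and the process stops at some $\alpha(*)<\lambda^+_*$ because each forking step strictly increases a type-dimension counter bounded by $\chi\le\|N^*\|$ (more precisely, bounded using the local character of \ref{g2}(E)(c) combined with stability in $\lambda_*$), so one cannot keep forking cofinally in $\lambda^+_*$.

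The main obstacle, and where the real work lies, is proving $\boxdot$ is impossible, which I would break into the three sublemmas $\boxdot_1,\boxdot_2,\boxdot_3$ already sketched in the claim. For $\boxdot_1$: given $\alpha$ with $N\not\subseteq N_\alpha$, let $N^+_\alpha\le_{\gs}N^*$ be $\gs$-prime over $M^*\cup N_\alpha$ (exists since $\NF_{\gt}(M_\alpha,M^*,N_\alpha,N^*)$); if $N\subseteq N^+_\alpha$ one extends directly, otherwise pick $c\in N\setminus N^+_\alpha$, and the cases split on whether $\ortp(c,N^+_\alpha,N)$ is $\perp M$, $\perp\mathbf P_0$ but $\pm M$ (use that some $p\in\mathbf P_1$ is non-orthogonal, its non-forking extension is $\lambda^+_*$-isolated, and $N^*$ being $\lambda^+_*$-saturated produces $b\in N$ realizing it with $\{b,c\}$ dependent over $N^+_\alpha$), giving a witness $\zeta$ as in $\circledast(e)$; hence $\boxdot_2$: $N\subseteq N_{\alpha(*)}$. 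Then $\boxdot_3$ handles the $q_i$'s by induction on $\chi(q_i)$, using $\mathbf F^0_{\lambda_*,\chi}$-membership and \ref{n11} to force $N_{q_i}\subseteq N_\alpha$ once $\alpha\ge\chi(q_i)$. Finally, granted $\boxdot_2$ and $\boxdot_3$, one argues that any $c$ violating $\boxdot$ would give $\ortp(c,N',N^*)$ non-orthogonal to $\mathbf P$, hence (after a reflection to $\cS^{\bs}$ over $N$ as in \ref{g41}) non-orthogonal to some $q\in\cS^{\bs}(N)$ lying in $\mathbf F^0_{\lambda_*,\chi}(N)$ for some $\chi$, forcing $N_q\not\subseteq N_{\alpha(*)}$ and contradicting $\boxdot_3(b)$; therefore clause (e) of ``approximation'' holds for $(M^{**},N^*)$, clauses (a)--(d) are immediate from the construction (with $M^\oplus\le_{\gs}N^*$ chosen so $(M^*,M^\oplus,\mathbf J_1)\in K^{3,\qr}_{\gs}$), and $(\beta)$ holds by the choice of $\mathbf I_\alpha$, so we are done.
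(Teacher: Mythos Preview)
Your proposal follows essentially the same approach as the paper, which is unsurprising here: the ``statement'' of Claim~\ref{dcx.2} as written already contains the entire proof outline (the preliminary choices of $q_{p,i}$, $N$, $N^*_q$, the inductive construction $\circledast$, the definition of $M^{**}$ and $\mathbf J_1$, and the sublemmas $\boxdot_1,\boxdot_2,\boxdot_3$ with their justifications), and you have recapitulated that outline faithfully. The one place you go slightly beyond the paper is your final paragraph, where you sketch how $\boxdot_2$ and $\boxdot_3$ together close the argument against $\boxdot$ via reflection to some $q\in\mathbf F^0_{\lambda_*,\chi}(N)$; the paper's text trails off at the end of the $\boxdot_3$ argument without writing this wrap-up explicitly, so your addition is a reasonable completion rather than a deviation.
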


\noindent
We first try to analyze the case of quite saturated models.
\begin{claim}
\label{n26}
If $M_0,{\mathbf P},N^*$ satisfies
$\circledast_1$, \then \, $M_1,N_1,\mathbf J$ satisfies $\circledast_2$
where

$\circledast_1 = \circledast^1_{M_0,N} \quad \quad  
M_0 \le_{\gs} N^*,M_0 \in K_\lambda,{\mathbf P} \subseteq 
{\cS}^{\bs}(M)$ and

\hskip74pt $N^*$ is $\lambda^+_*$-saturated 
or just $\lambda^+$-saturated.  
\smallskip

$\circledast_2 = \circledast^2_{M_0,M_1,N_1,\mathbf J,N^*} \qquad$ 
the following holds
\mn
\begin{enumerate}
\item[$(a)$]   $M_0 \le_{\gs} M_1 \le_{\gs} N_1 \le_{\gs} N^*$,
\sn
\item[$(b)$]    $\mathbf J$ is independent in $(M_1,N_1)$
\sn
\item[$(c)$]  if $c \in \mathbf J$ then $p = \ortp(c,M_1,N^*)
\in \mathbf F^0({\mathbf P},M_0,N^*)$
\sn
\item[$(d)$]   $(M_1,N_1,\mathbf J) \in K^{3,\qr}_{\gs}$
\sn
\item[$(e)$]   if $q \in {\cS}^{\bs}(N_1)$ does not fork over
$M_1$ and is $\perp {\mathbf P}$ \then \, $q$ has a unique extension in
${\cS}_{\gs}(N^*)$
\sn
\item[$(f)$]   if $p \in \mathbf F^0({\mathbf P},M_0,N^*)$ then $|\{c \in
\mathbf J:\ortp_{\gs}(c,M_0,N) = p\}| = \dim(p,N^*)$
\sn
\item[$(g)$]    $(M_0,M_1,\mathbf J' \cup \mathbf J'') \in
K^{3,\vq}_{\gs}$ where for $c \in \mathbf J$ we have 
$c \in \mathbf J' \Leftrightarrow \ortp_{\gs}
(c,M_0,N^*) \in \mathbf F^0({\mathbf P},M_0,N^*),c \in \mathbf
J'' \Rightarrow \ortp_{\gs}(c,M_0,N^*) \perp \mathbf F^0({\mathbf P},M_0,N^*)$. 
\end{enumerate}
\end{claim}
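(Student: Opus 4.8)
The plan is to rerun the decomposition construction from the proof of Claim~\ref{dcx.2}, specialized so that its output is exactly the triple $(M_1,N_1,\mathbf J)$ we want. Set $\mathbf P_1=\{p\in\cS^{\bs}_{\gs}(M_0):p\perp\mathbf P\}$ and $\mathbf P^*=\mathbf P\cup\mathbf P_1$. First I would record the inputs we will use: by Claim~\ref{n20}, $\mathbf F^0_\chi(\mathbf P,M_0,N^*)$ is dense in, and dominates, the $\lambda_*$-local part of $\cS^{\bs}_{\gs}$ above $M_0$ (so the basic types we will need to realize — both to exhaust dimensions and to kill the ``residue'' — do exist over $M_0$); by Claim~\ref{n17}, distinct members of $\mathbf F^0$ of distinct prescribed dimension are orthogonal and are orthogonal to higher-dimensional $\mathbf P$-types; and by Claim~\ref{n11}, for $p\in\mathbf F^0_{\lambda_*,\chi}(\mathbf P,M_0,N^*)$ there is an intermediate $M_*\in K^{\gs}_\chi$, $\Dom(p)\le_{\gs}M_*\le_{\gs}N^*$, over which the non-forking extension of $p$ has a unique extension to $\cS_{\gs}(N^*)$. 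The $N_1$ we build will absorb such $M_*$'s, which is what will give clause~(e).

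Next, the main construction: choose $\langle M_\alpha,N_\alpha,\mathbf I_\alpha:\alpha\le\alpha^*\rangle$ by induction inside $N^*$, with $M_\alpha,N_\alpha$ $\le_{\gs}$-increasing continuous, $M_0$ given, $N_0=M_0$, $M_\alpha\le_{\gs}N_\alpha$, $\|M_\alpha\|,\|N_\alpha\|\le\lambda_*+|\alpha|$, $\mathbf I_\alpha$ $\subseteq$-increasing continuous with $(M_0,M_\alpha,\mathbf I_\alpha)\in K^{3,\vq}_{\gs}$, $\mathbf I_\alpha\subseteq\{c:\ortp_{\gs}(c,M_0,N^*)\text{ is a non-forking extension of a type in }\mathbf P^*\}$, and $(M_\alpha,N_\alpha,\mathbf I_\alpha\cap N_\alpha)\in K^{3,\qr}_{\gs}$. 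At a successor $\alpha+1$ I would do the usual book-keeping ``if possible'': if there is $c\in N^*\setminus N_\alpha$ with $\ortp_{\gs}(c,N_\alpha,N^*)\pm\mathbf P$, use density (\ref{n20}) to pick $p\in\mathbf F^0_{\lambda_*}(\mathbf P,M_0,N^*)$ not orthogonal to it, realize the non-forking extension of $p$ over $N_\alpha$ by $a_\alpha\in N^*$ with $\{a_\alpha,c\}$ dependent over $N_\alpha$, enlarge $M_\alpha$ (so $(M_\alpha,M_{\alpha+1},a_\alpha)\in K^{3,\pr}_{\gs}$ up to non-forking bookkeeping) and $N_\alpha$ (absorbing the $M_*$ of \ref{n11} attached to $p$), and add $a_\alpha$ to $\mathbf I_{\alpha+1}$ when the current $p$-count is still $<\dim(p,N^*)$; similarly schedule, for every $p\in\mathbf F^0(\mathbf P,M_0,N^*)$, enough copies to reach $\dim(p,N^*)$, and schedule an enumeration of $\cS^{\bs}_{\gs}(N_\alpha)$ so that for every such non-forking $\perp\mathbf P$ type a witnessing $M_*$ enters $N$. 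As in $\boxdot_1$--$\boxdot_3$ of the proof of \ref{dcx.2}, every effective successor step manufactures a fresh forking or adds one more element of an independent set of bounded size, so by Claim~\ref{g44} the induction halts at some $\alpha^*<\lambda^{++}_*$; put $M_1=M_{\alpha^*}$, $N_1=N_{\alpha^*}$ (enlarged so that $(M_1,N_1,\mathbf J)\in K^{3,\qr}_{\gs}$, via \cite{Sh:705}), $\mathbf J=\mathbf I_{\alpha^*}\cap N_1$.

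Finally I would verify $\circledast_2$. Clauses (a),(b),(c),(d) are immediate from the construction and preservation of $K^{3,\vq}_{\gs},K^{3,\qr}_{\gs}$ under the relevant increasing unions. Clause (f) holds by the scheduling: for each $p\in\mathbf F^0(\mathbf P,M_0,N^*)$ we added exactly $\dim(p,N^*)$ copies, and no more is possible since $\mathbf J$ stays independent and $\dim$ is the supremum of sizes of such independent sets. Clause (g) follows by splitting $\mathbf J$ into $\mathbf J'=\{c\in\mathbf J:\ortp_{\gs}(c,M_0,N^*)\in\mathbf F^0(\mathbf P,M_0,N^*)\}$ and $\mathbf J''=\mathbf J\setminus\mathbf J'$ (whose members come from $\mathbf P_1$, hence are $\perp\mathbf F^0(\mathbf P,M_0,N^*)$) together with $(M_0,M_1,\mathbf J)\in K^{3,\vq}_{\gs}$. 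Clause~(e) is the substantive one: given $q\in\cS^{\bs}_{\gs}(N_1)$ non-forking over $M_1$ with $q\perp\mathbf P$, the halting condition yields the ``residue'' statement ($\boxdot$ in the proof of \ref{dcx.2}), i.e.\ $\ortp_{\gs}(c,N_1,N^*)\perp\mathbf P$ for $c\in N^*\setminus N_1$, and then applying \ref{n11} to the $M_*$'s absorbed into $N_1$ gives the unique extension of $q$ to $\cS_{\gs}(N^*)$. The main obstacle is exactly this last point: one must check that the halting condition forces the $\perp\mathbf P$ residue \emph{globally} and that the uniqueness supplied by \ref{n11} propagates along the continuous chain — that a non-forking $\perp\mathbf P$ type over $N_1$ of bounded weight is ``seen'' by the $M_*$'s we absorbed; this is done as $\boxdot_1$--$\boxdot_3$ in \ref{dcx.2}, using $\lambda^+_*$-saturation of $N^*$ and the domination calculus of \ref{g41} and \ref{j53}.
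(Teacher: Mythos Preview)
Your proposal and the paper's proof share the same architecture---an inductive construction inside $N^*$ that builds the chain $\langle M_i,N_i,\mathbf J_i\rangle$ and then argues it must halt---but the details diverge in two places worth noting.

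First, the halting argument. You terminate via Claim~\ref{g44} (no $\chi^+$-long forking chains), essentially as in $\boxdot_1$--$\boxdot_3$ of \ref{dcx.2}. The paper instead runs the induction to length $(\lambda_{\gs})^+$ and, if it does not halt earlier, attaches to each stage $i$ with $\cf(i)=\kappa$ a witness tuple $(N^-_i,b_i,c_i,q_i)$ (clause~(g) of the inductive scheme), encodes these as sequences $\bar a_i$, extracts a stationary set on which the $\bar a_i$ are convergent and indiscernible over some $M_{j(*)}$, and then invokes Conclusion~\ref{j56} to contradict the maximality of $\mathbf J_{j(*)}$. So the paper's contradiction is dimension-theoretic via indiscernibility and \ref{j56}, not the raw forking-chain bound you use; your route is more direct but leans harder on the $\boxdot$-bookkeeping imported from \ref{dcx.2}.

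Second, the paper carries the split $\mathbf J'_i,\mathbf J''_i$ (the $\mathbf F^0$-part and the $\mathbf F^0$-orthogonal part) explicitly through the induction, with clause~(j) demanding $(M_0,M_i,\mathbf J'_i\cup\mathbf J''_i)\in K^{3,\uq}_{\gs}$ at every stage $i=j+2$; this is what delivers clause~(g) of $\circledast_2$ at the end without further work. You instead recover the split only at the end by partitioning the final $\mathbf J$, which is fine but needs the extra remark that $(M_0,M_1,\mathbf J)\in K^{3,\vq}_{\gs}$ is preserved under such a partition. Both the paper's argument and yours are sketchy on clause~(e); the paper's cases II--V are essentially left open, so your treatment via absorbing the $M_*$'s of \ref{n11} is no less complete than what the paper provides.
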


\begin{PROOF}{\ref{n26}} 
 We choose by induction on $i < (\lambda^3)^+$ 
 a triple
$(M_i,N_i,\mathbf J_i,\mathbf J'_i,\mathbf J''_i)$ such that: 
\mn
\begin{enumerate}
\item[$(a)$]   $M_i \le_{\gs} N^*$ is from $K_{\lambda^{\gs}}$
\sn
\item[$(b)$]   $M_i$ is $\le_{\gs}$-increasing continuous

$N_i$ is $\le_{\gs}$-increasing continuous, 
$M_i \le_{\gs} N_i \le_{\gs} N^*$
\sn
\item[$(c)$]  if $i=j+1,q \in {\cS}^{\bs}_{\gs}(M_i),q \perp
{\mathbf P}$ and there is $(M',r)$ such that $M' \le_{\gs} N^*$ of
cardinality $\lambda$ and $r \in {\cS}^{\bs}_{\gs}(M')$ is
dominated by $q$ and $\dim(p,N^*) < \dim(q,N^*)$ \then \, there
is such $r \in {\cS}^{\bs}_{\gs}(M_i)$ (hence also if $\cf(i) 
\ge \kappa_{\gs}$ this holds)
\sn
\item[$(d)$]   if $j<i$ then:
\sn
\begin{enumerate}
\item[$(\alpha)$]   $\mathbf J_j \backslash (\mathbf J_j \cap M_i)
\subseteq \mathbf J_i$
\sn
\item[$(\beta)$]   $\NF_{\gs}(M_i \cap _j,M_i,N_j,N^*)$, 
\sn
\item[$(\gamma)$] $(M_j,M_i \cap N_j,\mathbf J_j \cap M_i) \in 
K^{3,\qr}_{\gs}$.  
\end{enumerate}
\item[$(e)$]   $(M_i,N_i,\mathbf J_i) \in K^{3,\vr}_{\gs}$
\sn
\item[$(f)$]   $\mathbf J$ is a maximal subset of $N^* \backslash M_i$
which is independent in $(M_i,N^*)$ and satisfies clause (c)
\sn
\item[$(g)$]   if $\cf(i) = \kappa$ then there is a triple
$(N^-_i,a_i,c_i,q_i)$ such that:
\sn
\begin{enumerate}
\item[$(\alpha)$]  $M_i \le_{\gs} N^-_i \le_{\gs} N_i$
\sn
\item[$(\beta)$]   $N^-_i \in K_\lambda$
\sn
\item[$(\gamma)$]  $b_i,c_i \in N^* \backslash N_i$
\sn
\item[$(\delta)$]   $q_i = \ortp(c_i,N_i,N^*)$ does not fork
over $M_i$ and is as in (c)
\sn
\item[$(\varepsilon)$]   $\ortp_{\gs}(b_i,N_i,N^*) \pm q$
\sn
\item[$(\zeta)$]   $\ortp_{\gs}(b_i,N_i,N^*)$ does not fork over $N^-_i$'
\sn
\item[$(\theta)$]    $N^-_i \subseteq N_{i+1}$
\end{enumerate}
\item[$(h)$]   $\mathbf J'_i \subseteq \{c \in N^*:\ortp_{\gs}(c,M_0,N) 
\in \mathbf F^0,({\mathbf P},M_0,N^*)\}$ increases with $i$
\sn
\item[$(i)$]   $\mathbf J''_i \subseteq \{c \in N^*:\ortp_{\gs}
(c,M_0,N^*)$ orthogonal to $\mathbf F^0({\mathbf P},M_0,N^*)$
\sn
\item[$(j)$]   if $i=j+2$ then $(M_0,M_i,\mathbf J'_i,\mathbf J'_i) \in
K^{3,\uq}_{\gs}$ 
\sn
\item[$(k)$]   $\mathbf J_0$ satisfies clause (f) of the claim.
\end{enumerate}
\mn
So for some $i(*) \le (\lambda^{\gs})^+$, we have defined for $i$ 
iff $i < i(*)$.
\bigskip

\noindent
\underline{Case 1}:  We succeed to carry the induction, i.e., $i(*) 
= (\lambda^{\gs})^*$.

Let $\bar a_i = \langle a^i_\varepsilon:\varepsilon < \lambda \rangle
\in {}^{\lambda +2}(N^*)$ list $N^-_i \cup \{b_i,c_i\},\{a^i_{4
\varepsilon}:\varepsilon < \lambda\}$ list $b_i,c_i \notin
\{a^i_\varepsilon:\varepsilon \ne 1,2\},N^-_i \cap
M_i,(a^i_{\lambda^+},a^i_{\lambda +1}) = (b_i,c_i)$.  By x.x. for some
stationary $S \subseteq \{i < (2^\lambda)^+:\cf(i) = \lambda$) the
sequence $\langle \bar a_i:i \in S \rangle$ is convergent and
indiscernible over $M_{j(*)}$.

Let $j(*)$ be minimal such that otp$(S \cap j(*)) \ge \kappa$.  By
\ref{j56} we get a contradiction to the maximality of $\mathbf J_{j(*)}$.
\bigskip

\noindent
\underline{Case II}:  for some limit $\delta$ we have defined for all $i <
\delta$.
\bigskip

\noindent
\underline{Case III}:  $i(*)=0$.

Trivial.
\bigskip

\noindent
\underline{Case IV}:  $i(*)=j+1,\cf(j) \ne \lambda^+$.

Easy.
\bigskip

\noindent
\underline{Case V}:  $i(*)=j+1,\cf(j) = \lambda^+$.
\end{PROOF}

\noindent
We prove that $(M_j,N_j,\mathbf J_j)$ are as required.
\bigskip

\begin{claim}
\label{n32}
Assume ${\gs}$ has $\NDOP$.

Assume $\circledast^2_{M_0,M_1,N_1,\mathbf J_1,N^*}$ 
and $N^*$ is $\|M_1\|^+$-saturated, $\|M_1\| = 
\|M_1\|^{< \kappa({\gs})}$. 

\Then \, we can find $N_0,\mathbf J_0$ such that:
\mn
\begin{enumerate}
\item[$(\alpha)$]   $\NF_{\gs}(M_0,M_1,N_0,N_1)$
\sn
\item[$(\beta)$]   $(M_0,N_0,\mathbf J_0) \in K^{3,\qr}_{\gs}$
\sn
\item[$(\gamma)$]   $c \in \mathbf J_0 \Rightarrow 
\ortp_{\gs}(c,M_0,N_0) \in \mathbf F^*({\mathbf P},M_0,N^*)$.
\end{enumerate}
\end{claim}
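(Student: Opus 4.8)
The plan is to construct $N_0$ and $\mathbf J_0$ inside $N_1$ by a maximality argument and then verify the three clauses, the hardest being $(\beta)$, which is where the NDOP hypothesis enters. Call $c \in N_1 \setminus M_0$ \emph{admissible} if $\ortp_{\gs}(c,M_0,N^*) \in \mathbf F^*(\mathbf P,M_0,N^*)$ and $\ortp_{\gs}(c,M_1,N_1)$ does not fork over $M_0$. First I would choose $\mathbf J_0 \subseteq N_1$ maximal such that $\mathbf J_0$ consists of admissible elements and is independent in $(M_0,N_1)$; by bs-stability and the standard forking-chain bound (as in the proofs of \ref{j17} and \ref{n20}) such a maximal $\mathbf J_0$ exists, and it can be realised along a $\le_{\gs}$-increasing semicontinuous chain $\langle N_{0,i} : i \le i^*\rangle$ with $N_{0,0} = M_0$, $N_{0,i^*} = N_0$, $(N_{0,i},N_{0,i+1},c_i) \in K^{3,\pr}_{\gs}$, $c_i$ admissible, $\mathbf J_0 = \{c_i : i < i^*\}$, and $\ortp_{\gs}(c_i,N_{0,i},N_1)$ not forking over $M_0$; using that $\gs$ is very good with $K^{3,\pr}_{\gs}$- and $\NF_{\gs}$-primes (\ref{g23}(G), \ref{g26}) one arranges moreover that $N_0 \le_{\gs} N_1$ is $\qr$-prime over $M_0 \cup \mathbf J_0$. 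Clause $(\gamma)$ is then immediate, and $\mathbf J_0$ is independent in $(M_0,N_0)$ by monotonicity.

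For clause $(\alpha)$, along the chain each $\ortp_{\gs}(c_i,N_{0,i},N_1)$ does not fork over $M_0$ and $\NF_{\gs}$ respects $\gs$, so by long transitivity of $\NF_{\gs}$ (\ref{g14}) one shows by induction on $i$ that $\NF_{\gs}(M_0,N_{0,i},M_1,N_1')$ for suitable $N_1'$ with $N_1 \le_{\gs} N_1'$ and $M_1 \le_{\gs} N_1'$; since $N_0$ is $\qr$-prime over $M_0 \cup \mathbf J_0$ and $K^{3,\qr}_{\gs}$-hulls are assembled from $K^{3,\pr}_{\gs}$- and $\NF_{\gs}$-primes, this passes to $\NF_{\gs}(M_0,N_0,M_1,N_1')$, and monotonicity together with symmetry of $\NF_{\gs}$ give $\NF_{\gs}(M_0,M_1,N_0,N_1)$ as required.

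The real work is clause $(\beta)$: one must show the construction does not halt prematurely, i.e., that $N_0$ together with $\mathbf J_0$ really is a $K^{3,\qr}_{\gs}$-triple, so that, given $N_0 \le_{\gs} N'$ and a one-to-one $h : \mathbf J_0 \to N'$ with $(M_0,N',h''(\mathbf J_0)) \in K^{3,\bs}_{\gs}$, the model $N_0$ $\le_{\gs}$-embeds into $N'$ over $M_0$ extending $h$. Here NDOP is essential: from $\NF_{\gs}(M_0,M_1,N_0,N_1)$ it yields that $N_1$ is $\NF_{\gs}$-prime over $N_0 \cup M_1$ modulo the $\mathbf J_1$-dimension, hence every $p \in \cS^{\bs}_{\gs}(N_1)$ is non-orthogonal to $M_0$ or to $M_1$, and the ones non-orthogonal to $M_1$ but orthogonal to $\mathbf P$ are pinned down by clause (e) of $\circledast^2$. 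Running the dichotomy ``forks over $M_0$'' versus ``orthogonal to $M_0$'' as in \ref{g44} and \ref{j56}, using domination of $\mathbf F^*$-types by types over $M_0$ (\ref{j23}, \ref{g41}) and the dimension equalities $\circledast^2(f)$, any obstruction to extending $h$ would produce either an admissible element of $N_1$ independent from $\mathbf J_0$ over $M_0$ (contradicting the maximality of $\mathbf J_0$) or a type over $N_1$ that is $\pm M_1$, $\perp \mathbf P$ and forks over $M_1$ (contradicting $\circledast^2(e)$ via \ref{g44}, using the $\|M_1\|^+$-saturation of $N^*$). Assembling NDOP, the $\mathbf F^0$/$\mathbf F^*$ dimension bookkeeping, and the saturation of $N^*$ is the step I expect to be the main obstacle; the rest is a routine adaptation of \ref{n20} and \ref{n26}.
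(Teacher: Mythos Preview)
A caveat first: the paper's own proof of this claim is explicitly unfinished. It offers a ``First Proof'' that breaks off mid-argument after $\boxdot_3$, an empty ``Second proof'', and then a list of open ``Problems''. So you are being compared against a sketch, not a completed argument.

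That said, the approaches differ substantially. The paper does not run a bare maximality argument. It stratifies $\mathbf J_1$ by dimension, setting $\mathbf J^1_\chi = \{c \in \mathbf J_1 : \ortp_{\gs}(c,M_1,N^*) \in \mathbf P_\chi\}$ and further decomposing $\mathbf J^1_\chi = \bigcup_{\varepsilon < \chi} \mathbf J^1_{\chi,\varepsilon}$; it then chooses elements $c_{\chi,i}$ by induction, at each step \emph{minimizing} the ordinal invariant $\varepsilon_\chi(c) = \min\{\varepsilon \le \chi : c$ is not good over $(M_0, M_1 \cup \mathbf J^1_{\chi,\varepsilon})\}$. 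This minimization ties the new $\mathbf J_0$ to the given layered decomposition of $\mathbf J_1$, and is what is meant to secure clause $(\alpha)$ together with the correct dimension bookkeeping; your direct maximal-independent-set construction has no analogue of this coordination.

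Your argument for $(\alpha)$ is where I see the main gap. You invoke ``$\NF_{\gs}$ respects $\gs$'', but that axiom (Definition \ref{g17}) points the wrong way: it says that $\NF$ implies non-forking of the relevant basic types, not that non-forking of each individual $c_i$ over $M_1$ yields $\NF_{\gs}(M_0,M_1,N_0,N_1)$. To pass from ``each admissible $c_i$ has $\ortp_{\gs}(c_i,M_1,N_1)$ not forking over $M_0$'' to $\NF$ of the models you need that $K^{3,\pr}_{\gs}$-extensions along non-forking types preserve $\NF$-amalgams step by step, and this is not among the axioms listed in \ref{g14}--\ref{g23} for $(\mu,\lambda,\kappa)$-$\NF$-frames; it would have to be proved separately. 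The paper's $\varepsilon_\chi$-minimization is precisely an attempt to coordinate $\mathbf J_0$ with the existing $\mathbf J_1$ so that the $\NF$ relation can be tracked through the construction; your shortcut skips this, and it is not clear the $\NF$ condition can be recovered after the fact from maximality alone.
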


\begin{proof}  
Let $\Theta_\ell = \{\dim(p,N^*):p \in \mathbf F^*({\mathbf P},M_\ell)\}$.
Fix for the time being $\chi \in
\Theta_\ell$, let ${\mathbf P}_\chi = \mathbf F^0_\chi({\mathbf
P},M_0,M_1,N),{\mathbf P}^*_\chi = \mathbf F^0_\chi({\mathbf P},M_0,N),\mathbf
J^1_\chi = \{c \in \mathbf J_1:\ortp_{\gs}(c,M_1,N^*) \in 
{\mathbf P}_\chi\}$.  Let $\mathbf J^1_\chi = 
\cup\{\mathbf J^1_{\chi,\varepsilon}:\varepsilon < \chi\}$ where $\mathbf
J^1_{\chi,\varepsilon}$ is increasing with $\varepsilon$.
\bigskip

\noindent
\underline{First Proof}:  Now we choose by induction on $i$ an element (of
sequences of length $< \kappa_{\gs}$) $c_{\chi,i}$ such that:
\mn
\begin{enumerate}
\item[$(i)$]   $c_{\chi,i} \in N^* \backslash M_0 \backslash
\{c_{\chi,j}:j<i\}$ 
\sn
\item[$(ii)$]   $\ortp_{\gs}(c_{\chi,i},M_0,N^*) \in {\cP}^*_\chi$
\sn
\item[$(iii)$]   $\{c_{\chi,j}:j \le i\}$ is independent in $(M_0,N^*)$
\sn
\item[$(iv)$]   $\varepsilon_\chi(c_{\chi,i}) \le \varepsilon(c)$
for any $c$ satisfying (i), (ii), (iii) where $\varepsilon_\chi(c) =
\varepsilon_\chi(c,N^*) = \text{ Min}\{\varepsilon \le \chi$: if
$\varepsilon < \chi$ then $c$ is not given over $(M_0,M_1 \cup \mathbf
J^1_{\chi,\varepsilon})$
\sn
\begin{enumerate}
\item[$\boxdot_1$]   $\varepsilon_\chi(c_j) \le 
\varepsilon_\chi(c_i)$ for $j<i$.

[Why?  Trivially.]
\sn
\item[$\boxdot_2$]   for every $\varepsilon$ the set
$\{i:\varepsilon_\chi(c_i) \le \varepsilon\}$ is an ordinal
$i[\varepsilon] < (\|M_1\| + |\varepsilon|)^+$ 

[Why?  By one of the basic properties of dimension.
]
\sn 
\item[$\boxdot_3$]   if $N^* \le_{\gs} N^+,c \in N^+
\backslash M_0 \backslash \{c_{\chi,i}:i < \chi\}$ and 
$\ortp_{\gs}(c,M_0,N^+) \in {\mathbf P}^*_\chi$ and $\{c_{\chi,i}:i < \chi\} \cup
\{c\}$ is independent in $(M_0,N^+)$ \then \,
$\varepsilon_\chi(c,N^+) = \chi$.

[Why?  Assume $c$ is a counterexample that $\varepsilon_\chi(c,N^+) =
\zeta < \chi$ and let $i(*) = i[\zeta]$.
\end{enumerate}
\end{enumerate}
\mn
Now 
\bigskip

\noindent
\underline{Second proof of \ref{n32}}:
\medskip

\noindent
\underline{ Problems 
}:

\noindent
1) Try to use \ref{n20} for \ref{n32}. 

\noindent
2) The problem is (a) or (b) where
\mn
\begin{enumerate}
\item[$(a)$]   the existence of $q$ dominated by $M_1$ orthogonal to
$M_0,(M_0,M_1,a) \in K^{3,\uq}_{\gs}$ and no $r \in {\cS}^{\bs}_{\gs}
(M_1)$ is dominated by $q$, so possibly dim$(q,N) >
\sup\{\dom(p,N):p \in {\cS}^{\bs}_{\gs}(M)\}$. 

Moreover, if $M_1 \le M_2,(M_0,M_2,a) \in K^{3,\uq}_{\gs}$ nothing
changes.  There is a hidden independence property, but some cases
seemingly has just one more unary function so there is a decomposition
\sn
\item[$(b)$]   instead $M_0,M$ we have $\langle M_i:i \le \delta
\rangle$ semi continuous, $\delta < \kappa_{\mathfrak s}$ and we look at
$q$ dominated by $M_\delta$ orthogonal to $M_i$ for $i < \delta$
(needed only if we like to have decomposition to trees $\subseteq
{}^{\kappa >} \mu$.
\end{enumerate}
\mn
3) If NDOP and for some $\{M_i:i < \alpha\}$ independent over $M$ the
prime model over $\cup\{M_i:i < \alpha\}$ is not minimal, \then \,
any logic fix-length-game + quantifiers on dimension does not
characterize models up to isomorphism. 

\noindent
4) Have imaginary elements for
\mn
\begin{enumerate}
\item[$(a)$]   $p/$parallelism
\sn
\item[$(b)$]   $p/\mathbf E,p_1 \mathbf E p_2$ iff they always have the
same dimension (i.e., each dominated the other). 
\end{enumerate}
\mn
5) \underline{conjecture}: if $p$ is dominated by $M$ but dominate no one in
${\cS}^{\bs}(M)$ \then \, on some $p/E$ there is a group hence is
of depth $0$ (and even $p^\infty$ is?). 

\noindent
6) Groups: see what I wrote to Lessman.   

\noindent
7) Take $\NDOP$ from 705 in \cite{Sh:705}: if ${\gs}$ is excellent
exactly up to $n+1$ then ${\gs}^+$ is excellent exactly up to $n$
or up to $n+1$.  Help in \S12 but need lower.  But: does $EM(I)$ help?

Define slim if $EM(I,\Phi)$ is without order this as dividing line.
\end{proof}
\newpage


\bibliographystyle{amsalpha}
\bibliography{shlhetal}

\end{document}